%%%%%%%%%%%%%%%%%%%%%%%%%%%%%%%%%%%%%
%Joint diamonds paper submitted to JSL
%%%%%%%%%%%%%%%%%%%%%%%%%%%%%%%%%%%%%

\documentclass[letterpaper]{amsart}
\usepackage[utf8]{inputenc}
\usepackage[T1]{fontenc}
\usepackage{MHmacros}
\usepackage{enumitem}

\newtheorem{theorem}{Theorem}%[section]
\newtheorem{corollary}[theorem]{Corollary}
\newtheorem{lemma}[theorem]{Lemma}
\newtheorem{proposition}[theorem]{Proposition}
\newtheorem{question}[theorem]{Question}

\theoremstyle{definition}
\newtheorem{definition}[theorem]{Definition}

\hyphenation{su-per-com-pact-ness su-per-com-pact strong-ness}

\newcommand{\ILd}{\(I\)-\,\(\Ld_\kappa\)}
\newcommand{\ILdthetasc}{\(I\)-\,\(\Ldthetasc_\kappa\)}

\begin{document}
\title{Joint diamonds and Laver diamonds}
\author{Miha E.\ Habič}
\address{
	Faculty of Information Technology\\
	Czech Technical University in Prague\\
	Th\'akurova 9\\
	160 00 Praha 6\\
	Czech Republic
	\&
	Department of Logic\\
	Faculty of Arts\\
	Charles University\\
	n\'am.\ Jana Palacha 2\\
	116 38 Praha 1\\
	Czech Republic
}
\email{habicm@ff.cuni.cz}
\urladdr{https://mhabic.github.io}
\thanks{The author is grateful to the anonymous referee for their numerous suggestions, which greatly improved the presentation in the paper.\\
The material presented in this paper is part of the author's doctoral dissertation, 
completed at the Graduate Center, CUNY, under the supervision of Joel David Hamkins.\\
The author was partly supported by the ESIF, EU Operational Programme Research, Development and Education, the International Mobility of Researchers in CTU project no.~(CZ.02.2.69/0.0/0.0/16\_027/0008465) at the Czech Technical University in Prague, and the joint FWF--GA\v{C}R grant no.~17-33849L: Filters, Ultrafilters and Connections with Forcing.}

\begin{abstract}
The concept of jointness for guessing principles, specifically \(\diamond_\kappa\)
and various Laver diamonds, is introduced. A family of guessing sequences is joint if the elements of any
given sequence of targets may be simultaneously guessed by the members of the family.
While equivalent in the case of \(\diamond_\kappa\), joint Laver diamonds
are nontrivial new objects. We give equiconsistency results for most of the large
cardinals under consideration and prove sharp separations between joint Laver diamonds
of different lengths in the case of \(\theta\)-supercompact cardinals.
\end{abstract}

\keywords{Laver functions, joint Laver functions, joint diamond sequences}
\subjclass[2010]{03E55, 03E35}
\maketitle

\section{Introduction}
The notion of a Laver function, introduced for supercompact cardinals 
in~\cite{Laver1978:MakingSupercompactnessIndestructible},
is a powerful strengthening of the usual \(\diamond\)-principle to the large cardinal setting.
It is based on the observation that a large variety of large cardinal properties give rise to
different notions of a ``large'' set, intermediate between stationary and club, and these
are then used to provide different guessing principles, where we require that the
sequence guesses correctly on these ``large'' sets. This is usually recast in terms of
elementary embeddings or extensions (if the large cardinal in question admits such a
characterization), using various ultrapower constructions. For example, in the case of a 
supercompact cardinal \(\kappa\), the usual definition states that a \emph{Laver function}
for \(\kappa\) is a function \(\ell\colon \kappa\to V_\kappa\) such that for any \(\theta\)
and any \(a\in H_{\theta^+}\) there is a \(\theta\)-supercompactness embedding
\(j\colon V\to M\) with critical point \(\kappa\) such that \(j(\ell)(\kappa)=a\) (this
ostensibly second order definition can be rendered in first order language by replacing
the quantification over arbitrary embeddings with quantification over
ultrapowers by measures on \(\power_\kappa(\theta)\), as in Laver's original account).
In this example, Łoś's theorem tells us that the set of \(\alpha<\kappa\), for which
\(\ell(\alpha)\) codes an ``initial segment'' of \(a\), is large, in the sense that
it has measure 1 with respect to the normal measure on \(\kappa\) derived from \(j\). 

Laver functions for other large cardinals were later defined by Gitik and 
Shelah~\cite{GitikShelah1989:IndestructibilityOfStrong}, Corazza~\cite{Corazza1989:LaverSequencesExtendible}, 
Hamkins~\cite{Hamkins2002:StrongDiamondPrinciples}, and others. The term \emph{Laver diamonds} 
has been suggested to more strongly underline the connection between
the large and small cardinal versions.

In this paper we examine the notion of \emph{jointness} for both ordinary and Laver
diamonds. We shall give a simple example in Section~\ref{sec:defs}; for now let us just
say that a family of Laver diamonds is joint if they can guess their targets simultaneously
and independently of one another.
Section~\ref{sec:defs} also introduces some terminology that will ease later discussion.
Sections~\ref{sec:JLDSc} and~\ref{sec:JLDStrong} deal with the outright existence
or at least the consistency of the existence of joint Laver sequences for supercompact and strong
cardinals, respectively. Our results will show that in almost all cases the existence of
a joint Laver sequence of maximal possible length is simply equiconsistent
with the particular large cardinal. The exception are the \(\theta\)-strong cardinals
where \(\theta\) is a limit of small cofinality, for which we prove that additional
strength is required for even the shortest joint sequences to exist. 
We also show that there are no
nontrivial implications between the existence of joint Laver sequences of different lengths.
Section~\ref{sec:JD} considers joint \(\diamond_\kappa\)-sequences and their relation
to other known principles. Our main result there shows that, for a fixed \(\kappa\), the 
principle \(\diamond_\kappa\) is simply
equivalent to the existence of a joint \(\diamond_\kappa\)-sequence of any possible length.

We shall list open questions wherever they arise in the course of exposition.

\section{Jointness: a motivating example}
\label{sec:defs}
All of the large cardinals we will be dealing with in this paper are characterized by
the existence of elementary embeddings of the universe into inner models 
which have that cardinal
as their critical point. We can thus speak of embeddings associated to a measurable, 
a \(\theta\)-strong, a 17-huge cardinal, and so forth.
At this stage we do not insist that these embeddings are any kind of ultrapower embedding, or
even definable, so this whole introductory discussion should take place in an appropriate
second-order setting.
Since the definitions of (joint) Laver diamonds for these various large cardinals
are quite similar, we give the following general definition as a framework to
aid future exposition.

\begin{definition}
\label{def:guessingFunctions}
Let \(j\) be an elementary embedding of the universe witnessing the largeness of
its critical point \(\kappa\) (a measurable or a \((\kappa+2)\)-strongness embedding, for instance)
and let \(\ell\) be a function defined on \(\kappa\).
We say that a set \(a\), the target, is \emph{guessed by \(\ell\) via \(j\)} 
if \(j(\ell)(\kappa)=a\).

If \(A\) is a set or a definable class, say that \(\ell\) is an \emph{\(A\)-guessing Laver 
function} (or \emph{Laver diamond}) if for any \(a\in A\) there is an embedding \(j\), witnessing the largeness of 
\(\kappa\), such 
that \(\ell\) guesses \(a\) via \(j\). If there is an \(A\)-guessing Laver function
for \(\kappa\), we shall say that \(\Ld_\kappa(A)\) holds.\footnote{Different notation has been used by different authors to denote the existence of a Laver function. We chose here to follow Hamkins~\cite{Hamkins2002:StrongDiamondPrinciples}.}
\end{definition}

To simplify the terminology even more, we shall associate to each type of large cardinal 
considered, a default set of targets \(A\) (for example, when talking about a measurable cardinal \(\kappa\), we will be predominantly interested in targets from \(H_{\kappa^+}\)). 
In view of this, whenever we neglect
the mention of a particular class of targets, these default targets will be intended.

We will often specify the type of large cardinal embeddings we
have in mind explicitly, by writing \(\Ldmeas_\kappa\), or \(\Ldthetasc_\kappa\), or similar.
This is to avoid ambiguity; for example, we could conceivably start with a supercompact
cardinal \(\kappa\) but only be interested in its measurable Laver functions.
Even so, to keep the notation as unburdened as possible, we may sometimes omit
the specific large cardinal property under consideration when it is clear from
context.

As a further complication, the stated definition of an \(A\)-guessing Laver function is 
second-order, since we are quantifying over all possible embeddings \(j\). This is 
unavoidable for arbitrary \(A\). However, the default sets of targets we shall be working
with are chosen in such a way that standard factoring arguments allow us to restrict
our attention to ultrapower embeddings (by measures or extenders). The most relevant definitions of
Laver functions can therefore be recast in first-order language in the usual way.

Given the concept of a Laver diamond for a large cardinal \(\kappa\), we might ask
when two Laver functions are different and how many distinct ones can \(\kappa\)
carry. It is clear that the guessing behaviour of these functions is determined by their restrictions to large (in the sense of an appropriate large-cardinal measure) sets; in other
words, \(j(\ell)(\kappa)\) and \(j(\ell')(\kappa)\) equal one another if \(\ell\) and 
\(\ell'\) only differ on a small (nonstationary, say) subset of their domain.
We definitely do not want to count these functions as distinct: they cannot even guess
distinct targets! Instead, what we want are Laver functions whose targets, under a single
embedding \(j\), can be chosen completely independently. Let us illustrate this
situation with a simple example.

Suppose \(\ell\colon\kappa\to V_\kappa\) is a supercompactness Laver function as defined
in the introduction. We can
then define two functions \(\ell_0,\ell_1\) by letting \(\ell_0(\xi)\) and
\(\ell_1(\xi)\) be the first and second components, respectively, of \(\ell(\xi)\), if
this happens to be an ordered pair. These two are then easily seen to be Laver functions
themselves, but have the additional property that, given any pair of targets \(a_0,a_1\),
there is a \emph{single} supercompactness embedding \(j\) such that
\(j(\ell_0)(\kappa)=a_0\) and \(j(\ell_1)(\kappa)=a_1\) (just the one that makes \(j(\ell)(\kappa)=(a_0,a_1)\)). 
This additional trait, where two Laver functions are, in a sense, enmeshed, we call jointness.

\begin{definition}
\label{def:jointLaverDiamonds}
Let \(A\) be a set or a definable class and let \(\kappa\) be a cardinal with a notion of 
\(A\)-guessing Laver function. A sequence \(\vec{\ell}=\langle \ell_\alpha;\alpha<\lambda\rangle\)
of \(A\)-guessing Laver functions is an \emph{\(A\)-guessing joint Laver sequence} if for any 
sequence \(\vec{a}=\langle a_\alpha;\alpha<\lambda\rangle\) of targets from \(A\) there is a single 
embedding \(j\), witnessing the largeness of \(\kappa\), such that each 
\(\ell_\alpha\) guesses \(a_\alpha\) via \(j\). If there is
an \(A\)-guessing joint Laver sequence of length \(\lambda\) for \(\kappa\), we shall say 
that \(\jLd_{\kappa,\lambda}(A)\) holds.
\end{definition}

In other words, a sequence of Laver diamonds is joint if, given any sequence of targets,
these targets can be guessed simultaneously by their respective Laver diamonds.

We must be careful to distinguish between entire sequence being jointly Laver
and its members being pairwise jointly Laver. It is not difficult to find examples of
three (or four or even infinitely many) 
Laver functions that are pairwise joint but not fully so. For example,
given two joint Laver functions \(\ell_0\) and \(\ell_1\), we might define
\(\ell_2(\xi)\) to be the symmetric difference of \(\ell_0(\xi)\) and \(\ell_1(\xi)\). It is easy
to check that any two of these three functions can have their targets freely chosen, but
the third one is uniquely determined by the other two.

Jointness also makes sense for ordinary diamond sequences, but needs to be formulated
differently, since elementary embeddings do not (obviously) appear in that setting.
Rather, we distil jointness for Laver diamonds into a property of certain
ultrafilters and then apply this to more general filters and diamond sequences.
We explore this further in Section~\ref{sec:JD}.

\section{Joint Laver diamonds for supercompact cardinals}
\label{sec:JLDSc}

\begin{definition}
A function \(\ell\colon\kappa\to V_\kappa\)
is a \(\theta\)-supercompactness Laver function for \(\kappa\) if it guesses elements of 
\(H_{\theta^+}\) via \(\theta\)-supercompactness embeddings with critical point \(\kappa\).
This also includes the case of \(\kappa\) being measurable (as this is equivalent to
it being \(\kappa\)-supercompact).

If \(\kappa\) is fully supercompact, then a function \(\ell\colon\kappa\to V_\kappa\) is
a Laver function for \(\kappa\) if it is a \(\theta\)-supercompactness Laver function
for \(\kappa\) for all \(\theta\).
\end{definition}

%These definitions, as given, are fundamentally of second order, since we are
%quantifying over all elementary embeddings of the universe. We could, of
%course, reformulate them in a first-order way, by requiring the existence of
%certain measures, whose ultrapower embeddings witness the required Laver property.
%Standard factoring arguments show that these two descriptions are completely
%equivalent, so we shall not pay further attention to the distinctions between them.

We shall say that \(\Ldthetasc_\kappa\) holds if there is a \(\theta\)-supercompactness
Laver function for \(\kappa\); in view of the definition just stated \(H_{\theta^+}\) is the default set of targets for these Laver functions, and so \(\Ldthetasc_\kappa\) is merely 
a synonym for \(\Ldthetasc_\kappa(H_{\theta^+})\). Similarly, \(\Ldmeas_\kappa\) will denote the existence of a measurable Laver function for \(\kappa\) and is a synonym for \(\Ldmeas_\kappa(H_{\kappa^+})\).
For fully supercompact cardinals \(\kappa\), the existence of a supercompactness Laver function
will be denoted by \(\Ldsc_\kappa\), which should be read more precisely as \(\Ldsc_\kappa(V)\).

While the definition of a \(\theta\)-supercompactness Laver function
refers to arbitrary \(\theta\)-supercompactness embeddings, one can in fact work solely with
embeddings arising from normal measures on \(\power_\kappa(\theta)\). This is because
any \(\theta\)-supercompactness embedding \(j\) can be factored as \(j=k\circ i\), where \(i\)
is the ultrapower by the induced normal measure on \(\power_\kappa(\theta)\) and
\(k\) is an elementary embedding with critical point strictly above \(\theta\).
Since we are only interested in guessing targets from \(H_{\theta^+}\) by \(\ell\), 
we get the same value whether we compute \(j(\ell)(\kappa)\) or \(i(\ell)(\kappa)\).
In brief, if \(\ell\) guesses a target in \(H_{\theta^+}\) via any \(\theta\)-supercompactness 
embedding, then it does so via a \(\theta\)-supercompactness ultrapower embedding. Moreover,
in the joint setting, if several \(\ell_\alpha\) guess their targets via a single 
\(\theta\)-supercompactness embedding, then they all guess their targets via a single 
\(\theta\)-supercompactness embedding as well.

Observe that there are at most \(2^\kappa\) many \(\theta\)-supercompactness
Laver functions for \(\kappa\), since there are only \(2^\kappa\) many functions
\(\kappa\to V_\kappa\). Since a joint Laver sequence cannot have the same
function appear on two different coordinates (as they could never guess two different
targets), this implies
that \(\lambda=2^\kappa\) is the largest cardinal for which 
there could possibly be a joint Laver sequence of length \(\lambda\). 
Bounding from the other side,
a single \(\theta\)-supercompactness Laver function, for some \(\theta\leq 2^\kappa\), 
already yields a joint Laver sequence of length \(\theta\).

\begin{proposition}
\label{prop:SCHasSomeJLDiamond}
If\/ \(\Ldthetasc_\kappa\) holds, then there is a \(\theta\)-supercompactness joint Laver sequence for 
\(\kappa\) of length \(\lambda=\min\{\theta,2^\kappa\}\).
\end{proposition}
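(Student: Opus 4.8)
The plan is to take the single $\theta$-supercompactness Laver function $\ell$ guaranteed by $\Ldthetasc_\kappa$ and ``unfold'' it into a sequence of $\lambda = \min\{\theta, 2^\kappa\}$ many functions, exactly mirroring the motivating example from Section~\ref{sec:defs}. There, a single Laver function was split into two coordinate functions $\ell_0, \ell_1$ whose targets could be guessed jointly by choosing $j$ so that $j(\ell)(\kappa)$ is the desired ordered pair. The natural generalization is to have $\ell$ code not just a pair but an entire $\lambda$-indexed sequence of targets, and then to let $\ell_\alpha$ read off the $\alpha$-th coordinate.

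The key steps I would carry out are as follows. First I would fix, in $V$, a way for sets in $H_{\theta^+}$ to code $\lambda$-sequences of elements of $H_{\theta^+}$; since $\lambda \le \theta$ and $\lambda \le 2^\kappa$, such codings exist and live in $H_{\theta^+}$, so a sequence $\vec a = \langle a_\alpha ; \alpha < \lambda\rangle$ of targets can itself be encoded as a single target $a \in H_{\theta^+}$. Next, for each $\alpha < \lambda$ I would define $\ell_\alpha(\xi)$ to be the $\alpha$-th coordinate of the sequence coded by $\ell(\xi)$, whenever $\ell(\xi)$ happens to code such a sequence (and some default value otherwise). Each $\ell_\alpha$ is then a function $\kappa \to V_\kappa$, so it is a legitimate candidate for a $\theta$-supercompactness Laver function. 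The crucial point is elementarity: since $j(\ell_\alpha)(\kappa)$ is, by elementarity, the $\alpha$-th coordinate of the sequence coded by $j(\ell)(\kappa)$, choosing $j$ witnessing $j(\ell)(\kappa) = a$ (where $a$ codes $\vec a$) yields $j(\ell_\alpha)(\kappa) = a_\alpha$ for every $\alpha < \lambda$ \emph{simultaneously}, through the one embedding $j$. This establishes both that each $\ell_\alpha$ is individually a Laver function and that the sequence $\langle \ell_\alpha ; \alpha < \lambda \rangle$ is joint.

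A couple of small technical points would need attention. One must confirm that $\alpha < \lambda \le \kappa \le \crit(j)$ is not actually required---indeed $\lambda$ may be as large as $\theta$ or $2^\kappa$, which can exceed $\kappa$---so I would verify that the coding and the appeal to elementarity go through for indices $\alpha$ up to $\lambda$. This is fine because the \emph{definition} of $\ell_\alpha$ is carried out in $V$ using the fixed coding scheme, and $j$ is applied to each $\ell_\alpha$ as a whole object; elementarity then transports the defining relation ``$\ell_\alpha(\xi)$ is the $\alpha$-th coordinate of the sequence coded by $\ell(\xi)$'' to the relation ``$j(\ell_\alpha)(\kappa)$ is the $\alpha$-th coordinate of the sequence coded by $j(\ell)(\kappa)$.'' I would also note, invoking the factoring discussion preceding the proposition, that it suffices to witness the guessing via ultrapower embeddings by normal measures on $\power_\kappa(\theta)$, and that jointness is preserved under this reduction.

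\textbf{The main obstacle} I anticipate is purely bookkeeping: arranging a coding of $\lambda$-sequences that stays inside $H_{\theta^+}$ and interacts cleanly with elementarity, so that the single embedding genuinely delivers all $\lambda$ targets at once. Once the coding is set up correctly, the argument is a direct transfer of the two-function example and requires no new ideas; the length $\lambda = \min\{\theta, 2^\kappa\}$ is precisely the largest value for which a single target in $H_{\theta^+}$ can encode the full sequence of targets while the resulting $\ell_\alpha$ remain distinct functions $\kappa \to V_\kappa$.
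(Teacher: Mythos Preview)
Your overall plan is right, and it is the same approach the paper takes: let \(\ell\) guess a code for the whole sequence \(\vec a\) and read off coordinates. But your handling of the one genuine issue---what happens to the index \(\alpha\) under \(j\) when \(\alpha\ge\kappa\)---is incorrect.

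You claim that elementarity transports ``\(\ell_\alpha(\xi)\) is the \(\alpha\)-th coordinate of the sequence coded by \(\ell(\xi)\)'' to ``\(j(\ell_\alpha)(\kappa)\) is the \(\alpha\)-th coordinate of \(j(\ell)(\kappa)\).'' It does not: the parameter \(\alpha\) is moved by \(j\), so what you actually get is that \(j(\ell_\alpha)(\kappa)\) is the \(j(\alpha)\)-th coordinate of \(j(\ell)(\kappa)\). If \(j(\ell)(\kappa)\) codes the sequence \(\vec a\) of length \(\lambda\), then for \(\alpha\ge\kappa\) the ordinal \(j(\alpha)\) may well exceed \(\lambda\), and in any case \(a_{j(\alpha)}\) is not \(a_\alpha\). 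So the argument as written breaks precisely in the range \(\kappa\le\alpha<\lambda\) that you flagged as needing care.

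The paper's proof repairs this with a specific indexing trick that you should adopt: fix a set \(I\subseteq\mathcal{P}(\kappa)\) of size \(\lambda\) and a bijection \(f\colon\lambda\to I\), and define \(\ell_\alpha(\xi)=\ell(\xi)(f(\alpha)\cap\xi)\). The target sequence is packaged as the function \(\vec a\circ f^{-1}\in H_{\theta^+}\), indexed by \(I\). The point is that \(f(\alpha)\subseteq\kappa\), so \(j(f(\alpha))\cap\kappa=f(\alpha)\); thus elementarity gives
\[
j(\ell_\alpha)(\kappa)=j(\ell)(\kappa)\bigl(j(f(\alpha))\cap\kappa\bigr)=j(\ell)(\kappa)(f(\alpha))=a_\alpha,
\]
with no drift in the index. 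This is exactly the ``bookkeeping'' you anticipated, but it is not merely cosmetic: replacing ordinal indices by subsets of \(\kappa\) is what makes the appeal to elementarity go through above the critical point. The constraint \(\lambda\le 2^\kappa\) enters here, since that is how large \(I\) can be.
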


\begin{proof}
Let \(\ell\) be a Laver function for \(\kappa\), and fix a subset \(I\) of 
\(\mathcal{P}(\kappa)\)
of size \(\lambda\) and a bijection \(f\colon \lambda\to I\). 
For \(\alpha<\lambda\) define \(\ell_\alpha\colon \kappa\to V_\kappa\) by
\(\ell_\alpha(\xi)=\ell(\xi)(f(\alpha)\cap\xi)\) if this makes sense and
\(\ell_\alpha(\xi)=\emptyset\) otherwise. We claim that
\(\langle \ell_\alpha;\alpha<\lambda\rangle\) is a joint Laver sequence for \(\kappa\).

To verify this, let \(\vec{a}=\langle a_\alpha;\alpha<\lambda\rangle\) be a sequence of 
elements of \(H_{\theta^+}\). Then \(\vec{a}\circ f^{-1}\in H_{\theta^+}\), so by assumption 
there is a \(\theta\)-supercompactness embedding \(j\colon V\to M\) such that 
\(j(\ell)(\kappa)=\vec{a}\circ f^{-1}\).
But now observe that, for any \(\alpha<\lambda\),
\[
j(\ell_\alpha)(\kappa)=j(\ell)(\kappa)(j(f(\alpha))\cap\kappa)=
j(\ell)(\kappa)(f(\alpha))=a_\alpha
\]
holds by elementarity.
\end{proof}

Of course, if a given Laver function works for many degrees of supercompactness
then the joint Laver sequence derived above will work for those same degrees.
In particular, if \(\kappa\) is fully supercompact then this observation, combined with
Laver's original construction, gives us a supercompactness joint Laver sequence
of length \(2^\kappa\).

\begin{corollary}
\label{cor:SCHasLongJLDiamond}
If \(\kappa\) is supercompact then \(\jLdsc_{\kappa,2^\kappa}\) holds.
\end{corollary}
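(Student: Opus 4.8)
The plan is to obtain the sequence by re-running the construction from the proof of Proposition~\ref{prop:SCHasSomeJLDiamond}, but fed with Laver's original supercompactness function and with the full power set in place of a proper subset. First I would invoke Laver's theorem~\cite{Laver1978:MakingSupercompactnessIndestructible}: a supercompact \(\kappa\) carries a function \(\ell\colon\kappa\to V_\kappa\) that is a \(\theta\)-supercompactness Laver function for \emph{every} \(\theta\) simultaneously. The reason for using Laver's \(\ell\) rather than a single \(\Ldthetasc_\kappa\)-witness is that it guesses uniformly across all degrees of supercompactness, which is precisely what lets us avoid committing to any particular \(\theta\) in advance and thereby reach the maximal length \(2^\kappa\).

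Next I would carry out the construction of Proposition~\ref{prop:SCHasSomeJLDiamond} with \(I=\mathcal{P}(\kappa)\), which has size \(2^\kappa\). That is, fix a bijection \(f\colon 2^\kappa\to\mathcal{P}(\kappa)\) and for each \(\alpha<2^\kappa\) define \(\ell_\alpha(\xi)=\ell(\xi)(f(\alpha)\cap\xi)\) when this is meaningful and \(\ell_\alpha(\xi)=\emptyset\) otherwise. This produces a candidate sequence \(\langle\ell_\alpha;\alpha<2^\kappa\rangle\) of the maximal possible length, fixed once and for all.

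For the verification, let \(\vec{a}=\langle a_\alpha;\alpha<2^\kappa\rangle\) be an arbitrary sequence of targets from \(V\). The composite \(\vec{a}\circ f^{-1}\) is a single set, so I would choose \(\theta\) large enough that \(\vec{a}\circ f^{-1}\in H_{\theta^+}\) and apply the \(\theta\)-supercompactness guessing of \(\ell\) to obtain a single \(\theta\)-supercompactness embedding \(j\colon V\to M\) with \(j(\ell)(\kappa)=\vec{a}\circ f^{-1}\). The computation is then identical to the one in Proposition~\ref{prop:SCHasSomeJLDiamond}: since the critical point of \(j\) is \(\kappa\) and \(f(\alpha)\subseteq\kappa\), we have \(j(f(\alpha))\cap\kappa=f(\alpha)\), whence
\[
j(\ell_\alpha)(\kappa)=j(\ell)(\kappa)\bigl(j(f(\alpha))\cap\kappa\bigr)=\bigl(\vec{a}\circ f^{-1}\bigr)\bigl(f(\alpha)\bigr)=a_\alpha
\]
for every \(\alpha\). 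A single embedding thus handles the whole target sequence because all the guessing is funnelled through the one value \(j(\ell)(\kappa)\).

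I do not expect a genuine obstacle, since this is essentially a corollary of the preceding proposition; the only point requiring care is the quantifier structure. We must produce a single sequence of length \(2^\kappa\) that works against every target sequence, even though the degree \(\theta\) of supercompactness needed to find \(j\) depends on the particular \(\vec{a}\). This dependence is harmless precisely because the sequence is built from Laver's \(\ell\), which already guesses at every degree: the \(\ell_\alpha\) are defined before any \(\vec{a}\) is seen, and only when a concrete target sequence is presented do we select a \(\theta\) large enough to capture \(\vec{a}\circ f^{-1}\) and read off the witnessing embedding.
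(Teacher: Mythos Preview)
Your proposal is correct and matches the paper's approach exactly: the paper does not give a separate proof but simply remarks that Laver's original supercompactness function, fed through the construction of Proposition~\ref{prop:SCHasSomeJLDiamond}, yields a joint Laver sequence that works simultaneously for all degrees of supercompactness. Your write-up is just a careful unfolding of that remark, including the point that the sequence \(\langle\ell_\alpha;\alpha<2^\kappa\rangle\) is fixed in advance while the degree \(\theta\) is chosen after the target sequence \(\vec{a}\) is given.
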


It should be pointed out that Laver's original
argument only shows that a \(\theta\)-supercompactness Laver function exists for a cardinal
\(\kappa\) provided \(\kappa\) is somewhere in the range of \(2^{\theta^{<\kappa}}\)-supercompact.
Plain \(\theta\)-supercompactness does not suffice in the case \(\theta=\kappa\) (that is, in the
measurable case), since there are no Laver functions in Kunen's model \(L[U]\) (as will follow
from Proposition~\ref{prop:JLDiamondHasManyMeasures}). It is currently unknown whether
the hypothesis from Laver's proof can be reduced to just \(\theta\)-supercompactness in
the case that \(\kappa<\theta\).\footnote{We might hope that recent work in providing a canonical inner model for finite levels of supercompactness will go some way towards answering this question.}
We will not say much about this question in the present paper, and, since we are mainly interested in
jointness phenomena, we will liberally assume that the large cardinal in question carries
at least one Laver function.
 
Proposition~\ref{prop:SCHasSomeJLDiamond} essentially shows that joint Laver sequences
of maximal length
exist automatically for cardinals with a high degree of supercompactness that carry at least one Laver function. Since we will
be interested in comparing the strengths of the principles \(\jLd_{\kappa,\lambda}\)
for various \(\lambda\), we will in the remainder of this section be mostly concerned
with cardinals \(\kappa\) which are not \(2^\kappa\)-supercompact (but are at least
measurable), so as to avoid situations where a single Laver function gives rise
to the longest possible joint Laver sequence.

\subsection{Creating long joint Laver diamonds}

We now show that the existence of \(\theta\)-supercompactness joint Laver sequences
of maximal length does not require strength beyond \(\theta\)-supercompactness
itself.

The following notion is due to Hamkins~\cite{Hamkins2000:LotteryPreparation}, although its
original form, relating to strong compactness, dates back to Menas~\cite{Menas1974:OnStrongCompactnessAndSupercompactness}.

\begin{definition}
	A \(\theta\)-supercompactness \emph{Menas function} for a cardinal \(\kappa\) is a function
	\(f\colon\kappa\to\kappa\) such that there is a \(\theta\)-supercompactness embedding
	\(j\colon V\to M\) with \(\cp(j)=\kappa\) and \(j(f)(\kappa)>\theta\).
\end{definition}

A Menas function is a particularly weak form of a Laver function. If \(\ell\)
is a \(\theta\)-supercompactness Laver function for \(\kappa\), then it is also a
\(\theta\)-supercompactness Menas function, since we can pick the embedding \(j\) to have \(\ell\)
guess \(\theta+1\), for example. However, the advantage of Menas functions is that we can prove
their existence from the optimal large cardinal hypothesis on \(\kappa\), something which is
unknown for Laver functions, as we mentioned in the preceding subsection.

\begin{lemma}
\label{lemma:ThetaSCMenasFunction}
If \(\kappa\) is \(\theta\)-supercompact, then \(\kappa\) carries a \(\theta\)-supercompactness Menas function.
\end{lemma}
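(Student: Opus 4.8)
The plan is to exhibit the witnessing embedding as an ultrapower. By the factoring discussion preceding this lemma, it suffices to work with the ultrapower $j\colon V\to M$ by a normal fine measure $U$ on $\power_\kappa(\theta)$, which is itself a $\theta$-supercompactness embedding with $\cp(j)=\kappa$. I would first record the two relevant seeds: the critical point is represented by the map $\sigma\mapsto\sigma\cap\kappa$, that is $\kappa=[\sigma\mapsto\sigma\cap\kappa]_U$, while the degree $\theta$ is represented by the order-type function, $\theta=[\sigma\mapsto\mathrm{ot}(\sigma)]_U$ (using that $|\sigma|<\kappa$, so $\mathrm{ot}(\sigma)<\kappa$ for $U$-almost every $\sigma$). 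By Łoś's theorem, for any $f\colon\kappa\to\kappa$ one has $j(f)(\kappa)=[\sigma\mapsto f(\sigma\cap\kappa)]_U$, so the image of $f$ at the seed is computed pointwise from $\sigma\cap\kappa$ alone.

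With these identifications the goal $j(f)(\kappa)>\theta$ becomes the purely combinatorial demand that
\[
f(\sigma\cap\kappa)>\mathrm{ot}(\sigma)\quad\text{for $U$-almost every }\sigma.
\]
So the heart of the matter is to dominate the order-type function, modulo $U$, by a function depending only on $\sigma\cap\kappa$. Granting such an $f$, the displayed inequality immediately yields $j(f)(\kappa)=[\sigma\mapsto f(\sigma\cap\kappa)]_U>[\sigma\mapsto\mathrm{ot}(\sigma)]_U=\theta$, and since $f$ maps into $\kappa$ it is a legitimate Menas function. To produce $f$ I would pass to the normal measure $W$ on $\kappa$ induced by the projection $\sigma\mapsto\sigma\cap\kappa$, together with the canonical factor embedding $h$ from the normal ultrapower into $M$; the values $j(f)(\kappa)$ for $f\colon\kappa\to\kappa$ are exactly the values of $h$ lying below $j(\kappa)$, so the task reduces to showing that $\theta$ lies below their supremum. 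Equivalently, since $\theta<j(\kappa)$ and $j(\kappa)$ is inaccessible in $M$, one wants an $f$ whose image $j(f)(\kappa)$ is a cardinal of $M$ strictly between $\theta$ and $j(\kappa)$; concretely I expect to take $f$ to iterate the cardinal-successor (or beth) operation just past the rate at which $\mathrm{ot}(\sigma)$ grows in terms of $\sigma\cap\kappa$.

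The step I expect to be the main obstacle is precisely this domination: verifying that $\sigma\mapsto\mathrm{ot}(\sigma)$ is bounded, modulo $U$, by a function of $\sigma\cap\kappa$ (equivalently, that $\theta$ does not sit in the final gap of $h$ below $j(\kappa)$). This is where the normality and fineness of $U$ must enter, via a pressing-down argument on almost all $\sigma$; the surrounding appeals to Łoś's theorem and elementarity are routine. For contrast, the more classical choice of Menas function — letting $f(\alpha)$ record the least degree of supercompactness that $\alpha$ fails to have — is tempting, but its verification by elementarity would require the target model $M$ to view $\kappa$ as $\theta$-supercompact, which the minimal ultrapower need not; for that reason I prefer the fast-growing construction above, where the only genuine work is the domination claim.
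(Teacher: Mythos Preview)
Your proposal identifies the correct reduction --- via \L o\'s and the factor map $h$ from the $W$-ultrapower into $M$, the task is exactly to show $\theta<\sup h[j_W(\kappa)]$ --- but then stops precisely at the point where all the content lies. The ``domination claim'' you flag as the main obstacle is not a technicality to be dispatched by pressing down; it is the whole lemma, and for an \emph{arbitrary} normal fine measure $U$ it need not hold. To see why a ``fast-growing'' $f$ cannot succeed uniformly: if $\theta$ is a fixed point of whatever hierarchy you iterate (beths, cardinal successors, inaccessibles, \dots), then since $M$ is closed under $\theta$-sequences it computes that hierarchy correctly below $\theta$, and $j(f)(\kappa)$ lands strictly below $\theta$. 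No choice of $f\colon\kappa\to\kappa$ defined without reference to the measure will dominate $\theta$ for all $\theta$, and your proposal never commits to a particular $U$.

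Your dismissal of the ``classical'' approach is also off the mark, though for an interesting reason. You are right that taking $f(\alpha)$ to be the \emph{least} degree of supercompactness $\alpha$ fails would require $\kappa$ to be $\theta$-supercompact in $M$. But the actual argument inverts this: one sets $f(\alpha)=2^{\lambda^{<\alpha}}$ where $\lambda$ is that least failure, and one \emph{chooses} $j$ to be Mitchell-minimal among $\theta$-supercompactness embeddings, so that $\kappa$ is \emph{not} $\theta$-supercompact in $M$. Then $j(f)(\kappa)=(2^{\lambda^{<\kappa}})^M$ for some $\lambda\le\theta$, and if this were at most $\theta$ the closure of $M$ under $\theta$-sequences would force $M$ to contain every normal measure on $\power_\kappa(\lambda)$ from $V$, contradicting the choice of $\lambda$. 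The selection of the embedding is doing essential work; that is the idea your outline is missing.
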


It is unclear who this lemma should be attributed to; we heard the following proof from Hamkins.

\begin{proof}
	We consider the nontrivial case when \(\kappa\leq\theta\).
	Define a function \(f\colon \kappa\to\kappa\) by letting \(f(\alpha)=0\) if \(\alpha\) is
	\(\kappa\)-supercompact, and \(f(\alpha)=2^{\lambda^{<\alpha}}\)
	where \(\lambda<\kappa\) is least such that \(\alpha\) is not \(\lambda\)-supercompact.
	It is simple to check that \(f\) really maps into \(\kappa\), since if \(\alpha<\kappa\)
	is \(\lambda\)-supercompact for every \(\lambda<\kappa\), then it is \(\theta\)-supercompact.
	We claim that this \(f\) is a Menas function.
	
	Let \(j\colon V\to M\) be a \(\theta\)-supercompactness embedding with critical point \(\kappa\)
	such that \(\kappa\) is not \(\theta\)-supercompact in \(M\). One can find such an embedding
	by either considering the Mitchell order on normal measures on \(\power_\kappa(\theta)\) and
	taking the embedding corresponding to a minimal such measure, or by simply taking a
	\(\theta\)-supercompactness embedding \(j\) for which \(j(\kappa)\) is least among all
	such embeddings.\footnote{In either case, the key observation is that, if \(\mu\) and \(\nu\)
	are normal measures on \(\power_\kappa(\theta)\) and \(\mu\) appears in the ultrapower by
	\(\nu\), then \(j_\mu(\kappa)<j_\nu(\kappa)\), where \(j_\mu\) and \(j_\nu\) are the 
	corresponding embeddings.}
	Let us see that \(j(f)(\kappa)>\theta\).
	
	Since \(\kappa\) is not \(\theta\)-supercompact in \(M\) and \(j(\kappa)>\theta\), we see that
	\(j(f)(\kappa)=(2^{\lambda^{<\kappa}})^M\), where \(\lambda\) is the least such that
	\(\kappa\) is not \(\lambda\)-supercompact in \(M\). 
	So suppose that \((2^{\lambda^{<\kappa}})^M\leq\theta\). Since \(M\) is closed under \(\theta\)-sequences,
	this means that \(M\) computes \(\power_\kappa(\lambda)\) and \(\power(\power_\kappa(\lambda))\) 
	correctly and, in fact, contains every subset of \(\power(\power_\kappa(\lambda))\) from \(V\).
	But \(V\) has a normal measure on \(\power_\kappa(\lambda)\), which is a subset just like that,
	and so \(M\) must also have this normal measure. This means that \(\kappa\) is in fact
	\(\lambda\)-supercompact in \(M\), contradicting our earlier assumption.
\end{proof}

\begin{theorem}
\label{thm:ForceLongJLDiamondSC}
If \(\kappa\) is \(\theta\)-supercompact, then there is a forcing extension in which
\(\jLdthetasc_{\kappa,2^\kappa}\) holds.
\end{theorem}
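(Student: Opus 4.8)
The plan is to prepare the model by a Menas-guided reverse Easton iteration and then read the joint Laver sequence off the generic. First I would invoke Lemma~\ref{lemma:ThetaSCMenasFunction} to fix a \(\theta\)-supercompactness Menas function \(f\) for \(\kappa\), and use it to define an Easton-support iteration \(\mathbb{P}=\langle(\mathbb{P}_\gamma,\dot{\mathbb{Q}}_\gamma);\gamma<\kappa\rangle\). The forcing \(\dot{\mathbb{Q}}_\gamma\) is trivial unless \(\gamma\) is inaccessible, a closure point of \(f\), and \(f(\gamma)\geq\gamma\), in which case it adds, over the intermediate model, an array of coding ``columns'' indexed by \(\power(\gamma)\): a condition is a partial function whose \(b\)-th column (for \(b\in\power(\gamma)\)) is a partial subset of \(f(\gamma)\), with the whole condition of bounded size so that \(\dot{\mathbb{Q}}_\gamma\) is highly closed while still genuinely adding each column (the columns have length \(f(\gamma)\), which outstrips the size of the conditions). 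The point of threading the size of the conditions through \(f\) is that, at the image stage, the forcing will be \(\le\theta\)-closed with columns of length \(j(f)(\kappa)>\theta\), leaving room to code arbitrary targets from \(H_{\theta^+}\). Standard reverse Easton bookkeeping gives that \(\mathbb{P}\) has size \(\kappa\), is \(\kappa\)-cc, and factors as \(\mathbb{P}_\gamma\ast\dot{\mathbb{P}}_{\geq\gamma}\) with an appropriately closed tail; I may also do a mild preparatory forcing first to make the cardinal arithmetic above \(\kappa\) tame, so that the counting in the generic-building step below lines up.

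Let \(G\) be \(\mathbb{P}\)-generic over \(V\). In \(V[G]\) I would define the candidate sequence indexed by \(b\in\power(\kappa)\): let \(\ell_b(\gamma)\) be the element of \(H_{f(\gamma)^+}\) coded by the \((b\cap\gamma)\)-th column of the stage-\(\gamma\) generic, if defined, and \(\emptyset\) otherwise. Re-indexing \(\power(\kappa)\) by a bijection with \(2^\kappa\) turns \(\langle\ell_b\rangle\) into a sequence \(\langle\ell_\alpha;\alpha<2^\kappa\rangle\) of functions \(\kappa\to V_\kappa\), and these are genuinely distinct since they read independent columns. To verify jointness, fix a target sequence \(\langle a_b;b\in\power(\kappa)\rangle\) with each \(a_b\in H_{\theta^+}\), and start from a ground \(\theta\)-supercompactness embedding \(j\colon V\to M\) with \(\cp(j)=\kappa\) and \(j(f)(\kappa)>\theta\), as furnished by the Menas property. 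Since \(\mathbb{P}\) has size \(\kappa\) and \(\cp(j)=\kappa\), the embedding lifts to \(j\colon V[G]\to M[j(G)]\) as soon as I supply an \(M[G]\)-generic filter \(H\) for the stage-\(\kappa\) forcing \(\mathbb{Q}_\kappa=j(\langle\dot{\mathbb{Q}}_\gamma\rangle)(\kappa)\) together with a generic for the tail. By elementarity \(j(\ell_b)(\kappa)\) is the element coded by the \((j(b)\cap\kappa)=b\)-th column of \(H\); so it suffices to build \(H\) whose \(b\)-th column codes \(a_b\) for every \(b\in\power(\kappa)^V=\power(\kappa)^M\), the equality holding because \(M^\theta\subseteq M\) and \(\theta\ge\kappa\).

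The delicate point is that \(\mathbb{Q}_\kappa\) carries \(2^\kappa\) columns, which may far exceed \(\theta\); I cannot prescribe every column by a single master condition, nor fix all columns at once without destroying genericity. Instead I would first obtain a genuine \(M[G]\)-generic \(H_0\) for \(\mathbb{Q}_\kappa\) lying in \(V[G]\), built by a diagonalization that meets the dense sets of \(\mathbb{Q}_\kappa\) lying in \(M\): here the \(\le\theta\)-closure of \(\mathbb{Q}_\kappa\) in \(M\), together with \(M^\theta\subseteq M\) supplying lower bounds at limit stages of cofinality \(\le\theta\), is what makes the construction go through (this is where the preparatory control of cardinal arithmetic is used, to keep the number of dense sets in line with the available closure). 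I would then perform surgery on \(H_0\): leave every column outside the index set \(\power(\kappa)\) untouched, and reset the \(b\)-th column to a fixed code of \(a_b\) for each \(b\in\power(\kappa)\), obtaining an altered filter \(H\). Because \(j(f)(\kappa)>\theta\) there remain columns beyond those we have disturbed, so genericity is not vacuously lost.

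The main obstacle, and the heart of the argument, is verifying that the surgically altered \(H\) is still \(M[G]\)-generic. This follows the Woodin surgery template: given a dense open \(D\in M[G]\), take \(p\in H_0\cap D\), and correct \(p\) on the \(<\kappa\) columns where its support meets \(\power(\kappa)\) so as to agree with \(H\); the correction is again a condition since each condition touches fewer than \(\kappa\) columns and alters fewer than \(\kappa\) coordinates within them. The subtlety is to arrange that the corrected condition can be taken back into \(D\), which requires a careful covering/counting argument exploiting the closure of \(\mathbb{Q}_\kappa\) in \(M\) and the agreement \(\power(\kappa)^M=\power(\kappa)^V\); I expect this step to be where essentially all of the work lies. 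Once \(H\) is seen to be \(M[G]\)-generic, the tail generic is supplied by the same closure machinery, the embedding lifts, and \(j(\ell_b)(\kappa)=a_b\) for all \(b\). Hence \(\langle\ell_\alpha;\alpha<2^\kappa\rangle\) is a \(\theta\)-supercompactness joint Laver sequence, so \(\jLdthetasc_{\kappa,2^\kappa}\) holds in \(V[G]\), as required.
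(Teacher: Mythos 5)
Your architecture is sound and in one respect slicker than the paper's: by spreading the coding columns over the stages below \(\kappa\) (the paper instead forces with \(\P_\kappa*\Add(\kappa,2^\kappa)\) and reads the sequence off the top generic \(g\)), your lift carries no constraint of the form \(j[g]\subseteq H\), and the self-indexing of columns via \(j(b)\cap\kappa=b\) is a nice touch. But the step you defer as ``the heart of the argument'' --- after-the-fact surgery on an independently built generic \(H_0\) --- is not a postponable technicality: it fails, because your overwriting data is visible to \(M[G]\) column by column. Concretely, fix any index \(b_0\in\mathcal{P}(\kappa)\); both \(b_0\) and the code \(c_0\) of \(a_{b_0}\) lie in \(M[G]\) (each \(a_b\in H_{\theta^+}^{V[G]}\subseteq M[G]\) by \(\theta\)-closure, in both of your cases). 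Let \(D_{b_0}\in M[G]\) be the dense set of conditions \(p\) deciding the first code bit \(s\) of column \(b_0\) and some fixed bit \(t\) of that column beyond the code region, with \(p(t)=1\) iff \(p(s)\neq c_0(0)\). Surgery resets \(s\) to \(c_0(0)\) and never touches \(t\); so if \(H_0\) happens to decide \(t\) as \(1\), no condition of the altered filter lies in \(D_{b_0}\), and genericity is lost. Since \(H_0\) is built obliviously to the targets, nothing rules this out, and arranging \(H_0\) to pass all \(2^\kappa\) such entanglement sets is precisely building the coding in from the start. Contrast this with Woodin's surgery, where the overwritten values come from the \(V\)-generic \(g\) and are therefore invisible to the target model, and where the surgery region is the thin set of \(j[\kappa^{++}]\)-indexed columns, whose trace on any antichain the counting lemma caps at size \(\kappa\). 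Your region is \emph{all} columns (note that \(j(f)(\kappa)>\theta\) concerns column length, not the supply of columns, so nothing is left untouched except coordinates above the code region), and in the case \(\theta<2^\kappa\) there is a further obstruction: stage-\(\kappa\) conditions must be allowed size up to \(j(f)(\kappa)>\theta\) (or your diagonalization building \(H_0\) already fails at limits of cofinality in \([\kappa,\theta]\)), so a single condition's surgery trace is a \({>}\theta\)-length fragment of the target sequence, which need not belong to the merely \(\theta\)-closed \(M[G]\) --- the ``corrected'' object need not even be a condition.

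The repair is the paper's actual argument, which your freer setup accommodates with less friction: code as you go, not after the fact. When \(2^\kappa\leq\theta\), the full prescription \(F^*\) of all code regions has size \(2^\kappa\cdot\theta=\theta<j(f)(\kappa)\) and lies in \(M[G]\) by closure, so it is a \emph{single master condition}; diagonalize below it against the (at most \(\theta^+\) many, counted in \(V[G]\) via the ultrapower representation) dense sets --- no surgery needed, and this is essentially the paper's first case minus the master condition \(\bigcup j[g]\) that its architecture forces on it. When \(\kappa\leq\theta<2^\kappa\), first arrange \(\theta^{<\kappa}=\theta\) and \(2^\theta=\theta^+\) by preparatory forcing (so \(2^\kappa=\theta^+\)), and then run Magidor's master filter argument as the paper does: enumerate the maximal antichains of the stage-\(\kappa\) poset in a \(\theta^+\)-sequence, use the continuity of the \(\power_\kappa(\theta)\)-ultrapower \(j\) at \(\theta^+=2^\kappa\) to trap each antichain inside a bounded piece of the poset, and build a descending \(\theta^+\)-sequence of conditions, each totally prescribing the code regions of the columns with indices in the relevant bounded set, deciding the next antichain, and making no commitments outside that bounded piece. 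With your surgery step replaced by this interleaved construction, the rest of your proposal goes through.
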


It should be mentioned that the forcing we do in the course of the proof may collapse \(2^\kappa\),
and so the \(\jLdthetasc_{\kappa,2^\kappa}\) in the conclusion of the theorem should be read
with the extension's version of \(2^\kappa\).

\begin{proof}
Since the \(\theta\)-supercompactness of \(\kappa\) implies its \(\theta^{<\kappa}\)-supercompactness (see, for example,~\cite[Proposition 22.11(b)]{Kanamori2005:HigherInfinite}),
we may assume that \(\theta^{<\kappa}=\theta\). Furthermore, we assume that
\(2^\theta=\theta^+\), since this may be forced without adding subsets to 
\(\power_\kappa(\theta)\) (which means that any measure on \(\power_\kappa(\theta)\) remains
a measure) or functions \(\power_\kappa(\theta)\to\theta\) (which means that any normal measure
remains normal), and so \(\kappa\) will remain \(\theta\)-supercompact after this forcing. 
Fix a Menas function \(f\) for \(\kappa\) as in Lemma~\ref{lemma:ThetaSCMenasFunction}. 
Let \(\P_\kappa\) be the length \(\kappa\) Easton support iteration which forces with 
\(\Q_\gamma=\Add(\gamma,2^\gamma)\) at inaccessible closure points of \(f\), meaning those
inaccessible \(\gamma\) for which \(f[\gamma]\subseteq\gamma\).
Finally, let \(\P=\P_\kappa*\Q_\kappa\). 
%It is useful to note that forcing with \(\P\) does not change the value of \(2^\kappa\). 
Let \(G*g\subseteq\P\) be generic; we will extract a joint Laver sequence from \(g\).

If \(g(\alpha)\) is the \(\alpha\)-th subset added by \(g\), we view it as a sequence
of bits. Using some coding scheme which admits end-of-code markers we can,
given any \(\xi<\kappa\), view the segment of \(g(\alpha)\)
between the \(\xi\)th bit and the next marker as the Mostowski code of an element of
\(V_\kappa\). We then define \(\ell_\alpha\colon\kappa\to V_\kappa\) as
follows:
given an inaccessible \(\xi\), let \(\ell_\alpha(\xi)\) be the set coded by
\(g(\alpha)\) at \(\xi\); otherwise let \(\ell_\alpha(\xi)=\emptyset\).
%given an inaccessible \(\xi\), if \(g(\alpha)\) codes a \(\P\rest (\xi+1)\)-name \(\tau\) at \(\xi\)
%let \(\ell_\alpha(\xi)=\tau^{(G*g)\rest(\xi+1)}\); otherwise let \(\ell_\alpha(\xi)=
%\emptyset\).
We claim that \(\langle \ell_\alpha;\alpha<2^\kappa\rangle\) is a joint Laver sequence.

Let \(\vec{a}=\langle a_\alpha;\alpha<2^\kappa\rangle\) be a sequence of targets in
\(H_{\theta^+}^{V[G][g]}\). 
%We can find names for these sets in \(H_{\theta^+}^V\). 
Let \(j\colon V\to M\) be the ultrapower embedding by a normal measure on 
\(\power_\kappa(\theta)\)
which corresponds to \(f\), meaning the one for which \(j(f)(\kappa)>\theta\).
We will lift this embedding through the forcing \(\P\) in \(V[G][g]\).

The argument splits into two cases, depending on the size of \(\theta\). We deal first
with the easier case when \(\theta\geq 2^\kappa\). In this case the poset 
\(j(\P_\kappa)\) factors as \(j(\P_\kappa)=\P_\kappa*\Q_\kappa*\Ptail\).
Since \(j(f)(\kappa)>\theta\), the next stage of forcing in \(j(\P_\kappa)\) above \(\kappa\)
occurs after \(\theta\), so \(\Ptail\) is \(\leq\theta\)-closed in \(M[G][g]\) and 
has size  \(j(\kappa)\) there.
It follows that \(\Ptail\) has \(j(2^\kappa)\) many subsets in \(M[G][g]\).
Since \(M\) was an ultrapower by a normal measure on \(\power_\kappa(\theta)\), 
the ordinal \(j(2^\kappa)\) has size at most \((2^\kappa)^\theta=\theta^+\) in \(V[G][g]\) (as
every smaller ordinal is represented by a function \(\power_\kappa(\theta)\to 2^\kappa\)).
Therefore, \(V[G][g]\) sees that there are only \(\theta^+\) many dense subsets of \(\Ptail\) in 
\(M[G][g]\). These can be lined up and met one at a time, using both that \(M[G][g]\) is closed
under \(\theta\)-sequences in \(V[G][g]\) and that \(\Ptail\) is a \(\leq\theta\)-closed poset in
\(M[G][g]\). This process produces in \(V[G][g]\) an 
\(M[G][g]\)-generic \(\Gtail\subseteq \Ptail\) and allows us to lift \(j\) to 
\(j\colon V[G]\to M[j(G)]\), where \(j(G)=G*g*\Gtail\).

Since \(M[j(G)]\) is still an ultrapower and thus closed under \(\theta\)-sequences in 
\(V[G][g]\),
we get \(j[g]\in M[j(G)]\). Since \(j(\Q_\kappa)\) is \(\leq\theta\)-directed closed in
\(M[j(G)]\) it has \(q=\bigcup j[g]\) as a condition. 
This \(q\) is a partial function on the domain \(j(2^\kappa)\times\kappa\).
Since \(M[j(G)]\) has both 
the sequence of targets \(\vec{a}\) and \(j\rest 2^\kappa\), we can further extend 
\(q\) to \(q^*\in M[j(G)]\) by coding \(a_\alpha\) into the bit-sequence of 
\(q(j(\alpha))\) at \(\kappa\) 
for each \(\alpha<2^\kappa\). We again diagonalize against the
\(\bigl|{2^{2^{j(\kappa)}}}\bigr|\leq \theta^+\) many dense subsets of \(j(\Q_\kappa)\) 
in \(M[j(G)]\) below the master condition \(q^*\) to get a \(M[j(G)]\)-generic \(g^*\subseteq
j(\Q_\kappa)\) and lift \(j\) to \(j\colon V[G][g]\to M[j(G)][g^*]\). Finally, observe
that we have arranged the construction of \(g^*=j(g)\) in such a way that 
\(g^*(j(\alpha))\) codes \(a_\alpha\) at \(\kappa\) 
for all \(\alpha<2^\kappa\) and, by definition,
this implies that \(j(\ell_\alpha)(\kappa)=a_\alpha\) for all \(\alpha<2^\kappa\). Thus we 
indeed have a joint Laver sequence for \(\kappa\) of length \(2^\kappa\) in \(V[G][g]\).

It remains for us to consider the second case, when \(\kappa\leq\theta<2^\kappa\). In this
situation our assumptions on \(\theta\) imply that \(2^\kappa=\theta^+\).
The poset \(j(\P_\kappa)\) factors as 
\(j(\P_\kappa)=\P_\kappa*\widetilde{\Q}_\kappa*\P_{\text{tail}}\), where 
\(\widetilde{\Q}_\kappa=\Add(\kappa,(2^\kappa)^{M[G]})\).
Since \(V[G]\) and \(M[G]\) agree on \(\power(\kappa)\), the ordinal \((2^\kappa)^{M[G]}\) has size
\(2^\kappa\) in \(V[G]\), so \(\widetilde{\Q}_\kappa\) is isomorphic, but not necessarily equal, to
\(\Q_\kappa\). Nevertheless, the same argument as before allows us to lift \(j\)
to \(j\colon V[G]\to M[j(G)]\) where \(j(G)=G*\widetilde{g}*G_{\text{tail}}\) and 
\(\widetilde{g}\) is the isomorphic image of \(g\).

We seem to hit a snag with the final lift through the forcing \(\Q_\kappa\), 
which has size \(2^\kappa\) and thus resists the usual approach of lifting via a master condition,
since this condition would simply be too big for the amount of closure we have. We salvage the
argument by using a technique, originally due to 
Magidor~\cite{Magidor1979:NonregularUltrafiltersCardinalityOfUltrapowers}, sometimes known as 
the ``master filter argument''.

The forcing \(j(\Q_\kappa)=\Add(j(\kappa),2^{j(\kappa)})^{M[j(G)]}\) has size \(2^{j(\kappa)}\) and
is \(\leq\theta\)-directed closed and \(j(\kappa)^+\)-cc in \(M[j(G)]\). Since \(M[j(G)]\)
is still an ultrapower, \(|2^{j(\kappa)}|\leq \theta^+=2^\kappa\) and so \(M[j(G)]\) has at most
\(2^\kappa\) many maximal antichains of \(j(\Q_\kappa)\), counted in \(V[G][g]\). Let these be given in the
sequence \(\langle Z_\alpha;\alpha<2^\kappa\rangle\). Since each \(Z_\alpha\) has size at most
\(j(\kappa)\), it is in fact contained in some bounded part of the poset \(j(\Q_\kappa)\). 
Furthermore (and crucially), since \(j\) is an ultrapower by a measure on \(\power_\kappa(\theta)\), it is continuous at \(2^\kappa=\theta^+\) and so there is for each \(\alpha\)
a \(\beta_\alpha<2^\kappa\) such that \(Z_\alpha\subseteq \Add(j(\kappa),j(\beta_\alpha))\).
%; for definiteness, let \(\beta_\alpha\) be the least with this property.
In particular, each \(Z_\alpha\) is a maximal antichain in \(\Add(j(\kappa),j(\beta_\alpha))\). 
We will now construct in \(V[G][g]\) a descending sequence of conditions,
deciding more and more of the antichains \(Z_\alpha\), which will generate a filter,
the ``master filter'', that will allow us to lift \(j\) to \(V[G][g]\) and also (lest we forget)
witness the joint guessing property. 
%
%\begin{claim}
%Given any dense open \(D\subseteq j(\Q_\kappa)\) and any condition \(q\) compatible with
%every condition in \(j[g]\), there is an extension \(q'\leq q\) in \(D\) which is still 
%compatible with every condition in \(j[g]\).
%\end{claim}
%
%\begin{proof}
%Let \(Z\subseteq D\) be a maximal antichain. 
%\end{proof}
%
We begin by defining the first condition \(q_0\). Consider the generic \(g\) up to
\(\beta_0\). This piece has size \(\theta\) and so \(\bigcup j[g\rest\beta_0]\) is a
condition in \(j(\Q_\kappa)\rest j(\beta_0)\). Let \(q_0'\) be the extension of
\(\bigcup j[g\rest\beta_0]\) which codes the target \(a_\alpha\) at \(\kappa\)
in \(q_0'(j(\alpha))\) for each \(\alpha<\beta_0\). This is still a condition
in \(j(\Q_\kappa)\rest j(\beta_0)\) and we can finally let \(q_0\) be any extension
of \(q_0'\) in this poset which decides the maximal antichain \(Z_0\).
Note that \(q_0\) is compatible with every condition in \(j[g]\), since we extended the
partial master condition \(\bigcup j[g\rest\beta_0]\) and made no commitments outside
\(j(\Q_\kappa)\rest j(\beta_0)\). We continue in this way recursively, constructing a descending
sequence of conditions \(q_\alpha\) for \(\alpha<\theta^+\), using the closure of
\(j(\Q_\kappa)\) and \(M[j(G)]\) to pass through limit stages. Now consider the filter
\(g^*\) generated by the conditions \(q_\alpha\). It is \(M[j(G)]\)-generic by construction
and also extends (or can easily be made to extend) \(j[g]\). We can thus lift \(j\) to
\(j\colon V[G][g]\to M[j(G)][g^*]\) and, since \(\P\) is \(\theta^+\)-cc and both 
\(G_{\text{tail}}\)
and \(g^*\) were constructed in \(V[G][g]\), the model \(M[j(G)][g^*]\) is closed under \(\theta\)-sequences
which shows that \(\kappa\) remains \(\theta\)-supercompact in \(V[G][g]\). Finally, as in
the previous case, \(g^*\) was constructed in such a way that \(j(\ell_\alpha)(\kappa)=a_\alpha\)
for all \(\alpha<2^\kappa\), verifying that these functions really do form a
joint Laver sequence for \(\kappa\).
\end{proof}

As a special case of Theorem~\ref{thm:ForceLongJLDiamondSC} we can deduce the corresponding
result for measurable cardinals.

\begin{corollary}
\label{cor:ForceLongJLDiamondMeas}
If \(\kappa\) is measurable, then there is a forcing extension in which there is a 
joint Laver sequence for \(\kappa\) of length \(2^\kappa\).
\end{corollary}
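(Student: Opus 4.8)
The plan is to obtain Corollary~\ref{cor:ForceLongJLDiamondMeas} as the instance $\theta = \kappa$ of Theorem~\ref{thm:ForceLongJLDiamondSC}. Recall that, as noted in the definition of $\theta$-supercompactness Laver functions, measurability of $\kappa$ is equivalent to $\kappa$ being $\kappa$-supercompact, and a measurable Laver function is precisely a $\kappa$-supercompactness Laver function guessing targets in $H_{\kappa^+}$. So the first step is simply to set $\theta = \kappa$ and apply the theorem, which yields a forcing extension in which $\jLdthetasc_{\kappa,2^\kappa}$ holds with $\theta = \kappa$, i.e.\ a $\kappa$-supercompactness (equivalently, measurable) joint Laver sequence of length $2^\kappa$.

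Before declaring victory, I would check that the setup of the theorem's proof degenerates sensibly when $\theta = \kappa$. Here $\theta^{<\kappa} = \kappa^{<\kappa} = \kappa$ automatically (as $\kappa$ is inaccessible), so the preliminary reduction costs nothing, and one may still force $2^\kappa = \kappa^+$ without disturbing measurability by the usual argument that $\Add(\kappa^+,1)$-style GCH-forcing above $\kappa$ adds no new subsets of $\power_\kappa(\kappa) = \power(\kappa)$ and no functions $\power(\kappa) \to \kappa$. The Menas function from Lemma~\ref{lemma:ThetaSCMenasFunction} specializes to a measurable Menas function, and the embedding $j$ is just an ultrapower by a normal measure on $\kappa$ with $j(f)(\kappa) > \kappa$.

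The one point genuinely worth verifying is which case of the theorem's proof governs the $\theta = \kappa$ instance. Since we arrange $2^\kappa = \kappa^+ > \kappa = \theta$, we land in the \emph{second} case ($\kappa \le \theta < 2^\kappa$), where the final lift through $\Q_\kappa = \Add(\kappa, 2^\kappa)$ is handled by the master filter argument of Magidor. Thus Corollary~\ref{cor:ForceLongJLDiamondMeas} is not a trivial reading-off of the easy first case but genuinely inherits the master filter construction; there is nothing further to prove, as every step of that construction (continuity of $j$ at $\theta^+ = 2^\kappa$, the enumeration of the $\le 2^\kappa$ many maximal antichains, and the recursive descent coding each $a_\alpha$ at $\kappa$) applies verbatim with $\theta = \kappa$.

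I expect the only conceivable obstacle to be a bookkeeping worry that does not in fact materialize: one might fear that with $\theta = \kappa$ the distinction between $H_{\theta^+} = H_{\kappa^+}$ and $V_\kappa$ (the codomain of the $\ell_\alpha$) causes trouble, or that closure under $\theta$-sequences degrades to mere closure under $\kappa$-sequences. But the proof already works with exactly this amount of closure for all $\theta \geq \kappa$, so the measurable case is simply the smallest-$\theta$ endpoint of the theorem and requires no separate treatment. The corollary therefore follows immediately.
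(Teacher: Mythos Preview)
Your proposal is correct and matches the paper's approach exactly: the paper simply records this corollary as the special case \(\theta=\kappa\) of Theorem~\ref{thm:ForceLongJLDiamondSC}, with no separate proof given. Your additional remarks verifying that the second (master filter) case is the one that applies when \(\theta=\kappa\) are accurate and go beyond what the paper spells out.
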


It follows from the results of Hamkins~\cite{Hamkins2003:ApproximationAndCoveringNoNewLargeCardinals} that the forcing
\(\P\) from Theorem~\ref{thm:ForceLongJLDiamondSC} does not create any measurable or
(partially) supercompact cardinals below \(\kappa\), since it admits a very low gap.
We could therefore have started with the least large cardinal \(\kappa\)
of interest and preserved its minimality throughout the construction.

\begin{corollary}
\label{cor:LeastSCCanHaveLongJLD}
If \(\kappa\) is the least \(\theta\)-supercompact cardinal, then there
is a forcing extension where \(\kappa\) remains the least \(\theta\)-supercompact cardinal
and \(\jLdthetasc_{\kappa,2^\kappa}\) holds.
\end{corollary}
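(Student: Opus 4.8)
The plan is to run the forcing of Theorem~\ref{thm:ForceLongJLDiamondSC} starting from a least \(\theta\)-supercompact \(\kappa\) and check that its minimality survives. So assume \(\kappa\) is the least \(\theta\)-supercompact cardinal, fix a Menas function \(f\) for \(\kappa\), and let \(\P=\P_\kappa*\Q_\kappa\) be the associated poset. For generic \(G*g\subseteq\P\), Theorem~\ref{thm:ForceLongJLDiamondSC} already tells us that \(\kappa\) is still \(\theta\)-supercompact in \(V[G][g]\) and that \(\jLdthetasc_{\kappa,2^\kappa}\) holds there. Since no \(\delta<\kappa\) is \(\theta\)-supercompact in \(V\) by minimality, the only thing left to verify is that the forcing creates no new \(\theta\)-supercompact cardinal below \(\kappa\).

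This is exactly the content of the gap-forcing observation recorded before the statement. First I would exhibit the gap explicitly: \(\P_\kappa\) is trivial below the least inaccessible closure point \(\gamma_0\) of \(f\); the stage there is \(\Add(\gamma_0,2^{\gamma_0})\), of size \(2^{\gamma_0}\); and every later stage, as well as \(\Q_\kappa\), is \({<}\gamma_1\)-closed, where \(\gamma_1\) is the next inaccessible closure point. As \(\gamma_1\) is a strong limit we have \((2^{\gamma_0})^+<\gamma_1\), so setting \(\delta=(2^{\gamma_0})^+\) exhibits \(\P\) as a nontrivial forcing of size \({<}\delta\) followed by a \({\le}\delta\)-closed tail. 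Thus \(\P\) admits a gap at \(\delta\), and this gap is \emph{very low}, since \(\delta\) lies below \(\gamma_1\) and hence below every inaccessible of the extension past \(\gamma_0\).

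With the gap located, I would invoke the results of Hamkins~\cite{Hamkins2003:ApproximationAndCoveringNoNewLargeCardinals}: a forcing admitting such a low gap adds no new measurable or partially supercompact cardinals above the gap. In particular, were some \(\delta<\kappa\) to be \(\theta\)-supercompact in \(V[G][g]\), the witnessing embedding, whose critical point \(\delta\) exceeds \(\delta_{\mathrm{gap}}\), would be a lift of a ground-model embedding witnessing that \(\delta\) was already \(\theta\)-supercompact in \(V\), contradicting the minimality of \(\kappa\). Hence no such \(\delta\) exists in \(V[G][g]\) either, and combining this with the preservation of \(\kappa\)'s own \(\theta\)-supercompactness shows that \(\kappa\) remains the least \(\theta\)-supercompact cardinal, while \(\jLdthetasc_{\kappa,2^\kappa}\) holds.

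The step needing the most care is the reflection of \emph{partial} supercompactness through the gap, especially when \(\theta\ge\kappa\): one must check that Hamkins's theorem returns a ground-model embedding with precisely the \(\theta\)-closure required to witness \(\theta\)-supercompactness of \(\delta\) in \(V\), not merely some weaker degree. The only cardinal lying at or below \(\delta_{\mathrm{gap}}\) that is not covered by the gap theorem is essentially \(\gamma_0\) itself, since any inaccessible above \(\gamma_0\) already exceeds \(\delta_{\mathrm{gap}}\); and as \(\gamma_0\) is merely inaccessible in \(V\) and the only forcing acting on it, \(\Add(\gamma_0,2^{\gamma_0})\), is far too mild to confer measurability, let alone \(\theta\)-supercompactness, this remaining case is routine.
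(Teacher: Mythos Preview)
Your approach is correct and is essentially the same as the paper's: the paper states the corollary immediately after observing that the forcing \(\P\) of Theorem~\ref{thm:ForceLongJLDiamondSC} admits a very low gap and hence, by Hamkins' approximation and cover results, creates no new measurable or partially supercompact cardinals below \(\kappa\). You simply unpack this remark, locating the gap explicitly at \(\delta=(2^{\gamma_0})^+\) between the first nontrivial stage \(\Add(\gamma_0,2^{\gamma_0})\) and the \({<}\gamma_1\)-closed tail, and then invoking the same Hamkins theorem. Your added discussion of the residual cases at or below the gap and of the need for the reflected ground-model embedding to retain full \(\theta\)-closure goes beyond what the paper writes out, but it does not change the strategy.
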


It is perhaps interesting to observe the peculiar arrangement of cardinal
arithmetic in the model produced in the above proof. We have
\(2^\theta=\theta^+\) and, if \(\theta\leq 2^\kappa\), also 
\(2^\kappa=\theta^+\). In particular, we never produced a \(\theta\)-supercompactness
joint Laver sequence of length greater than \(\theta^+\) (assuming here, of course,
that \(\theta=\theta^{<\kappa}\) is the optimal degree of supercompactness).
One has to wonder whether this is significant. Certainly the existence of
long joint Laver sequences does not imply much about cardinal arithmetic, since,
for example, if \(\kappa\) is indestructibly supercompact, we can manipulate the
value of \(2^\kappa\) freely, while maintaining the existence of a supercompactness
joint Laver sequence of length \(2^\kappa\).
On the other hand, even in the case of measurable \(\kappa\), the consistency 
strength of \(2^\kappa>\kappa^+\) is
known to exceed that of \(\kappa\) being measurable. The following question is therefore
natural:
\begin{question}
If \(\kappa\) is \(\theta\)-supercompact and \(2^\kappa>\theta^+\), is there a forcing
extension preserving these facts in which there is a 
joint Laver sequence for \(\kappa\) of length \(2^\kappa\)?
\end{question}

We next show that the existence of joint Laver sequences is preserved under mild 
forcing. This will be
useful later when we separate the existence of these sequences based on their lengths.

\begin{lemma}
\label{lemma:JLDiamondSCPreservation}
Let \(\kappa\) be \(\theta\)-supercompact (with \(\kappa\leq\theta\)), \(\lambda\) a cardinal, and assume \(\jLdthetasc_{\kappa,\lambda}\) holds. Suppose \(\P\) is a poset such that either
\begin{enumerate}[label=\textup{(\arabic*)}]
\item\label{lab:distributive} \(\lambda\geq \theta^{<\kappa}\) and \(\P\) is \(\leq\lambda\)-distributive, or 
\item\label{lab:lifting} \(|\P|\leq \kappa\) and, for any \(\lambda\)-sequence of targets, some embedding associated
to the joint Laver sequence and these targets lifts through \(\P\).
\end{enumerate}
Then forcing with \(\P\) preserves \(\jLdthetasc_{\kappa,\lambda}\).
\end{lemma}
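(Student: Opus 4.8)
The plan is to treat the two cases separately, since they exploit different features of \(\P\). In case~\ref{lab:distributive} the forcing is highly distributive but possibly large, so I will \emph{recycle} the ground-model embeddings verbatim rather than lift them; in case~\ref{lab:lifting} the forcing is small but may add new targets, so I will reinterpret the Laver functions through \(G\) and invoke the assumed liftability. In both cases the goal is to exhibit \emph{some} joint Laver sequence in \(V[G]\), not necessarily the original one.

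For case~\ref{lab:distributive}, the key observation is that \(\leq\lambda\)-distributivity with \(\lambda\geq\theta^{<\kappa}\) means \(\P\) adds no new sequences of length \(\leq\theta^{<\kappa}\). Consequently \(\P\) adds no new subsets of \(\power_\kappa(\theta)\) (each being coded by such a sequence), no new regressive functions \(\power_\kappa(\theta)\to\theta\), and no new elements of \(H_{\theta^+}\). Thus any normal fine \(\kappa\)-complete measure \(U\) on \(\power_\kappa(\theta)\) in \(V\) remains one in \(V[G]\), and its ultrapower \(j'_U\colon V[G]\to M'\) is a genuine \(\theta\)-supercompactness embedding of the extension. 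Now take a target sequence \(\vec a=\langle a_\alpha;\alpha<\lambda\rangle\) in \(V[G]\); its entries lie in \(H_{\theta^+}^{V[G]}=H_{\theta^+}^V\), and as \(\vec a\) is a \(\lambda\)-sequence of ground-model sets, distributivity places \(\vec a\) itself in \(V\). Applying \(\jLdthetasc_{\kappa,\lambda}\) in \(V\) yields a measure \(U\) whose ultrapower \(j_U\colon V\to M\) satisfies \(j_U(\ell_\alpha)(\kappa)=a_\alpha\) for all \(\alpha\). Let \(k\colon M\to M'\) be the canonical factor map with \(k\circ j_U=j'_U\); since \(M\) and \(M'\) share the common \(H_{\theta^+}\), the map \(k\) fixes it pointwise, so \(k(\kappa)=\kappa\) and \(k(a_\alpha)=a_\alpha\). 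Therefore \(j'_U(\ell_\alpha)(\kappa)=k\bigl(j_U(\ell_\alpha)(\kappa)\bigr)=a_\alpha\), and the original sequence \(\langle\ell_\alpha\rangle\) still witnesses \(\jLdthetasc_{\kappa,\lambda}\) in \(V[G]\).

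For case~\ref{lab:lifting}, as \(|\P|\leq\kappa\) I may assume \(\P\subseteq\kappa\), so that every \(a\in H_{\theta^+}^{V[G]}\) has a name \(\dot a\in H_{\theta^+}^V\). In \(V[G]\) I reinterpret the functions: set \(\ell_\alpha^*(\xi)=(\ell_\alpha(\xi))_G\) when \(\ell_\alpha(\xi)\) is a \(\P\)-name and \(\ell_\alpha^*(\xi)=\emptyset\) otherwise, and let \(\dot\ell_\alpha^*\) be the canonical name for \(\ell_\alpha^*\). Given targets \(\vec a\) in \(V[G]\), fix in \(V\) a sequence \(\langle\dot a_\alpha;\alpha<\lambda\rangle\) of names in \(H_{\theta^+}^V\) with \((\dot a_\alpha)_G=a_\alpha\). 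By \(\jLdthetasc_{\kappa,\lambda}\) in \(V\) there are embeddings guessing \(\langle\dot a_\alpha\rangle\), and by hypothesis one such \(\theta\)-supercompactness embedding \(j\colon V\to M\), associated to the joint Laver sequence and these targets, lifts to \(j\colon V[G]\to M[\hat G]\) in \(V[G]\), where the \(\P\)-part of \(\hat G\) is \(G\). The lift sends \(\ell_\alpha^*=(\dot\ell_\alpha^*)_G\) to the \(\hat G\)-interpretation of \(j(\dot\ell_\alpha^*)\), whose value-name at \(\kappa\) is the name-interpretation of \(j(\ell_\alpha)(\kappa)=\dot a_\alpha\). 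Hence \(j(\ell_\alpha^*)(\kappa)=(\dot a_\alpha)_{\hat G}=(\dot a_\alpha)_G=a_\alpha\), so \(\langle\ell_\alpha^*\rangle\) is a joint Laver sequence in \(V[G]\).

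The distributivity bookkeeping renders case~\ref{lab:distributive} essentially mechanical, once one notices that the ground-model measures may be reused rather than lifted, the only content being that they survive as normal fine measures. The delicate point, and the step I expect to demand the most care, is the name manipulation in case~\ref{lab:lifting}: one must confirm that the embedding supplied by \(\jLd\) in \(V\) guesses a \emph{name} \(\dot a_\alpha\) whose reinterpretation under the lift is exactly the intended \(V[G]\)-target, that each \(\ell_\alpha^*\) genuinely has \(\dot a_\alpha\) as its value-name at \(\kappa\), and that the \(\P\)-part of \(\hat G\) is \(G\) so that \((\dot a_\alpha)_{\hat G}=a_\alpha\). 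Checking that each \(\ell_\alpha^*\) is a bona fide \(\theta\)-supercompactness Laver function in \(V[G]\) is where the argument must be executed with attention.
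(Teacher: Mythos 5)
Your overall strategy coincides with the paper's in both cases: in case~(1) the ground-model sequence survives because no new targets or \(\lambda\)-sequences of targets appear and the ultrapower embeddings pass to \(V[G]\); in case~(2) you reinterpret the \(\ell_\alpha\) through \(G\) and have the ground-model sequence guess names, which is exactly the paper's proof. But two steps are not justified as written. In case~(1), your ``canonical factor map'' \(k\colon M\to M'\) is not elementary in general: Łoś's theorem for the two ultrapowers is computed over different models, and a \(\leq\lambda\)-distributive poset can change truth above \(\lambda\) (it may collapse \(\lambda^{++}\), say), so the class map \([f]^V_U\mapsto[f]^{V[G]}_U\) need not preserve formulas; moreover ``\(M\) and \(M'\) share \(H_{\theta^+}\), hence \(k\) fixes it pointwise'' is a non sequitur even granting elementarity. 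The correct (and sufficient) statement is that \(M\) is a transitive \(\in\)-submodel of \(M'\) via \([f]^V_U\mapsto[f]^{V[G]}_U\): if \([g]\in[f]\) with \(f\in V\), then \(g\) is \(U\)-almost everywhere a function into \(V\), hence, being essentially a \(\theta^{<\kappa}\)-sequence of ground-model sets, lies in \(V\) by \(\leq\lambda\)-distributivity. Thus \(j'_U\rest V=j_U\), i.e.\ \(j'_U\) is literally the lift of \(j_U\) --- which is the paper's formulation, via its citation of the standard lifting fact for distributive forcing --- and \(j'_U(\ell_\alpha)(\kappa)=j_U(\ell_\alpha)(\kappa)=a_\alpha\) follows directly, with no factor map needed.

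In case~(2) the genuine gap sits exactly at the point you yourself flagged as delicate: ``fix in \(V\) a sequence \(\langle\dot a_\alpha;\alpha<\lambda\rangle\) of names.'' Choosing a name for each \(a_\alpha\) individually is a choice performed in \(V[G]\), so the resulting \emph{sequence} of names is a priori only an element of \(V[G]\) --- and an embedding supplied by \(\jLdthetasc_{\kappa,\lambda}\) in \(V\) can only guess a sequence of targets that exists in \(V\). The paper closes this by starting from a single name \(\dot{\vec a}\in V\) for the whole target sequence and deriving the coordinate names \(\dot a_\alpha\) from it uniformly (fullness), so that \(\langle\dot a_\alpha;\alpha<\lambda\rangle\in V\); each is then replaced by a nice name \(\sigma_\alpha\in H_{\theta^+}^V\), using that \(\P\), having size \(\leq\kappa\), is \(\theta^+\)-cc, with the sequence of nice names again in \(V\). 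With that step inserted, the remainder of your computation goes through: \(\hat G\cap\P=G\) because \(G\) is generic (a condition of \(\hat G\cap\P\) outside \(G\) would be incompatible with some element of \(G\), contradicting that both lie in the filter \(\hat G\)), and the evaluation of a \(\P\)-name by \(\hat G\) depends only on \(\hat G\cap\P\), so \(j(\ell^*_\alpha)(\kappa)=\sigma_\alpha^{\hat G}=\sigma_\alpha^{G}=a_\alpha\) as desired.
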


If \(|\P|\leq\kappa\), it is very often the case that every \(\theta\)-supercompactness embedding 
with critical point \(\kappa\) lifts through \(\P\), so the hypothesis in item~\ref{lab:lifting} is easily
satisfied. Furthermore, while the restriction \(|\P|\leq\kappa\) in~\ref{lab:lifting} is
necessary for full generality, it can in fact be relaxed to \(|\P|\leq\theta\) for
a large class of forcings.

\begin{proof}
Under the hypotheses of~\ref{lab:distributive} every ultrapower embedding by a measure on 
\(\power_\kappa(\theta)\) lifts to the extension by \(\P\) (see, for example, \cite[Proposition 15.1]{Cummings:Handbook}) and no elements
of \(H_{\theta^+}\) or \(\lambda\)-sequences of these are added, so any ground model
joint Laver sequence of length \(\lambda\) remains such in the extension.

Now suppose that the hypotheses of~\ref{lab:lifting} hold, let \(\langle \ell_\alpha;\alpha<\lambda\rangle\)
be a joint Laver sequence for \(\kappa\) and let \(G\subseteq \P\) be generic. We may
also assume that the underlying set of \(\P\) is a subset of \(\kappa\).
Define functions \(\ell'_\alpha\colon \kappa\to V_\kappa[G]\) by
\(\ell'_\alpha(\xi)=\ell_\alpha(\xi)^{G\cap\xi}\) if this makes sense and \(\ell'_\alpha(\xi)=
\emptyset\) otherwise. We claim that \(\langle \ell'_\alpha;\alpha<\lambda\rangle\) 
is a joint Laver sequence in \(V[G]\).

Let \(\vec{a}\) be a \(\lambda\)-sequence of targets in \(H_{\theta^+}^{V[G]}\), and let
\(\dot{\vec{a}}\) be a name for \(\vec{a}\). We can use this name and the fullness property to
derive names \(\dot{a}_\alpha\) for the targets \(a_\alpha\), uniformly in \(\alpha<\lambda\),
which means that we find the entire sequence \(\langle \dot{a}_\alpha;\alpha<\lambda\rangle\)
in \(V\).
Since each \(\dot{a}_\alpha\) names an element of \(H_{\theta^+}^{V[G]}\), and because \(\P\) is 
small (and therefore \(\theta^+\)-cc), 
we can find a nice name \(\sigma_\alpha \in H_{\theta^+}\) for each \(\dot{a}_\alpha\),
and the sequence of these nice names is in \(V\) as well.
Now let \(j\colon V\to M\) be a \(\theta\)-supercompactness embedding with critical point \(\kappa\)
which lifts through \(\P\) and which satisfies \(j(\ell_\alpha)(\kappa)=\sigma_\alpha\)
for each \(\alpha\). It then follows that \(j(\ell'_\alpha)(\kappa)=a_\alpha\) in \(V[G]\),
verifying the joint Laver diamond property there.
\end{proof}

\subsection{Separating joint Laver diamonds by length}
We next aim to show that it is consistent that there is a measurable Laver function 
for \(\kappa\) but no joint Laver sequences of length \(\kappa^+\).
The following proposition expresses the key observation
for our solution, connecting the question to the number of normal measures problem.

\begin{proposition}
\label{prop:JLDiamondHasManyMeasures}
If there is a \(\theta\)-supercompactness joint Laver sequence for \(\kappa\)
of length \(\lambda\), then there
are at least \(2^{\theta\cdot \lambda}\) many normal measures on \(\power_\kappa(\theta)\).
%In particular, there are no \(\Ldmeas\) functions in the
%canonical inner model \(L[\mu]\) for a measurable \(\kappa\).
\end{proposition}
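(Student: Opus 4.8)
The plan is to exploit the fact that, under the joint Laver property, a \emph{single} normal measure must do all the guessing for a whole $\lambda$-sequence of targets at once, so that distinct target sequences are forced to come from distinct measures. Concretely, let $\langle \ell_\alpha;\alpha<\lambda\rangle$ be the given joint Laver sequence. By the joint form of the reduction to ultrapowers recorded earlier in this section, for every sequence of targets $\vec{a}=\langle a_\alpha;\alpha<\lambda\rangle$ with each $a_\alpha\in H_{\theta^+}$ there is a single normal measure $\mu$ on $\power_\kappa(\theta)$ whose ultrapower $j_\mu\colon V\to M$ satisfies $j_\mu(\ell_\alpha)(\kappa)=a_\alpha$ for all $\alpha<\lambda$.

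First I would fix, for each such $\vec{a}$, one witnessing measure $\mu_{\vec{a}}$, thereby defining a map $\vec{a}\mapsto\mu_{\vec{a}}$ from $(H_{\theta^+})^\lambda$ into the set of normal measures on $\power_\kappa(\theta)$. The key point is that this map is injective: for a \emph{fixed} measure $\mu$ the set $j_\mu(\ell_\alpha)(\kappa)$ depends only on $\mu$ and $\ell_\alpha$, so the entire target sequence can be read back off $\mu_{\vec{a}}$ as $\langle j_{\mu_{\vec{a}}}(\ell_\alpha)(\kappa);\alpha<\lambda\rangle$. Hence different target sequences are necessarily witnessed by different measures. It then remains only to count the domain: since $|H_{\theta^+}|=2^\theta$, we have $\bigl|(H_{\theta^+})^\lambda\bigr|=(2^\theta)^\lambda=2^{\theta\cdot\lambda}$, and the injection delivers at least that many normal measures on $\power_\kappa(\theta)$.

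The substantive step---and the one where jointness is indispensable---is the realization of the \emph{full product} $(H_{\theta^+})^\lambda$ in the range. A single, non-joint Laver function would only guarantee that each coordinate value is individually achieved by some measure, which would bound the number of measures below merely by $|H_{\theta^+}|=2^\theta$; it is precisely the joint guessing property, insisting that all $\lambda$ targets be hit by one embedding, that lets an arbitrary element of $(H_{\theta^+})^\lambda$ be coded into a single measure and thereby inflates the count to $2^{\theta\cdot\lambda}$. The auxiliary ingredients, namely the factoring reduction to normal ultrapowers and the cardinal-arithmetic identities $|H_{\theta^+}|=2^\theta$ and $(2^\theta)^\lambda=2^{\theta\cdot\lambda}$, are routine and are not where the difficulty lies.
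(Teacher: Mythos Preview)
Your argument is correct and is essentially the same as the paper's own proof: each $\lambda$-sequence of targets in $H_{\theta^+}$ is realized by some normal measure on $\power_\kappa(\theta)$ via the joint Laver sequence, and a fixed measure can realize only one such sequence, so the $2^{\theta\cdot\lambda}$ many target sequences yield at least that many measures. The paper states this more tersely, but the content is identical.
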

\begin{proof}
The point is that any \(\lambda\)-sequence of targets in \(H_{\theta^+}\) can be guessed via the 
embedding arising from a normal measure on \(\power_\kappa(\theta)\) (see the discussion at the start of Section~\ref{sec:JLDSc}) and any such measure realizes a 
single \(\lambda\)-sequence via the fixed joint Laver sequence.
But there are \(2^{\theta\cdot\lambda}\) many sequences of targets, and thus there must be at least
this many measures.
\end{proof}

\begin{theorem}
\label{thm:SeparateShortLongJLDiamondMeas}
If \(\kappa\) is measurable, then there is a forcing extension in which
there is a Laver function for \(\kappa\) but no joint Laver sequence of length \(\kappa^+\).
\end{theorem}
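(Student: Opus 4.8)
The plan is to use Proposition~\ref{prop:JLDiamondHasManyMeasures} in its contrapositive form. In the measurable case \(\theta=\kappa\), and the normal measures on \(\power_\kappa(\kappa)\) are exactly the normal measures on \(\kappa\). A joint Laver sequence of length \(\kappa^+\) would, by that proposition, force at least \(2^{\kappa\cdot\kappa^+}=2^{\kappa^+}\) many normal measures on \(\kappa\), whereas a single Laver function (a trivially joint sequence of length \(1\)) already forces at least \(2^{\kappa}\) of them. The whole problem therefore reduces to producing a forcing extension in which \(\kappa\) is measurable, carries a Laver function, and has strictly fewer than \(2^{\kappa^+}\) normal measures. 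This is only possible when \(2^\kappa<2^{\kappa^+}\), so I would first arrange GCH (by a measurability-preserving preparation), giving \(2^\kappa=\kappa^+\) and \(2^{\kappa^+}=\kappa^{++}\); the target then becomes a model carrying a Laver function together with exactly \(2^\kappa=\kappa^+\) normal measures, this being the least number compatible with a Laver function.

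The construction of this model is the heart of the matter, and I would build it by combining two ingredients that normally pull against each other: a forcing whose generic codes a Laver function, and a measure-counting mechanism in the style of Friedman and Magidor that caps the number of surviving normal measures. Concretely, I would first force (preserving measurability) to a model \(W_0\models\mathrm{GCH}\) in which \(\kappa\) carries a \emph{unique} normal measure \(U\), with ultrapower \(j\colon W_0\to M\). Over \(W_0\) I would then force with a poset \(\P\) of size \(\kappa^+\) whose generic determines a function \(\ell\colon\kappa\to V_\kappa\), read off the generic via a coding scheme with end-of-code markers exactly as in the proof of Theorem~\ref{thm:ForceLongJLDiamondSC}. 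The essential point is to use at stage \(\kappa\) a rigid, Sacks-style coding forcing rather than the lavish \(\Add(\kappa,2^\kappa)\) of that theorem: the fusion structure of \(\kappa\)-Sacks-type forcing severely restricts the \(M\)-generic continuations lying above a given master condition, and this is what lets one engineer that \(j\) lifts in exactly \(\kappa^+\) measure-inequivalent ways, with the lift indexed by \(\eta<\kappa^+\) realizing the \(\eta\)-th target \(a_\eta\) of \(H_{\kappa^+}\) as \(j(\ell)(\kappa)=a_\eta\). Since \(\P\) has size \(\kappa^+\) and is \(\kappa^+\)-cc, the arithmetic \(2^\kappa=\kappa^+<\kappa^{++}=2^{\kappa^+}\) is preserved into \(W_0[\P]\).

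The guessing half is then the easy part: given any target \(a\in H_{\kappa^+}\), one exhibits the corresponding controlled lift of \(j\) and checks, just as in the earlier proofs, that the coded \(\ell\) guesses \(a\); hence \(\Ldmeas_\kappa\) holds in the extension. The hard part, and the step I expect to be the main obstacle, is the \emph{upper} bound on the number of normal measures, namely that \(W_0[\P]\) has at most \(\kappa^+\) of them. Here I would argue by capturing. Any normal measure \(W\) on \(\kappa\) in \(W_0[\P]\) restricts to a normal measure on \(\kappa\) in \(W_0\), which by uniqueness must be \(U\); consequently the ultrapower \(j_W\) is a lift of \(j\) through \(\P\), so \(W\) arises from one of the \(M\)-generic continuations above the master condition. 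The Sacks-style rigidity built into \(\P\) must then be shown to bound the number of measure-inequivalent such continuations by \(\kappa^+\), and the coding must be shown to pin \(W\) down within the family \(\{U_\eta:\eta<\kappa^+\}\) indexed by the targets. The delicate bookkeeping is exactly to verify that the resulting map from measures to targets is simultaneously surjective enough to furnish the Laver function and constrained enough to keep the total count at \(\kappa^+\); once this bound is in hand, Proposition~\ref{prop:JLDiamondHasManyMeasures} rules out any joint Laver sequence of length \(\kappa^+\), while the Laver function persists, which completes the argument.
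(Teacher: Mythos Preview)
Your overall strategy---limit the number of normal measures on \(\kappa\) and then invoke Proposition~\ref{prop:JLDiamondHasManyMeasures}---is exactly the paper's, and your arithmetic (\(2^{\kappa^+}\) measures needed for a \(\kappa^+\)-long joint sequence, \(\kappa^+\) measures sufficing to block it) is correct. Where you diverge is in the construction of the model. The paper does not build anything from scratch: after forcing a Laver function as in Theorem~\ref{thm:ForceLongJLDiamondSC}, it simply forces with \(\Add(\omega,1)*\Coll(\kappa^+,2^{2^\kappa})\) and cites the result of Apter, Cummings, and Hamkins~\cite{ApterCummingsHamkins2007:LargeCardinalsFewMeasures} that this extension preserves the Laver function while leaving only \(\kappa^+\) normal measures on \(\kappa\). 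That is the entire proof.

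Your route through a unique-measure ground model plus Sacks-style coding is much heavier, and it has real gaps. First, the ``preliminary'' step of forcing to a GCH model with a \emph{unique} normal measure is itself essentially the Apter--Cummings--Hamkins result (or requires passing to a canonical inner model); you cannot treat it as a throwaway. Second, your claim that every normal measure in \(W_0[\P]\) restricts to a normal measure in \(W_0\) is not automatic: this requires a covering/Sacks-property argument that you gesture at but do not carry out, and which in the paper's later Friedman--Magidor argument (Theorem~\ref{thm:FMDoesntHaveLongjLd}) relies on working over a specific extender model. Third, the assertion that there are exactly \(\kappa^+\) measure-inequivalent lifts of \(j\) is the technical heart of any such argument and is simply asserted. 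In short, you are sketching a substantially harder theorem than the one being proved; the paper's point is precisely that a black-box citation suffices here, reserving the Sacks machinery for the genuinely harder separation in Theorem~\ref{thm:FMDoesntHaveLongjLd} where \(2^\kappa>\kappa^+\) is also required.
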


\begin{proof}
After forcing as in the proof of Theorem~\ref{thm:ForceLongJLDiamondSC}, 
if necessary, we may assume that \(\kappa\) has a Laver
function. A result of Apter, Cummings, and Hamkins~\cite{ApterCummingsHamkins2007:LargeCardinalsFewMeasures} then shows that
\(\kappa\) still carries a Laver function in the extension by
\(\P=\Add(\omega,1)*\Coll\bigl(\kappa^+,2^{2^\kappa}\bigr)\), 
but only carries \(\kappa^+\) many normal measures there.
Proposition~\ref{prop:JLDiamondHasManyMeasures} now implies that
there cannot be a joint Laver sequence of length \(\kappa^+\) in the extension.
\end{proof}

We can push this result a bit further to get a separation between any two desired lengths
of joint Laver sequences. To state the sharpest result we need to introduce a new
notion.

\begin{definition}
Let \(\kappa\) be a large cardinal supporting a notion of Laver diamond and
\(\lambda\) a cardinal. We say that a sequence 
\(\vec{\ell}=\langle\ell_\alpha;\alpha<\lambda\rangle\) is an
\emph{almost-joint Laver sequence} for \(\kappa\) if \(\vec{\ell}\rest\gamma\) is a joint Laver sequence for \(\kappa\)
for any \(\gamma<\lambda\). We say that \(\jLd_{\kappa,<\lambda}\) holds if there is an
almost-joint Laver sequence of length \(\lambda\).
\end{definition}

\begin{theorem}
\label{thm:SeparateBetterShortLongJLDiamondMeas}
Suppose \(\kappa\) is measurable and let \(\lambda\) be a regular cardinal satisfying
\(\kappa<\lambda\leq 2^\kappa\).
If \(\jLdmeas_{\kappa,<\lambda}\) holds then there is a forcing extension
preserving this in which \(\jLdmeas_{\kappa,\lambda}\) fails.
\end{theorem}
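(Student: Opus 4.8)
The plan is to exploit Proposition~\ref{prop:JLDiamondHasManyMeasures} to kill joint sequences of the full length \(\lambda\) by driving the number of normal measures on \(\kappa\) below \(2^\lambda\), while using Lemma~\ref{lemma:JLDiamondSCPreservation} to keep every proper initial segment of the given almost-joint sequence jointly Laver. Concretely, starting from a witness \(\langle \ell_\alpha;\alpha<\lambda\rangle\) to \(\jLdmeas_{\kappa,<\lambda}\), I would force with \(\P = \Add(\omega,1)*\Coll(\lambda, 2^{2^\kappa})\), generalizing the forcing used in the proof of Theorem~\ref{thm:SeparateShortLongJLDiamondMeas} (which handled \(\lambda=\kappa^+\)) by collapsing down to \(\lambda\) rather than to \(\kappa^+\). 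The factor \(\Add(\omega,1)\) is present only to feed the measure-counting machinery; the collapse does the real work.

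For preservation, first note that \(\Add(\omega,1)\) is small, \(|\Add(\omega,1)|=\omega\leq\kappa\), and every measurable embedding lifts through it trivially, so Lemma~\ref{lemma:JLDiamondSCPreservation}\ref{lab:lifting}, applied uniformly, carries the almost-joint sequence into \(V[c]\). Next, since \(\lambda\) is regular, \(\Coll(\lambda, 2^{2^\kappa})\) is \(<\lambda\)-closed, so for every \(\gamma\) with \(\kappa\leq\gamma<\lambda\) it is \(\leq\gamma\)-distributive and \(\gamma\geq\kappa=\kappa^{<\kappa}\); thus Lemma~\ref{lemma:JLDiamondSCPreservation}\ref{lab:distributive} (applied over \(V[c]\) with parameter \(\gamma\)) preserves jointness of \(\langle\ell'_\alpha;\alpha<\gamma\rangle\). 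As any subsequence of a joint sequence is again joint, the shorter initial segments are preserved as well, and so \(\jLdmeas_{\kappa,<\lambda}\) survives into \(V[c][G]\). The same closure keeps \(\kappa\) measurable.

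For the failure, the collapse fixes the cardinal arithmetic for free: being \(<\lambda\)-closed it adds no subsets of any \(\gamma<\lambda\), yet it collapses \(2^\kappa\) (and each \(2^\gamma\) for \(\kappa\leq\gamma<\lambda\), all of which lie in the interval \([\lambda,2^{2^\kappa}]\)) down to \(\lambda\). Hence \(2^\gamma=\lambda\) for every \(\kappa\leq\gamma<\lambda\) in the extension, so that \(\kappa\cdot\lambda=\lambda\) and \(2^{\kappa\cdot\lambda}=2^\lambda>\lambda\). Invoking the (generalized) Apter--Cummings--Hamkins count~\cite{ApterCummingsHamkins2007:LargeCardinalsFewMeasures}, \(\kappa\) carries at most \(\lambda\) normal measures after \(\P\); since \(\lambda<2^\lambda=2^{\kappa\cdot\lambda}\), Proposition~\ref{prop:JLDiamondHasManyMeasures} rules out any joint Laver sequence of length \(\lambda\), and thus \(\jLdmeas_{\kappa,\lambda}\) fails in \(V[c][G]\).

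The main obstacle is the measure count. I must verify that the Apter--Cummings--Hamkins analysis, which in the cited form yields exactly \(\kappa^+\) normal measures after \(\Add(\omega,1)*\Coll(\kappa^+,2^{2^\kappa})\), generalizes to give at most \(\lambda\) (equivalently, fewer than \(2^\lambda\)) normal measures after collapsing to the regular cardinal \(\lambda\); the regularity of \(\lambda\) should be exactly what is needed for the collapse to be \(<\lambda\)-closed and for the count to go through. The complementary consistency check is automatic: preserving all initial segments forces, via Proposition~\ref{prop:JLDiamondHasManyMeasures}, at least \(2^{<\lambda}=\lambda\) measures, which harmonizes with the upper bound. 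So the whole argument hinges on pushing the \(\leq\lambda\) bound through for general \(\lambda\).
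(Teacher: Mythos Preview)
Your proposal is correct and follows essentially the same approach as the paper: force with \(\Add(\omega,1)*\Coll(\lambda,2^{2^\kappa})\), invoke the Apter--Cummings--Hamkins measure count together with Proposition~\ref{prop:JLDiamondHasManyMeasures} to kill \(\jLdmeas_{\kappa,\lambda}\), and use Lemma~\ref{lemma:JLDiamondSCPreservation}\ref{lab:lifting} followed by~\ref{lab:distributive} to preserve \(\jLdmeas_{\kappa,<\lambda}\). The paper's proof is terser and simply cites the analysis from Theorem~\ref{thm:SeparateShortLongJLDiamondMeas}, but the content is the same; your identification of the generalized measure bound as the one point requiring care matches exactly where the paper defers to~\cite{ApterCummingsHamkins2007:LargeCardinalsFewMeasures}.
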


\begin{proof}
We imitate the proof of Theorem~\ref{thm:SeparateShortLongJLDiamondMeas} but force
instead with \(\P=\Add(\omega,1)*\Coll\bigl(\lambda,2^{2^\kappa}\bigr)\).
The analysis based on \cite{ApterCummingsHamkins2007:LargeCardinalsFewMeasures} now
shows that the final extension has at most \(\lambda\) many normal measures on \(\kappa\)
and thus there can be no joint Laver sequences of length \(\lambda\) there by 
Proposition~\ref{prop:JLDiamondHasManyMeasures}. That \(\jLdmeas_{\kappa,<\lambda}\) still
holds
follows from (the proof of) Lemma~\ref{lemma:JLDiamondSCPreservation}: 
part (2) implies that, by guessing names, the \(\jLdmeas_{\kappa,<\lambda}\)-sequence
from the ground model gives rise to one in the intermediate Cohen extension.
Part (1) then shows that each of the initial segments of this sequence remains
a joint Laver sequence in the final extension.
\end{proof}

We can also extend these results to \(\theta\)-supercompact cardinals without too much
effort.

\begin{theorem}
\label{thm:SeparateShortLongJLDiamondThetaSC}
If \(\kappa\) is \(\theta\)-supercompact, \(\theta\) is regular, and 
\(\theta^{<\kappa}=\theta\), then there is a forcing extension in
which \(\Ldthetasc_{\kappa}\) holds but \(\jLdthetasc_{\kappa,\theta^+}\) fails.
\end{theorem}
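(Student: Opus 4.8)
The plan is to imitate the proofs of Theorems~\ref{thm:SeparateShortLongJLDiamondMeas} and~\ref{thm:ForceLongJLDiamondSC}, now counting normal measures on \(\power_\kappa(\theta)\) rather than on \(\kappa\) and playing the role of \(\kappa^+\) with \(\theta^+\). First I would secure a Laver function and arrange convenient arithmetic: forcing as in Theorem~\ref{thm:ForceLongJLDiamondSC} if necessary, we may assume that \(\kappa\) carries a \(\theta\)-supercompactness Laver function, and, exactly as in that proof, we may further force \(2^\theta=\theta^+\) without adding subsets of \(\power_\kappa(\theta)\), so that \(\kappa\) stays \(\theta\)-supercompact, the Laver function is retained, and the hypotheses \(\theta\) regular and \(\theta^{<\kappa}=\theta\) persist. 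Working over this model, the main forcing is the \(\theta\)-supercompactness analogue of the poset from Theorem~\ref{thm:SeparateShortLongJLDiamondMeas}, namely \(\P=\Add(\omega,1)*\Coll\bigl(\theta^+,2^{2^\theta}\bigr)\).

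The heart of the argument is a count of measures. Adapting the analysis of Apter, Cummings, and Hamkins~\cite{ApterCummingsHamkins2007:LargeCardinalsFewMeasures} to the \(\power_\kappa(\theta)\) setting, I would show that in the extension by \(\P\) the cardinal \(\kappa\) is still \(\theta\)-supercompact but carries at most \(\theta^+\) many normal measures on \(\power_\kappa(\theta)\). Granting this, suppose toward a contradiction that \(\jLdthetasc_{\kappa,\theta^+}\) held there. By Proposition~\ref{prop:JLDiamondHasManyMeasures} there would be at least \(2^{\theta\cdot\theta^+}=2^{\theta^+}\) many normal measures on \(\power_\kappa(\theta)\); but \(\theta^+<2^{\theta^+}\), a contradiction. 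Hence \(\jLdthetasc_{\kappa,\theta^+}\) fails in the extension.

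It remains to check that \(\Ldthetasc_\kappa\) survives. The Cohen factor \(\Add(\omega,1)\) has size below \(\kappa\), so by a Lévy--Solovay argument (the mechanism behind part~\ref{lab:lifting} of Lemma~\ref{lemma:JLDiamondSCPreservation}) every \(\theta\)-supercompactness embedding lifts through it and the ground-model Laver function passes to one in the Cohen extension. The collapse \(\Coll\bigl(\theta^+,2^{2^\theta}\bigr)\) is \(\leq\theta\)-closed, so it adds no targets in \(H_{\theta^+}\) and, by the standard lifting criterion for closed forcing (\cite[Proposition 15.1]{Cummings:Handbook}, underlying part~\ref{lab:distributive} of Lemma~\ref{lemma:JLDiamondSCPreservation}), every normal-measure ultrapower embedding lifts through it as well. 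A single Laver function therefore persists to the final model, witnessing \(\Ldthetasc_\kappa\).

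The main obstacle is the measure count itself: the genuine adaptation of the few-measures argument from normal measures on \(\kappa\) to normal measures on \(\power_\kappa(\theta)\). One must redo the Cohen-real coding and the homogeneity and closure analysis of the collapse in the \(\power_\kappa(\theta)\) context, verifying both that every normal measure on \(\power_\kappa(\theta)\) in the extension is a lift of a ground-model measure drawn from only \(\theta^+\) many possibilities, and that enough \(\theta\)-supercompactness embeddings lift through \(\Coll\bigl(\theta^+,2^{2^\theta}\bigr)\) to keep \(\kappa\) fully \(\theta\)-supercompact. Once this bookkeeping is carried out, the remaining steps run exactly as in the measurable case.
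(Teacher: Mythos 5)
Your proposal is correct and follows essentially the same route as the paper: force with \(\Add(\omega,1)*\Coll\bigl(\theta^+,2^{2^\theta}\bigr)\) over a model with a Laver function, invoke the Apter--Cummings--Hamkins analysis to bound the number of normal measures on \(\power_\kappa(\theta)\) by \(\theta^+\), kill \(\jLdthetasc_{\kappa,\theta^+}\) via Proposition~\ref{prop:JLDiamondHasManyMeasures}, and preserve \(\Ldthetasc_\kappa\) by guessing \(\Add(\omega,1)\)-names and lifting through the small Cohen factor and the \(\leq\theta\)-closed collapse. The one point you flag as the ``main obstacle''---redoing the few-measures analysis for \(\power_\kappa(\theta)\)---is not actually an obstacle, since the results of~\cite{ApterCummingsHamkins2007:LargeCardinalsFewMeasures} already cover the \(\theta\)-supercompactness case and the paper simply cites them.
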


Of course, the theorem is only interesting when \(\kappa\leq\theta<2^\kappa\), in which
case the given separation is best possible in view of 
Proposition~\ref{prop:SCHasSomeJLDiamond}.

\begin{proof}
We may assume by prior forcing, as in Theorem~\ref{thm:ForceLongJLDiamondSC}, that we have a 
Laver function for \(\kappa\).
We now force with
\(\P=\Add(\omega,1)*\Coll\bigl(\theta^+,2^{2^\theta}\bigr)\) to get an extension
\(V[g][G]\). By the results of \cite{ApterCummingsHamkins2007:LargeCardinalsFewMeasures},
the extension \(V[g][G]\) has at most \(\theta^+\) many normal measures on \(\power_\kappa(\theta)\)
and therefore there are no joint Laver sequences for \(\kappa\) of length \(\theta^+\)
there by Proposition~\ref{prop:JLDiamondHasManyMeasures}. It remains to see that
there is a Laver function in \(V[g][G]\). Let
\(\ell\) be a Laver function in \(V\) and define
\(\ell'\in V[g][G]\) by \(\ell'(\xi)=\ell(\xi)^g\) if \(\ell(\xi)\) is an \(\Add(\omega,1)\)-%
name and \(\ell'(\xi)=\emptyset\) otherwise. For a given \(a\in H_{\theta^+}^{V[g][G]}=
H_{\theta^+}^{V[g]}\) we can select an \(\Add(\omega,1)\)-name \(\dot{a}\in H_{\kappa^+}^V\)
and find a \(\theta\)-supercompactness embedding \(j\colon V\to M\) such that
\(j(\ell)(\kappa)=\dot{a}\). The embedding \(j\) lifts to \(j\colon V[g][G]\to M[g][j(G)]\)
since the Cohen forcing was small and the collapse forcing was \(\leq\theta\)-closed.
But then clearly \(j(\ell')(\kappa)=\dot{a}^g=a\), so \(\ell'\) is a Laver function.
\end{proof}

\begin{theorem}
Suppose \(\kappa\) is \(\theta\)-supercompact and let \(\lambda\) be a regular cardinal
satisfying \(\theta^{<\kappa}<\lambda\leq 2^\kappa\). 
If\/ \(\jLdthetasc_{\kappa,<\lambda}\) holds, then there is a forcing extension
preserving this in which \(\jLdthetasc_{\kappa,\lambda}\) fails.
\end{theorem}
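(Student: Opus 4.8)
The plan is to imitate the proof of Theorem~\ref{thm:SeparateBetterShortLongJLDiamondMeas}, merging its treatment of a general regular length \(\lambda\) with the passage to \(\theta\)-supercompactness carried out in Theorem~\ref{thm:SeparateShortLongJLDiamondThetaSC}. Concretely, I would force with \(\P=\Add(\omega,1)*\Coll\bigl(\lambda,2^{2^\theta}\bigr)\), the collapse being computed in the Cohen extension (arranging beforehand, if the few-measures analysis requires it, any cardinal arithmetic as in Theorem~\ref{thm:ForceLongJLDiamondSC}, via a preparation that preserves the hypothesis by Lemma~\ref{lemma:JLDiamondSCPreservation}). The two tasks are then to show that \(\jLdthetasc_{\kappa,\lambda}\) fails in the resulting model while \(\jLdthetasc_{\kappa,<\lambda}\) is preserved.

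For the failure I would appeal to the analysis of \cite{ApterCummingsHamkins2007:LargeCardinalsFewMeasures} in the form already used in Theorem~\ref{thm:SeparateShortLongJLDiamondThetaSC}: collapsing \(2^{2^\theta}\) down to \(\lambda\) (after first adding a Cohen real) leaves \(\kappa\) carrying at most \(\lambda\) many normal measures on \(\power_\kappa(\theta)\). Since \(\lambda\) is regular and \(\theta^{<\kappa}<\lambda\), the collapse is \(<\lambda\)-closed, hence \(\leq\theta^{<\kappa}\)-closed, so it adds no new subsets of \(\power_\kappa(\theta)\) and preserves normality; thus this count really is an upper bound on the normal measures of the final model. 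As \(2^{\theta\cdot\lambda}\geq 2^\lambda>\lambda\), Proposition~\ref{prop:JLDiamondHasManyMeasures} then forbids a \(\theta\)-supercompactness joint Laver sequence of length \(\lambda\), giving the failure of \(\jLdthetasc_{\kappa,\lambda}\).

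For preservation I would factor \(\P\) and invoke the two clauses of Lemma~\ref{lemma:JLDiamondSCPreservation} in turn, exactly as in the measurable case. The Cohen step \(\Add(\omega,1)\) is small, so every \(\theta\)-supercompactness embedding lifts through it and clause~\ref{lab:lifting} applies; guessing nice names as in that proof converts the ground-model \(\jLdthetasc_{\kappa,<\lambda}\)-sequence into one in the Cohen extension, with every initial segment still jointly Laver. For the collapse step, note that for each \(\gamma\) with \(\theta^{<\kappa}\leq\gamma<\lambda\) the \(<\lambda\)-closed poset \(\Coll\bigl(\lambda,2^{2^\theta}\bigr)\) is \(\leq\gamma\)-distributive, so clause~\ref{lab:distributive} shows the length-\(\gamma\) initial segment remains a joint Laver sequence; segments of length \(\gamma<\theta^{<\kappa}\) are then obtained by restricting the preserved length-\(\theta^{<\kappa}\) segment. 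Hence every initial segment of length below \(\lambda\) survives, which is exactly \(\jLdthetasc_{\kappa,<\lambda}\).

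The main obstacle is the measure-counting step: one must check that the few-measures argument of \cite{ApterCummingsHamkins2007:LargeCardinalsFewMeasures}, originally phrased for normal measures on a measurable cardinal and already adapted in Theorem~\ref{thm:SeparateShortLongJLDiamondThetaSC} to \(\power_\kappa(\theta)\) with a collapse to \(\theta^+\), continues to yield the bound \(\lambda\) when one instead collapses \(2^{2^\theta}\) to an arbitrary regular \(\lambda\) with \(\theta^{<\kappa}<\lambda\leq 2^\kappa\). Everything else—the lifting through the small Cohen forcing and the distributivity of the closed collapse—is routine given Lemma~\ref{lemma:JLDiamondSCPreservation}, and the role of the strict inequality \(\theta^{<\kappa}<\lambda\) is precisely to supply the closure needed both for the measure count and for the uniform preservation of all initial segments.
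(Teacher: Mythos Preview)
Your approach is essentially the same as the paper's, and your preservation argument via the two clauses of Lemma~\ref{lemma:JLDiamondSCPreservation} is exactly right. There is one slip: the cardinal you collapse should be \(2^{2^{\theta^{<\kappa}}}\), not \(2^{2^\theta}\). Normal measures on \(\power_\kappa(\theta)\) are subsets of \(\power(\power_\kappa(\theta))\), and \(|\power_\kappa(\theta)|=\theta^{<\kappa}\), so the ground-model bound on their number is \(2^{2^{\theta^{<\kappa}}}\); collapsing only \(2^{2^\theta}\) need not bring that count down to \(\lambda\) when \(\theta^{<\kappa}>\theta\). The paper accordingly forces with \(\Add(\omega,1)*\Coll\bigl(\lambda,2^{2^{\theta^{<\kappa}}}\bigr)\). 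This also explains why the hypothesis is \(\theta^{<\kappa}<\lambda\) rather than \(\theta<\lambda\): it is precisely the closure needed for the collapse to be \(\leq\theta^{<\kappa}\)-distributive, so that no new subsets of \(\power_\kappa(\theta)\) appear and both the measure count and clause~\ref{lab:distributive} apply. With that one correction, your argument goes through verbatim.
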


\begin{proof}
The relevant forcing is \(\Add(\omega,1)*\Coll\bigl(\lambda,2^{2^{\theta^{<\kappa}}}\bigr)\).
Essentially the argument from Theorem~\ref{thm:SeparateBetterShortLongJLDiamondMeas}
then finishes the proof.
\end{proof}

Just as with joint Laver sequences, there is an upper bound on the length of an almost-joint Laver sequence.

\begin{proposition}
	The principle \(\jLdthetasc_{\kappa,<(2^\kappa)^+}\) fails for every cardinal \(\kappa\).
\end{proposition}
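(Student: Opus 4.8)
The plan is to argue by a simple cardinality count, relying on the two observations already recorded in the discussion preceding Proposition~\ref{prop:SCHasSomeJLDiamond}: that there are at most \(2^\kappa\) many functions \(\kappa\to V_\kappa\), and that a joint Laver sequence cannot have the same function appear on two distinct coordinates.

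Suppose toward a contradiction that \(\vec\ell=\langle\ell_\alpha;\alpha<(2^\kappa)^+\rangle\) is an almost-joint Laver sequence. The first step is to verify that the functions \(\ell_\alpha\) are pairwise distinct. Indeed, if \(\ell_\alpha=\ell_\beta\) for some \(\alpha<\beta<(2^\kappa)^+\), then setting \(\gamma=\beta+1<(2^\kappa)^+\), the initial segment \(\vec\ell\restriction\gamma\) is by hypothesis a genuine joint Laver sequence, yet it carries the same function at the two distinct coordinates \(\alpha\) and \(\beta\). This is impossible: a single function cannot guess two distinct targets via one embedding, while a joint Laver sequence of length \(\gamma\) must in particular be able to realize targets with \(a_\alpha\neq a_\beta\).

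Having established distinctness, I would conclude by counting. The sequence \(\vec\ell\) exhibits \((2^\kappa)^+\) many pairwise distinct functions \(\kappa\to V_\kappa\), whereas there are only \(2^\kappa\) such functions in total. This contradiction shows that no almost-joint Laver sequence of length \((2^\kappa)^+\) can exist, that is, \(\jLdthetasc_{\kappa,<(2^\kappa)^+}\) fails.

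There is no substantial obstacle here; the only point requiring a little care is the distinctness step, where one must exploit that the \emph{almost}-jointness hypothesis renders every proper initial segment joint, so that for any two coordinates below \((2^\kappa)^+\) there is always a joint initial segment containing both. Once this is in place, the remainder is a one-line pigeonhole argument against the \(2^\kappa\) available functions.
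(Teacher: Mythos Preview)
Your argument is correct and is essentially the same as the paper's: both use the pigeonhole principle on the \(2^\kappa\) available functions \(\kappa\to V_\kappa\) together with the observation that any initial segment of an almost-joint sequence containing a repeated function cannot be joint. The paper simply phrases it directly (a repeat must occur, and then the initial segment fails) rather than as a proof by contradiction via distinctness, but the content is identical.
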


\begin{proof}
	Any potential \(\jLdthetasc_{\kappa,<(2^\kappa)^+}\)-sequence must necessarily have
	the same function appear on at least two coordinates. But then any initial segment of this
	sequence containing both of those coordinates cannot be joint, since it cannot guess
	distinct targets on those coordinates.
\end{proof}

A question remains about the principles \(\jLd_{\kappa,<\lambda}\), whether they are
genuinely new or whether they reduce to other principles.

\begin{question}
Let \(\kappa\) be \(\theta\)-supercompact and \(\lambda\leq 2^\kappa\). 
Is \(\jLdthetasc_{\kappa,<\lambda}\) equivalent
to \(\jLdthetasc_{\kappa,\gamma}\) holding for all \(\gamma<\lambda\)?
\end{question}

An almost-joint Laver sequence definitely gives instances of
joint Laver diamonds of each particular length \(\gamma\). The reverse implication is especially
interesting in the case when \(\lambda=\mu^+\) is a successor cardinal. This is
because simply rearranging the functions in a joint Laver sequence of length \(\mu\)
gives joint Laver sequences of any length shorter than \(\mu^+\). The question is thus
asking whether \(\jLd_{\kappa,\mu}\) suffices for \(\jLd_{\kappa,<\mu^+}\). 

An annoying feature of the models produced in the preceding theorems is that in
all of them
the least \(\lambda\) for which \(\jLd_{\kappa,\lambda}\) fails is \(\lambda=2^\kappa\). 
One has to wonder whether this is significant.

In particular, we would like an answer to the following question: is it relatively consistent
that there is a \(\theta\)-supercompact cardinal \(\kappa\), for
some \(\theta\), such that \(\Ldthetasc_\kappa\) holds and \(\jLdthetasc_{\kappa,\lambda}\)
fails for some \(\lambda<2^\kappa\)?

%\begin{question}
%\label{q:FastFailureOfjLd}
%Is it relatively consistent that there is a \(\theta\)-supercompact cardinal \(\kappa\), for
%some \(\theta\), such that \(\Ldthetasc_\kappa\) holds and \(\jLdthetasc_{\kappa,\lambda}\)
%fails for some \(\lambda\) satisfying \(\lambda<2^\kappa\)?
%\end{question}

To satisfy the listed conditions, GCH must fail at \(\kappa\) (since we must have \(\kappa<\lambda<2^\kappa\) by Proposition~\ref{prop:SCHasSomeJLDiamond}). We can therefore
expect that achieving the situation described in the question will require some
additional consistency strength.

In the case of a measurable \(\kappa\) the answer to this question is positive: we will
show in Theorem~\ref{thm:FMDoesntHaveLongjLd} that, starting from sufficient large
cardinal hypotheses, we can produce a model where \(\kappa\) is measurable and
has a measurable Laver function but no joint Laver sequences of length
\(\kappa^+<2^\kappa\). 
The proof relies on an argument
due to Friedman and Magidor~\cite{FriedmanMagidor2009:NumberNormalMeasures}
which facilitates the simultaneous control of the number of measures
at \(\kappa\) and the value of the continuum function at \(\kappa\) and \(\kappa^+\).

Let us briefly give a general setup for the argument 
of~\cite{FriedmanMagidor2009:NumberNormalMeasures} that will allow us to carry out
our intended modifications without repeating too much of the work done there.

We first recall the higher Sacks forcing, originally due to 
Kanamori~\cite{Kanamori1980:PerfectSetForcingUncountable}. Let \(\gamma\) be an inaccessible cardinal.
A condition in the poset \(\operatorname{Sacks}(\gamma)\) is a \(<\gamma\)-closed subtree \(T\) of 
\(\funcs{<\gamma}{2}\) of height \(\gamma\), such that there is a club \(C_T\subseteq\gamma\)
such that a node \(t\in T\) is a splitting node in \(T\) if and only if \(|t|\in C_T\). Stronger
conditions are given by subtrees. A generic filter for \(\operatorname{Sacks}(\gamma)\) determines
(and is, in turn, determined by) a single branch through \(\funcs{<\gamma}{2}\), a new subset
of \(\gamma\) in the extension. The forcing is \(<\gamma\)-closed and satisfies a form of fusion,
which means that it preserves \(\gamma^+\) as well.

As a slight generalization, one might consider bushier trees instead of just binary trees
as conditions. Of particular interest will be the version in which conditions
are \(<\gamma\)-closed subtrees of \(\funcs{<\gamma}{\gamma}\) of height \(\gamma\),
where each splitting node of height \(\delta\) (and there is again a club of these heights of
splitting nodes) splits into \(\delta^{++}\) many immediate successors. Call this version
of the forcing \(\operatorname{Sacks}_{\mathrm{id}^{++}}(\gamma)\).

Fix a cardinal \(\kappa\) and suppose GCH holds up to and including \(\kappa\).
Furthermore suppose that \(\kappa\) is the critical point of an
elementary embedding \(j\colon V\to M\) satisfying the following properties:
\begin{itemize}
\item \(j\) is an extender embedding, meaning that every element of \(M\) has the form
\(j(f)(\alpha)\) for some function \(f\) defined on \(\kappa\) and some \(\alpha<j(\kappa)\).
\item \((\kappa^{++})^M=\kappa^{++}\).
\item There is a function \(f\colon\kappa\to V\), such that \(j(f)(\kappa)\) is, in
\(V\) (and therefore also in \(M\)), a sequence of \(\kappa^{++}\) many disjoint
stationary subsets of \(\kappa^{++}\cap\operatorname{Cof}_{\kappa^+}\).
\end{itemize}
Given this arrangement, Friedman and Magidor define a forcing iteration \(\P\) of length
\(\kappa+1\) (with nonstationary support) 
which forces at each inaccessible stage \(\gamma\leq\kappa\) with
\(\operatorname{Sacks}^*(\gamma,\gamma^{++})*\operatorname{Sacks}_{\mathrm{id}^{++}}(\gamma)
*\operatorname{Code}(\gamma)\). Here 
%the conditions in 
%\(\operatorname{Sacks}_{\mathrm{id}^{++}}\) are perfect trees of height \(\gamma\), splitting
%on a club, and in which every splitting node of height \(\delta\) has \(\delta^{++}\)
%many successors; furthermore 
\(\operatorname{Sacks}^*(\gamma,\gamma^{++})\) is a
large product of versions of \(\operatorname{Sacks}(\gamma)\) where the splitting levels are 
restricted to the singular elements of a club and \(\operatorname{Code}(\gamma)\) is a certain
\(\leq\gamma\)-distributive notion of forcing coding information about the stage
\(\gamma\) generics into the stationary sets given by \(f(\gamma)\).

Let \(G\subseteq\P\) be generic. 
We shall use \(\P_{<\kappa}\) to denote the initial segment of the iteration \(\P\) up to stage \(\kappa\), and \(G_{<\kappa}\) will be the corresponding restriction of the generic \(G\).
In the interest of avoiding repeating the analysis of the
forcing notion given in~\cite{FriedmanMagidor2009:NumberNormalMeasures}, we list some of the properties of the extension \(V[G]\)
that we will use (but see~\cite{FriedmanMagidor2009:NumberNormalMeasures} for proofs):
\begin{enumerate}[label=\textup{(\arabic*)}]
\item\label{lab:sacksInaccessible} \(\P\) preserves cardinals and cofinalities, and increases the values of
the continuum function by at most two cardinal steps. 
In particular, any inaccessible cardinals
of \(V\) remain such in \(V[G]\).
\item\label{lab:sacksGCH} We have \(2^\kappa=\kappa^{++}\) in \(V[G]\).
\item\label{lab:sacksProperty} \(\P\) has the \(\kappa\)-Sacks property: for any function 
\(f\colon \kappa\to\mathrm{Ord}\) in \(V[G]\) there is a function \(h\in V\) such that
\(f(\alpha)\in h(\alpha)\) for all \(\alpha\) and \(|h(\alpha)|\leq\alpha^{++}\).
\item The generic \(G\) is self-encoding in a strong way: in \(V[G]\) there is a unique 
\(M\)-generic for \(j(\P)_{<j(\kappa)}\) extending \(j[G_{<\kappa}]\).
\item If \(S_\kappa\) is the generic added by 
\(\operatorname{Sacks}_{\mathrm{id}^{++}}(\kappa)\) within \(\P\), 
then \(\bigcap j[S_\kappa]\) is a \emph{tuning fork}: the union of \(\kappa^{++}\) many 
branches, all of which split off exactly at level \(\kappa\).
\item\label{lab:sacksLift} In \(V[G]\) there are exactly \(\kappa^{++}\) many \(M\)-generics for
\(j(\P)\) extending \(j[G]\), corresponding to the \(\kappa^{++}\) many branches in
\(\bigcap j[S_\kappa]\). In particular, there are exactly \(\kappa^{++}\) many lifts
\(j_\alpha\) of \(j\) to \(V[G]\), distinguished by \(j_\alpha(S_\kappa)(\kappa)=\alpha\)
for \(\alpha<\kappa^{++}\).
\end{enumerate}

\begin{proposition}
\label{prop:FMHasLd}
In the above setup, the iteration \(\P\) adds a measurable Laver function for \(\kappa\).
\end{proposition}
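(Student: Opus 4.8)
The plan is to exploit the exact numerical match between the targets and the available lifts. By~\ref{lab:sacksGCH} we have \(2^\kappa=\kappa^{++}\) in \(V[G]\), so \(|H_{\kappa^+}^{V[G]}|=\kappa^{++}\); and by~\ref{lab:sacksLift} there are exactly \(\kappa^{++}\) lifts \(j_\alpha\colon V[G]\to M[j_\alpha(G)]\) of \(j\), distinguished by \(j_\alpha(S_\kappa)(\kappa)=\alpha\). The idea is to define a single \(\ell\in V[G]\) for which the assignment \(\alpha\mapsto j_\alpha(\ell)(\kappa)\) is a bijection of \(\kappa^{++}\) onto \(H_{\kappa^+}^{V[G]}\); then every target is guessed by exactly one of the \(j_\alpha\). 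To arrange this I use a coherent sequence of local bijections whose reflection enumerates \(H_{\kappa^+}\), read off along the generic branch \(S_\kappa\).

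Concretely, working in \(V[G_{<\kappa}]\), fix for each inaccessible \(\xi<\kappa\) a bijection \(e_\xi\colon\xi^{++}\to H_{\xi^+}^{V[G_{<\kappa}]}\) (these exist since the stage-\(\xi\) forcing gives \(2^\xi=\xi^{++}\) and no subsets of \(\xi\) are added later in \(\P_{<\kappa}\)), and let \(\vec e=\langle e_\xi\rangle\in V[G_{<\kappa}]\). Arranging \(\operatorname{Sacks}_{\mathrm{id}^{++}}(\kappa)\) so that at a splitting level \(\xi\) the immediate successors are indexed by \(\xi^{++}\), I define \(\ell\in V[G]\) by \(\ell(\xi)=e_\xi(S_\kappa(\xi))\) whenever \(\xi\) is a splitting level (so that \(S_\kappa(\xi)<\xi^{++}\)) and \(\ell(\xi)=\emptyset\) otherwise; note \(\ell(\xi)\in H_{\xi^+}\subseteq V_\kappa\). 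Now fix a target \(a\in H_{\kappa^+}^{V[G]}\). Since \(\vec e\in V[G_{<\kappa}]\) and the \(M\)-generic \(H^{*}\) for \(j(\P)_{<j(\kappa)}=j(\P_{<\kappa})\) below \(j(\kappa)\) is unique, and hence the same for every lift, the object \(E:=j_\alpha(\vec e)(\kappa)=j(\vec e)(\kappa)\) does not depend on \(\alpha\); by elementarity it is a bijection of \(\kappa^{++}\) onto \(H_{\kappa^+}\) as computed in \(M[H^{*}]\). Taking \(\alpha=E^{-1}(a)\) (legitimate once we check \(a\in\operatorname{ran}(E)\), see below) and applying \(j_\alpha\) to the definition of \(\ell\), the splitting clause holds at \(\kappa\) and
\[
 j_\alpha(\ell)(\kappa)=j_\alpha(\vec e)(\kappa)\bigl(j_\alpha(S_\kappa)(\kappa)\bigr)=E(\alpha)=a,
\]
so \(\ell\) guesses \(a\) via \(j_\alpha\), as required.

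The main thing to verify---and the step I expect to be the real obstacle---is that the reflected bijection \(E\) actually ranges over \emph{all} of \(H_{\kappa^+}^{V[G]}\), i.e.\ that \(H_{\kappa^+}^{V[G]}\subseteq H_{\kappa^+}^{M[H^{*}]}\). This is precisely where the self-encoding of the generic (item~(4) of the list above) is needed: because \(H^{*}\) extends \(j[G_{<\kappa}]\) and already includes the stage-\(\kappa\) forcing of \(j(\P)\) (which sits below \(j(\kappa)\)), the self-encoding forces its stage-\(\kappa\) component to reconstruct \(g_\kappa\), so that \(M[H^{*}]\) and \(V[G]\) agree on \(\power(\kappa)\) and hence on \(H_{\kappa^+}\); in particular \(a\in\operatorname{ran}(E)\). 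It remains only to note that the embeddings used are genuine measurability embeddings: each \(j_\alpha\) is, after the usual factoring through the derived normal measure \(U_\alpha=\{X\subseteq\kappa:\kappa\in j_\alpha(X)\}\), the ultrapower of \(V[G]\) by a normal measure on \(\kappa\), with the factor map having critical point above \(\kappa\) and therefore fixing the target \(a\in H_{\kappa^+}\). Thus \(\ell\) is a measurable Laver function for \(\kappa\) in \(V[G]\).
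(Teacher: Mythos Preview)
Your proof is essentially correct and shares the paper's core idea---compose the ordinal-guessing function \(\bar\ell(\xi)=S_\kappa(\xi)\) with an enumeration of \(H_{\kappa^+}\)---but the implementation differs. The paper picks an enumeration \(\vec e\) of \(H_{\kappa^+}^{M[G]}\) inside \(M[G]\), takes a name \(\dot e\in M\), and invokes the extender representation to write \(\dot e=j(F)(\kappa)\) for some \(F\in V\); then \(\ell(\gamma)=(F(\gamma)^G)(\bar\ell(\gamma))\). You instead build the enumeration from below, fixing local bijections \(e_\xi\) in \(V[G_{<\kappa}]\) and relying on the uniqueness of the lift through \(j(\P)_{<j(\kappa)}\) to see that \(j_\alpha(\vec e)(\kappa)\) is independent of \(\alpha\). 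This is a valid alternative; the price is the extra verification you flag, namely \(H_{\kappa^+}^{V[G]}\subseteq H_{\kappa^+}^{M[H^*]}\). Your self-encoding remark is on the right track but incomplete as stated: the clean argument is that one can exhibit an \(M\)-generic for \(j(\P)_{<j(\kappa)}\) in \(V[G]\) whose stage-\(\kappa\) component is exactly \(g_\kappa\), whence by the uniqueness in item~(4) this \emph{is} \(H^*\), and the desired equality of \(H_{\kappa^+}\) follows. The paper sidesteps this detour by working directly with \(M[G]\) and noting that the tail of \(j(\P)\) above stage \(\kappa\) is \(\leq\kappa\)-distributive.

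One small wrinkle: you define \(\ell(\xi)=e_\xi(S_\kappa(\xi))\) at \emph{splitting levels}, but your \(e_\xi\) is only defined at inaccessible \(\xi\); splitting levels form a club and need not be inaccessible. Simply restrict the defining clause to inaccessible \(\xi\) with \(S_\kappa(\xi)<\xi^{++}\) (and set \(\ell(\xi)=\emptyset\) elsewhere); since \(\kappa\) is inaccessible in \(M[H^*]\) and \(j_\alpha(S_\kappa)(\kappa)=\alpha<\kappa^{++}\), the intended clause applies at \(\kappa\) and the computation goes through.
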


\begin{proof}
Let \(G\subseteq\P\) be generic. As we stated in item~\ref{lab:sacksLift}, for any \(\alpha<\kappa^{++}\) there is a
lift \(j_\alpha\colon V[G]\to M[j_\alpha(G)]\) of \(j\) such that 
\(j_\alpha(S_\kappa)(\kappa)=\alpha\), where \(S_\kappa\) is the Sacks subset of \(\kappa\)
added by the \(\kappa\)th stage of \(G\). This shows that 
\(\bar{\ell}(\gamma)=S_\kappa(\gamma)\) is a \(\kappa^{++}\)-guessing measurable Laver
function for \(\kappa\).

Note that all of the subsets of \(\kappa\) in \(M[j_\alpha(G)]\) (and \(V[G]\)) 
appear already in \(M[G]\); this is because the part of \(j(\P)\) above stage \(\kappa\) is
forced to be \(\leq\kappa\)-distributive. 
Let \(\vec{e}=\langle e_\alpha;\alpha<\kappa^{++}\rangle\) be
an enumeration of \(H_{\kappa^+}^{M[G]}\) in \(M[G]\) and let \(\dot{e}\in M\) be
a name for \(\vec{e}\). We can write \(\dot{e}=j(F)(\kappa)\) for some function \(F\),
defined on \(\kappa\). Now define a function \(\ell\colon\kappa\to V_\kappa\) in \(V[G]\) by
\(\ell(\gamma)=(F(\gamma)^G)(\bar{\ell}(\gamma))\). This is, in fact, our desired Laver
function; given an arbitrary element of \(H_{\kappa^+}^{V[G]}\), we can find it in the 
enumeration \(\vec{e}\). If \(\alpha\) is its index, then
\[
j_\alpha(\ell)(\kappa) =
\bigl(j_\alpha(F)(\kappa)^{j_\alpha(G)}\bigr) (j_\alpha(\bar{\ell})(\kappa)) =
\bigl(j(F)(\kappa)^{j_\alpha(G)}\bigr) (\alpha) =
\vec{e}(\alpha)= e_\alpha\,.
\qedhere
\]
\end{proof}

\begin{theorem}
\label{thm:FMDoesntHaveLongjLd}
Suppose \(\kappa\) is \((\kappa+2)\)-strong and assume that \(V=L[\vec{E}]\) is the
minimal extender model witnessing this. Then there is a forcing extension in which
\(2^\kappa=\kappa^{++}\), the cardinal \(\kappa\) remains measurable, 
\(\kappa\) carries a measurable Laver function, and there are no measurable joint Laver
sequences for \(\kappa\) of length \(\kappa^+\).
\end{theorem}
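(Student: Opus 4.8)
The plan is to build the desired model inside the Friedman--Magidor framework set up above and then to read off the nonexistence of long joint Laver sequences purely from a measure count. Specializing Proposition~\ref{prop:JLDiamondHasManyMeasures} to the measurable case (\(\theta=\kappa\), \(\lambda=\kappa^+\)), a joint Laver sequence of length \(\kappa^+\) would force the existence of at least \(2^{\kappa\cdot\kappa^+}=2^{\kappa^+}\) normal measures on \(\kappa\). So it suffices to build a forcing extension in which \(2^\kappa=\kappa^{++}\), there is a measurable Laver function, and there are strictly fewer than \(2^{\kappa^+}\) normal measures on \(\kappa\). These requirements already pin down the rough shape of the model: the Laver function of Proposition~\ref{prop:FMHasLd} must guess every element of \(H_{\kappa^+}\), of which there are \(2^\kappa=\kappa^{++}\) many, so there must be \emph{at least} \(\kappa^{++}\) normal measures; combined with the bound just mentioned, this forces us to keep the number of measures at exactly \(\kappa^{++}\) while arranging \(2^{\kappa^+}>\kappa^{++}\).

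With this target in mind, I would first blow up the continuum at \(\kappa^+\) and only then run the iteration \(\P\). Concretely, force over \(V=L[\vec{E}]\) with \(\Add(\kappa^+,\kappa^{+3})\); since \(V\) satisfies GCH this poset is \(\leq\kappa\)-closed and \(\kappa^{++}\)-cc, hence it preserves all cardinals, adds no new subsets of \(\kappa\) (so GCH still holds up to and including \(\kappa\) and \(\kappa\) remains measurable), and yields \(2^{\kappa^+}=\kappa^{+3}\). After lifting the extender embedding \(j\) through this forcing, the three bullet hypotheses of the Friedman--Magidor setup persist, so one may run \(\P\) over the resulting model \(V[H]\). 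In \(V[H][G]\) we then have \(2^\kappa=\kappa^{++}\) by item~\ref{lab:sacksGCH}, a measurable Laver function by Proposition~\ref{prop:FMHasLd}, exactly \(\kappa^{++}\) many normal measures on \(\kappa\) by item~\ref{lab:sacksLift} together with the Friedman--Magidor counting argument, and, since \(\P\) preserves cardinals by item~\ref{lab:sacksInaccessible}, still \(2^{\kappa^+}\geq\kappa^{+3}\). As \(\kappa^{++}<\kappa^{+3}\leq 2^{\kappa^+}\), Proposition~\ref{prop:JLDiamondHasManyMeasures} rules out any joint Laver sequence of length \(\kappa^+\), as required.

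The main obstacle is exactly this ``simultaneous control'' of the continuum at \(\kappa^+\) and the number of measures. One cannot simply blow up \(2^{\kappa^+}\) \emph{after} making \(2^\kappa=\kappa^{++}\): once \(2^{<\kappa^+}=\kappa^{++}>\kappa^+\), the poset \(\Add(\kappa^+,\kappa^{+3})\) collapses \(\kappa^{++}\), destroying both \(2^\kappa=\kappa^{++}\) and the Laver function; this is why the blow-up must happen in the GCH ground model, before the Sacks-style iteration. The genuinely delicate step, however, is re-verifying the Friedman--Magidor count over the non-GCH ground model \(V[H]\): one must check that the self-encoding property of the generic listed above, together with the minimality of \(L[\vec{E}]\), still forces every normal measure on \(\kappa\) to arise from one of the \(\kappa^{++}\) lifts of \(j\) recorded in item~\ref{lab:sacksLift}, so that enlarging \(2^{\kappa^+}\) contributes no extra measures. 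Along the way one also has to confirm that \(\Add(\kappa^+,\kappa^{+3})\) preserves the extender embedding \(j\), the equality \((\kappa^{++})^M=\kappa^{++}\), and the stationarity of the sets in \(j(f)(\kappa)\) on which the coding forcing \(\operatorname{Code}(\gamma)\) depends.
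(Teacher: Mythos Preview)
Your proposal is correct and follows essentially the same approach as the paper: first force with \(\Add(\kappa^+,\kappa^{+3})\) to push \(2^{\kappa^+}\) to \(\kappa^{+3}\), then run the Friedman--Magidor iteration, and conclude via Proposition~\ref{prop:JLDiamondHasManyMeasures} once the measure count is pinned at \(\kappa^{++}\). The paper fills in exactly the obstacle you flag---showing that every normal measure in \(V[g][G]\) arises from one of the \(\kappa^{++}\) lifts of \(j\)---by a core-model argument: any such measure restricts on \(V=L[\vec{E}]\) to a normal iteration of \(\vec{E}\), the first extender applied must be the top one (using item~\ref{lab:sacksInaccessible} and coherence), and the \(\kappa\)-Sacks property of \(\P\) (item~\ref{lab:sacksProperty}) forces the iteration to terminate after that single step.
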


%This finally answers Question~\ref{q:FastFailureOfjLd} in the positive.

\begin{proof}
Let \(j\colon V\to M\) be the ultrapower embedding by the top extender of \(\vec{E}\),
the unique extender witnessing the \((\kappa+2)\)-strongness of \(\kappa\). In particular,
every element of \(M\) has the form \(j(f)(\alpha)\) for some \(\alpha<j(\kappa)\), and
\(M\) computes \(\kappa^{++}\) correctly. Furthermore, \(V\) has a canonical
\(\diamond_{\kappa^{++}}(\operatorname{Cof}_{\kappa^+})\)-sequence, which is definable without parameters over \(H_{\kappa^{++}}\) via the standard condensation argument. 
Since \(H_{\kappa^{++}}\in M\), this same sequence is also in \(M\)
and is of the form \(j(\bar{f})(\kappa)\) for some function \(\bar{f}\), since it is definable
in \(M\) just from the parameter \(\kappa\).
By having this diamond sequence guess the singletons \(\{\xi\}\) for \(\xi<\kappa^{++}\),
we obtain a sequence of \(\kappa^{++}\) many disjoint stationary subsets of
\(\kappa^{++}\cap\operatorname{Cof}_{\kappa^+}\), and this sequence itself has the form
\(j(f)(\kappa)\) for some function \(f\). We are therefore in a situation where
the definition of the Friedman--Magidor iteration we described above makes sense.
But first, we shall carry out some preliminary forcing.

Let \(g\subseteq\Add(\kappa^+,\kappa^{+3})\) be generic. Since this Cohen poset is
\(\leq\kappa\)-distributive, the embedding \(j\) lifts (uniquely) to an embedding
\(j\colon V[g]\to M[j(g)]\).\footnote{The lifted embedding will not be
a \((\kappa+2)\)-strongness embedding and, in fact, \(\kappa\) is no longer 
\((\kappa+2)\)-strong in \(V[g]\). Nevertheless, the residue of strongness will suffice
for our argument.}
Let us examine the lifted embedding \(j\). It is still an extender embedding.
Additionally, since GCH holds in \(V\), the forcing \(\Add(\kappa^+,\kappa^{+3})\)
preserves cardinals, cofinalities, and stationary subsets of \(\kappa^{++}\).
Together, this means that \(M[j(g)]\) computes \(\kappa^{++}\) correctly, and the stationary
sets given by the sequence \(j(f)(\kappa)\) above remain stationary. Therefore we may
still define the Friedman--Magidor iteration \(\P\) over \(V[g]\).

Let \(G\subseteq\P\) be generic over \(V[g]\). We claim that \(V[g][G]\) is the model
we want. We have \(2^\kappa=\kappa^{++}\) in the extension, by item~\ref{lab:sacksGCH} of our list, and
Proposition~\ref{prop:FMHasLd} implies that \(\kappa\) is
measurable in \(V[g][G]\) and \(\Ldmeas_\kappa\) holds there. So it remains for us to see
that \(\jLdmeas_{\kappa,\kappa^+}\) fails. By Proposition~\ref{prop:JLDiamondHasManyMeasures}
it suffices to show that \(\kappa\) does not carry \(2^{\kappa^+}=\kappa^{+3}\) many
normal measures in \(V[g][G]\).

Let \(U^*\in V[g][G]\) be a normal measure on \(\kappa\) and let
\(j^*\colon V[g][G]\to N[j^*(g)][j^*(G)]\) be its associated ultrapower embedding.
This embedding restricts to \(j^*\rest V\colon V\to N\). Since \(V\) is the core model
from the point of view of \(V[g][G]\), the 
embedding \(j^*\rest V\) arises as the ultrapower map associated to a normal iteration
of extenders on the sequence \(\vec{E}\) (see~\cite[Section 7.4]{Zeman2002:InnerModels} for more
details).

We first claim that the first extender applied in this iteration is the top extender of
\(\vec{E}\). Let us write \(j^*\rest V=j_1\circ j_0\), where \(j_0\colon V\to N_0\) results from the
first applied extender. Clearly \(j_0\) has critical point \(\kappa\). 
Now suppose that \(j_0(\kappa)<\kappa^{++}\). Of course,
\(j_0(\kappa)\) is inaccessible in \(N_0\) and, since \(N\) is an inner model of \(N_0\),
also in \(N\). But \(j_0(\kappa)\) is not inaccessible in \(N[j^*(g)][j^*(G)]\), since
\(2^\kappa=\kappa^{++}\) there. This is a contradiction, since passing from \(N\)
to \(N[j^*(g)][j^*(G)]\) preserves inaccessibility, by item~\ref{lab:sacksInaccessible} of our list.

It follows that we must have \(j_0(\kappa)\geq\kappa^{++}\). We will argue that
the extender \(E\) applied to get \(j_0\) witnesses the \((\kappa+2)\)-strongness of 
\(\kappa\), so it must be the top extender of \(\vec{E}\). 
Using a suitable indexing of
\(\vec{E}\), the extender \(E\) has index \((j_0(\kappa)^+)^{N_0}>\kappa^{++}\),
and the coherence of the extender sequence implies that the sequences in \(V\) and in
\(N_0\) agree up to \(\kappa^{++}\). By the acceptability of these extender models
it now follows that \(H_{\kappa^{++}}^V=H_{\kappa^{++}}^{N_0}\) or, equivalently,
\(V_{\kappa+2}\in N_0\). This means that \(j_0\) is the \((\kappa+2)\)-strongness ultrapower of \(V\),
and is equal to the embedding \(j\) we started with.

Finally, we claim that the iteration giving rise to \(j^*\rest V\) ends after one step, meaning
that \(j^*\rest V=j_0=j\). Suppose to the contrary that \(j_1\) is nontrivial. By the normality of
the iteration, the critical point of \(j_1\) must be some \(\lambda>\kappa\).
We can find a function \(h\in V[g][G]\), defined on \(\kappa\), such that
\(j^*(h)(\kappa)=\lambda\), since \(j^*\) is given by an ultrapower of
\(V[g][G]\) by a normal measure on \(\kappa\). 
By the \(\kappa\)-Sacks property of \(\P\) (see item~\ref{lab:sacksProperty}) we can cover the function
\(h\) by a function \(\bar{h}\in V[g]\); in fact, since the forcing to add \(g\)
was \(\leq\kappa\)-closed, we have \(\bar{h}\in V\).
Now
\[
\lambda=j^*(h)(\kappa)\in j^*(\bar{h})(\kappa) = j_1(j_0(\bar{h}))(j_1(\kappa))
= j_1(j_0(\bar{h})(\kappa))
\]
and \(A=j_0(\bar{h})(\kappa)\) has cardinality at most \(\kappa^{++}\) in \(M\), using the
properties of \(\bar{h}\).
In particular, since \(\kappa^{++}<\lambda\), we have \(\lambda\in j_1(A)=j_1[A]\),
which is a contradiction, since \(\lambda\) was the critical point of \(j_1\).

We can conclude that any embedding \(j^*\) arising from a normal measure on \(\kappa\)
in \(V[g][G]\) is a lift of the ground model \((\kappa+2)\)-strongness embedding \(j\).
But there are exactly \(\kappa^{++}\) many such lifts: the lift to \(V[g]\) is unique,
and there are \(\kappa^{++}\) many possibilities for the final lift to \(V[g][G]\), according
to item~\ref{lab:sacksLift}.
Therefore there are only \(\kappa^{++}\) many normal measures on \(\kappa\)
in \(V[g][G]\).
\end{proof}

Ben-Neria and Gitik~\cite{BenNeriaGitik:UniqueMeasureWithoutGCH} have announced that the consistency strength required to
achieve the failure of GCH at a measurable cardinal carrying a unique normal measure
is exactly that of a measurable cardinal \(\kappa\) with \(o(\kappa)=\kappa^{++}\). Their method is flexible enough
to allow us to incorporate it into our proof of theorem~\ref{thm:FMDoesntHaveLongjLd},
reducing the consistency strength hypothesis required there from a \((\kappa+2)\)-strong
cardinal \(\kappa\) to just \(o(\kappa)=\kappa^{++}\). We have chosen to present the proof
based on the original Friedman--Magidor argument since it avoids some complications
arising from using the optimal hypotheses

\subsection{(Joint) Laver diamonds and the number of normal measures}

The only method of controlling the existence of (joint) Laver diamonds we have seen
is by controlling the number of large cardinal measures, relying on the rough bound
given by Proposition~\ref{prop:JLDiamondHasManyMeasures}. One has to wonder whether
merely the existence of sufficiently many measures guarantees the existence of
(joint) Laver diamonds. We focus on the simplest form of the question, concerning
measurable cardinals.

\begin{question}
	\label{q:MeasuresGiveLavers}
	Suppose \(\kappa\) is measurable and there are at least \(2^\lambda\) many normal measures
	on \(\kappa\) for some \(\lambda\geq\kappa\). 
	%Does \(\jLd_{\kappa,\lambda}\) hold?
	Does there exist a measurable joint Laver sequence for \(\kappa\) of length \(\lambda\)?
\end{question}

As the special case when \(\lambda=\kappa\), the question includes the possibility
that having \(2^\kappa\) many normal measures, the minimum required, suffices to give
the existence of a measurable Laver function for \(\kappa\).
Even in this very special case
it seems implausible that simply having enough measures
would automatically yield a Laver function. Nevertheless, in all of the examples
of models obtained by forcing and in which we have control over the number of measures
that we have seen, Laver functions have existed. 
On the other hand, Laver functions and joint Laver sequences also exist in
canonical inner models that have sufficiently many measures. These models carry
long Mitchell-increasing sequences of normal measures that we can use to obtain
ordinal-guessing Laver functions. We can then turn these into actual Laver functions
by exploiting the structure of these models.

\begin{definition}
	Let \(A\) be a set (or class) of ordinals and let \(\bar{\ell}\) be an \(A\)-guessing
	Laver function for some large cardinal \(\kappa\). Let \(\triangleleft\) be some
	wellorder (one arising from an \(L\)-like inner model, for example). 
	We say that \(\triangleleft\) is \emph{suitable} for \(\bar{\ell}\) if,
	for any \(\alpha\in A\), there is an elementary embedding \(j\), witnessing the largeness of
	\(\kappa\), such that \(\bar{\ell}\) guesses \(\alpha\) via \(j\) and \(j(\triangleleft)\rest 
	(\alpha+1) = \triangleleft \rest (\alpha+1)\); that is, the wellorders \(j(\triangleleft)\)
	and \(\triangleleft\) agree on their first \(\alpha+1\) many elements.
	
	If \(\mathcal{J}\) is a class of elementary embeddings witnessing the largeness of 
	\(\kappa\),
	we say that \(\triangleleft\) is \emph{supersuitable for \(\mathcal{J}\)} if 
	\(j(\triangleleft)\rest j(\kappa)=\triangleleft\rest j(\kappa)\) for any \(j\in\mathcal{J}\).
\end{definition}

We could, for example, take the class \(\mathcal{J}\) to consist of all ultrapower embeddings
by normal measures on \(\kappa\) or, more to the point, all ultrapower embeddings arising
from a fixed family of extenders. We should also note that, for the notion to make sense,
the order type of \(\triangleleft\) must be quite high: at least \(\sup A\) in the case
of wellorders suitable for an \(A\)-guessing Laver function and at least
\(\sup_{j\in\mathcal{J}} j(\kappa)\) for supersuitable wellorders (the latter would also
make sense if the order type of \(\triangleleft\) were smaller than \(\kappa\), but that
case is not of much interest).

%Clearly any supersuitable wellorder is suitable for any ordinal-guessing Laver function
%\(\bar{\ell}\), provided that the class \(\mathcal{J}\) includes the embeddings via which
%\(\bar{\ell}\) guesses its targets. 
If \(\bar{\ell}\) is an ordinal-guessing Laver function and \(\mathcal{J}\) is a class of
elementary embeddings that includes all the embeddings that \(\bar{\ell}\) requires to guess
its targets, then any wellorder that is supersuitable for \(\mathcal{J}\) is also
suitable for \(\bar{\ell}\).
The following lemma describes the way in which 
suitable wellorders will be used to turn ordinal-guessing Laver functions into set-guessing 
ones.

\begin{lemma}
	\label{lemma:SetGuessingFromOrdinalGuessing}
	Let \(A\) be a set (or class) of ordinals and let \(\bar{\ell}\) be an
	\(A\)-guessing Laver function for some large cardinal \(\kappa\). Let \(\triangleleft\)
	be a wellorder such that \(\operatorname{otp}(\triangleleft)\subseteq A\).
	If \(\triangleleft\) is suitable for \(\bar{\ell}\), then there is a \(B\)-guessing Laver 
	function for \(\kappa\), where \(B\) is the field of \(\triangleleft\).
\end{lemma}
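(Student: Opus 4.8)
The plan is to use the suitable wellorder $\triangleleft$ to convert the ordinal $\bar{\ell}(\xi)$ produced by the ordinal-guessing Laver function into an actual \emph{set}, in such a way that the guessing property is preserved. The key idea is that a wellorder of a sufficiently large field assigns to each ordinal $\alpha$ a unique object (the $\alpha$-th element of $\triangleleft$, or equivalently the object whose $\triangleleft$-predecessors have order type $\alpha$), and the agreement condition $j(\triangleleft)\rest(\alpha+1)=\triangleleft\rest(\alpha+1)$ guarantees that applying $j$ to this decoding gives the correct target downstairs. Concretely, I would define $\ell\colon\kappa\to V_\kappa$ by setting $\ell(\xi)$ to be the $\bar{\ell}(\xi)$-th element of the wellorder $\triangleleft$ (that is, the unique $b$ whose set of $\triangleleft$-predecessors has order type $\bar{\ell}(\xi)$) whenever this makes sense — i.e.\ when $\bar{\ell}(\xi)$ is an ordinal below the order type of $\triangleleft\rest\kappa$ — and $\ell(\xi)=\emptyset$ otherwise. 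Since $\triangleleft$ is a wellorder, this decoding map is a definable bijection between an initial segment of ordinals and the field $B$ of $\triangleleft$.

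First I would fix notation for the decoding function: for a wellorder $\triangleleft$ with field $B$, let $\pi_\triangleleft\colon\operatorname{otp}(\triangleleft)\to B$ be the inverse of the Mostowski-type collapse, so $\pi_\triangleleft(\alpha)$ is the $\alpha$-th element in the $\triangleleft$-increasing enumeration of $B$. Then I would define $\ell(\xi)=\pi_{\triangleleft\rest V_\kappa}(\bar{\ell}(\xi))$ when defined and $\emptyset$ otherwise, taking care that $\ell$ really maps into $V_\kappa$ (which follows because $\bar{\ell}$ maps into $\kappa$ and we are decoding via the restriction of $\triangleleft$ to small objects). Next, to verify the guessing property, I would take an arbitrary target $b\in B$ and let $\alpha$ be its $\triangleleft$-rank, so $\pi_\triangleleft(\alpha)=b$; since $\operatorname{otp}(\triangleleft)\subseteq A$, we have $\alpha\in A$, so suitability gives an embedding $j$ witnessing the largeness of $\kappa$ with $\bar{\ell}$ guessing $\alpha$ via $j$, that is $j(\bar{\ell})(\kappa)=\alpha$, and moreover $j(\triangleleft)\rest(\alpha+1)=\triangleleft\rest(\alpha+1)$.

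The crux is then the computation showing $j(\ell)(\kappa)=b$. By elementarity $j(\ell)(\kappa)=\pi_{j(\triangleleft)\rest V_{j(\kappa)}}(j(\bar{\ell})(\kappa))=\pi_{j(\triangleleft)}(\alpha)$, i.e.\ the $\alpha$-th element of $j(\triangleleft)$. The agreement hypothesis $j(\triangleleft)\rest(\alpha+1)=\triangleleft\rest(\alpha+1)$ is precisely what is needed: it says that the $\triangleleft$-increasing enumerations of $j(\triangleleft)$ and $\triangleleft$ coincide on the first $\alpha+1$ elements, hence $\pi_{j(\triangleleft)}(\alpha)=\pi_\triangleleft(\alpha)=b$. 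This closes the argument. The main obstacle I anticipate is bookkeeping around the decoding being well-defined and absolute: I must ensure that ``the $\alpha$-th element of $\triangleleft$'' is computed identically upstairs and downstairs for the relevant $\alpha$, which is exactly the role of the agreement on an initial segment of length $\alpha+1$, and I must confirm that the restriction of $\triangleleft$ used to define $\ell$ is small enough to keep $\ell$ a function into $V_\kappa$ while still recovering the full field $B$ as the range under the images $j(\triangleleft)$. A minor subtlety is that elementarity converts $\pi_{\triangleleft}$ into $\pi_{j(\triangleleft)}$ (the decoding relative to the image wellorder), so the agreement condition must be read as comparing these two enumerations, and it is worth stating explicitly that $\operatorname{otp}(\triangleleft)\subseteq A$ is what licenses invoking suitability at the ordinal $\alpha=\operatorname{rank}_\triangleleft(b)$.
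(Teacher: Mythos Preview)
Your argument is essentially the paper's own proof: define \(\ell(\xi)\) as the \(\bar{\ell}(\xi)\)-th element of \(\triangleleft\), and use the suitability agreement \(j(\triangleleft)\rest(\alpha+1)=\triangleleft\rest(\alpha+1)\) to see that \(j(\ell)(\kappa)=\pi_{j(\triangleleft)}(\alpha)=\pi_\triangleleft(\alpha)=b\). The one unnecessary complication you introduce is the restriction to \(\triangleleft\rest V_\kappa\) in the definition of \(\ell\): the general framework (Definition~\ref{def:guessingFunctions}) does not require a Laver function to map into \(V_\kappa\), so the paper simply lets \(\ell(\xi)\) be the \(\bar{\ell}(\xi)\)-th element of \(\triangleleft\) without restriction, avoiding your unjustified step \(\pi_{j(\triangleleft)\rest V_{j(\kappa)}}(\alpha)=\pi_{j(\triangleleft)}(\alpha)\).
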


\begin{proof}
	We can define a \(B\)-guessing Laver function by simply letting
	\(\ell(\xi)\) be the \(\bar{\ell}(\xi)\)th element of \(\triangleleft\). Then,
	given a target \(b\in B\), we can find its index \(\alpha\) in the wellorder 
	\(\triangleleft\) and an embedding \(j\) such that \(j(\bar{\ell})(\kappa)=\alpha\).
	Since \(\triangleleft\) is suitable for \(\bar{\ell}\), the orders \(\triangleleft\) and
	\(j(\triangleleft)\) agree on their \(\alpha\)th element and so \(\ell\) guesses \(b\)
	via \(j\).
\end{proof}

It follows from the above lemma that in any model with a sufficiently supersuitable
wellorder, being able to guess ordinals suffices to be able to guess arbitrary sets.

\begin{lemma}
	Let \(X\) be a set (or class) of ordinals and let \(\mathcal{J}\) be a class of elementary
	embeddings of \(L[X]\) with critical point \(\kappa\) such that 
	\(j(X)\cap j(\kappa) = X\cap j(\kappa)\) for any
	\(j\in\mathcal{J}\).
	Then \(\leq_X\), the canonical order of \(L[X]\), is supersuitable for \(\mathcal{J}\).
\end{lemma}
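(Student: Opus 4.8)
The plan is to verify the definition of supersuitability directly for each \(j\in\mathcal{J}\), the key point being that the \(L[X]\)-hierarchy is locally determined by \(X\). First I would observe that, since the canonical wellorder is defined by a uniform recursion over the constructible hierarchy, elementarity gives \(j(\leq_X)=\leq_{j(X)}\), the canonical order of the target model \(M=L[j(X)]\). The task thus reduces to showing that \(\leq_X\) and \(\leq_{j(X)}\) agree on their first \(j(\kappa)\) many elements.

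Next I would establish the localization step: writing \(L_{\alpha+1}[X]=\mathrm{Def}\bigl(L_\alpha[X],\in,X\cap L_\alpha[X]\bigr)\) and recalling that \(\mathrm{Ord}\cap L_\alpha[X]=\alpha\), the only information about \(X\) entering the formation of \(L_\alpha[X]\) (and the definition of the canonical order on it) is \(X\cap\alpha\). Since the hypothesis gives \(X\cap j(\kappa)=j(X)\cap j(\kappa)\), a routine induction on \(\alpha\le j(\kappa)\) yields \(L_\alpha[X]=L_\alpha[j(X)]\) together with \(\leq_X\rest L_\alpha[X]=\leq_{j(X)}\rest L_\alpha[j(X)]\). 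In particular the two canonical orders are literally identical on the common level \(L_{j(\kappa)}[X]=L_{j(\kappa)}[j(X)]\), and this common set is an initial segment of each.

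It then remains to check that this common initial segment already contains the first \(j(\kappa)\) elements of each order, i.e.\ that \(\mathrm{otp}\bigl(\leq_X\rest L_{j(\kappa)}[X]\bigr)\ge j(\kappa)\). Here I would invoke the standard fact that the canonical wellorder restricted to the ordinals coincides with their usual \(\in\)-order: every ordinal \(\beta<j(\kappa)\) lies in \(L_{j(\kappa)}[X]\) and is \(\leq_X\)-above all smaller ordinals, so its rank in \(\leq_X\) is at least \(\beta\); letting \(\beta\) range up to \(j(\kappa)\) forces the order type of the initial segment to be at least \(j(\kappa)\) (note that this needs no cardinality assumption on \(j(\kappa)\)). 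Combining the three steps, the first \(j(\kappa)\) elements of \(\leq_X\) and of \(\leq_{j(X)}\) both lie in the common segment on which the two orders coincide, so \(j(\leq_X)\rest j(\kappa)=\leq_{j(X)}\rest j(\kappa)=\leq_X\rest j(\kappa)\), which is precisely supersuitability.

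The step I expect to require the most care is the localization: making precise that the canonical order on each level \(L_\alpha[X]\) depends only on \(X\cap\alpha\) (so that the induction transports the order and not merely the underlying set) and confirming that \(L_{j(\kappa)}[X]\) is long enough to absorb the first \(j(\kappa)\) elements. Both rest on elementary, well-known properties of the fine structure of \(L[X]\), so I anticipate no genuinely hard obstacle.
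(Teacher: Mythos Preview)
Your proposal is correct and follows essentially the same approach as the paper: both arguments rest on the fact that \(\leq_X\rest j(\kappa)\) is definable in \(L_{j(\kappa)}[X]\), and that the coherence hypothesis \(X\cap j(\kappa)=j(X)\cap j(\kappa)\) forces \(L_{j(\kappa)}[X]=L_{j(\kappa)}[j(X)]\). The paper dispatches this in a single sentence (``this is obvious''), whereas you spell out the localization induction and the order-type lower bound explicitly; one small imprecision is that \(j(\leq_X)\) is a priori only the canonical order of the target model \(M\) rather than of the full \(L[j(X)]\), but since the initial segment \(L_{j(\kappa)}[j(X)]\) and its canonical order are absolute this does not affect the argument.
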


\begin{proof}
	This is obvious; the order \(\leq_X\rest j(\kappa)\) is definable in \(L_{j(\kappa)}[X]\),
	but by our coherence hypothesis this structure is just the same as \(L_{j(\kappa)}[j(X)]\).
\end{proof}

We are mostly interested in this lemma in the case when \(X=\vec{E}\) is an extender sequence
and \(L[\vec{E}]\) is an extender model in the sense of~\cite{Zeman2002:InnerModels}.
In particular, we want \(\vec{E}\) to be \emph{acceptable} (a technical condition which 
implies enough condensation properties in \(L[\vec{E}]\) to conclude
\(H_\lambda^{L[\vec{E}]}=L_\lambda[\vec{E}]\)), \emph{coherent} (meaning that if
\(j\colon L[\vec{E}]\to L[\vec{F}]\) is an ultrapower by the \(\alpha\)th extender of
\(\vec{E}\) then \(\vec{F}\rest (\alpha+1)= \vec{E}\rest\alpha\)), and to use Jensen indexing
(meaning that the index of an extender \(E\) on \(\vec{E}\) with critical point \(\kappa\) 
is \(j_E(\kappa)^+\), as computed in the ultrapower).

\begin{corollary}
	\label{cor:SupersuitableOrderInExtenderModels}
	Let \(V=L[\vec{E}]\) be an extender model.
	Then the canonical wellorder is supersuitable for the class of ultrapower embeddings
	by the extenders on the sequence \(\vec{E}\).
\end{corollary}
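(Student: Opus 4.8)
The plan is to read this off the preceding lemma, applied with \(X=\vec{E}\). To set this up one fixes a coding of the extender sequence as a class of ordinals that is \emph{local}, in the sense that for each relevant cardinal \(\lambda\) the ordinals below \(\lambda\) in the code capture exactly the extenders of \(\vec{E}\) indexed below \(\lambda\). The acceptability of \(\vec{E}\), which yields \(H_\lambda^{L[\vec{E}]}=L_\lambda[\vec{E}]\), guarantees that such a local coding is available. With this identification in place, the supersuitability of the canonical order \(\leq_{\vec{E}}\) follows immediately from the lemma, \emph{provided} one verifies its coherence hypothesis: that \(j(\vec{E})\cap j(\kappa)=\vec{E}\cap j(\kappa)\) for every ultrapower \(j\colon L[\vec{E}]\to L[j(\vec{E})]\) given by an extender \(E\) on \(\vec{E}\) with \(\cp(j)=\kappa\).

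So the only real work is to check this hypothesis, and here the Jensen indexing convention and coherence of \(\vec{E}\) do everything. First I would recall that, under Jensen indexing, the extender \(E\) producing \(j\) sits on \(\vec{E}\) at index \(\alpha=j_E(\kappa)^+=j(\kappa)^+\). Then I would invoke coherence directly: since \(j\) is the ultrapower by the \(\alpha\)th extender of \(\vec{E}\), we have \(j(\vec{E})\rest(\alpha+1)=\vec{E}\rest\alpha\), so the two sequences carry exactly the same extenders at every index below \(\alpha=j(\kappa)^+\), and in particular at every index below \(j(\kappa)\). Passing back through the local coding, this agreement of sequences on all indices below \(j(\kappa)\) is precisely the statement \(j(\vec{E})\cap j(\kappa)=\vec{E}\cap j(\kappa)\). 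The lemma then delivers the conclusion, that \(\leq_{\vec{E}}\) is supersuitable for the class of ultrapower embeddings by the extenders on \(\vec{E}\).

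The one point demanding care is purely bookkeeping: matching the sequence-level formulation of coherence, \(j(\vec{E})\rest(\alpha+1)=\vec{E}\rest\alpha\), with the set-of-ordinals hypothesis \(j(\vec{E})\cap j(\kappa)=\vec{E}\cap j(\kappa)\) of the lemma. This is where Jensen indexing and acceptability are essential: indexing places the sole point of disagreement—the extender \(E\) itself—at \(j(\kappa)^+\), comfortably above \(j(\kappa)\), so nothing disrupts the agreement below \(j(\kappa)\); and acceptability makes the coding local enough that this agreement of sequences is faithfully reflected as an agreement of codes below \(j(\kappa)\). Granting these standard facts about acceptable, coherent, Jensen-indexed extender models, the corollary follows with no further computation.
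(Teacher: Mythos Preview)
Your proposal is correct and follows essentially the same route as the paper: apply the preceding lemma with \(X=\vec{E}\), and verify its hypothesis \(j(\vec{E})\cap j(\kappa)=\vec{E}\cap j(\kappa)\) using coherence together with Jensen indexing. The paper's proof is a single sentence invoking exactly these two facts; you have simply spelled out the details of why they suffice, including the bookkeeping about the index \(\alpha=(j(\kappa)^+)^{\text{ult}}\) sitting above \(j(\kappa)\).
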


\begin{proof}
	This is immediate from the preceding lemma and the fact that our extender sequences are
	coherent and use Jensen indexing.
\end{proof}

\begin{theorem}
	\label{thm:LaverFunctionsInExtenderModels}
	Let \(V=L[\vec{E}]\) be an extender model. Let \(\kappa\) be a
	cardinal such that every normal measure on \(\kappa\) appears on the sequence \(\vec{E}\). 
	If \(o(\kappa)\geq\kappa^+\) then \(\Ldmeas_\kappa\) holds.
	Moreover, if \(o(\kappa)=\kappa^{++}\) then \(\jLdmeas_{\kappa,\kappa^+}\),
	and even \(\Ldmeas_\kappa(H_{\kappa^{++}})\), holds.
\end{theorem}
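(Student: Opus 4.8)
The plan is to reduce both conclusions to \emph{ordinal} guessing and then transfer to set guessing via the machinery of Lemma~\ref{lemma:SetGuessingFromOrdinalGuessing} and Corollary~\ref{cor:SupersuitableOrderInExtenderModels}; the only genuinely new ingredient is that the Mitchell order supplies a canonical ordinal-guessing Laver function. Using the hypothesis that every normal measure on \(\kappa\) lies on \(\vec{E}\), I would fix the Mitchell-increasing enumeration \(\langle U_\xi;\xi<o(\kappa)\rangle\) of all normal measures on \(\kappa\), with ultrapowers \(j_\xi\colon V\to M_\xi\), and set \(\bar{\ell}(\nu)=o(\nu)\), the Mitchell order of \(\nu\) computed in \(V=L[\vec{E}]\). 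Since \(\kappa\) is inaccessible this is a function \(\kappa\to\kappa\), and because \(o(\cdot)\) is definable uniformly over the relevant models, elementarity gives \(j_\xi(\bar{\ell})(\kappa)=o^{M_\xi}(\kappa)\). The defining feature of a Mitchell-increasing sequence is that the normal measures on \(\kappa\) surviving into \(M_\xi\) are exactly the \(U_\eta\) with \(\eta<\xi\), so \(o^{M_\xi}(\kappa)=\xi\). Thus \(\bar{\ell}\) is an \(o(\kappa)\)-guessing Laver function, the ordinal \(\xi\) being witnessed by \(U_\xi\).

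Next I would convert \(\bar{\ell}\) into a set-guessing function. Let \(\mathcal{J}\) be the class of ultrapowers by extenders on \(\vec{E}\); by hypothesis each \(j_\xi\in\mathcal{J}\), so \(\mathcal{J}\) contains every embedding \(\bar{\ell}\) uses, and by Corollary~\ref{cor:SupersuitableOrderInExtenderModels} the canonical order \(\leq_{\vec{E}}\) is supersuitable for \(\mathcal{J}\). By the remark preceding Lemma~\ref{lemma:SetGuessingFromOrdinalGuessing}, \(\leq_{\vec{E}}\) is therefore suitable for \(\bar{\ell}\); moreover its restriction to any \(H_\lambda^{L[\vec{E}]}=L_\lambda[\vec{E}]\) remains suitable, since the initial segments that suitability constrains (through ordinals \(\xi<o(\kappa)\) with \(\xi=o^{M_\xi}(\kappa)<j_\xi(\kappa)\)) lie strictly below the truncation. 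If \(o(\kappa)\geq\kappa^+\), I would take \(\triangleleft=\leq_{\vec{E}}\rest H_{\kappa^+}\), whose field is \(B=H_{\kappa^+}\) and whose order type is \(\kappa^+\subseteq o(\kappa)\) by GCH in \(L[\vec{E}]\); Lemma~\ref{lemma:SetGuessingFromOrdinalGuessing} then produces an \(H_{\kappa^+}\)-guessing, i.e.\ an ordinary measurable, Laver function, so \(\Ldmeas_\kappa\) holds. If \(o(\kappa)=\kappa^{++}\), I would instead take \(\triangleleft=\leq_{\vec{E}}\rest H_{\kappa^{++}}\), of order type \(\kappa^{++}=o(\kappa)\), obtaining \(\Ldmeas_\kappa(H_{\kappa^{++}})\).

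The remaining joint statement \(\jLdmeas_{\kappa,\kappa^+}\) follows from \(\Ldmeas_\kappa(H_{\kappa^{++}})\) by the coding device of Proposition~\ref{prop:SCHasSomeJLDiamond}, adapted to the measurable case. Fixing a bijection \(f\colon\kappa^+\to\power(\kappa)\) (available since \(2^\kappa=\kappa^+\)) and an \(H_{\kappa^{++}}\)-guessing function \(\ell\), I would define \(\ell_\alpha(\xi)=\ell(\xi)(f(\alpha)\cap\xi)\). Given targets \(\langle a_\alpha;\alpha<\kappa^+\rangle\) in \(H_{\kappa^+}\), the single object \(\vec{a}\circ f^{-1}\) has size \(\kappa^+\) and so lies in \(H_{\kappa^{++}}\); guessing it by \(\ell\) via one measure \(U\) yields \(j_U(\ell_\alpha)(\kappa)=a_\alpha\) simultaneously, exactly as in the computation of Proposition~\ref{prop:SCHasSomeJLDiamond}.

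I expect the only delicate point to be the range check underlying the suitability of the truncated order: one must confirm \(o^{M_\xi}(\kappa)=\xi<j_\xi(\kappa)\) for every \(\xi<o(\kappa)\), so that the guessing of the topmost ordinals is not lost when \(\leq_{\vec{E}}\) is cut down to \(H_{\kappa^{++}}\). This rests on \(j_\xi(\kappa)\) being inaccessible in \(M_\xi\) and hence above \((\kappa^{++})^{M_\xi}>\xi\), while still \(j_\xi(\kappa)<\kappa^{++}\) by the GCH count of functions \(\kappa\to\kappa\); together with \(\operatorname{otp}(\leq_{\vec{E}}\rest H_\lambda)=\lambda\) for cardinals \(\lambda\), this keeps \(\operatorname{otp}(\triangleleft)\subseteq o(\kappa)\). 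Everything else is bookkeeping with the already-established suitability framework.
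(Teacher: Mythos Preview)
Your proposal is correct and follows essentially the same approach as the paper: define \(\bar{\ell}(\xi)=o(\xi)\) as an ordinal-guessing Laver function, invoke Corollary~\ref{cor:SupersuitableOrderInExtenderModels} to see the canonical order is (super)suitable, apply Lemma~\ref{lemma:SetGuessingFromOrdinalGuessing} with \(\triangleleft\) the canonical order restricted to \(H_\lambda=L_\lambda[\vec{E}]\), and finish the joint case via the coding of Proposition~\ref{prop:SCHasSomeJLDiamond}. You supply more detail than the paper does---in particular the verification that \(j_\xi(\bar{\ell})(\kappa)=o^{M_\xi}(\kappa)=\xi\) and the range check \(\xi<j_\xi(\kappa)\)---but the skeleton and the key lemmas invoked are identical.
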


In particular, the above theorem implies \(\Ldmeas_\kappa\) holds in the least inner model
with the required number of measures and the same holds for \(\jLdmeas_{\kappa,\kappa^+}\).
This provides further evidence that the answer to Question~\ref{q:MeasuresGiveLavers}, which
remains open, might turn out to be positive.

\begin{proof}
	We can argue for the two cases more or less uniformly: let 
	\(\lambda\in\{\kappa^+,\kappa^{++}\}\) such that \(\lambda\leq o(\kappa)\).
	The function \(\bar{\ell}(\xi)=o(\xi)\) is a \(\lambda\)-guessing measurable Laver
	function for \(\kappa\).  By the acceptability of
	\(\vec{E}\) we have that \(H_{\lambda}=L_{\lambda}[\vec{E}]\). The canonical wellorder
	\(\leq_{\vec{E}}\cap L_\lambda[\vec{E}]\) has order type \(\lambda\) and, by
	Corollary~\ref{cor:SupersuitableOrderInExtenderModels}, is supersuitable for the class
	of ultrapower embeddings by normal measures on \(\kappa\). It follows that
	\(\leq_{\vec{E}}\) is suitable for \(\bar{\ell}\), so, by 
	Lemma~\ref{lemma:SetGuessingFromOrdinalGuessing}, there is an \(H_{\lambda}\)-guessing
	measurable Laver function for \(\kappa\).
	
	To finish the proof we still need to produce a joint measurable Laver sequence
	for \(\kappa\), in the case that \(o(\kappa)=\kappa^{++}\). This is done in exactly the same
	way as in Proposition~\ref{prop:SCHasSomeJLDiamond}; one simply uses the 
	\(H_{\kappa^{++}}\)-guessing Laver function to guess the whole sequence of targets
	for a joint Laver sequence.
\end{proof}

Interestingly, if we restrict to a smaller set of targets, having
enough normal measures \emph{does} give us Laver functions.

\begin{lemma}
	\label{lemma:DiscreteMeasures}
	Let \(\kappa\) be a regular cardinal and \(\gamma\leq\kappa\)
	and suppose that \(\langle \mu_\alpha;\alpha<\gamma\rangle\) is a sequence of distinct normal
	measures on \(\kappa\). Then there is a sequence \(\langle X_\alpha;\alpha<\gamma\rangle\)
	of pairwise disjoint subsets of \(\kappa\) such that \(X_\alpha\in\mu_\beta\) if
	and only if \(\alpha=\beta\).
\end{lemma}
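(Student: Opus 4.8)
The plan is to produce, for each \(\alpha<\gamma\), a single set \(D_\alpha\in\mu_\alpha\) lying outside every other \(\mu_\beta\), and then to disjointify the family \(\langle D_\alpha\rangle\) while preserving these properties. For the first step I would separate the measures pairwise: since the \(\mu_\alpha\) are distinct ultrafilters, for each pair \(\alpha\neq\beta\) below \(\gamma\) I can fix \(A_{\alpha,\beta}\in\mu_\alpha\) with \(A_{\alpha,\beta}\notin\mu_\beta\) (so \(\kappa\setminus A_{\alpha,\beta}\in\mu_\beta\)). Setting \(A_{\alpha,\beta}=\kappa\) whenever \(\beta=\alpha\) or \(\gamma\leq\beta<\kappa\), I then form the diagonal intersection
\[
D_\alpha=\{\eta<\kappa : \eta\in A_{\alpha,\beta}\text{ for all }\beta<\eta\}.
\]
Since every \(A_{\alpha,\beta}\) lies in \(\mu_\alpha\) and \(\mu_\alpha\) is normal, we get \(D_\alpha\in\mu_\alpha\). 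On the other hand, for \(\beta\neq\alpha\) below \(\gamma\), any \(\eta\in D_\alpha\) with \(\eta>\beta\) satisfies \(\eta\in A_{\alpha,\beta}\), so \(D_\alpha\) is disjoint from the set \((\beta,\kappa)\cap(\kappa\setminus A_{\alpha,\beta})\), which belongs to \(\mu_\beta\) (the tail \((\beta,\kappa)\in\mu_\beta\) by \(\kappa\)-completeness and non-principality); hence \(D_\alpha\notin\mu_\beta\). This is the heart of the argument, and precisely where the diagonal intersection does work that an ordinary intersection cannot.

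For the disjointification I would set \(X_\alpha=D_\alpha\setminus\bigcup_{\beta<\alpha}D_\beta\), so that the \(X_\alpha\) are pairwise disjoint by construction. To see \(X_\alpha\in\mu_\alpha\), write \(X_\alpha=D_\alpha\cap\bigcap_{\beta<\alpha}(\kappa\setminus D_\beta)\); here \(D_\alpha\in\mu_\alpha\), and for each \(\beta<\alpha\) the previous step gives \(D_\beta\notin\mu_\alpha\), so \(\kappa\setminus D_\beta\in\mu_\alpha\). Since \(\alpha<\gamma\leq\kappa\) forces \(\alpha<\kappa\), this is an intersection of fewer than \(\kappa\) many sets from the \(\kappa\)-complete measure \(\mu_\alpha\), whence \(X_\alpha\in\mu_\alpha\). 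The stated equivalence then follows at once: the ``if'' direction is \(X_\alpha\in\mu_\alpha\), and for the ``only if'' direction I note that for \(\beta\neq\alpha\) the sets \(X_\alpha\) and \(X_\beta\) are disjoint while \(X_\beta\in\mu_\beta\), so \(X_\alpha\notin\mu_\beta\).

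The main obstacle is genuinely the case \(\gamma=\kappa\): there the naive choice \(\bigcap_{\beta\neq\alpha}A_{\alpha,\beta}\) is an intersection of \(\kappa\) many sets and need not lie in \(\mu_\alpha\), so \(\kappa\)-completeness alone is insufficient to carry out the first step. Passing to the diagonal intersection is exactly what circumvents this. Once the \(D_\alpha\) are in hand the disjointification only ever intersects fewer than \(\kappa\) many sets—since each index \(\alpha\) is below \(\kappa\)—so plain completeness suffices there and no second appeal to normality is needed.
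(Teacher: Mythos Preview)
Your argument is correct and in fact somewhat cleaner than the paper's. The paper proceeds by induction on \(\gamma\): the base and successor steps are elementary, and at limits it passes to the sequences \(\langle X_\alpha^\delta;\alpha<\delta\rangle\) obtained at earlier stages, intersecting these over \(\delta\) (by ordinary \(\kappa\)-completeness if \(\gamma<\kappa\), and by a diagonal intersection followed by a further disjointification when \(\gamma=\kappa\)). Your approach is direct rather than inductive: you separate the measures pairwise, take a single diagonal intersection to obtain the \(D_\alpha\), and then disjointify. This isolates the one place where normality is genuinely needed (the case \(\gamma=\kappa\)) more transparently than the inductive argument does, and avoids the bookkeeping of carrying the \(X_\alpha^\delta\) through the stages. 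The paper's induction, on the other hand, makes the structure of the obstruction at each length visible and would adapt more easily if one wanted to track exactly how much completeness or normality is used at each \(\gamma\). Both arguments ultimately hinge on the same idea: diagonal intersection to handle \(\kappa\)-many constraints, followed by \(\kappa\)-completeness for the fewer-than-\(\kappa\)-many sets arising in the disjointification.
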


\begin{proof}
	We prove the lemma by induction on \(\gamma\). In the base step, \(\gamma=1\),
	we simply observe that, since \(\mu_0\neq\mu_1\), we must have a set \(X_0\subseteq\kappa\)
	such that \(X_0\in\mu_0\) and \(\kappa\setminus X_0\in \mu_1\).
	
	The successor step proceeds similarly. 
	Suppose that the lemma holds for sequences of length \(\gamma\) and fix a sequence
	of measures \(\langle \mu_\alpha;\alpha<\gamma+1\rangle\). By the induction hypothesis
	we can find pairwise disjoint sets \(\langle Y_\alpha;\alpha<\gamma\rangle\) such
	that each \(Y_\alpha\) picks out a unique measure among those with indices below
	\(\gamma\). Again, since \(\mu_\gamma\) is distinct from all of the other measures,
	we can find sets \(Z_\alpha\in \mu_\gamma\setminus \mu_\alpha\) for each \(\alpha<\gamma\).
	Then the sets \(X_\alpha=Y_\alpha\setminus Z_\alpha\) for \(\alpha<\gamma\) and
	\(X_\gamma=\bigcap_{\alpha<\gamma} Z_\alpha\) are as required.
	
	In the limit step suppose that the lemma holds for all \(\delta<\gamma\).
	We can then fix sequences \(\langle X_\alpha^\delta;\alpha<\delta\rangle\) for each 
	\(\delta<\gamma\)
	as above. The argument proceeds slightly differently depending on whether \(\gamma=\kappa\)
	or not. If \(\gamma<\kappa\) we can simply let 
	\(X_\alpha=\bigcap_{\alpha<\delta<\gamma} X_\alpha^\delta\in\mu_\alpha\). 
	If, on the other hand, we have \(\gamma=\kappa\), then first let
	\(Y_\alpha=\diag_{\alpha<\delta<\kappa}X_\alpha^\delta\in \mu_\alpha\).
%	Observe that the \(Y_\alpha\) are almost disjoint: \(Y_\alpha\cap Y_\beta\) is
%	bounded in \(\kappa\) for any \(\alpha,\beta<\kappa\).
	Given \(\alpha<\beta<\gamma\), the sets \(Y_\alpha\) and \(Y_\beta\) are almost contained
	in \(X_\alpha^{\beta+1}\) and \(X_\beta^{\beta+1}\), respectively. Since these two are, in
	turn, disjoint, \(Y_\alpha\) and \(Y_\beta\) have bounded intersection.
	Now consider 
	\[X_\alpha=Y_\alpha\setminus \bigcup_{\beta<\alpha}(Y_\alpha\cap Y_\beta)\]
	for \(\alpha<\kappa\). Since \(Y_\alpha\cap Y_\beta\) is bounded for all \(\beta<\alpha\),
	we still have \(X_\alpha\in \mu_\alpha\). Furthermore, we obviously have
	\(X_\alpha\cap X_\beta=\emptyset\) for \(\beta<\alpha\) and this implies that the
	\(X_\alpha\) are pairwise disjoint.
\end{proof}

\begin{theorem}
	\label{thm:OrdGuessingIffEnoughMeasures}
	Let \(\kappa\) be a measurable cardinal and \(\gamma<\kappa^+\) an ordinal.
	There is a \(\gamma\)-guessing measurable Laver function for \(\kappa\) if and only if
	there are at least \(|\gamma|\) many normal measures on \(\kappa\).
\end{theorem}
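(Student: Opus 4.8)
The plan is to prove the two implications separately. The forward direction (``only if'') will be a short counting argument, while the reverse direction (``if'') is where the real work happens, drawing on Lemma~\ref{lemma:DiscreteMeasures}.

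For the forward direction, suppose \(\ell\) is a \(\gamma\)-guessing measurable Laver function. The observation to exploit is that for a normal measure \(\mu\) on \(\kappa\) with associated ultrapower embedding \(j_\mu\), Łoś's theorem gives \(j_\mu(\ell)(\kappa)=[\ell]_\mu\), so the value that \(\ell\) guesses via \(j_\mu\) is determined outright by \(\mu\). Hence \(\mu\mapsto[\ell]_\mu\) is a single-valued assignment from normal measures on \(\kappa\) to ordinals, and the guessing hypothesis says precisely that its range contains every ordinal below \(\gamma\). Since a fixed measure can guess only one target, distinct targets require distinct measures, so there must be at least \(|\gamma|\) many normal measures on \(\kappa\). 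This is exactly the bookkeeping behind Proposition~\ref{prop:JLDiamondHasManyMeasures}, specialized to a single function with ordinal targets.

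For the reverse direction, assume there are at least \(|\gamma|\) many normal measures on \(\kappa\). Since \(\gamma<\kappa^+\) we have \(|\gamma|\leq\kappa\), so I would fix distinct normal measures \(\langle\mu_\alpha;\alpha<|\gamma|\rangle\) and apply Lemma~\ref{lemma:DiscreteMeasures} to obtain pairwise disjoint sets \(\langle X_\alpha;\alpha<|\gamma|\rangle\) with \(X_\alpha\in\mu_\alpha\). After fixing a bijection \(e\colon|\gamma|\to\gamma\), the key point is that every target is representable in each relevant ultrapower: because \(e(\alpha)<\gamma<\kappa^+=(\kappa^+)^M<j_{\mu_\alpha}(\kappa)\), there is some \(f_\alpha\colon\kappa\to\kappa\) with \([f_\alpha]_{\mu_\alpha}=e(\alpha)\). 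I would then define \(\ell\colon\kappa\to\kappa\) by letting \(\ell(\xi)=f_\alpha(\xi)\) for the unique \(\alpha\) with \(\xi\in X_\alpha\), if such \(\alpha\) exists, and \(\ell(\xi)=0\) otherwise; disjointness of the \(X_\alpha\) makes this well-defined. To check the guessing property, fix \(\delta<\gamma\) and let \(\alpha=e^{-1}(\delta)\); since \(\ell\) agrees with \(f_\alpha\) on the \(\mu_\alpha\)-measure-one set \(X_\alpha\), we get \(j_{\mu_\alpha}(\ell)(\kappa)=[\ell]_{\mu_\alpha}=[f_\alpha]_{\mu_\alpha}=\delta\), so \(\ell\) guesses \(\delta\) via \(j_{\mu_\alpha}\).

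The main obstacle is concentrated in the reverse direction, but its genuinely combinatorial core has already been isolated as Lemma~\ref{lemma:DiscreteMeasures}: extracting pairwise disjoint ``discrete'' witnessing sets from up to \(\kappa\) many distinct normal measures. Once those are in hand the argument is routine Łoś-theorem bookkeeping, the one point deserving care being that the targets, which may well exceed \(\kappa\), nonetheless lie below every \(j_{\mu_\alpha}(\kappa)\) and so are represented by functions \(\kappa\to\kappa\).
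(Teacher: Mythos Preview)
Your proposal is correct and follows essentially the same approach as the paper: both directions match, with the reverse direction relying on Lemma~\ref{lemma:DiscreteMeasures} to obtain disjoint witnessing sets and then patching together representing functions for the ordinal targets. The only cosmetic differences are that the paper reindexes to a length-\(\gamma\) sequence of measures rather than using a bijection \(e\colon|\gamma|\to\gamma\), and it specifies canonical representing functions \(f_\alpha\) that work uniformly for all normal measures (constants below \(\kappa\), and \(\xi\mapsto\operatorname{otp}(\triangleleft_\alpha\cap(\xi\times\xi))\) for a fixed wellorder of \(\kappa\) of type \(\alpha\) when \(\kappa\leq\alpha<\gamma\)), whereas you choose each \(f_\alpha\) relative to its own measure \(\mu_\alpha\); either choice suffices.
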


\begin{proof}
	First suppose that \(\Ldmeas_\kappa(\gamma)\) holds. Then, just as in
	Proposition~\ref{prop:JLDiamondHasManyMeasures}, each target \(\alpha<\gamma\)
	requires its own embedding \(j\) via which it is guessed and this gives us
	\(|\gamma|\) many distinct normal measures.
	
	Conversely, suppose that we have at least \(|\gamma|\) many normal measures on
	\(\kappa\). We can apply Lemma~\ref{lemma:DiscreteMeasures} to find a sequence
	of pairwise disjoint subsets of \(\kappa\) distinguishing these measures. By reorganizing
	the measures and the distinguishing sets we may assume that they are given in sequences
	of length \(\gamma\). We now have normal measures \(\langle \mu_\alpha;\alpha<\gamma\rangle\)
	and sets \(\langle X_\alpha;\alpha<\gamma\rangle\) such that \(\mu_\alpha\) is the unique
	measure concentrating on \(X_\alpha\); we may even assume that the \(X_\alpha\) partition
	\(\kappa\). 
	
	Let \(f_\alpha\) for
	\(\alpha<\gamma\) be the representing functions for \(\alpha\), that is,
	\(j(f_\alpha)(\kappa)=\alpha\) for any ultrapower embedding \(j\) by a normal measure on
	\(\kappa\). Constructing these functions is not difficult. If \(\alpha<\kappa\), we can simply
	take \(f_\alpha\) to be the constant function with value \(\alpha\). If 
	\(\kappa\leq\alpha<\gamma<\kappa^+\), we can fix a wellorder \(\triangleleft_\alpha\) of
	\(\kappa\) in ordertype \(\alpha\), and the function \(f_\alpha(\xi)=\operatorname{otp}(\triangleleft_\alpha\cap(\xi\times\xi))\)
	will be a representing function for \(\alpha\).\footnote{These functions \(f_\alpha\) are essentially just the first \(\kappa^+\) many canonical functions for \(\kappa\), see~\cite[Section 1.3]{Jech:Handbook}.}
	
	We can now define a \(\gamma\)-guessing Laver function \(\ell\) by letting
	\(\ell(\xi)=f_\alpha(\xi)\) where \(\alpha\) is the unique index such that 
	\(\xi\in X_\alpha\). This function indeed guesses any target \(\alpha<\gamma\):
	simply let \(j\colon V\to M\) be the ultrapower by \(\mu_\alpha\).
	Since \(\mu_\alpha\) concentrates on \(X_\alpha\) we have 
	\(j(\ell)(\kappa)=j(f_\alpha)(\kappa)=\alpha\).
\end{proof}

\begin{corollary}
	\label{cor:SmallGuessingIffEnoughMeasures}
	Let \(\kappa\) be a measurable cardinal and fix a subset \(A\subseteq H_{\kappa^+}\)
	of size at most \(\kappa\). Then there is an \(A\)-guessing measurable Laver function for
	\(\kappa\) if and only if there are at least \(|A|\) many normal measures on \(\kappa\).
\end{corollary}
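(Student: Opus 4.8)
The plan is to mirror the two directions of Theorem~\ref{thm:OrdGuessingIffEnoughMeasures}, upgrading the ordinal targets there to arbitrary set targets from \(A\). The forward implication is immediate and identical to before: if \(\ell\) is an \(A\)-guessing measurable Laver function, then each target \(a\in A\) is guessed via the ultrapower \(j_a\) by some normal measure \(\mu_a\) with \(j_a(\ell)(\kappa)=a\). Distinct targets force distinct measures, since a single embedding can realize only the single value \(j(\ell)(\kappa)\); hence \(a\mapsto\mu_a\) is an injection of \(A\) into the normal measures on \(\kappa\), giving at least \(|A|\) many. This is the same observation underlying Proposition~\ref{prop:JLDiamondHasManyMeasures}.

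For the converse I would imitate the construction in Theorem~\ref{thm:OrdGuessingIffEnoughMeasures}. Suppose there are at least \(|A|\) many normal measures on \(\kappa\); since \(|A|\leq\kappa\), I can enumerate \(A\) as \(\langle a_\alpha;\alpha<\gamma\rangle\) with \(\gamma=|A|\leq\kappa\) and, applying Lemma~\ref{lemma:DiscreteMeasures}, fix normal measures \(\langle \mu_\alpha;\alpha<\gamma\rangle\) together with a partition \(\langle X_\alpha;\alpha<\gamma\rangle\) of \(\kappa\) into pieces such that \(\mu_\alpha\) is the unique measure concentrating on \(X_\alpha\). The one new ingredient, replacing the canonical representing functions for ordinals, is a representing function for each set target: for each \(\alpha\) I would choose \(g_\alpha\colon\kappa\to V_\kappa\) with \(j_{\mu_\alpha}(g_\alpha)(\kappa)=a_\alpha\). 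I would then define \(\ell(\xi)=g_\alpha(\xi)\) for the unique \(\alpha\) with \(\xi\in X_\alpha\). Since \(\mu_\alpha\) concentrates on \(X_\alpha\) and \(\ell\) agrees with \(g_\alpha\) there, Łoś's theorem gives \(j_{\mu_\alpha}(\ell)(\kappa)=j_{\mu_\alpha}(g_\alpha)(\kappa)=a_\alpha\), so \(\ell\) guesses every element of \(A\).

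The step that needs a short justification---and the only genuine point beyond the proof of Theorem~\ref{thm:OrdGuessingIffEnoughMeasures}---is that each target \(a_\alpha\in H_{\kappa^+}\) admits a representing function whose range actually lies in \(V_\kappa\), so that \(\ell\) is a bona fide function \(\kappa\to V_\kappa\). For this I would first note that, for the ultrapower \(j_{\mu_\alpha}\colon V\to M\), closure under \(\kappa\)-sequences yields \(\power(\kappa)^V=\power(\kappa)^M\) and hence \(H_{\kappa^+}^V=H_{\kappa^+}^M\subseteq V_{j(\kappa)}^M=j_{\mu_\alpha}(V_\kappa)\), so in particular \(a_\alpha\in j_{\mu_\alpha}(V_\kappa)\). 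Writing \(a_\alpha=[g]_{\mu_\alpha}\) for an arbitrary representing function \(g\), Łoś's theorem then gives \(\{\xi:g(\xi)\in V_\kappa\}\in\mu_\alpha\), and redefining \(g\) to be \(\emptyset\) off this measure-one set produces the desired \(g_\alpha\colon\kappa\to V_\kappa\) without changing its \(\mu_\alpha\)-class. With this in hand the construction goes through verbatim, and I expect no further obstacles; in particular this direct approach avoids having to invoke Lemma~\ref{lemma:SetGuessingFromOrdinalGuessing}, whose suitability hypothesis is not available here in the absence of an \(L\)-like structure.
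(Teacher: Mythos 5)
Your proof is correct, and the forward direction coincides with the paper's; but your converse takes a genuinely different route. The paper treats Theorem~\ref{thm:OrdGuessingIffEnoughMeasures} as a black box: it assumes \(A\subseteq\mathcal{P}(\kappa)\) (via Mostowski codes), fixes a bijection \(f\colon|A|\to A\), and converts the ordinal-guessing function \(\bar{\ell}\) into a set-guessing one by setting \(\ell(\xi)=f(\bar{\ell}(\xi))\cap\xi\); all the set-versus-ordinal bookkeeping is then absorbed into the single observation that \(j(f)(\alpha)\cap\kappa=j(f(\alpha))\cap\kappa=f(\alpha)\) for \(\alpha<\kappa\) and \(f(\alpha)\subseteq\kappa\). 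You instead re-run the proof of Theorem~\ref{thm:OrdGuessingIffEnoughMeasures} with set targets, replacing the canonical functions by Łoś representatives \(g_\alpha\) of the \(a_\alpha\) with respect to the specific measures \(\mu_\alpha\), patched together along the partition from Lemma~\ref{lemma:DiscreteMeasures}. This is sound: your representatives need only work for \(\mu_\alpha\) (unlike the canonical functions, which represent their ordinals under every normal ultrapower), and your verification that each \(g_\alpha\) can be taken \(V_\kappa\)-valued---via \(H_{\kappa^+}^V=H_{\kappa^+}^M\subseteq V_{j(\kappa)}^M=j(V_\kappa)\) and trimming \(g\) off a \(\mu_\alpha\)-null set---is precisely the extra step your route demands, whereas the paper's \(\cap\,\xi\) trick gets \(V_\kappa\)-valuedness for free since \(f(\bar{\ell}(\xi))\cap\xi\in V_\kappa\) automatically. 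One small correction to your closing remark: the paper likewise does not invoke Lemma~\ref{lemma:SetGuessingFromOrdinalGuessing} for this corollary (no suitable wellorder is needed on either route); it uses the bijection-and-restrict trick instead. The trade-off is that the paper's argument is shorter given the theorem, while yours is self-contained modulo Lemma~\ref{lemma:DiscreteMeasures} and makes the representing-function mechanism explicit.
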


\begin{proof}
	The forward direction follows just as before: each target in \(A\) gives its own
	normal measure on \(\kappa\). Conversely, if there are at least \(|A|\) many normal
	measures on \(\kappa\) then, by Theorem~\ref{thm:OrdGuessingIffEnoughMeasures},
	there is an \(|A|\)-guessing measurable Laver function \(\bar{\ell}\). Fix a bijection
	\(f\colon |A|\to A\). We may assume, moreover, that \(A\subseteq\mathcal{P}(\kappa)\). 
	Then we can define an \(A\)-guessing Laver function \(\ell\) by letting 
	\(\ell(\xi)=f(\bar{\ell}(\xi))\cap\xi\). This works: to guess \(f(\alpha)\) we let
	\(\bar{\ell}\) guess \(\alpha\) via some \(j\). Then 
	\(j(\ell)(\kappa)=j(f(\alpha))\cap\kappa=f(\alpha)\).
\end{proof}

Lemma~\ref{lemma:DiscreteMeasures} can be recast in somewhat different language,
giving it, and the subsequent results, a more topological flavour.

Given a cardinal \(\kappa\), let \(\mathcal{M}(\kappa)\) be the set of normal measures on
\(\kappa\). We can topologize \(\mathcal{M}(\kappa)\) by having, for each 
\(X\subseteq\kappa\), a basic neighbourhood \([X]=\set{\mu\in \mathcal{M}(\kappa)}{X\in\mu}\)
(this is just the topology induced on \(\mathcal{M}(\kappa)\) by the Stone topology on
the space of ultrafilters on \(\kappa\)). Lemma~\ref{lemma:DiscreteMeasures} can now
be restated to say that any subspace of \(\mathcal{M}(\kappa)\) of size at most \(\kappa\)
is discrete and, moreover, the basic open sets
witnessing this can be taken to arise from a pairwise disjoint family of subsets of \(\kappa\);
such subspaces of spaces of ultrafilters are sometimes also called \emph{strongly discrete} (see~\cite{Dow1985:GoodOKUltrafilters}, for example). 
One might thus hope to show
the existence of Laver functions by exhibiting even larger discrete subspaces of
\(\mathcal{M}(\kappa)\). In pursuit of that goal we obtain the following generalization of 
Corollary~\ref{cor:SmallGuessingIffEnoughMeasures}.

\begin{theorem}
	\label{thm:equivalenceOfLaverAndAlmostStronglyDisjointMeasures}
	Let \(\kappa\) be a measurable cardinal and \(A\subseteq\mathcal{P}(\kappa)\). 
	Then \(\Ldmeas_\kappa(A)\) holds if and only if
	there are for each \(a\in A\) a set \(S_a\subseteq\kappa\) and a normal
	measure \(\mu_a\) on \(\kappa\) such that \(\set{\mu_a}{a\in A}\) is discrete in \(\mathcal{M}(\kappa)\),
	as witnessed by \(\set{S_a}{a\in A}\), and \(S_a\cap S_b\subseteq \set{\xi}{a\cap\xi=b\cap\xi}\).
\end{theorem}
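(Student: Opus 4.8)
The plan is to read the sets $S_a$ and measures $\mu_a$ directly off a witnessing Laver function in the forward direction, and to reconstruct a Laver function from the data $\set{S_a}{a\in A}$ and $\set{\mu_a}{a\in A}$ in the converse. Throughout I use that, since $A\subseteq\mathcal{P}(\kappa)\subseteq H_{\kappa^+}$, each target is guessed via an ultrapower by a normal measure on $\kappa$ (as explained at the start of the section), together with the elementary fact that $j(a)\cap\kappa=a$ for every $a\subseteq\kappa$ and every embedding $j$ with critical point $\kappa$.

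For the forward direction, suppose $\ell$ witnesses $\Ldmeas_\kappa(A)$. For each $a\in A$ fix a normal measure $\mu_a$ whose ultrapower $j_a\colon V\to M_a$ satisfies $j_a(\ell)(\kappa)=a$, and set $S_a=\set{\xi<\kappa}{\ell(\xi)=a\cap\xi}$. Since $j_a(\ell)(\kappa)=a=j_a(a)\cap\kappa$, Łoś's theorem gives $S_a\in\mu_a$. If $b\neq a$ and $S_a\in\mu_b$, then applying elementarity of $j_b$ to the defining property of $S_a$ forces $j_b(\ell)(\kappa)=j_b(a)\cap\kappa=a$, contradicting $j_b(\ell)(\kappa)=b$; hence $S_a\notin\mu_b$, so $\set{\mu_a}{a\in A}$ is discrete as witnessed by $\set{S_a}{a\in A}$. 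Finally, if $\xi\in S_a\cap S_b$ then $a\cap\xi=\ell(\xi)=b\cap\xi$, which is exactly the required containment $S_a\cap S_b\subseteq\set{\xi}{a\cap\xi=b\cap\xi}$.

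For the converse, suppose the sets $S_a$ and measures $\mu_a$ are given. I would define $\ell\colon\kappa\to V_\kappa$ by $\ell(\xi)=a\cap\xi$ whenever $\xi\in S_a$ for some $a\in A$, and $\ell(\xi)=\emptyset$ otherwise (this indeed maps into $V_\kappa$, as $a\cap\xi\subseteq\xi<\kappa$ and $\kappa$ is inaccessible). The crucial point — the place where the hypothesis $S_a\cap S_b\subseteq\set{\xi}{a\cap\xi=b\cap\xi}$ does all the work — is that $\ell$ is well-defined: if $\xi$ lies in both $S_a$ and $S_b$, the containment guarantees $a\cap\xi=b\cap\xi$, so the prescribed value does not depend on the choice of $a$. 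To see that $\ell$ guesses $a\in A$, pass to the ultrapower $j_a$ by $\mu_a$. On $S_a\in\mu_a$ we have $\ell(\xi)=a\cap\xi$, so $\ell$ agrees $\mu_a$-almost everywhere with $\xi\mapsto a\cap\xi$; by Łoś's theorem $j_a(\ell)(\kappa)=j_a(a)\cap\kappa=a$, as desired.

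No serious obstacle arises: once the definitions are chosen correctly the argument is bookkeeping via Łoś's theorem. The only delicate point, which is precisely the conceptual content of the theorem, is the interplay between the two conditions. Discreteness ($S_a\in\mu_a$ and $S_a\notin\mu_b$ for $b\neq a$) is what lets a single measure isolate a single target, whereas the almost-disjointness condition $S_a\cap S_b\subseteq\set{\xi}{a\cap\xi=b\cap\xi}$ is exactly what is needed to stitch the local guesses $a\cap\xi$ together into one coherent function $\ell$. This coherence requirement is what distinguishes the statement from Corollary~\ref{cor:SmallGuessingIffEnoughMeasures}, where the witnessing sets could be taken pairwise disjoint and no such condition was needed.
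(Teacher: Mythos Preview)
Your proof is correct and follows essentially the same approach as the paper's: define \(S_a=\set{\xi}{\ell(\xi)=a\cap\xi}\) in the forward direction, and recover \(\ell\) by setting \(\ell(\xi)=a\cap\xi\) for \(\xi\in S_a\) in the converse, with the coherence condition guaranteeing well-definedness. Your version is in fact more explicit than the paper's, spelling out the \L o\'s-theorem computations that the paper leaves to the reader.
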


\noindent We could have relaxed our hypothesis to \(A\subseteq H_{\kappa^+}\)
by working with Mostowski codes.

\begin{proof}
	Assume first that \(\ell\) is a measurable \(A\)-guessing Laver function for \(\kappa\).
	Then we can let \(S_a=\set{\xi}{\ell(\xi)=a\cap\xi}\). Obviously we have \(j(\ell)(\kappa)=a\)
	if and only if the measure derived from \(j\) concentrates on \(S_a\). It follows
	that the measures \(\mu_a\) derived this way form a discrete subspace of
	\(\mathcal{M}(\kappa)\) and we obviously have
	\(S_a\cap S_b\subseteq \set{\xi}{a\cap \xi=b\cap\xi}\).
	
	Conversely, assume we have such a discrete family of measures \(\mu_a\) 
	and a family of sets \(S_a\) as described. We can define an \(A\)-guessing
	measurable Laver function \(\ell\) by letting \(\ell(\xi)=a\cap \xi\) where
	\(a\) is such that \(\xi\in S_a\). This is well defined by the coherence condition
	imposed upon the \(S_a\), and it is easy to see that \(\ell\) satisfies the guessing property.
\end{proof}

This topological viewpoint presents a number of questions which might suggest an
approach to Question~\ref{q:MeasuresGiveLavers}. 
%For example, 
%it is unclear whether, given a discrete family of normal measures one can find an almost
%disjoint discretizing family as in the above theorem. Even more pressingly, we do not know
%whether it is possible for \(\mathcal{M}(\kappa)\) to have no discrete subspaces of size
%\(\kappa^+\) (while itself having size at least \(\kappa^+\)).
For example, it might be the case that \emph{every} discrete subset of
\(\mathcal{M}(\kappa)\) has its discreteness witnessed by a family of sets \(S_a\) as in
Theorem~\ref{thm:equivalenceOfLaverAndAlmostStronglyDisjointMeasures}. If this were so, we could
reduce the problem of finding Laver functions to the seemingly simpler problem of finding large\footnote{Recall that Lemma~\ref{lemma:DiscreteMeasures} says that every subset of \(\mathcal{M}(\kappa)\) of size \(\leq\kappa\) is already strongly discrete.}
discrete subspaces of \(\mathcal{M}(\kappa)\). But even this simpler task is problematic, since
it might be possible that \(\mathcal{M}(\kappa)\) has size (at least) \(\kappa^+\) but has no
discrete subspaces of size \(\kappa^+\) at all.

\subsection{\(\protect\Ld_\kappa\)-trees}

Thus far we have thought of joint Laver diamonds as simply matrices or sequences of Laver
diamonds. To better facilitate the reflection properties required for forcing
iterations using prediction, we would now like a different representation.
A reasonable attempt seems to be trying to align the joint Laver sequence
with the full binary tree of height \(\kappa\).

\begin{definition}
	%A joint Laver sequence \(\langle \ell_\alpha;\alpha<2^\kappa\rangle\)
	%is \emph{fully treeable} if there is a bijection \(f\colon \funcs{\kappa}{2}\to 2^\kappa\)
	%such that \(\ell_{f(s)}(\xi)=\ell_{f(t)}(\xi)\) whenever \(s\rest(\xi+1)= t\rest(\xi+1)\).
	Let \(\kappa\) be a large cardinal supporting a notion of Laver diamond.
	A \emph{\(\Ld_\kappa\)-tree} is a labelling of the binary tree such that the labels along the
	branches form a joint Laver sequence. More precisely, a \(\Ld_\kappa\)-tree is a function 
	\(D\colon \funcs{<\kappa}{2}\to V\)
	such that for any sequence of targets \(\langle a_s;s\in\funcs{\kappa}{2}\rangle\)
	there is an elementary embedding \(j\), witnessing the largeness \(\kappa\),
	such that \(j(D)(s)=a_s\) for all \(s\in \funcs{\kappa}{2}\).
	
	Given an \(I\subseteq\funcs{\kappa}{2}\), an \emph{\ILd-tree} is
	a function \(D\) as above, satisfying the same guessing property but only for sequences of
	targets indexed by \(I\).
\end{definition}

Naturally, we will in this section mostly be interested in \(\Ldthetasc_\kappa\)-trees, that is,
\(\Ld_\kappa\)-trees whose branches form a \(\jLdthetasc_{\kappa,2^\kappa}\)-sequence.
If the degree of supercompactness of \(\kappa\) is sufficiently large, then \(\Ldthetasc_\kappa\)-trees are nothing new.

\begin{proposition}
	\label{prop:FullyTreeableThetaSCJLDExist}
	Suppose \(\kappa\) is \(\theta\)-supercompact and \(\theta\geq 2^\kappa\). Then
	a \(\Ldthetasc_\kappa\)-tree exists if and only if a 
	\(\theta\)-supercompactness Laver function for \(\kappa\) does
	(if and only if\/ \(\jLdthetasc_{\kappa,2^\kappa}\) holds).
\end{proposition}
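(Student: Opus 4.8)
The plan is to establish the three stated conditions — existence of a \(\Ldthetasc_\kappa\)-tree, existence of a single \(\theta\)-supercompactness Laver function (that is, \(\Ldthetasc_\kappa\)), and \(\jLdthetasc_{\kappa,2^\kappa}\) — as equivalent by closing a short cycle of implications. Two of them are already linked: \(\jLdthetasc_{\kappa,2^\kappa}\) trivially gives a single Laver function, namely any one coordinate of the joint sequence, while the converse is exactly Proposition~\ref{prop:SCHasSomeJLDiamond}, whose conclusion is a joint sequence of length \(\min\{\theta,2^\kappa\}\), which equals \(2^\kappa\) here precisely because \(\theta\geq 2^\kappa\). So the genuine work is to tie the tree into this picture, which I would do by building a tree from a single Laver function and reading a joint sequence of length \(2^\kappa\) off of a tree.

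For the passage from a Laver function to a tree, let \(\ell\) be a \(\theta\)-supercompactness Laver function and define \(D\colon\funcs{<\kappa}{2}\to V_\kappa\) by \(D(t)=\ell(\xi)(t)\) whenever \(\xi\) is the length of \(t\) and \(\ell(\xi)\) is a function having \(t\) in its domain, and \(D(t)=\emptyset\) otherwise. Given a target sequence \(\langle a_s;s\in\funcs{\kappa}{2}\rangle\), I would assemble it into a single function \(F\) with \(\operatorname{dom}(F)=\funcs{\kappa}{2}\) and \(F(s)=a_s\). The one place where the hypothesis \(\theta\geq 2^\kappa\) is essential is the observation that \(F\in H_{\theta^+}\): the domain has size \(2^\kappa\leq\theta\) and each value lies in \(H_{\theta^+}\), so a transitive-closure count yields \(|\operatorname{trcl}(F)|\leq\theta\). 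Thus \(\ell\) guesses \(F\) via some \(\theta\)-supercompactness embedding \(j\colon V\to M\), and by elementarity \(j(D)(s)=j(\ell)(\kappa)(s)=F(s)=a_s\) for every \(s\in\funcs{\kappa}{2}\); here \(s\) is a node of \(j(D)\) of length \(\kappa\), it lies in \(M\) because \(\cp(j)=\kappa<\theta\) and \(M\) is closed under \(\kappa\)-sequences, and \(F=j(\ell)(\kappa)\in M\) as a value of a function in \(M\). Hence \(D\) is a \(\Ldthetasc_\kappa\)-tree.

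For the reverse passage, given a \(\Ldthetasc_\kappa\)-tree \(D\) I would read the labels along branches: for \(s\in\funcs{\kappa}{2}\) put \(\ell_s(\xi)=D(s\rest\xi)\). Given targets \(\langle a_s;s\in\funcs{\kappa}{2}\rangle\), the tree property hands me a \(\theta\)-supercompactness embedding \(j\) with \(j(D)(s)=a_s\) for all \(s\), and by elementarity \(j(\ell_s)(\kappa)=j(D)(j(s)\rest\kappa)=j(D)(s)=a_s\), using that \(j(s)\rest\kappa=s\) since \(\cp(j)=\kappa\). So, after reindexing by a bijection \(\funcs{\kappa}{2}\to 2^\kappa\), the sequence \(\langle\ell_s\rangle\) is a joint Laver sequence of length \(2^\kappa\); no two coordinates can coincide, as the joint guessing property we just verified would then be violated. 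This closes the loop: a Laver function yields a tree, a tree yields a joint sequence of length \(2^\kappa\), and such a joint sequence trivially yields a Laver function, so the three conditions are equivalent.

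I do not anticipate a serious obstacle, as both constructions are elementarity bookkeeping. The only load-bearing step is fitting the coding function \(F\) into \(H_{\theta^+}\), which is exactly where \(\theta\geq 2^\kappa\) is needed: without it one could not compress the \(2^\kappa\) many targets into a single object small enough to be guessed in one shot, and the tree-to-sequence correspondence would fail.
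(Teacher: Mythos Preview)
Your proposal is correct and follows essentially the same route as the paper: the key step---building the tree by \(D(t)=\ell(|t|)(t)\) and using \(\theta\geq 2^\kappa\) to fit the entire target family into \(H_{\theta^+}\) as a single object to be guessed---is identical. You spell out the tree-to-joint-sequence direction (reading labels along branches) and the closing of the cycle via Proposition~\ref{prop:SCHasSomeJLDiamond} more explicitly than the paper, which treats these as immediate, but there is no substantive difference in approach.
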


\begin{proof}
	The forward implication is trivial, so we focus on the reverse implication. Let
	\(\ell\) be a Laver function for \(\kappa\). For any \(t\in\funcs{<\kappa}{2}\) define
	\(D(t)=\ell(|t|)(t)\) if this makes sense and \(D(t)=\emptyset\) otherwise. 
	We claim this defines a \(\Ldthetasc_\kappa\)-tree. Indeed, let 
	\(\vec{a}=\langle a_s;s\in\funcs{\kappa}{2}\rangle\) be a sequence of targets. Since
	\(\theta\geq 2^\kappa\) we get \(\vec{a}\in H_{\theta^+}\), so there is a 
	\(\theta\)-supercompactness embedding \(j\) such that \(j(\ell)(\kappa)=\vec{a}\).
	Therefore, given any \(s\in\funcs{\kappa}{2}\), we have
	\(
	j(D)(s)=j(\ell)(\kappa)(s)=a_s
	%\qedhere
	\)
\end{proof}

In other situations, however, the existence of a \(\Ldthetasc_\kappa\)-tree
can have strictly higher consistency strength than merely a 
\(\theta\)-supercompact cardinal.

\begin{definition}
	Let \(X\) be a set and \(\theta\) a cardinal.
	A cardinal \(\kappa\) is \emph{\(X\)-strong with closure \(\theta\)} if there is an
	elementary embedding \(j\colon V\to M\) with critical point \(\kappa\) such that
	\(\funcs{\theta}{M}\subseteq M\) and \(X\in M\).
\end{definition}

\begin{proposition}
	\label{prop:FullyTreeableThetaSCJLDStronger}
	Suppose \(\kappa\) is \(\theta\)-supercompact and there is a \(\Ldthetasc_\kappa\)-tree. 
	Then \(\kappa\) is \(X\)-strong with closure
	\(\theta\) for any \(X\subseteq H_{\theta^+}\) of size at most \(2^\kappa\).
\end{proposition}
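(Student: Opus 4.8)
The plan is to fold the large set \(X\) into a single sequence of targets indexed by the branches of the tree, feed that sequence to the \(\Ldthetasc_\kappa\)-tree to obtain a \(\theta\)-supercompactness embedding realizing the targets, and then reconstruct \(X\) inside the target model as a definable image of the guessing function. The nontrivial point is that \(X\) may have size up to \(2^\kappa>\theta\), so \(X\) is in general \emph{not} itself an element of \(H_{\theta^+}\) and cannot be thrown into \(M\) as a single target; the tree is precisely what lets us distribute \(X\) across its \(2^\kappa\) branches as genuinely small targets and then collect it back together.

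Concretely, I would first fix a (not necessarily injective) enumeration \(\langle x_s;s\in\funcs{\kappa}{2}\rangle\) of \(X\), which is possible since \(|X|\leq 2^\kappa=|\funcs{\kappa}{2}|\); as \(X\subseteq H_{\theta^+}\), each \(x_s\) is a legitimate target for the tree. Applying the defining property of the \(\Ldthetasc_\kappa\)-tree \(D\) to this sequence produces a \(\theta\)-supercompactness embedding \(j\colon V\to M\) with \(\cp(j)=\kappa\) and \(j(D)(s)=x_s\) for every \(s\in\funcs{\kappa}{2}\). Since \(j\) witnesses \(\theta\)-supercompactness we have \(\funcs{\theta}{M}\subseteq M\), which already supplies the closure demanded by the definition of \(X\)-strength with closure \(\theta\); it remains only to verify \(X\in M\).

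For the latter, observe that in \(M\) the function \(j(D)\) has domain \(\funcs{<j(\kappa)}{2}\), and since \(\kappa<j(\kappa)\) every \(s\in\funcs{\kappa}{2}\) is a node in this domain. Because \(\kappa\leq\theta\), the model \(M\) is closed under \(\kappa\)-sequences, so \(\power(\kappa)^M=\power(\kappa)^V\) and hence \((\funcs{\kappa}{2})^M=(\funcs{\kappa}{2})^V\); in other words \(M\) sees exactly the height-\(\kappa\) branches that \(V\) does. Working inside \(M\), I would then form by replacement the set \(Y=\set{j(D)(s)}{s\in\funcs{\kappa}{2}}\), the range of \(j(D)\) over the nodes of height \(\kappa\). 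By the branch agreement and the choice of \(j\) we get \(Y=\set{x_s}{s\in\funcs{\kappa}{2}}=X\), so \(X=Y\in M\), completing the argument.

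The step that will require the most care --- indeed the only step with any content --- is the agreement \((\funcs{\kappa}{2})^M=(\funcs{\kappa}{2})^V\). Were \(M\) to omit even a single branch, the range it computes internally would be a proper subset of \(X\), and we could not conclude \(X\in M\). This agreement is exactly what the closure of a \(\theta\)-supercompactness embedding guarantees (via \(\power(\kappa)\subseteq M\)), and it is the reason the full \(\theta\)-supercompactness of the embedding, rather than some weaker strongness property, is what makes the reconstruction of \(X\) go through.
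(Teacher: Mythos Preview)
Your proof is correct and follows essentially the same approach as the paper: enumerate \(X\) by the branches of the tree, invoke the \(\Ldthetasc_\kappa\)-tree to obtain a \(\theta\)-supercompactness embedding \(j\) with \(j(D)(s)=x_s\), and conclude \(X=j(D)[\funcs{\kappa}{2}]\in M\). The paper's version is terser and omits the justification that \((\funcs{\kappa}{2})^M=(\funcs{\kappa}{2})^V\), which you rightly spell out via the closure of \(M\) under \(\kappa\)-sequences.
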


\begin{proof}
	Suppose \(D\colon \funcs{<\kappa}{2}\to V_\kappa\) is a \(\Ldthetasc_\kappa\)-tree
	and fix an \(X\subseteq H_{\theta^+}\) of size at most \(2^\kappa\).
	Let \(f\colon \funcs{\kappa}{2}\to X\) enumerate \(X\). We can then find a 
	\(\theta\)-supercompactness embedding \(j\colon V\to M\) with critical point \(\kappa\)
	such that \(j(D)(s)=f(s)\) for all \(s\in\funcs{\kappa}{2}\). In particular,
	\(X=j(D)[\funcs{\kappa}{2}]\) is an element of \(M\), as required.
\end{proof}

%A kind of converse also holds. The proof of 
%proposition~\ref{prop:FullyTreeableThetaSCJLDExist}
%shows that if \(\kappa\) is \(X\)-strong with closure \(\theta\) for the \(X\) considered
%above and \(\kappa\) has a \(\theta\)-supercompactness Laver function then it also has a
%Laver tree.
If \(2^\kappa\leq\theta\) then \(X\)-strongness with closure \(\theta\) for
all \(X\subseteq H_{\theta^+}\) of size \(2^\kappa\) 
amounts to just \(\theta\)-supercompactness and 
Proposition~\ref{prop:FullyTreeableThetaSCJLDExist} gives the full equivalence of 
Laver functions
and \(\Ldthetasc_\kappa\)-trees. But if \(\theta< 2^\kappa\), then \(X\)-strongness with closure \(\theta\)
can have additional consistency strength. For example, 
we might choose \(X\) to be a normal measure on \(\kappa\) to see that \(\kappa\) must
have nontrivial Mitchell rank (by iterating this idea we can even deduce that
\(o(\kappa)=(2^\kappa)^+\)). In the typical scenario
where \(2^\kappa=2^\theta=\theta^+\), we can also reach higher and choose \(X\) to be a 
normal measure on \(\power_\kappa(\theta)\) and see that \(\kappa\) must also have 
nontrivial \(\theta\)-supercompactness Mitchell rank. 
We can use this observation to show
that there might not be any \(\Ldthetasc_\kappa\)-trees, even in the presence of very long joint Laver
sequences.

\begin{theorem}
	\label{thm:SeparateLongAndTreeableSCJLD}
	Suppose GCH holds and let \(\kappa\) be \(\theta\)-supercompact where either 
	\(\theta=\kappa\) or \(\cf(\theta)>\kappa\). Then there is a cardinal-preserving
	forcing extension in which \(\kappa\) remains \(\theta\)-supercompact, has a 
	\(\theta\)-supercompactness joint
	Laver sequence of length \(2^\kappa\), but is also the least measurable cardinal.
	In particular, \(\theta<2^\kappa\) and there are no \(\Ldthetasc_\kappa\)-trees in the extension.
\end{theorem}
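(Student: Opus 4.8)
The plan is to build the model in two stages: first blow up the continuum at $\kappa$ and install the joint Laver sequence, and only afterwards destroy the superfluous measurables, the point being that the second step becomes possible exactly because $2^\kappa$ has been pushed above $\theta$. Before that, observe that the last sentence of the statement is essentially free once the main assertion is proved. If $\kappa$ is the least measurable cardinal and $D$ were a $\Ldthetasc_\kappa$-tree, then applying Proposition~\ref{prop:FullyTreeableThetaSCJLDStronger} with $X$ a normal measure $U$ on $\kappa$ (an element of $\power(\kappa)\subseteq H_{\theta^+}$ of size $2^\kappa$) would produce $j\colon V\to M$ with $\cp(j)=\kappa$, ${}^\theta M\subseteq M$ and $U\in M$; then $M$ thinks $\kappa$ is measurable, so $M$, and hence $V$ by elementarity, would have a measurable cardinal below $\kappa$, a contradiction. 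The inequality $\theta<2^\kappa$ will be visible directly, since we arrange $2^\kappa=\theta^+$ (for $\theta=\kappa$ one has $\theta<2^\kappa$ trivially).

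For the first phase I would force, as in the proof of Theorem~\ref{thm:ForceLongJLDiamondSC}, with $\P_\kappa*\Add(\kappa,\theta^+)$, reading a $\theta$-supercompactness joint Laver sequence of length $2^\kappa$ off the top Cohen generic. The hypothesis $\theta=\kappa$ or $\cf(\theta)>\kappa$ ensures $\theta^{<\kappa}=\theta$ and that an ultrapower by a normal measure on $\power_\kappa(\theta)$ is continuous at $\theta^+$, so the master-filter argument of that proof applies and yields a model $V_1$ in which $\kappa$ is still $\theta$-supercompact, $\jLdthetasc_{\kappa,2^\kappa}$ holds, and $2^\kappa=\theta^+$. (When $\theta=\kappa$ this is just Corollary~\ref{cor:ForceLongJLDiamondMeas}, with $2^\kappa$ set to $\kappa^+$.) The crucial gain is that now $2^\kappa>\theta$, so for \emph{every} $\theta$-supercompactness ultrapower $j\colon V_1\to M$ the induced normal measure on $\kappa$ fails to lie in $M$, whence $M\models$``$\kappa$ is not measurable''.

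In the second phase I force over $V_1$ with the Easton-support iteration $\mathbb{R}$ of length $\theta+1$ that, at each stage $\delta\le\theta$ with $\delta\neq\kappa$ which is measurable in the current model, adds a non-reflecting stationary subset of $\delta$, thereby destroying its weak compactness (and doing nothing at $\kappa$). A standard reflection/gap-forcing argument shows $\mathbb{R}$ creates no new measurables below $\kappa$, so in $V_1[\mathbb{R}]$ the only measurable cardinal $\le\theta$ is $\kappa$ and $\kappa$ is the least measurable. To preserve $\theta$-supercompactness I lift an arbitrary $\theta$-supercompactness ultrapower $j\colon V_1\to M$ through $\mathbb{R}$. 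Because $M\models$``$\kappa$ not measurable'' the image $j(\mathbb{R})$ does nothing at $\kappa$, and because $M$ and $V_1$ share $H_{\theta^+}$ (so compute the same measurables up to $\theta$) one checks that $j(\mathbb{R})$ agrees with $\mathbb{R}$ on all stages $\le\theta$; thus $G\subseteq\mathbb{R}$ already meets $j(\mathbb{R})$ below its tail. The tail of $j(\mathbb{R})$ lives strictly above $\theta$, hence is $\le\theta$-closed in $M[G]$, while $\cf(\theta)>\kappa$ forces $|\mathbb{R}\restriction(\kappa,\theta]|=\theta$, so the master condition $\bigcup j[\,G\restriction(\kappa,\theta]\,]$ is an element of $M$. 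Diagonalising against the at most $\theta^+$ dense sets of the tail (using the $\theta$-closure of $M$ and of the tail) I obtain in $V_1[G]$ an $M[G]$-generic containing that master condition and lift $j$ to a $\theta$-supercompactness embedding of $V_1[G]$.

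Finally, $\mathbb{R}$ preserves the joint Laver sequence: it has size $\theta$, falls under the $|\P|\le\theta$ relaxation of part~(2) of Lemma~\ref{lemma:JLDiamondSCPreservation} noted after that lemma, and I have just shown that every relevant $\theta$-supercompactness embedding lifts through it, so guessing names converts the $V_1$-sequence into a joint Laver sequence of length $2^\kappa=\theta^+$ in $V_1[\mathbb{R}]$; cardinals are preserved throughout, as $\Add(\kappa,\theta^+)$ is $\kappa^+$-cc and $<\kappa$-closed and the stages of $\mathbb{R}$ are suitably closed and chain-conditioned. I expect the main obstacle to be precisely the second-phase lift, where one must ensure that the kill-measurables iteration neither attacks $\kappa$ itself nor leaves an insufficiently closed tail: it is the interaction of having first forced $2^\kappa>\theta$ (so $\kappa$ is non-measurable from $M$'s point of view, letting $j(\mathbb{R})$ skip it) with the cofinality hypothesis $\cf(\theta)>\kappa$ (so the piece of the iteration inside $(\kappa,\theta]$ has size only $\theta$ and its $j$-image admits a master condition in $M$, while the tail above $\theta$ stays $\le\theta$-closed) that makes the argument run. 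The case $\theta=\kappa$ is easier, since there is no interval $(\kappa,\theta]$ and the tail of $j(\mathbb{R})$ above $\kappa$ is automatically closed enough to meet the $2^\kappa=\kappa^+$ many relevant dense sets.
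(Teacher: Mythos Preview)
Your derivation of the final sentence is fine, and your instinct to reverse the paper's order---first build the joint Laver sequence, then kill measurables---is a reasonable strategy. But the second-phase lifting argument has two real gaps.

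First, the inference ``the induced normal measure on \(\kappa\) fails to lie in \(M\), whence \(M\models\kappa\) is not measurable'' is a non sequitur. The fact that \emph{one particular} normal measure on \(\kappa\) is missing from \(M\) says nothing about whether \(M\) contains some \emph{other} normal measure on \(\kappa\). If \(\kappa\) happens to have nontrivial (\(\theta\)-supercompactness) Mitchell rank in \(V_1\), there will certainly be \(\theta\)-supercompactness ultrapowers \(j\colon V_1\to M\) in which \(\kappa\) is measurable; then \(j(\mathbb{R})\) does act at stage \(\kappa\), destroying the \(\theta\)-supercompactness of \(\kappa\), and the lift fails. You cannot even retreat to a Mitchell-minimal embedding here, since minimality only gives you ``\(\kappa\) is not \(\theta\)-supercompact in \(M\)'', not ``\(\kappa\) is not measurable in \(M\)''.

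Second, the claim that \(M\) and \(V_1\) compute the same measurable cardinals up to \(\theta\) because they share \(H_{\theta^+}\) is wrong as stated: a normal measure on \(\delta\) lives in \(H_{(2^\delta)^+}\), and after your first phase you have arranged \(2^\delta=\theta^+\) for every \(\kappa\le\delta\le\theta\), so such measures are elements of \(H_{\theta^{++}}\), not \(H_{\theta^+}\). Thus you have no reason to believe that \(\mathbb{R}\) and \(j(\mathbb{R})\) agree on the interval \((\kappa,\theta]\), which undermines the factoring on which the master-condition step relies.

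The paper avoids both problems by reversing the order of your two phases: it first invokes the established results of Magidor (Prikry iteration, for \(\theta=\kappa\)) or Apter--Shelah (for \(\cf(\theta)>\kappa\)) to make \(\kappa\) the least measurable while remaining \(\theta\)-supercompact, and only afterwards forces the joint Laver sequence via Corollary~\ref{cor:LeastSCCanHaveLongJLD}. The latter forcing admits a very low gap, so by Hamkins' approximation and cover results it creates no new measurable cardinals below \(\kappa\), and \(\kappa\) remains the least measurable. In this order there is nothing to preserve through a measurable-killing iteration, and the delicate lifting issues you identified simply do not arise.
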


\begin{proof}
	We may assume by prior forcing,	as in the proof of Theorem~\ref{thm:ForceLongJLDiamondSC}, 
	that \(\kappa\) has a Laver function. 
	Additionally, by performing either Magidor's iteration of Prikry forcing
	(see~\cite{Magidor1976:IdentityCrises}) or applying an argument due to
	Apter and Shelah (see~\cite{ApterShelah1997:MenasResultBestPossible}), depending
	on whether \(\theta=\kappa\) or not, we may assume that, in addition to being
	\(\theta\)-supercompact, \(\kappa\) is also the least measurable cardinal.\footnote{Of
		course, if \(\theta>\kappa\), this arrangement requires a strong failure of GCH at 
		\(\kappa\). In fact, \(2^\kappa=\theta^+\) in the Apter--Shelah model.}
	
	We now apply Corollary~\ref{cor:LeastSCCanHaveLongJLD} and
	arrive at a model where \(\kappa\) carries a \(\theta\)-su\-per\-com\-pact\-ness joint Laver sequence 
	of length \(2^\kappa\), but is also the least measurable cardinal. 
	It follows that there can be no 
	\(\Ldthetasc_\kappa\)-trees (or even \(\Ldmeas_\kappa\)-trees),
	since, by the discussion above, their existence would imply that \(\kappa\) has nontrivial Mitchell rank, implying that there are many measurables below \(\kappa\).
\end{proof}

Proposition~\ref{prop:FullyTreeableThetaSCJLDStronger} can be improved slightly to give a 
jump in consistency strength even for \ILdthetasc-trees where \(I\) is not the whole
set of branches.
A simple modification of the proof given there yields the following result, together
with the corresponding version of Theorem~\ref{thm:SeparateLongAndTreeableSCJLD}.

\begin{theorem}
	\label{thm:DefinableITreeableThetaSCJLDStronger}
	Suppose \(\kappa\) is \(\theta\)-supercompact and there is an \ILdthetasc-tree
	for some \(I\subseteq\funcs{\kappa}{2}\) of size \(2^\kappa\). 
	If \(I\) is definable (with parameters) over \(H_{\kappa^+}\)
	then \(\kappa\) is \(X\)-strong with closure \(\theta\) for any 
	\(X\subseteq H_{\theta^+}\) of size at most \(2^\kappa\).
\end{theorem}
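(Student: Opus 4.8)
The plan is to mimic the proof of Proposition~\ref{prop:FullyTreeableThetaSCJLDStronger} but to cope with the fact that the guessing property of an \ILdthetasc-tree \(D\) only ranges over sequences of targets indexed by \(I\), rather than over all branches through \(\funcs{\kappa}{2}\). The key point is that if \(I\) is definable over \(H_{\kappa^+}\) with some parameter \(p\), then the set \(j(I)\cap\funcs{\kappa}{2}\) will, by elementarity, be defined over \(H_{j(\kappa)^+}^M\) by the \emph{same} formula using the parameter \(j(p)\); and because a \(\theta\)-supercompactness embedding satisfies \(\funcs{\theta}{M}\subseteq M\) while \(\crit(j)=\kappa\le\theta\), the relevant initial segment of the parameter and the defining structure are computed correctly, so that the branches in \(\funcs{\kappa}{2}\) lying in \(j(I)\) are exactly those in \(I\). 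This is what lets us reconstruct the whole target set \(X\) inside \(M\) from the image of the tree.

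Concretely, I would first fix \(X\subseteq H_{\theta^+}\) of size at most \(2^\kappa\). Since \(I\) has size \(2^\kappa\), I can choose an enumeration \(f\colon I\to X\) (surjective, padding if \(|X|<2^\kappa\)). Using the \ILdthetasc-guessing property of \(D\), I obtain a \(\theta\)-supercompactness embedding \(j\colon V\to M\) with critical point \(\kappa\) such that \(j(D)(s)=f(s)\) for every \(s\in I\). The aim is then to argue that
\[
X=j(D)\bigl[\,j(I)\cap\funcs{\kappa}{2}\,\bigr]
\]
and that this set is an element of \(M\). Membership in \(M\) follows because \(j(D)\) and \(j(I)\) are elements of \(M\), and the comprehension needed to form this image (restricting the domain of \(j(D)\) to the members of \(j(I)\) of length \(\kappa\)) is available inside \(M\).

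The main obstacle, and the step that requires the definability hypothesis, is to verify that \(j(I)\cap\funcs{\kappa}{2}=I\), so that applying \(j(D)\) to this set recovers precisely the indices \(s\in I\) on which \(j(D)(s)=f(s)\), and hence recovers all of \(X=f[I]\). Here is where the argument that \(I\) is definable over \(H_{\kappa^+}\) does its work: if \(I=\set{s\in\funcs{\kappa}{2}}{\varphi(s,p)}^{H_{\kappa^+}}\), then by elementarity \(j(I)=\set{s}{\varphi(s,j(p))}^{H_{j(\kappa)^+}^M}\); restricting to \(s\) of length \(\kappa\) and using that \(H_{\kappa^+}\) (together with \(p\), which has size at most \(2^\kappa\le\theta\) and so is fixed or correctly computed) is absolute between \(V\) and \(M\) by the \(\theta\)-closure of \(M\), one sees that \(\varphi(s,j(p))\) holds in \(M\) exactly when \(\varphi(s,p)\) holds in \(V\), for any \(s\in\funcs{\kappa}{2}\). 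I expect the delicate bookkeeping to be in tracking the parameter \(p\): one must check that the closure \(\funcs{\theta}{M}\subseteq M\) suffices for \(M\) to see \(p\) and the structure \(H_{\kappa^+}\) correctly, which it does since \(H_{\kappa^+}\subseteq H_{\theta^+}\) and every element of \(H_{\kappa^+}\) is coded by a subset of \(\kappa\), hence by an element of \(\funcs{\kappa}{2}\) that is fixed by \(j\) up to the correct length. Once this absoluteness is in hand, the conclusion that \(\kappa\) is \(X\)-strong with closure \(\theta\) is immediate.
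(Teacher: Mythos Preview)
Your overall strategy is the right one, and it is essentially the ``simple modification'' the paper has in mind: use the definability of \(I\) over \(H_{\kappa^+}\) to place \(I\) inside \(M\), and then read off \(X\) as the \(j(D)\)-image of \(I\). However, the way you phrase the key step is genuinely wrong and would not go through as written.

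The problem is the object \(j(I)\cap\funcs{\kappa}{2}\). By elementarity, \(j(I)\) is a subset of \((\funcs{j(\kappa)}{2})^M\); its elements are functions with domain \(j(\kappa)>\kappa\), not \(\kappa\). Hence \(j(I)\cap\funcs{\kappa}{2}=\emptyset\), and your displayed identity \(X=j(D)[j(I)\cap\funcs{\kappa}{2}]\) collapses. The subsequent attempt to recover \(I\) by ``restricting \(j(I)\) to sequences of length \(\kappa\)'' and comparing \(\varphi(s,j(p))\) with \(\varphi(s,p)\) is therefore working with the wrong set, and the appeal to \(j(p)\) introduces an unnecessary complication (note also that \(p\in H_{\kappa^+}\) has transitive closure of size at most \(\kappa\), not \(2^\kappa\), and in the interesting case one has \(\theta<2^\kappa\), so your inequality \(2^\kappa\leq\theta\) is not available).

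The clean argument avoids \(j(I)\) and \(j(p)\) entirely. Since \(M\) is closed under \(\theta\)-sequences with \(\theta\geq\kappa\), we have \(\mathcal{P}(\kappa)^M=\mathcal{P}(\kappa)^V\) and hence \(H_{\kappa^+}^M=H_{\kappa^+}^V\); in particular \(p\in M\) and the structure \((H_{\kappa^+},\in,p)\) is literally the same in \(V\) and in \(M\). Thus \(M\) can simply evaluate the definition \(\set{s\in\funcs{\kappa}{2}}{H_{\kappa^+}\models\varphi(s,p)}\) and obtain exactly \(I\). So \(I\in M\), and since \(j(D)\in M\) we get \(X=j(D)[I]\in M\), as required. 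No elementarity between \(I\) and \(j(I)\) is needed---only the absoluteness of \(H_{\kappa^+}\) between \(V\) and \(M\).
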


The above theorem notwithstanding, 
we shall give a construction which shows that the 
existence of an \ILdthetasc-tree does not
yield additional consistency strength, provided that we allow \(I\) to be sufficiently
foreign to \(H_{\kappa^+}\). 
The argument will rely on being able to surgically alter
a Cohen subset of \(\kappa^+\) in a variety of ways. To this end we fix some
notation beforehand.

\begin{definition}
	Let \(f\) and \(g\) be functions.
	The \emph{graft} of \(f\) onto \(g\) is the function \(g\wr f\), defined on \(\dom(g)\) by
	\[(g\wr f)(x)=\begin{cases}f(x);&x\in\dom(g)\cap\dom(f)\\ 
	g(x);& x\in\dom(g)\setminus\dom(f)\end{cases}\]
\end{definition}

\noindent Essentially, the graft replaces the values of \(g\) with those of \(f\) on their 
common domain.

\begin{lemma}
	\label{lemma:ThinSubsets}
	Let \(\lambda\) be a regular cardinal and assume \(\diamond_\lambda\) holds.
	Suppose \(M\) is a transitive model of \textup{ZFC} (either set- or class-sized) such that
	\(\lambda\in M\) and \(M^{<\lambda}\subseteq M\) and \(|\mathcal{P}(\lambda)^M|=\lambda\).
	Then there are an unbounded  set \(I\subseteq\lambda\) and a function \(g\colon\lambda\to H_\lambda\)
	such that, given any \(f\colon I\to H_\lambda\), the graft \(g\wr f\) is generic for
	\(\Add(\lambda,1)\) over \(M\).\footnote{Here we take the version of
		\(\Add(\lambda,1)\) which adds a function \(g\colon\lambda\to H_\lambda\) by initial segments.}
\end{lemma}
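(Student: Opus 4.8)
The plan is to build $g$ and the thin set $I$ by a single recursion of length $\lambda$, reserving the points of $I$ to be very sparse and carrying out all of the genericity work in the $I$-free gaps between them, while using $\diamond_\lambda$ to neutralise our ignorance of the completion $f$.

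First the preliminaries. Since $\diamond_\lambda$ implies $2^{<\lambda}=\lambda$, we have $|H_\lambda|=\lambda$ in $V$, and I fix once and for all a coding of elements of $H_\lambda$, of finite tuples, and of bounded partial functions by ordinals below $\lambda$. The closure $M^{<\lambda}\subseteq M$ gives $H_\lambda^M=H_\lambda$, so the poset $\P:=\Add(\lambda,1)^M=\funcs{<\lambda}{H_\lambda}$ is computed the same way in $M$ and in $V$; moreover every initial segment $(g\wr f)\restriction\alpha$ of any graft is a ${<}\lambda$-sequence of elements of $H_\lambda\subseteq M$, hence lies in $M$ by closure. Thus each graft already generates a filter on $\P$, and it is $M$-generic precisely if it meets every dense subset of $\P$ lying in $M$. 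The hypothesis $|\mathcal{P}(\lambda)^M|=\lambda$ is exactly what controls these: it forces $\P$, and its power set as seen by $M$, to have size $\lambda$ in $V$, so I may fix an enumeration $\langle D_\xi;\xi<\lambda\rangle$ of all the dense open subsets of $\P$ that belong to $M$.

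Now the construction. Fix a $\diamond_\lambda$-sequence and, at each $\alpha<\lambda$, decode its $\alpha$th entry as a pair $(\xi_\alpha,s_\alpha)$, where $\xi_\alpha<\lambda$ is a candidate index and $s_\alpha$ is a candidate for the restriction $f\restriction(I\cap\alpha)$. I recurse through $\lambda$, maintaining at stage $\alpha$ the part $g\restriction\alpha$ and the set $I\cap\alpha$ decided so far, acting only at \emph{fresh} stages (those at or past the end of the previous gap). At a fresh stage $\alpha$ I assemble the candidate prefix $p^\ast$ of length $\alpha$ whose values off $I$ are those already built for $g$ and whose values on $I\cap\alpha$ are read off from $s_\alpha$; if $p^\ast$ really is a condition I choose the least (in a fixed wellorder of $\P$) condition $q\leq p^\ast$ with $q\in D_{\xi_\alpha}$, declare the entire interval $[\alpha,\dom q)$ to lie in $\lambda\setminus I$ and set $g$ there equal to $q$, and then place the next element of $I$ at $\dom q$. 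If the guess is unusable I extend $g$ trivially and drop an $I$-point to keep things moving. The essential feature is that $g$ and $I$ are produced from the fixed $\diamond_\lambda$-sequence and the fixed enumeration alone, with no reference to any particular $f$.

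For the verification, fix $f\colon I\to H_\lambda$ and an index $\xi$. The graft $g\wr f$ codes to a subset of $\lambda$, so by $\diamond_\lambda$ the set of $\alpha$ at which the $\alpha$th guess equals $(\xi,(g\wr f)\restriction\alpha)$ is stationary; intersecting with the club of fresh stages, pick such an $\alpha$. There $p^\ast=(g\wr f)\restriction\alpha$ exactly, since its off-$I$ part is by definition $g\restriction\alpha$ and $\diamond_\lambda$ has correctly guessed $s_\alpha=f\restriction(I\cap\alpha)$; and because the gap $[\alpha,\dom q)$ meets $I$ nowhere, $g\wr f$ agrees with $g$, hence with $q$, throughout it. Therefore $(g\wr f)\restriction\dom q=q\in D_\xi$, so $g\wr f$ meets $D_\xi$. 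As $\xi$ and $f$ were arbitrary, every graft is $M$-generic; and $I$ is unbounded because a new $I$-point is dropped each time we act, which happens cofinally (already the stages whose guess codes \emph{some} admissible pair form a stationary set). The crux I expect is precisely this reconciliation of one fixed $g$ with the many arbitrary completions $f$: naively the extension needed to enter a dense set depends on the condition below it, hence on $f$, so a single $g$ cannot satisfy all $f$ at once. The device that breaks the circle is having $\diamond_\lambda$ guess the $f$-dependent part of each prefix in advance, so that the extension hard-wired into $g$ at a guessing stage is the correct one for every $f$ whose prefix it has matched, each $f$ being served along its own stationary set of stages. The remaining care is bookkeeping — that the gaps are disjoint and $I$-free, that the fresh stages form a club, and that the enumeration of $M$-dense sets genuinely has length $\lambda$ — which should be routine once the hypotheses are unwound as above.
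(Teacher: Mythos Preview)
Your argument is correct and follows essentially the same strategy as the paper's proof: use \(\diamond_\lambda\) to anticipate the unknown values of \(f\) on \(I\cap\alpha\), form the grafted condition, extend it into a prescribed dense set, and lay down \(g\) on the new interval so that the extension work happens entirely off \(I\). The only organisational difference is in the bookkeeping for dense sets: the paper lets the diamond guess only (an approximation to) \(f\) and at each stage extends into \(D_{\eta_\gamma}\) for the least index \(\eta_\gamma\) not yet handled, whereas you fold the index into the diamond guess as the first coordinate of the pair \((\xi_\alpha,s_\alpha)\); both devices are standard and interchangeable here.
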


\noindent The hypothesis of \(\diamond_\lambda\) is often automatically satisfied.
Specifically, our assumptions about \(M\) imply that \(2^{<\lambda}=\lambda\).
If \(\lambda=\kappa^+\) is a successor, this gives
\(2^\kappa=\kappa^+\) which already implies \(\diamond_\lambda\) by
a result of Shelah~\cite{Shelah2010:Diamonds}.

We are grateful to Joel David Hamkins for suggesting this proof.

\begin{proof}
	Let \(\langle f_\alpha;\alpha<\lambda\rangle\), with 
	\(f_\alpha\colon \alpha\to H_{|\alpha|}\), be a \(\diamond_\lambda\)-sequence and
	fix an enumeration \(\langle D_\alpha;\alpha<\lambda\rangle\) of the open dense subsets of
	\(\Add(\lambda,1)\) in \(M\). We shall construct by recursion a descending sequence of
	conditions \(p_\alpha\in\Add(\lambda,1)\) and an increasing sequence of sets
	\(I_\alpha\) as approximations to \(g\) and \(I\). Specifically, we shall use
	\(\diamond_\lambda\) to guess pieces of any potential function \(f\) and ensure along the
	way that the modified conditions \(p_\alpha\wr f\) meet all of the listed dense sets.
	
	Suppose we have built the sequences \(\langle p_\alpha;\alpha<\gamma\rangle\) and
	\(\langle I_\alpha;\alpha<\gamma\rangle\) for some \(\gamma<\lambda\).
	Let \(I_\gamma^*=\bigcup_{\alpha<\gamma} I_\alpha\).
	Let \(p_\gamma^*\in M\) be an extension of \(\bigcup_{\alpha<\gamma} p_\alpha\) 
	such that \(I_\gamma^*\subseteq\dom(p_\gamma^*)\in\lambda\); such an extension exists
	in \(M\) by our assumption on the closure of \(M\).
	
	Let us briefly summarize the construction. We shall surgically modify the condition 
	\(p_\gamma^*\) by grafting the function given by \(\diamond_\lambda\) onto it.
	We shall then extend this modified condition to meet one of our dense sets, after
	which we will undo the surgery. We will be left with a condition \(p_\gamma\)
	which is one step closer to ensuring that the result of one particular grafting
	\(g\wr f\) is generic. At the same time we also extend \(I_\gamma^*\) by
	adding a point beyond the domains of all the conditions constructed so far.
	
	More precisely, let  \(\tilde{p}^*_\gamma=p^*_\gamma \wr (f_\gamma\rest I^*_\gamma)\).
	This is still a condition in \(M\).
	Let \(\widetilde{p}_\gamma\) be any extension of this condition
	inside \(D_{\eta_\gamma}\), where \(\eta_\gamma\) is the least such that
	\(\tilde{p}^*_\gamma\notin D_{\eta_\gamma}\), and satisfying 
	\(\dom(\widetilde{p}_\gamma)\in\lambda\). Finally, we undo the initial graft and set
	\(p_\gamma=\widetilde{p}_\gamma \wr (p^*_\gamma\rest I^*_\gamma)\). Note that
	we have \(p_\gamma\leq p^*_\gamma\). We also extend our approximation to \(I\) with
	the first available point, letting
	\(I_\gamma=I_\gamma^*\cup\{\min(\lambda\setminus\dom(p_\gamma))\}\).
	
	Once we have completed this recursive construction we can set
	\(I=\bigcup_{\gamma<\lambda}I_\gamma\) and \(g=\bigcup_{\gamma<\lambda}p_\gamma\).
	Let us check that these do in fact have the desired properties.
	
	Let \(f\colon I\to H_\lambda\) be a function. We need to show that \(g\wr f\) is generic over
	\(M\). Using \(\diamond_\lambda\), we find that there are stationarily many \(\gamma\)
	such that \(f_\gamma=f\rest\gamma\).
	Note also that there are club many \(\gamma\) such that \(I_\gamma^*\subseteq\gamma\)
	is unbounded, and together this means that 
	\(S=\set{\gamma}{f_\gamma\rest I_\gamma^*=f\rest(I\cap\gamma)}\) is stationary.
	The conditions \(\widetilde{p}_\gamma\) for \(\gamma\in S\) extend each other
	and we have \(\bigcup_{\gamma\in S}\widetilde{p}_\gamma=g\wr f\). Furthermore, since
	the sets \(D_\alpha\) are open, the construction of \(\widetilde{p}_\gamma\) ensures
	that eventually these conditions will meet all of these dense sets, showing that \(g\wr f\)
	really is generic.
\end{proof}

The construction in the above proof is quite flexible and can be modified to make the
set \(I\) generic in various ways as well (for example, we can arrange for \(I\) to be
Cohen or dominating over \(M\), and have other similar properties).

\begin{theorem}
	\label{thm:ForceITreeableThetaSCJLD}
	If \(\kappa\) is \(\theta\)-supercompact, then there is a forcing extension in which there is
	an \ILdthetasc-tree for some \(I\subseteq\funcs{\kappa}{2}\) of size \(2^\kappa\).
\end{theorem}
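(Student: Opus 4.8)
The plan is to add the tree with a single Cohen poset coding a labelling \(D\colon\funcs{<\kappa}{2}\to V_\kappa\), and then to use the graft lemma (Lemma~\ref{lemma:ThinSubsets}) both to manufacture a \emph{foreign} index set \(I\) of branches and to realize arbitrary targets along \(I\) while lifting a \(\theta\)-supercompactness embedding. By Proposition~\ref{prop:FullyTreeableThetaSCJLDExist} the case \(\theta\geq 2^\kappa\) is already handled by any Laver function, so I would assume \(\kappa\leq\theta<2^\kappa\). As in Theorem~\ref{thm:ForceLongJLDiamondSC} I would first force, if necessary, so that \(\kappa\) carries a \(\theta\)-supercompactness Laver function, \(\theta^{<\kappa}=\theta\), \(2^\theta=\theta^+\), and \(2^\kappa=\theta^+=:\lambda\). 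Then \(\lambda\) is regular with \(2^{<\lambda}=\lambda\), and \(\diamond_\lambda\) holds by Shelah's theorem, so the hypotheses of Lemma~\ref{lemma:ThinSubsets} are available.

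Next I would force with \(\P=\Add(\kappa,1)\), viewed as adding a generic \(G\colon\kappa\to H_\kappa\); since \(2^{<\kappa}=\kappa\) I may identify \(\funcs{<\kappa}{2}\) with \(\kappa\) and read off a labelling \(D\in V[G]\) exactly as in Proposition~\ref{prop:FullyTreeableThetaSCJLDExist}. The point is that the eventual index set \(I\subseteq\funcs{\kappa}{2}\) will be extracted from the foreign subset produced by the graft lemma, rather than being any set definable over \(H_{\kappa^+}\); this is precisely what lets the construction slip past the lower bound of Theorem~\ref{thm:DefinableITreeableThetaSCJLDStronger}.

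To verify the guessing property I would fix a sequence of targets \(\langle a_s;s\in I\rangle\) in \(H_{\theta^+}^{V[G]}=H_{\theta^+}^V\) and a \(\theta\)-supercompactness ultrapower \(j\colon V\to N\) by a normal measure on \(\power_\kappa(\theta)\), so that \(N\) is closed under \(\theta\)-sequences. The forcing \(j(\P)=\Add(j(\kappa),1)^N\) is \(<j(\kappa)\)-closed in \(N\), and the height-\(\kappa\) nodes of \(N\)'s tree coming from the \(V\)-branches occupy a set of coordinates of size \(2^\kappa=\lambda\), cofinal below some \(\delta<j(\kappa)\) by the continuity of \(j\) at \(\lambda\). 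Since \(N\) is an ultrapower by a measure on \(\power_\kappa(\theta)\), it has only \(\lambda=\theta^+\) many maximal antichains of \(j(\P)\), counted in \(V[G]\). I would then run the master-filter diagonalization of the case \(\kappa\le\theta<2^\kappa\) of Theorem~\ref{thm:ForceLongJLDiamondSC} to meet these antichains one at a time, using the \(\le\theta\)-closure of both \(N\) and \(j(\P)\); but at the branch coordinates below \(\delta\) I would instead invoke Lemma~\ref{lemma:ThinSubsets} to obtain a master generic together with a foreign coordinate set along which values may be grafted freely. Grafting the code of \(a_s\) at the coordinate of each \(s\in I\) leaves the result \(N\)-generic, so the lift satisfies \(j(D)(s)=a_s\) for every \(s\in I\); pulling the graft set back along \(j\) then \emph{defines} the index set \(I\subseteq\funcs{\kappa}{2}\) of size \(2^\kappa\).

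The hard part will be the bookkeeping of the branch coordinates: I must guarantee that the foreign coordinate set supplied by the graft lemma descends to an honest set \(I\) of \(V\)-branches of full size \(2^\kappa\), and that grafting targets onto only those coordinates disturbs neither the \(N\)-genericity of the remaining, uncontrolled branch values nor the master filter built above \(\delta\). Crucially, the entire force of the argument is that \(I\) is \emph{not} reconstructible inside \(N\) from \(j(D)\) alone: unlike in Proposition~\ref{prop:FullyTreeableThetaSCJLDStronger}, the image \(j(D)[I]\) need not be an element of \(N\), so no \(X\)-strongness with closure \(\theta\) can be extracted, even though the graft lemma still lets us hit every prescribed target along \(I\). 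Reconciling these two demands—the freedom of the graft against genericity over the ultrapower, together with the non-definability of \(I\) over \(H_{\kappa^+}\)—is exactly where Lemma~\ref{lemma:ThinSubsets} does the essential work.
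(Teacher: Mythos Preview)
Your outline has the right ingredients—the graft lemma for a foreign index set and a lifting argument—but the execution has two genuine gaps.

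First, a single \(\P=\Add(\kappa,1)\) does not give the factorization you need. The paper instead uses the length-\(\kappa\) Easton-support iteration forcing with \(\Q_\gamma=\Add(2^\gamma,1)\) at each inaccessible \(\gamma\), precisely so that \(j(\P)\) factors as \(\P*\Q_\kappa*\Ptail\), where \(\Q_\kappa=\Add((2^\kappa)^{M[G]},1)=\Add(\theta^+,1)\) labels exactly the level-\(\kappa\) nodes of the tree. Lemma~\ref{lemma:ThinSubsets} is then applied directly to \(\Q_\kappa\), with \(\lambda=\theta^+\) and the model \(M[G]\), yielding \(I\subseteq\funcs{\kappa}{2}\) and a base function \(g\). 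With your single Cohen poset, \(j(\P)=\Add(j(\kappa),1)^N\) has no such factor: the coordinates corresponding to level-\(\kappa\) nodes under your unspecified bijection are scattered through \(j(\kappa)\), and you have not identified any regular \(\lambda\) and forcing \(\Add(\lambda,1)\) over a suitable model to which the lemma applies. The master-filter argument you cite from Theorem~\ref{thm:ForceLongJLDiamondSC} is also tailored to a large product \(\Add(j(\kappa),j(2^\kappa))\), not to \(\Add(j(\kappa),1)\), and does not combine with the graft lemma in the way you describe.

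Second, your order of quantifiers is off. The index set \(I\) must be fixed \emph{before} any targets are chosen. In the paper this is done by applying Lemma~\ref{lemma:ThinSubsets} once, in \(V[G]\), to the fixed model \(M[G]\): one obtains \(I\) and \(g\) such that for \emph{every} \(f\colon I\to H_{\theta^+}\) the graft \(g\wr f\) is \(M[G]\)-generic for \(\Q_\kappa\); only then does one take an arbitrary target sequence \(\vec{a}\), use \(g\wr\vec{a}\) as the stage-\(\kappa\) generic, and diagonalize over \(\Ptail\). Your exposition instead invokes the graft lemma inside the verification for a particular target sequence and then ``pulls the graft set back along \(j\)'' to define \(I\), which, read literally, makes \(I\) depend on the targets. (Also, \(H_{\theta^+}^{V[G]}\neq H_{\theta^+}^V\): your forcing adds a subset of \(\kappa\), so targets genuinely live in the extension and must be handled as such.)
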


\begin{proof}
	If \(\theta\geq 2^\kappa\) then even a single Laver function for \(\kappa\) gives rise
	to a full \(\Ld_\kappa\)-tree, by Proposition~\ref{prop:FullyTreeableThetaSCJLDExist},
	and we can force the existence of a Laver function by Theorem~\ref{thm:ForceLongJLDiamondSC}.
	We thus focus on the remaining case when \(\kappa\leq\theta<2^\kappa\).
	
	We make similar simplifying assumptions as in Theorem~\ref{thm:ForceLongJLDiamondSC}.
	Just as there we assume that \(\theta=\theta^{<\kappa}\).
	Furthermore, we may assume that \(2^\theta=\theta^+\), since this can be forced
	without 
	%adding subsets to \(\power_\kappa(\theta)\) and 
	affecting the \(\theta\)-supercompactness of \(\kappa\). Note that these cardinal arithmetic hypotheses
	imply that \(2^\kappa=\theta^+\).
	
	Let \(\P\) be the length \(\kappa\) Easton support iteration
	which adds, in a recursive fashion, a labelling of the tree \(\funcs{<\kappa}{2}\)
	of the extension. Specifically, let \(\P\)
	force with \(\Q_\gamma=\Add(2^\gamma,1)\) at each inaccessible \(\gamma<\kappa\) stage
	\(\gamma\). Let \(G\subseteq\P\) be generic and let \(G_\gamma\) be the piece added at stage 
	\(\gamma\).
	Using suitable coding, we can see each \(G_\gamma\), in \(V[G]\),
	as a function \(G_\gamma\colon \funcs{\gamma}{2}\to H_{\gamma^+}\);
	in particular, \(G_\gamma\)	really does label the whole level \(\funcs{\gamma}{2}\) in the
	final extension, since no new nodes appear in the tree \(\funcs{\leq\gamma}{2}\) after stage
	\(\gamma\) in the iteration \(\P\).
	Thus \(G\) induces a map \(D\colon \funcs{<\kappa}{2}\to V_\kappa[G]\), by 
	extending the \(G_\gamma\) in any way we like to the entire tree. 
	We shall show that \(D\) is an \ILdthetasc-tree for some specifically chosen \(I\).
	
	Fix a
	\(\theta\)-supercompactness embedding \(j\colon V\to M\) in \(V\). Note that
	\(M[G]^{\theta}\subseteq M[G]\) in \(V[G]\) as well, since the forcing \(\P\) is
	\(\theta^+\)-cc. Furthermore, in \(V[G]\) we still have \(2^\theta=\theta^+\), which
	implies \(\diamond_{\theta^+}\) by a result of Shelah~\cite{Shelah2010:Diamonds}.
	We also know that \(|\power(\theta^+)^{M[G]}|=\theta^+\), since \(|j(\kappa)|=\theta^+\) and
	\(|\power(\theta^+)^{M[G]}|<j(\kappa)\) because \(\theta<j(\kappa)\) and \(j(\kappa)\) is
	inaccessible in \(M[G]\).
	Now apply Lemma~\ref{lemma:ThinSubsets} to \(M[G]\) and \(\lambda=\theta^+\) to obtain
	an \(I\subseteq \funcs{\kappa}{2}\) of size \(\theta^+\) and a function 
	\(g\colon \funcs{\kappa}{2}\to H_{\theta^+}\)
	such that for any \(f\colon I\to H_{\theta^+}\) in \(V[G]\), 
	the graft \(g\wr f\) is generic over
	\(M[G]\). We claim that \(D\) is an \ILdthetasc-tree.
	
	To check the guessing property, fix a sequence of targets 
	\(\vec{a}=\langle a_s;s\in I\rangle\) in \(V[G]\). We shall lift the embedding \(j\)
	to \(V[G]\). Let us write \(j(\P)=\P*\Q_\kappa*\Ptail\).
	We know that \(g\wr \vec{a}\) is \(M[G]\)-generic for \(\Q_\kappa\), so we only need to find
	the further generic for \(\Ptail\). We easily see that
	\(M[G][g\wr \vec{a}]^{\theta}\subseteq M[G][g\wr \vec{a}]\) in \(V[G]\), that \(\Ptail\) is
	\(\leq\theta\)-closed in that model, and that \(M[G][g\wr \vec{a}]\) only has
	\(\theta^+\)-many subsets of \(\Ptail\). We can thus diagonalize against these
	dense sets in \(\theta^+\)-many steps and produce a generic \(\Gtail\) for
	\(\Ptail\). Putting all of this together, we can lift \(j\) to
	\(j\colon V[G]\to M[j(G)]\) in \(V[G]\), where \(j(G)=G*(g\wr \vec{a})*\Gtail\). 
	Now consider \(j(D)\). This is
	exactly the labelling of the tree \(\funcs{<j(\kappa)}{2}\) in \(M[j(G)]\)
	given by \(j(G)\). Furthermore, for any \(s\in I\), we have 
	\(j(D)(s)=(g\wr \vec{a})(s)=a_s\), verifying the guessing property.
\end{proof}

Given a \(\Ld_\kappa\)-tree, it is easy to produce a joint Laver sequence of length
\(2^\kappa\) from it by just reading the labels along each branch of the tree.
The resulting sequence then exhibits a large degree of coherence.
We might wonder about the possibility of reversing this process, starting with a joint
Laver sequence and attempting to fit it into a tree. But, taken literally, this property is not very robust.
For example, all functions in such a joint Laver sequence must have the same value at
0, so this coherence property of joint Laver sequences can be destroyed without changing
the sequence in any essential way. To avoid such trivialities, we relax the definition to only
ask for coherence modulo bounded perturbations.

\begin{definition}
	\label{def:treeability}
	Let \(\kappa\) be a regular cardinal and \(\vec{f}=\langle f_\alpha;\alpha<\lambda\rangle\)
	a sequence of functions defined on \(\kappa\). The sequence \(\vec{f}\) is
	\emph{treeable} if there are a bijection \(e\colon \lambda\to \funcs{\kappa}{2}\)
	and a tree labelling \(D\colon \funcs{<\kappa}{2}\to V\) such that, for all \(\alpha<\lambda\),
	we have \(D(e(\alpha)\rest\xi)=f_\alpha(\xi)\) for all but boundedly many \(\xi<\kappa\).
%	
%	Given an \(I\subseteq\funcs{\kappa}{2}\), the sequence \(\vec{f}\) is \emph{\(I\)-treeable}
%	if the above holds for a bijection \(e\colon \lambda\to I\).
\end{definition}

\begin{lemma}
	\label{lemma:GenericNotTreeable}
	Let \(\kappa\) be a regular cardinal and assume that \(2^{<\kappa}< 2^\kappa\). 
	Let \(G\subseteq\Add(\kappa,2^\kappa)\) be generic. Then \(G\) is not treeable.
\end{lemma}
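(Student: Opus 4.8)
The plan is to assume, toward a contradiction, that $G$ — viewed as the sequence $\langle g_\alpha;\alpha<2^\kappa\rangle$ of Cohen subsets of $\kappa$ that it adds — is treeable, witnessed by a bijection $e\colon 2^\kappa\to\funcs{\kappa}{2}$ and a labelling $D\colon\funcs{<\kappa}{2}\to V$ lying in $V[G]$, and then to extract a contradiction from the mutual genericity of the columns. For a branch $s\in\funcs{\kappa}{2}$ let me write $r_s$ for the branch-read $\xi\mapsto D(s\rest\xi)$, so that treeability asserts exactly that $g_\alpha$ and $r_{e(\alpha)}$ agree on a tail of $\kappa$ for every $\alpha$. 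The difficulty to navigate is that both $e$ and $D$ live in $V[G]$ and may depend on the generic in intricate ways — in particular $e$, being a bijection, must involve all of the columns — so one cannot directly argue that some column is unpredictable. My idea for getting around this is to exploit the \emph{surjectivity} of $e$ by feeding the columns themselves back in as branches; this collapses the global coherence demanded by the tree into a local fixed-point equation that no single generic column can satisfy.

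First I would note that $D$ is coded by a subset of $\funcs{<\kappa}{2}$, a set of size $2^{<\kappa}$, so by a standard nice-name argument (each of the $2^{<\kappa}$ facts ``$t\in D$'' is decided by an antichain of conditions, each using fewer than $\kappa$ coordinates) there is a set $s_D$ of column-indices with $|s_D|\le 2^{<\kappa}$ and $D\in V[G\rest s_D]$. Since $2^{<\kappa}<2^\kappa$ there are $2^\kappa$-many indices $\beta\notin s_D$. For such a $\beta$ the column $g_\beta$ is itself a branch, so $g_\beta=e(\alpha_\beta)$ for the unique $\alpha_\beta=e^{-1}(g_\beta)$, and treeability gives that $g_{\alpha_\beta}$ agrees on a tail with $r_{g_\beta}$. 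But $r_{g_\beta}$ is computed from $D$ and $g_\beta$ alone, hence lies in $V[G\rest(s_D\cup\{\beta\})]$; were $\alpha_\beta\notin s_D\cup\{\beta\}$, the column $g_{\alpha_\beta}$ would be $\Add(\kappa,1)$-generic over this model and so could have no tail inside it, a contradiction. Thus $\alpha_\beta\in s_D\cup\{\beta\}$ for every $\beta\notin s_D$. Since $\beta\mapsto\alpha_\beta$ is injective, the alternative $\alpha_\beta\in s_D$ can hold for at most $|s_D|\le 2^{<\kappa}$ many $\beta$, so for some (in fact most) $\beta\notin s_D$ we are forced to have $\alpha_\beta=\beta$; that is, $e(\beta)=g_\beta$ and $g_\beta$ agrees on a tail with $r_{g_\beta}$, i.e. $g_\beta(\xi)=D(g_\beta\rest\xi)$ for all large $\xi$.

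It remains to refute this fixed-point equation, and here mutual genericity finishes the argument: since $\beta\notin s_D$, the column $g_\beta$ is $\Add(\kappa,1)$-generic over $W=V[G\rest s_D]$, and $D\in W$. For each $\eta<\kappa$ the set of conditions $p\in\funcs{<\kappa}{2}$ admitting a level $\xi\in[\eta,|p|)$ with $p(\xi)\ne D(p\rest\xi)$ is dense: given $p$, first extend it to some $q$ of length at least $\eta$ and then append a single bit different from $D(q)$ (possible, as $D(q)$ is just one value), forcing a disagreement at level $|q|\ge\eta$. Hence generically $\{\xi : g_\beta(\xi)\ne D(g_\beta\rest\xi)\}$ is unbounded in $\kappa$, so $g_\beta$ and $r_{g_\beta}$ agree on no tail — contradicting the previous paragraph. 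I expect the crux of the write-up to be exactly the reduction in the middle paragraph, where the coherence imposed by the bijection is made to collapse onto the single column $g_\beta$; this is where the hypothesis $2^{<\kappa}<2^\kappa$ is indispensable, both to bound the support of $D$ and to leave a fresh index surviving the counting. The bounding of the support and the concluding density argument are then routine.
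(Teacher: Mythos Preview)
Your proof is correct, but it takes a genuinely different route from the paper's.

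Both arguments begin the same way: use the hypothesis \(2^{<\kappa}<2^\kappa\) together with the chain condition of \(\Add(\kappa,2^\kappa)\) to see that the labelling \(D\) (which may as well be \(\{0,1\}\)-valued here) lives in a small factor \(V[G\rest s_D]\), so that most columns are mutually generic with it. From there the two proofs diverge. The paper never invokes the surjectivity of \(e\); it simply fixes an arbitrary column index \(\alpha\), decides the tail bound \(\gamma\), and then runs a countable race inside the forcing: alternately extend to decide longer initial segments of \(\dot e(\alpha)\) while keeping track of how much of column \(\alpha\) has been committed, arriving at a level \(\delta>\gamma\) where \(\dot e(\alpha)\rest\delta\) is decided but \(G_\alpha(\delta)\) is not, and then flipping that bit to contradict the tail agreement. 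Your argument instead exploits that \(e\) is a bijection: you feed the generic columns themselves in as branches, use mutual genericity to force \(e^{-1}(g_\beta)\in s_D\cup\{\beta\}\), and then count to find a fixed point \(e(\beta)=g_\beta\), which a single density argument kills.

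The paper's approach is shorter and works with any function \(e\), not just a bijection; yours trades the explicit name-manipulation for a cleaner mutual-genericity picture and a pleasant counting step, at the cost of leaning on surjectivity. Both are fine proofs of the lemma.
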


\begin{proof}
	Let us write \(G=\langle g_\alpha;\alpha<2^\kappa\rangle\) as a sequence of its slices.
	Now suppose that this
	sequence were treeable and let \(\dot{e}\) and \(\dot{D}\) be names for the indexing function
	and the labelling of \(\funcs{<\kappa}{2}\), respectively. Our cardinal arithmetic
	assumption implies that the name \(\dot{D}\) only
	involves conditions from a bounded part of the poset \(\Add(\kappa,2^\kappa)\), so we
	may assume that the labelling \(D\) exists already in the ground model.
	Let \(p\) be an arbitrary condition and \(\alpha<\kappa\). Since we assumed that
	\(G\) was forced to be treeable, there is a name \(\dot{\gamma}\)
	for an ordinal beyond which \(G_\alpha\) agrees with \(D\rest f(\alpha)\).
	By strengthening \(p\) if necessary, we may assume that the value of \(\dot{\gamma}\) has
	been decided. We now inductively construct a countable descending sequence of conditions 
	below \(p\), deciding longer and longer initial segments of \(\dot{e}(\alpha)\),
	in such a way that, for some \(\delta>\gamma\), their union \(p^*\leq p\) decides
	\(\dot{e}(\alpha)\rest\delta\) but does not decide \(G_\alpha(\delta)\).
	Then \(p^*\) can be further extended to a condition forcing
	\(G_\alpha(\delta)\neq D(\dot{e}(\alpha)\rest\delta)\), which contradicts the fact
	that \(p\) forces that \(G\) is treeable.
\end{proof}

\begin{corollary}
	\label{cor:NontreeableSCJLD}
	If \(\kappa\) is \(\theta\)-supercompact, then there is a forcing extension in which
	there is a nontreeable \(\theta\)-supercompactness joint Laver sequence for \(\kappa\) of 
	length \(2^\kappa\).
\end{corollary}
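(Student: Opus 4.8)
The plan is to combine the forcing from Theorem~\ref{thm:ForceLongJLDiamondSC}, which produces a long joint Laver sequence, with a subsequent generic Cohen addition over \(\kappa\) that supplies a nontreeable sequence via Lemma~\ref{lemma:GenericNotTreeable}. The idea is that the slices of an \(\Add(\kappa,2^\kappa)\)-generic are themselves the right raw material: they form a nontreeable sequence, and with a little coding they can be made to guess targets. So I would first force with the poset \(\P\) from Theorem~\ref{thm:ForceLongJLDiamondSC} to arrive at a model \(V[G][g]\) where \(\jLdthetasc_{\kappa,2^\kappa}\) holds and \(\kappa\) remains \(\theta\)-supercompact. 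Then I would force with \(\Add(\kappa,2^\kappa)\) (as computed in that model) to add a generic sequence of \(\kappa\)-slices.

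First I would check that the Cohen forcing \(\Add(\kappa,2^\kappa)\) preserves the joint Laver sequence. This follows from Lemma~\ref{lemma:JLDiamondSCPreservation}: the forcing has size \(2^\kappa\) but is \(\leq\kappa\)-closed, and, crucially, since any target sequence can be guessed via an embedding arising from a normal measure on \(\power_\kappa(\theta)\), and \(\Add(\kappa,2^\kappa)\) is small enough relative to the closure of such embeddings to lift, the hypotheses of the lemma (in the relaxed form mentioned after its statement, allowing \(|\P|\leq\theta\) or using the master-filter-style lifting from Theorem~\ref{thm:ForceLongJLDiamondSC}) are met. Thus the ground-model joint Laver sequence induces a joint Laver sequence \(\langle \ell'_\alpha;\alpha<2^\kappa\rangle\) in the extension, as in the proof of Lemma~\ref{lemma:JLDiamondSCPreservation}~\ref{lab:lifting}.

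Next I would use the generic \(G'\subseteq\Add(\kappa,2^\kappa)\), whose slices \(\langle g_\alpha;\alpha<2^\kappa\rangle\) are nontreeable by Lemma~\ref{lemma:GenericNotTreeable} (noting \(2^{<\kappa}<2^\kappa\) holds in the relevant model since \(2^\kappa=\theta^+\) while \(2^{<\kappa}=\theta\)). The point is to fold these nontreeable slices into the joint Laver sequence without destroying either property. Concretely, I would interleave: define a new sequence whose odd-indexed functions agree with the \(\ell'_\alpha\) (preserving jointness, since a subfamily of a joint sequence guessed via a single embedding still guesses its targets) and whose even-indexed functions carry the generic slices \(g_\alpha\) modified by the \(\ell'\)-values so they also guess targets. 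The decisive observation is that treeability is inherited by subsequences only up to the bijection with \(\funcs{\kappa}{2}\), so embedding a genuinely nontreeable block inside the sequence forces the whole sequence to be nontreeable: any purported tree labelling and indexing bijection would restrict to one witnessing treeability of the generic block, contradicting Lemma~\ref{lemma:GenericNotTreeable}.

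The main obstacle will be verifying that the combined sequence is simultaneously joint and nontreeable, since these pull in opposite directions: jointness wants coherence that could enable a tree structure, while nontreeability forbids it. The resolution is that treeability, per Definition~\ref{def:treeability}, is a property allowing only boundedly-many disagreements along each branch, and the genericity of the Cohen slices produces unboundedly-many forced disagreements with \emph{any} ground-model labelling (which is all we need, since the cardinal arithmetic forces any candidate labelling \(D\) into a bounded part of the poset). I would therefore take care that the coding used to make the even functions guess targets alters each \(g_\alpha\) only on a bounded set, so that the nontreeability argument of Lemma~\ref{lemma:GenericNotTreeable} survives verbatim. The remaining details—checking that the guessing coding is bounded and that jointness passes to the interleaved sequence—are routine given the preservation lemma and the genericity argument.
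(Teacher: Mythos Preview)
You have missed the key observation that makes the paper's proof a two-line corollary: the joint Laver sequence constructed in Theorem~\ref{thm:ForceLongJLDiamondSC} is \emph{already} extracted directly from the \(\Add(\kappa,2^\kappa)\)-generic \(g\) added at the last stage \(\Q_\kappa\) of the iteration. The functions \(\ell_\alpha\) there are defined by decoding the slices \(g(\alpha)\), so the sequence is generic over the intermediate model \(V[G]\), and Lemma~\ref{lemma:GenericNotTreeable} applies to it immediately. No further forcing, preservation, or interleaving is needed.

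Your proposed route, by contrast, has real gaps. First, Lemma~\ref{lemma:JLDiamondSCPreservation} does not obviously cover a second round of \(\Add(\kappa,2^\kappa)\): part~\ref{lab:distributive} fails because the forcing is not \(\leq\lambda\)-distributive (it adds subsets of \(\kappa\)), and part~\ref{lab:lifting} requires \(|\P|\leq\kappa\), which fails as well; the remark after the lemma about relaxing to \(|\P|\leq\theta\) does not help when \(\theta<2^\kappa\), which is exactly the interesting case. Second, your interleaving step is underspecified: the raw Cohen slices \(g_\alpha\) are not Laver functions, and ``modifying them by the \(\ell'\)-values'' on a bounded set cannot make them guess arbitrary targets in \(H_{\theta^+}\). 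Third, the inheritance argument for nontreeability is not quite right as stated: restricting a treeability bijection \(e\colon 2^\kappa\to\funcs{\kappa}{2}\) to the even indices yields only an injection, not a bijection, so you would need to strengthen Lemma~\ref{lemma:GenericNotTreeable} slightly (which can be done, but you have not said so). All of this is unnecessary once you notice that Theorem~\ref{thm:ForceLongJLDiamondSC} already hands you a Cohen-generic joint Laver sequence.
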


\begin{proof}
	The joint Laver sequence constructed in Theorem~\ref{thm:ForceLongJLDiamondSC} was
	added by forcing with \(\Add(\kappa,2^\kappa)\), so it is not treeable by
	Lemma~\ref{lemma:GenericNotTreeable}.
\end{proof}

\section{Joint Laver diamonds for strong cardinals}
\label{sec:JLDStrong}
\begin{definition}
	%Let \(\kappa\) be \(\theta\)-strong. 
	A function \(\ell\colon\kappa\to V_\kappa\) is
	a \(\theta\)-strongness Laver function if it guesses elements of \(V_\theta\)
	via \(\theta\)-strongness embeddings with critical point \(\kappa\).
	
	If \(\kappa\) is fully strong then a function \(\ell\colon\kappa\to V_\kappa\) is a
	Laver function for \(\kappa\) if it is a \(\theta\)-strongness Laver function
	for \(\kappa\) for all \(\theta\).
\end{definition}

Just as in the supercompact case, we shall say that \(\Ldthetastr_\kappa\) holds if there
is a \(\theta\)-strongness Laver function for \(\kappa\); since our default set of targets
in this case is \(V_\theta\), this is just the same as \(\Ldthetastr_\kappa(V_\theta)\).
In the case of full strongness, we shall similarly say that \(\Ldstr_\kappa\) holds if there
is a strongness Laver function for \(\kappa\), which should be read as \(\Ldstr_\kappa(V)\).

A similar factoring argument as in the supercompact case shows that we can afford to be
imprecise about which embeddings we count as \(\theta\)-strongness embeddings in the definition above.
Specifically, if \(j\colon V\to M\) is any \(\theta\)-strongness embedding with critical point
\(\kappa\) and a function \(\ell\) guesses a target \(a\in V_\theta\) via \(j\), then
\(\ell\) also guesses \(a\) via the induced \((\kappa,V_\theta)\)-extender ultrapower embedding.

As in the supercompact case, \(2^\kappa\) is the largest possible cardinal length of a
\(\theta\)-strongness joint Laver sequence for \(\kappa\), just because there are only
\(2^\kappa\) many functions \(\ell\colon\kappa\to V_\kappa\).

The set of targets \(V_\theta\) is a bit unwieldy and lacks some basic closure
properties, particularly in the case when \(\theta\) is a successor ordinal.
The following lemma shows that, modulo some coding, we can recover a good deal of closure
under sequences.

\begin{lemma}
	\label{lemma:FlatCoding}
	Let \(\theta\) be an infinite ordinal and let \(I\in V_\theta\) be a set.
	If \(\theta\) is  successor ordinal or \(\cf(\theta)>|I|\) then \(V_\theta\) is closed
	under a coding scheme for sequences indexed by \(I\). Moreover, this coding is
	\(\Delta_0\)-definable.
\end{lemma}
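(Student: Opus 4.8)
The plan is to code a sequence $\vec{a}=\langle a_i;i\in I\rangle$ of elements of $V_\theta$ by its graph $c=\{\pi(i,u)\mid i\in I,\ u\in a_i\}$ for a suitably chosen pairing $\pi$, and to decode by $a_i=\{u\mid \pi(i,u)\in c\}$. Because $I$ is a fixed parameter there is no need to record separately which indices carry the empty set, and the relation ``$a$ is the $i$th term of the sequence coded by $c$'' is $\Delta_0$ in the parameter $I$ as soon as $\pi$ and its two projections are $\Delta_0$ (the quantifiers over $u$ can be bounded to $\operatorname{tc}(c)$). The entire content of the lemma is thus to choose, in each of the two hypothesized cases, a pairing $\pi$ for which $c$ provably lands in $V_\theta$; the two hypotheses on $\theta$ are exactly what make this go through, but for different reasons.

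In the case $\cf(\theta)>|I|$ the ordinary Kuratowski pairing $\pi(i,u)=\{\{i\},\{i,u\}\}$ already suffices. Every $i$ and every $u\in a_i$ has rank below $\theta$, so each pair does too, the finite increase being absorbed by the limit $\theta$. The rank of $c$ is then a supremum, over the at most $|I|$ many indices $i$, of ordinals below $\theta$ (each bounded in terms of $\operatorname{rank}(a_i)$ and $\operatorname{rank}(I)$), and this supremum remains below $\theta$ precisely because $\cf(\theta)>|I|$. Hence $c\in V_\theta$, and Kuratowski pairing and unpairing are visibly $\Delta_0$.

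The successor case $\theta=\beta+1$ is where the real work lies, and I expect it to be the main obstacle. Here $\cf(\theta)=1$, so the cofinality bound is unavailable, and once $\beta$ is itself a successor ordinal Kuratowski pairing fails outright, raising rank by two beyond the room available in $V_\beta$. The fix, which covers the whole successor case at once, is a rank-non-increasing (``flat'') pairing $\pi\colon V_\beta\times V_\beta\to V_\beta$. I would assemble it from two tagging maps defined by $\in$-recursion, $L(w)=\{\emptyset\}\cup\{L(w')\mid w'\in w\}$ and $R(w)=\{\{\emptyset\}\}\cup\{R(w')\mid w'\in w\}$, and put $\pi(x,y)=\{L(w)\mid w\in x\}\cup\{R(w)\mid w\in y\}$. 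A routine induction shows that $L$ and $R$ preserve rank on sets of infinite rank — the inserted finite tag is swallowed — and keep finite-rank sets finite; since $\operatorname{rank}(i),\operatorname{rank}(u)<\beta$ it follows that $\operatorname{rank}(\pi(i,u))<\beta$, so that $c\subseteq V_\beta$ and hence $c\in V_\theta$, now with no restriction on $|I|$ at all. Finally $L$ and $R$ are injective with disjoint ranges, since every $L$-value contains $\emptyset$ as a member while no $R$-value does, which equips $\pi$ with well-defined projections and makes the decoding bounded.

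The one genuinely delicate point, and the place I would be most careful, is the claim that this flat coding is $\Delta_0$ rather than merely $\Delta_1$: the recursive maps $L$ and $R$ are a priori only $\Sigma_1$. I would dispatch this in the standard manner, noting that a witness to ``$z=L(w)$'' is a function carried by $\operatorname{tc}(\{w\})$ and so can be located inside a set whose rank is bounded in terms of $\operatorname{rank}(w)$, which lets every quantifier in the defining formula be bounded by the transitive closures of the parameters. Granting this, both encoding and decoding are bounded formulas in the parameter $I$, and the two cases together establish the lemma.
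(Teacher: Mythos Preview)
Your proposal is correct and follows essentially the same strategy as the paper: code a sequence \(\vec a=\langle a_i;i\in I\rangle\) by the set \(\{[i,b]:i\in I,\ b\in a_i\}\) for a flat pairing \([\cdot,\cdot]\), and check that this set lands in \(V_\theta\). The paper is more economical in two respects. First, it fixes a single flat pairing (citing Quine--Rosser rather than building one by hand) and uses it uniformly in \emph{both} cases; your separate Kuratowski argument for the high-cofinality case is fine but unnecessary once a flat pairing is available. Second, the paper dispatches both hypotheses with one line: choosing infinite \(\theta_i<\theta\) with \(a_i\cup\{i\}\subseteq V_{\theta_i}\), the code sits in \(V_{\sup_i\theta_i+1}\), and \(\sup_i\theta_i<\theta\) holds either because \(\theta=\beta+1\) (take all \(\theta_i=\beta\)) or because \(\cf(\theta)>|I|\). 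Your hand-built pairing via the tagging maps \(L,R\) is a legitimate flat pairing, and your rank analysis of it is right; you are simply reinventing a wheel the paper cites off the shelf. On the \(\Delta_0\) point, the paper says even less than you do, relying on the known simple definability of Quine--Rosser; your sketch of bounding the recursion witness is in the right spirit, though making it literally \(\Delta_0\) (as opposed to \(\Delta_1\)) for your recursive \(L,R\) takes more care than you indicate.
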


\begin{proof}
	If \(\theta=\omega\) then the \(I\) under consideration are finite. Since \(V_\omega\)
	is already closed under finite sequences we need only deal with \(\theta>\omega\).
	
	Fix in advance a simply definable flat pairing function \([\cdot,\cdot]\)
	(flat in the sense that any infinite \(V_\alpha\) is closed under it; the Quine--Rosser
	pairing function will do).
	
	Let \(\vec{a}=\langle a_i; i\in I\rangle\) be a sequence of elements of \(V_\theta\).
	For each \(i\in I\) we can find an (infinite) ordinal \(\theta_i<\theta\) such that
	\(a_i\cup \{i\}\subseteq V_{\theta_i}\). Now let \(\widetilde{a}_i=\set{[i,b]}{b\in a_i}
	\subseteq V_{\theta_i}\) and finally define
	\(\widetilde{a}=\bigcup_{i\in I}\widetilde{a}_i\). We see that 
	\(\widetilde{a}\subseteq V_{\sup_i \theta_i}\) and, under our hypotheses,
	\(\sup_i\theta_i<\theta\). It follows that
	\(\widetilde{a}\in V_{(\sup_i\theta_i)+1} \subseteq V_\theta\) as required.
\end{proof}

\begin{proposition}
	\label{prop:ThetaStrJLDiamond}
	Let \(\kappa\) be \(\theta\)-strong with \(\kappa+2\leq\theta\) and let 
	\(\lambda\leq 2^\kappa\) be a cardinal. If there is a \(\theta\)-strongness Laver 
	function for \(\kappa\)
	and \(\theta\) is either a successor ordinal or \(\lambda<\cf(\theta)\) then there is a
	\(\theta\)-strongness joint Laver sequence of length \(\lambda\) for \(\kappa\).
\end{proposition}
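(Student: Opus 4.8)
The plan is to adapt the proof of Proposition~\ref{prop:SCHasSomeJLDiamond} to the strong setting, with Lemma~\ref{lemma:FlatCoding} supplying the coding that \(V_\theta\), unlike \(H_{\theta^+}\), does not carry for free. Fix a \(\theta\)-strongness Laver function \(\ell\) for \(\kappa\). Since \(\kappa+2\leq\theta\) we have \(\power(\kappa)\subseteq V_{\kappa+1}\), so we may choose an index set \(I\subseteq\power(\kappa)\) of size \(\lambda\) (possible as \(\lambda\leq 2^\kappa\)); note that \(I\in V_{\kappa+2}\subseteq V_\theta\). Fix a bijection \(f\colon\lambda\to I\). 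Our hypothesis on \(\theta\) (a successor ordinal, or \(\lambda<\cf(\theta)\), which gives \(|I|<\cf(\theta)\)) is exactly what Lemma~\ref{lemma:FlatCoding} requires to conclude that \(V_\theta\) is closed under the \(\Delta_0\)-definable coding of \(I\)-indexed sequences. Write \(\operatorname{dec}(c,s)\) for the associated \(\Delta_0\) decoding operation, which extracts the \(s\)-th entry from a code \(c\).

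With this in hand I would define the candidate functions exactly as in the supercompact case, decoding in place of function application: for \(\alpha<\lambda\) let \(\ell_\alpha(\xi)=\operatorname{dec}(\ell(\xi),f(\alpha)\cap\xi)\). As the decoding is \(\Delta_0\) it is always defined, and since \(\ell(\xi)\in V_\kappa\) and \(\kappa\) is inaccessible the value again lies in \(V_\kappa\), so each \(\ell_\alpha\colon\kappa\to V_\kappa\). I claim \(\langle\ell_\alpha;\alpha<\lambda\rangle\) is a \(\theta\)-strongness joint Laver sequence.

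To verify jointness, let \(\vec a=\langle a_\alpha;\alpha<\lambda\rangle\) be a sequence of targets from \(V_\theta\). Then \(\vec a\circ f^{-1}\) is an \(I\)-indexed sequence of elements of \(V_\theta\), so by the closure just established it has a code \(\widetilde a\in V_\theta\). Because \(\ell\) is a \(\theta\)-strongness Laver function there is a \(\theta\)-strongness embedding \(j\colon V\to M\) with \(\cp(j)=\kappa\) and \(j(\ell)(\kappa)=\widetilde a\). Now, since decoding is \(\Delta_0\) (hence absolute and commuting with \(j\)) and \(f(\alpha)\subseteq\kappa\) gives \(j(f(\alpha))\cap\kappa=f(\alpha)\), elementarity yields
\[
j(\ell_\alpha)(\kappa)=\operatorname{dec}\bigl(j(\ell)(\kappa),\,j(f(\alpha))\cap\kappa\bigr)=\operatorname{dec}(\widetilde a,f(\alpha))=(\vec a\circ f^{-1})(f(\alpha))=a_\alpha
\]
for every \(\alpha<\lambda\), so all the \(\ell_\alpha\) guess their targets via the single embedding \(j\). (That each \(\ell_\alpha\) is individually a Laver function then follows by feeding in sequences that are \(a\) on the \(\alpha\)-th coordinate and \(\emptyset\) elsewhere.)

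The one genuinely new point over Proposition~\ref{prop:SCHasSomeJLDiamond} — and the step I expect to carry the real content — is ensuring that the single target \(\widetilde a\) actually lands in \(V_\theta\), so that the hypothesized Laver function can guess it. A \(\lambda\)-sequence of elements of \(V_\theta\) can have ranks cofinal in \(\theta\), and then its naive join escapes \(V_\theta\); this is precisely the obstruction that the cofinality (or successor) hypothesis on \(\theta\) removes through Lemma~\ref{lemma:FlatCoding}. Once the code is confined to \(V_\theta\), the remaining computation is the same elementarity argument as in the supercompact case, relying only on the absoluteness of the \(\Delta_0\) decoding and on \(\cp(j)=\kappa\) fixing each \(f(\alpha)\subseteq\kappa\).
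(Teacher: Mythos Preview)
Your proof is correct and follows essentially the same approach as the paper's own argument: both imitate Proposition~\ref{prop:SCHasSomeJLDiamond} by fixing \(I\subseteq\power(\kappa)\) of size \(\lambda\), defining \(\ell_\alpha(\xi)\) as the \((f(\alpha)\cap\xi)\)-indexed entry decoded from \(\ell(\xi)\) via the scheme of Lemma~\ref{lemma:FlatCoding}, and then guessing the coded sequence \(\widetilde a\in V_\theta\) with the single Laver function. Your write-up is somewhat more explicit about why \(I\in V_\theta\), why the \(\Delta_0\) decoding is absolute between \(V\) and \(M\), and why the cofinality hypothesis is precisely what is needed to keep \(\widetilde a\) inside \(V_\theta\), but these are elaborations rather than departures from the paper's strategy.
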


\noindent In particular, if \(\theta\) is a successor, then a single \(\theta\)-strongness
Laver function already yields a joint Laver sequence of length \(2^\kappa\), the maximal
possible.

\begin{proof}
	We aim to imitate the proof of Proposition~\ref{prop:SCHasSomeJLDiamond}. 
	To that end, fix an \(I\subseteq\mathcal{P}(\kappa)\) of size \(\lambda\) and a
	bijection \(f\colon \lambda\to I\).
	If \(\ell\) is a Laver function for \(\kappa\), we define a 
	joint Laver sequence by letting \(\ell_\alpha(\xi)\), for each \(\alpha<\lambda\),
	be the element of \(\ell(\xi)\) with index \(f(\alpha)\cap \xi\) in the coding scheme
	described in Lemma~\ref{lemma:FlatCoding}.
	
	It is now easy to verify that the functions \(\ell_\alpha\) form a joint Laver sequence:
	given a sequence of targets \(\vec{a}\), we can replace it with a coded version \(\widetilde{a}\in V_\theta\), by using \(f\) and 
	Lemma~\ref{lemma:FlatCoding}.
	We can then use \(\ell\) to guess \(\widetilde{a}\) and the \(\theta\)-strongness
	embedding obtained this way will witness the joint guessing property of the \(\ell_\alpha\).
\end{proof}

Again, as in the supercompact case, if the Laver diamond we started with works for
several different \(\theta\) then the joint Laver sequence derived above
will also work for those same \(\theta\). In particular, if \(\kappa\) is strong,
then combining the argument from Proposition~\ref{prop:ThetaStrJLDiamond} with
the construction of a strongness Laver function due to Gitik and Shelah~\cite{GitikShelah1989:IndestructibilityOfStrong} gives an analogue of Corollary~\ref{cor:SCHasLongJLDiamond}.

\begin{corollary}
	\label{cor:StrHasJLD}
	If \(\kappa\) is strong then there is a strongness joint Laver sequence for \(\kappa\)
	of length \(2^\kappa\).
\end{corollary}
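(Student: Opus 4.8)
The plan is to combine the Gitik--Shelah construction of a strongness Laver function with the argument of Proposition~\ref{prop:ThetaStrJLDiamond}, exactly paralleling how Corollary~\ref{cor:SCHasLongJLDiamond} followed from Proposition~\ref{prop:SCHasSomeJLDiamond} in the supercompact case. Since \(\kappa\) is fully strong, the Gitik--Shelah argument~\cite{GitikShelah1989:IndestructibilityOfStrong} furnishes a single function \(\ell\colon\kappa\to V_\kappa\) which is a \(\theta\)-strongness Laver function for \emph{every} \(\theta\). First I would fix this \(\ell\), together with a set \(I\subseteq\power(\kappa)\) of size \(2^\kappa\) and a bijection \(f\colon 2^\kappa\to I\), and then define the candidate sequence \(\langle\ell_\alpha;\alpha<2^\kappa\rangle\) verbatim as in Proposition~\ref{prop:ThetaStrJLDiamond}: let \(\ell_\alpha(\xi)\) be the element of \(\ell(\xi)\) with index \(f(\alpha)\cap\xi\) under the flat coding scheme of Lemma~\ref{lemma:FlatCoding} (and \(\emptyset\) if this is undefined). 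The crucial point is that this coding is a single \(\Delta_0\)-definable, flat scheme, so the sequence \(\langle\ell_\alpha;\alpha<2^\kappa\rangle\) is fixed once and for all, independently of any \(\theta\).

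To verify the joint guessing property, suppose we are given a sequence of targets \(\langle a_\alpha;\alpha<2^\kappa\rangle\) with each \(a_\alpha\in V\). Here I would exploit the freedom afforded by full strongness to choose the degree of strongness to our liking: let \(\theta\) be a \emph{successor} ordinal large enough that \(\kappa+2\leq\theta\) and every \(a_\alpha\) lies in \(V_\theta\) (one can simply take \(\theta=\beta+2\) for \(\beta=\sup_\alpha\operatorname{rank}(a_\alpha)\)). Because \(\theta\) is a successor, Lemma~\ref{lemma:FlatCoding} applies with no restriction on \(\lambda=2^\kappa\), so the sequence of targets can be coded into a single \(\widetilde{a}\in V_\theta\). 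Since \(\ell\) is in particular a \(\theta\)-strongness Laver function, there is a \(\theta\)-strongness embedding \(j\colon V\to M\) with \(j(\ell)(\kappa)=\widetilde{a}\), and then, exactly as in the computation in Proposition~\ref{prop:SCHasSomeJLDiamond}, the absoluteness of the \(\Delta_0\) coding together with \(j(f(\alpha))\cap\kappa=f(\alpha)\) yields \(j(\ell_\alpha)(\kappa)=a_\alpha\) for every \(\alpha<2^\kappa\).

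The main point of the argument --- and really the only thing beyond quoting Proposition~\ref{prop:ThetaStrJLDiamond} --- is the observation that a single fixed sequence works simultaneously at all degrees of strongness. This is where the hypothesis of full strongness earns its keep: whereas Proposition~\ref{prop:ThetaStrJLDiamond} stumbles when \(\theta\) is a limit of cofinality at most \(\lambda\), in the present setting we are never forced to use such a \(\theta\), since for any given sequence of targets we may always round the required degree of strongness up to a successor ordinal. I do not expect any genuine obstacle here; the flatness of the coding (so that the \(\ell_\alpha\) do not depend on \(\theta\)) and the successor-rounding trick (so that the bad cofinality case never arises) together reduce everything to the verification already carried out for \(\theta\)-strongness.
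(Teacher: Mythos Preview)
Your proposal is correct and follows essentially the same approach as the paper: the paper's argument is the brief discussion immediately preceding the corollary, which says to combine the Gitik--Shelah strongness Laver function with the argument of Proposition~\ref{prop:ThetaStrJLDiamond}, noting that since the derived \(\ell_\alpha\) do not depend on \(\theta\), the same sequence works at every degree of strongness. Your explicit ``round up to a successor \(\theta\)'' observation is exactly the mechanism implicit in the paper's remark that the joint sequence works for all \(\theta\) the original Laver function works for.
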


Proposition~\ref{prop:ThetaStrJLDiamond} implies that in most cases (that is, for most 
\(\theta\)) we do not need to do any additional work beyond ensuring that there is a 
\(\theta\)-strongness Laver function for \(\kappa\) to automatically also find the longest 
possible joint Laver sequence. For example, if
\(\theta\) is a successor or if \(\cf(\theta)>2^\kappa\) then a single \(\theta\)-strongness
Laver function yields a joint Laver sequence of length \(2^\kappa\).
To gauge the consistency strength of the existence of \(\theta\)-strongness
joint Laver sequences for \(\kappa\) we should therefore only focus on the consistency
strength required for a single Laver diamond, and, separately, 
on \(\theta\) of low cofinality.

%Forcing constructions giving a single \(\theta\)-strongness Laver diamond for a cardinal
%\(\kappa\) are not difficult (one can simply add, after a suitable preparation, a Cohen subset
%to \(\kappa\); a simple master condition argument shows that this adds a Laver function).
%We therefore get an immediate corollary to Proposition~\ref{prop:ThetaStrJLDiamond}.
%%%%%%%%%%%%%%%%%%%%%%%%%%%%%

Forcing constructions for a single \(\theta\)-strongness Laver diamond are known. However, since
we weren't able to find a suitable reference, we give the proofs in some detail.

We are going to need an analogue of Lemma~\ref{lemma:ThetaSCMenasFunction} for strong cardinals.

\begin{definition}
	A \(\theta\)-strongness \emph{Menas function} for a cardinal \(\kappa\) is a function
	\(f\colon\kappa\to\kappa\) such that there is a \(\theta\)-strongness embedding
	\(j\colon V\to M\) with \(\cp(j)=\kappa\) and \(j(f)(\kappa)=\theta\).
\end{definition}

We should mention that our definition differs slightly from Hamkins' original definition 
in~\cite{Hamkins2000:LotteryPreparation}, where he says that \(f\) is a \(\theta\)-strongness
Menas function if \(j(f)(\kappa)>\beth_\theta\) for some \(\theta\)-strongness embedding \(j\).
A Menas function in our sense gives rise to one in Hamkins' original sense, and
is in general more convenient to work with.

\begin{lemma}
	\label{lemma:thetaStrMenasFunction}
	If \(\kappa\) is \(\theta\)-strong, then \(\kappa\) carries a \(\theta\)-strongness
	Menas function.
\end{lemma}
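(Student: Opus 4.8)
The plan is to mirror the proof of Lemma~\ref{lemma:ThetaSCMenasFunction}, replacing the supercompactness degree used there by a \emph{strongness degree}, and adjusting so that the final value computed is exactly \(\theta\) rather than merely above it. I would dispatch the trivial cases first: if \(\theta<\kappa\), take \(f\) to be the constant function with value \(\theta\), so that \(j(f)(\kappa)=\theta\) for every embedding with critical point \(\kappa\); and if \(\theta=\kappa\), take \(f=\mathrm{id}\), for which \(j(f)(\kappa)=\kappa=\theta\) via any \(\kappa\)-strongness (i.e.\ measurable) ultrapower. So I would concentrate on the genuinely nontrivial case \(\kappa<\theta\).

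First I would define \(f\colon\kappa\to\kappa\) by letting \(f(\alpha)\) be the least \(\beta<\kappa\) such that \(\alpha\) is not \(\beta\)-strong, and \(f(\alpha)=0\) if \(\alpha\) is \(\beta\)-strong for every \(\beta<\kappa\); this clearly maps into \(\kappa\). The embedding I would use is a \(\theta\)-strongness extender ultrapower embedding \(j\colon V\to M\) chosen \emph{minimally}---say with \(j(\kappa)\) least among all such embeddings, or via the Mitchell-minimal \((\kappa,V_\theta)\)-extender---so as to arrange that \(\kappa\) is \emph{not} \(\theta\)-strong in \(M\). This is the exact analogue of the selection of a minimal normal measure on \(\power_\kappa(\theta)\) in Lemma~\ref{lemma:ThetaSCMenasFunction}.

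It then remains to compute \(j(f)(\kappa)=\theta\), which I would do by two inequalities. Since \(\theta<j(\kappa)\) and \(\kappa\) is not \(\theta\)-strong in \(M\), the value \(j(f)(\kappa)\)---computed in \(M\) as the least \(\beta<j(\kappa)\) for which \(\kappa\) fails to be \(\beta\)-strong in \(M\)---is at most \(\theta\). Conversely, because \(j\) is a \(\theta\)-strongness embedding we have \(V_\theta\subseteq M\), so for every \(\beta<\theta\) a witness to the \(\beta\)-strongness of \(\kappa\) in \(V\) has rank below \(\theta\), hence lies in \(M\), and since \(V_\beta=V_\beta^M\) it continues to witness \(\beta\)-strongness of \(\kappa\) inside \(M\). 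Thus \(\kappa\) is \(\beta\)-strong in \(M\) for all \(\beta<\theta\), giving \(j(f)(\kappa)\geq\theta\). Together these yield \(j(f)(\kappa)=\theta\), as required.

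The main obstacle I expect is justifying the two structural facts about the chosen \(M\). On one side, one must verify that a minimally chosen \(\theta\)-strongness embedding really does leave \(\kappa\) non-\(\theta\)-strong in \(M\); as in the supercompact case this rests on the observation that if an extender witnessing \(\theta\)-strongness of \(\kappa\) appeared in the ultrapower then it would yield an embedding with strictly smaller image of \(\kappa\), contradicting minimality. On the other side, one must be slightly careful with the downward absoluteness of \(\beta\)-strongness for \(\beta<\theta\): the point is exactly that \(V_\theta\subseteq M\) guarantees that all the relevant witnesses of rank below \(\theta\) are present in \(M\) and remain witnesses there. Neither step is deep, but they are where the genuine work lies.
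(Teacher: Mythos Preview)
Your proposal is correct and follows essentially the same route as the paper: define \(f(\alpha)\) as the least failure point of strongness (or a default value if \(\alpha\) is \(\kappa\)-strong), pick a minimal \(\theta\)-strongness embedding so that \(\kappa\) is not \(\theta\)-strong in \(M\), and then read off \(j(f)(\kappa)=\theta\). The paper's own proof is a two-line sketch that simply invokes the analogy with Lemma~\ref{lemma:ThetaSCMenasFunction}, whereas you have spelled out the two inequalities and the absoluteness of \(\beta\)-strongness for \(\beta<\theta\); this extra detail is fine and accurate.
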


\begin{proof}
	We define a function \(f\colon \kappa\to\kappa\) by letting \(f(\alpha)=0\) if \(\alpha\)
	is \(\kappa\)-strong, and otherwise let \(f(\alpha)\) be the least \(\gamma<\kappa\) such that
	\(\alpha\) is not \(\gamma\)-strong. An argument much like the one in 
	Lemma~\ref{lemma:ThetaSCMenasFunction} shows that we can find a \(\theta\)-strongness
	embedding \(j\colon V\to M\) with critical point \(\kappa\) such that \(\kappa\) is not
	\(\theta\)-strong in \(M\), and that \(j(f)(\kappa)=\theta\) for this \(j\).
\end{proof}

%The following two lemmas are implicit in many existing arguments involving the preservation of
%strongness; we include the proofs for completeness.

\begin{lemma}
	\label{lemma:ValphaNamesSmallForcing}
	Let \(\kappa\) be a cardinal and suppose \(\P\subseteq V_\kappa\) is a
	poset. Let \(G\subseteq \P\) be generic and assume that \(\kappa\) is a \(\beth\)-fixed
	point in \(V[G]\). For any ordinal \(\alpha\geq\kappa+1\),
	every element of \(V[G]_\alpha=(V_\alpha)^{V[G]}\) has a \(\P\)-name coded in \(V_\alpha\).
\end{lemma}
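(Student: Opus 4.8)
The plan is to prove the statement by induction on the ordinal $\alpha \geq \kappa+1$, constructing for each element of $V[G]_\alpha$ a $\P$-name that is itself an element of $V_\alpha$. The key resource is that $\P \subseteq V_\kappa$ is small, so $|\P| < \kappa$, and that $\kappa$ is a $\beth$-fixed point in $V[G]$, meaning $|V[G]_\gamma| = |\gamma|$ in $V[G]$ (and similarly $\kappa = \beth_\kappa$) for the relevant $\gamma$. The idea is that a $\P$-name for an element $x$ of $V[G]_\alpha$ can be assembled from $\P$-names for the elements of its transitive closure, each of which lives at a strictly smaller rank, so the induction hypothesis applies. The main bookkeeping task is to control the rank of the resulting name so that it indeed lands in $V_\alpha$ rather than some larger $V_\beta$.

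\textbf{The base case and successor steps.}
First I would handle the base case $\alpha = \kappa+1$: every element of $V[G]_{\kappa+1}$ is (coded by) a subset of $V[G]_\kappa$, and since $\P$ is small and $\kappa$ is inaccessible-like enough (being a $\beth$-fixed point) one can find nice names of low rank. More carefully, an element $x \in V[G]_\alpha$ has each member of its transitive closure sitting in some $V[G]_\beta$ with $\beta < \alpha$, so by induction each such member $y$ has a name $\dot y \in V_\beta \subseteq V_\alpha$. To name $x$ itself, I would take a nice-name presentation: for each member $y$ of $x$ (in $V[G]$) choose a name $\dot y$ of controlled rank, and choose an antichain $A_{\dot y} \subseteq \P$ deciding that $\dot y \in \dot x$. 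The resulting name $\dot x = \set{(\dot y, p)}{\ldots}$ is built from names already in $V_{<\alpha}$ together with conditions from $\P \subseteq V_\kappa \subseteq V_\alpha$, so the pairs $(\dot y, p)$ have rank strictly below $\alpha$, whence $\dot x \in V_\alpha$.

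\textbf{The rank computation — the main obstacle.}
The delicate point, and where I expect the real work to be, is the rank arithmetic: I must verify that forming the Kuratowski (or flat) pairs $(\dot y, p)$ and collecting them into $\dot x$ does not push the rank of $\dot x$ up to or beyond $\alpha$. This is exactly where the $\beth$-fixed-point hypothesis on $\kappa$ is used: because $\P \subseteq V_\kappa$ and $\kappa$ is a $\beth$-fixed point, the conditions $p \in \P$ contribute only boundedly below $\kappa \leq \alpha$ to ranks, and forming a pair or a set of pairs adds only a bounded, finite number of ranks at successor stages and stays below $\alpha$ at limit stages of $\alpha$. The case split between $\alpha$ a limit and $\alpha$ a successor is where one has to be slightly careful: at a successor $\alpha = \beta+1$ the members of $x$ lie in $V[G]_\beta$, their names lie in $V_\beta$ by induction, and the pairing plus collection keeps $\dot x$ in $V_{\beta+n} \subseteq V_\alpha$ for a small finite $n$ absorbed by the fact that we may always pad to an infinite stage; at limit $\alpha$ one simply unions the names obtained at earlier stages. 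Once the rank of $\dot x$ is confirmed to be below $\alpha$, the induction closes and the lemma follows.
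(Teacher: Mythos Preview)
Your inductive strategy matches the paper's, and you correctly identify the rank computation at successor stages as the crux. But your proposed resolution of that obstacle is a genuine gap. At \(\alpha = \beta+1\), forming Kuratowski pairs \((\dot y, p)\) from \(\dot y \in V_\beta\) and \(p \in V_\kappa \subseteq V_\beta\) already lands you in \(V_{\beta+2}\), and collecting these pairs into \(\dot x\) pushes the rank to \(\beta+3\). Since \(V_\alpha = V_{\beta+1}\), your claim that \(V_{\beta+n} \subseteq V_\alpha\) is simply false for \(n \geq 2\), and there is nothing to ``pad to an infinite stage'': if \(\alpha = \kappa+2\), for instance, you cannot round up to a limit ordinal, and the finite offset is not absorbed.

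The paper's resolution is to observe that the lemma only asserts the name is \emph{coded} in \(V_\alpha\), not that it literally lies there, and to invoke the flat coding scheme of Lemma~\ref{lemma:FlatCoding}. The nice name \(\sigma = \bigcup_x \{x\} \times A_x\) is viewed as the sequence of antichains \(A_x \in V_{\kappa+1} \subseteq V_\alpha\) indexed by (a subset of) \(V_\alpha\); since \(\alpha+1\) is a successor ordinal, that lemma guarantees the sequence has a code in \(V_{\alpha+1}\). The flat pairing function underlying Lemma~\ref{lemma:FlatCoding} is designed precisely to avoid the rank inflation caused by Kuratowski pairing, and it is the missing ingredient in your sketch. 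Your base case would also benefit from more precision along the paper's lines: the \(\beth\)-fixed-point hypothesis is used to reduce names for elements of \(V[G]_{\kappa+1}\) to nice names for subsets of \(\kappa\), which are essentially subsets of \(\kappa \times \P\) and hence elements of \(V_{\kappa+1}\).
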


\begin{proof}
	We argue by induction on \(\alpha\). In the base step, the key point is that, since
	\(\kappa\) is a \(\beth\)-fixed point in \(V[G]\), names for elements of \(V[G]_{\kappa+1}\)
	can be replaced with names for subsets of \(\kappa\). But nice names for subsets of
	\(\kappa\) are essentially just subsets of \(\kappa\times\P\) and are thus elements of
	\(V_{\kappa+1}\). The limit step of the induction is trivial, so it only remains for
	us to consider the successor step.
	
	Assume that every element of \(V[G]_\alpha\), for some \(\alpha\geq \kappa+1\), has a name
	coded in \(V_\alpha\). Now consider an arbitrary element of \(V[G]_{\alpha+1}\).
	By the induction hypothesis, it has a nice name of the form
	\(\sigma=\bigcup_x \{x\}\times A_x\), where each \(A_x\) is an antichain in \(\P\)
	and the union runs over all the coded names \(x\) in \(V_\alpha\). We can think of
	\(\sigma\) as simply the sequence of the antichains \(A_x\), indexed by a subset of \(V_\alpha\),
	and, because each \(A_x\) is an element of \(V_{\kappa+1}\subseteq V_{\alpha}\),
	Lemma~\ref{lemma:FlatCoding} implies that this sequence is coded by an element of 
	\(V_{\alpha+1}\).
%		
%	The proof is quite similar to the one we just gave and we can argue by induction on \(\alpha\) again. The limit case is clear, and the successor step goes exactly like the successor
%	step in the previous proof, using Lemma~\ref{lemma:FlatCoding}. 
%	For the base case \(\alpha=\kappa+1\), the key point is that,
%	since \(\kappa\) is a \(\beth\)-fixed point in \(V[G]\), 
%	names for elements of \(V[G]_{\kappa+1}\) can be recovered from names for subsets of 
%	\(\kappa\). But nice names for subsets of \(\kappa\) are essentially just subsets
%	of \(\kappa\times \P\) and are thus elements of \(V_{\kappa+1}\).
\end{proof}

\begin{lemma}
	\label{lemma:ValphaNamesAddKappa+}
	Let \(\kappa\) be a cardinal and let \(\P=\Add(\kappa^+,1)\). Let \(G\subseteq \P\) be generic.
	For any ordinal \(\alpha\geq\kappa\),
	every element of \(V[G]_\alpha=(V_\alpha)^{V[G]}\) has a \(\P\)-name coded in \(V_\alpha\). 
\end{lemma}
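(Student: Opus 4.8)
The plan is to mirror the induction in the proof of Lemma~\ref{lemma:ValphaNamesSmallForcing}, the essential new difficulty being that \(\P=\Add(\kappa^+,1)\) is \emph{not} small: its conditions are (coded by) bounded subsets of \(\kappa^+\) and so naturally sit at rank around \(\kappa^+\), while \(|\P|=2^\kappa\). Taken at face value, a single nontrivial condition appearing in a name would push that name up to rank \(\kappa^+\), which is fatal for the low values of \(\alpha\). The whole point of the argument is therefore rank control.

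First I would replace \(\P\) by an order-isomorphic forcing whose conditions lie in \(V_{\kappa+1}\). Since \(\P\) is \(\leq\kappa\)-closed its conditions are hereditarily of size \(\leq\kappa\), hence belong to \(H_{\kappa^+}\); fixing in \(V\) a bijection between \(H_{\kappa^+}\) and \(\power(\kappa)\) (both have size \(2^\kappa\), via Mostowski codes) transports \(\P\) to an isomorphic poset \(\P'\subseteq\power(\kappa)\subseteq V_{\kappa+1}\). The trivial condition is still coded by a low-rank object, and now every condition has rank \(\leq\kappa\), so an antichain of \(\P'\) is an element of \(V_{\kappa+2}\). From here on I work with \(\P'\).

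Next I would pin down the bottom of the induction using the closure of the forcing. Because \(\P\) is \(\leq\kappa\)-closed and (as throughout this section) \(\kappa\) is at least measurable and hence inaccessible, the forcing adds no new subsets of \(\kappa\), nor of the size-\(\kappa\) set \(V_\kappa\); thus \(V_\kappa^{V[G]}=V_\kappa^V\) and \(V_{\kappa+1}^{V[G]}=V_{\kappa+1}^V\). Every element of these two levels is then named by its canonical check-name, and since the trivial condition has rank \(0\), the check-name of a set of rank \(\leq\kappa\) again has rank \(\leq\kappa\), so these names are coded in \(V_\kappa\) and \(V_{\kappa+1}\) respectively. This step is exactly what forces the restriction \(\alpha\geq\kappa\): a genuinely generic set of rank below \(\kappa\) could only be named using nontrivial conditions, whose codes already have rank \(\kappa\), so it could never be named inside \(V_\kappa\).

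The successor step then proceeds as in Lemma~\ref{lemma:ValphaNamesSmallForcing}: given \(y\in V_{\alpha+1}^{V[G]}\) with \(\alpha\geq\kappa\), write a nice name \(\sigma=\bigcup_\tau\{\tau\}\times A_\tau\), where \(\tau\) ranges over the \(V_\alpha\)-coded names supplied by the induction hypothesis and each \(A_\tau\subseteq\P'\) is an antichain, the limit stages being immediate. The remaining subtlety, and the place where the main obstacle reappears, is the rank bookkeeping, since the conditions \(p\in\P'\) still have rank \(\kappa\). Here I would use the flat pairing and sequence coding of Lemma~\ref{lemma:FlatCoding} throughout, so that forming the pairs \([\tau,p]\) does not raise rank beyond the maximum of the two coordinates; the resulting code of \(\sigma\) then lands in \(V_{\alpha+1}\), which is precisely the bound required and which avoids the spurious one-level loss that naive pairing would incur already at the first interesting stage \(\alpha=\kappa+2\). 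In short, the hard part is purely the rank analysis: compressing \(\P\) into \(V_{\kappa+1}\), using \(\leq\kappa\)-closure to settle the two absolute base levels, and then leaning on flat coding to hold the names at the sharp level.
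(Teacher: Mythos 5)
Your proposal is correct and takes essentially the same route as the paper's proof: pass to an isomorphic copy of \(\P\) inside \(V_{\kappa+1}\), handle the base levels \(\alpha=\kappa,\kappa+1\) with check names using the \(\leq\kappa\)-closure of the forcing, and at successors write a nice name as a sequence of antichains (each in \(V_{\kappa+2}\)) indexed by \(V_\alpha\)-coded names, compressed into \(V_{\alpha+1}\) via the flat coding of Lemma~\ref{lemma:FlatCoding}. Your explicit observation that the base case relies on the inaccessibility of \(\kappa\) (so that \(|V_\kappa|=\kappa\) and the levels \(V_\kappa\), \(V_{\kappa+1}\) are not disturbed) correctly makes precise a point the paper leaves implicit.
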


\begin{proof}
	We can argue much like in the proof we just gave, by induction on \(\alpha\). 
	For \(\alpha=\kappa\) or \(\alpha=\kappa+1\),
	no new elements of \(V_\alpha\) are added by \(\P\), so we can simply take check names.
	The limit step of the induction is trivial, so it only remains for us to consider the
	successor step.
	
	Assume that every element of \(V[G]_\alpha\), for some \(\alpha\geq\kappa+2\), has
	a name coded in \(V_\alpha\). We may as well work with an isomorphic copy of the poset \(\P\)
	which is a subset of \(V_{\kappa+1}\). This means that every antichain of \(\P\) is an element
	of \(V_{\kappa+2}\subseteq V_\alpha\). Now consider an arbitrary element of \(V[G]_{\alpha+1}\).
	By the induction hypothesis, it has a nice name of the form \(\sigma=\bigcup_x \{x\}\times A_x\), 
	where each \(A_x\) is an antichain in \(\P\), and the union runs over all the coded names in 
	\(V_\alpha\) for elements of \((V[G])_\alpha\). As in the previous proof, we think of \(\sigma\)
	as a sequence of antichains indexed by a subset of \(V_\alpha\), and use 
	Lemma~\ref{lemma:FlatCoding} to obtain a code for \(\sigma\) in \(V_{\alpha+1}\).
\end{proof}

\begin{theorem}
	\label{thm:forceThetaStrLd}
	If \(\kappa\) is \(\theta\)-strong and \(\theta\) is either a successor ordinal or
	\(\cf(\theta)\geq\kappa^+\), then there is a \((2^\kappa)^+\)-cc forcing extension in which
	there is a \(\theta\)-strongness Laver function for \(\kappa\). In the extension 
	\(2^\kappa=\kappa^+\) holds, \(\kappa^+\) is preserved, and, 
	if \(\theta\) was a limit ordinal, \(\cf(\theta)\geq\kappa^+\) remains true.
\end{theorem}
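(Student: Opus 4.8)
The plan is to build a Laver function by forcing with a carefully chosen iteration that adds the guessing values, following the same template as Theorem~\ref{thm:ForceLongJLDiamondSC} but adapted to the strongness setting. First I would invoke Lemma~\ref{lemma:thetaStrMenasFunction} to fix a \(\theta\)-strongness Menas function \(f\), which lets me identify the correct inaccessible stages at which to force and guarantees that when I apply the associated extender embedding \(j\colon V\to M\), the image \(j(f)(\kappa)=\theta\) places the critical stage of \(j(\P_\kappa)\) exactly at \(\theta\). The iteration \(\P_\kappa\) would be a length-\(\kappa\) Easton-support iteration forcing at inaccessible closure points of \(f\), topped off by a stage \(\Q_\kappa\) at \(\kappa\) itself that adds a single subset coding potential targets. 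Since our targets now live in \(V_\theta\) rather than \(H_{\theta^+}\), the top forcing should be something like \(\Add(\kappa^+,1)\) (which is why Lemmas~\ref{lemma:ValphaNamesSmallForcing} and~\ref{lemma:ValphaNamesAddKappa+} were proved just above): these guarantee that elements of \(V[G]_\theta\) have names coded low in the cumulative hierarchy, so that the generic object can encode targets from \(V_\theta\) in a way the lifted embedding can read off.

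The core of the argument is the embedding lift. After forcing to arrange \(2^\kappa=\kappa^+\) and the appropriate GCH-like pattern below \(\kappa\), I would take the \((\kappa,V_\theta)\)-extender ultrapower \(j\colon V\to M\) corresponding to \(f\) and lift it through \(\P\) in two steps. The first step lifts through \(\P_\kappa*\Q_\kappa\) by factoring \(j(\P_\kappa)=\P_\kappa*\Q_\kappa*\Ptail\); because \(j(f)(\kappa)=\theta\), the tail \(\Ptail\) is highly closed in \(M\), so one diagonalizes against its dense sets using the closure of \(M\) (here \(M\supseteq V_\theta\), so it is \(\beth_\theta\)-ish closed rather than \(\theta\)-sequence closed as in the supercompact case). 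The second step builds \(j(\Q_\kappa)\)-generic over the lifted model by a master-condition argument, where the master condition is chosen to code the desired target \(a\in V_\theta\) at coordinate \(\kappa\), so that \(j(\ell)(\kappa)=a\) holds by construction. The \((2^\kappa)^+\)-cc and the preservation of \(\kappa^+\) follow from the closure and chain-condition properties of the Easton iteration together with the nice-name counting from Lemmas~\ref{lemma:ValphaNamesSmallForcing} and~\ref{lemma:ValphaNamesAddKappa+}.

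The main obstacle I expect is the case split on the cofinality of \(\theta\), and in particular managing the encoding of \(V_\theta\)-targets. Unlike the supercompact case, where targets sit in \(H_{\theta^+}\) and the ultrapower is closed under \(\theta\)-sequences, the extender model \(M\) here only satisfies \(V_\theta\subseteq M\); the hypothesis that \(\theta\) is a successor ordinal or \(\cf(\theta)\geq\kappa^+\) is exactly what is needed so that the coding scheme of Lemma~\ref{lemma:FlatCoding} applies to sequences of length up to \(\kappa\) inside \(V_\theta\), and so that a target \(a\in V_\theta\) can be recovered from a bounded piece of the generic. When \(\theta\) is a successor I would use the top forcing \(\Add(\kappa^+,1)\) and read the target off a single slice; when \(\cf(\theta)\geq\kappa^+\), the closure ensures that any potential target is already captured below some \(V_\alpha\) with \(\alpha<\theta\) via a name coded by Lemma~\ref{lemma:ValphaNamesSmallForcing}. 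Verifying that the lifted embedding \(j\colon V[G]\to M[j(G)]\) is genuinely a \(\theta\)-strongness embedding in \(V[G]\), i.e.\ that \(V_\theta^{V[G]}\subseteq M[j(G)]\) is preserved after the lift, is where I would be most careful, as it requires checking that no new elements of \(V_\theta\) escape \(M[j(G)]\); this follows from the chain condition of \(\P\) together with the fact that the tail generics were constructed inside \(V[G]\).
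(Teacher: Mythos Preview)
Your overall architecture matches the paper: Menas function from Lemma~\ref{lemma:thetaStrMenasFunction}, Easton-support iteration with \(\Q_\gamma=\Add(\gamma^+,1)\) at inaccessible closure points of \(f\), and the use of Lemmas~\ref{lemma:ValphaNamesSmallForcing} and~\ref{lemma:ValphaNamesAddKappa+} to verify that \(V[G]_\theta\subseteq M[j(G)]\). However, there is a genuine gap in where you place the target-coding step. You propose to build the \(j(\Q_\kappa)\)-generic by a master-condition argument that also encodes the target \(a\) at coordinate \(\kappa\). But \(\Q_\kappa=\Add(\kappa^+,1)\) has conditions of size up to \(\kappa\), so \(\bigcup j[g]\) has size \(\kappa^+\); since the extender ultrapower \(M\) is only closed under \(\kappa\)-sequences (from Lemma~\ref{lemma:FlatCoding}), this union need not be a condition in \(j(\Q_\kappa)\) at all, and there is no master condition to extend. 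The paper instead uses that \(\Q_\kappa\) is \(\leq\kappa\)-distributive: every dense \(D\subseteq j(\Q_\kappa)\) in \(M[G][\Gtail]\) has the form \(j(F)(a)\), the \(\kappa\)-many sets \(F(b)\) have dense intersection in \(\Q_\kappa\), and so \(j[g]\) already meets \(D\). This gives the \(j(\Q_\kappa)\)-generic for free, but with no room to insert a target.

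Consequently the paper codes the target into \(\Gtail\), not into \(j(\Q_\kappa)\). The Laver function is defined so that \(\ell(\xi)\) reads the set coded at the \emph{next} inaccessible closure point \(\xi'>\xi\), which means \(j(\ell)(\kappa)\) reads from the first stage of \(\Gtail\); one simply begins the diagonalization for \(\Gtail\) below a condition coding \(a\). Your sketch of the \(\Gtail\) diagonalization is also too quick: since \(M\) is not \(\theta\)-closed you cannot directly enumerate its dense sets in \(V[G]\). The paper groups them by representing function \(F\colon V_\kappa\to V_{\kappa+1}\), intersects each group \(\mathcal{D}_F\) inside \(M[G]\) using the \(\leq\beth_\theta\)-closure of \(\Ptail\), and then meets only the \(2^\kappa=\kappa^+\) many resulting sets \(D_F\). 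Without this grouping trick the counting does not go through.
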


\begin{proof}
	If \(\theta\leq\kappa\), no forcing is necessary, and the case \(\theta=\kappa+1\)
	was essentially covered by Theorem~\ref{thm:ForceLongJLDiamondSC}. We may therefore
	restrict our attention to the case when \(\theta\geq\kappa+2\).
	
	Fix a Menas function \(f\) as in Lemma~\ref{lemma:thetaStrMenasFunction} and define
	the forcing \(\P_\kappa\) as the length \(\kappa\) Easton support iteration which forces
	with \(\Q_\gamma=\Add(\gamma^+,1)\) at inaccessible closure points \(\gamma\)
	of the function \(f\). Let \(\P=\P_\kappa * \Q_\kappa\) and let 
	\(G=G_\kappa* g\subseteq \P\) be generic over \(V\). Let us first show that \(\kappa\)
	remains \(\theta\)-strong in \(V[G]\). We will describe later how to derive a Laver function
	in the extension.
	
	By the definition of the Menas function \(f\), we can fix a \(\theta\)-strongness extender 
	embedding \(j\colon V\to M\) such that \(j(f)(\kappa)=\theta\). 
	We can write \(j(\P)=\P*\Ptail*j(\Q_\kappa)\).

	Because of our assumptions about \(\theta\), Lemma~\ref{lemma:FlatCoding} implies
	that \(M\) is in fact closed under \(\kappa\)-sequences.	
	Since \(\P_\kappa\) is \(\kappa\)-cc, the model \(M[G_\kappa]\) remains closed under
	\(\kappa\)-sequences in \(V[G_\kappa]\), and, since \(\Q_\kappa\) is \(\leq\kappa\)-closed,
	this remains true for \(M[G]\) in \(V[G]\).
	
	Recall that every element of \(M[G]\) is of the form \(j(F)(a)^G\) for some \(a\in V_\theta\)
	and some function \(F\in V\) defined on \(V_\kappa\). Since \(\Ptail\) has size \(j(\kappa)\)
	in \(M[G]\), every open dense subset of \(\Ptail\) can be represented in this way by using
	a function \(F\colon V_\kappa\to V_{\kappa+1}\). For a fixed \(F\) like this, let
	\(\mathcal{D}_F\) be the collection of all open dense subsets of \(\Ptail\) of the form
	\(j(F)(a)^G\) for some \(a\in V_\theta\). Then \(\mathcal{D}_F\in M[G]\) and it
	has size \(\beth_\theta\) in \(M[G]\). Since the first stage of forcing in \(\Ptail\) occurs
	after the first inaccessible above \(j(f)(\kappa)=\theta\) in \(M\), the forcing \(\Ptail\)
	is \(\leq\beth_\theta\)-closed in \(M[G]\). This means that \(D_F=\bigcap\mathcal{D}_F\)
	is a dense subset of \(\Ptail\). Since \(2^\kappa=\kappa^+\) in \(V[G]\) (because of the last
	stage of forcing), there are only \(\kappa^+\) many of these dense sets \(D_F\), counted in
	\(V[G]\). Therefore we can line them up and, using the closure of the poset \(\Ptail\)
	and of the model \(M[G]\), meet all of them in order to build a generic \(\Gtail\in V[G]\)
	over \(M[G]\). This allows us to lift the embedding \(j\) to
	\(j\colon V[G_\kappa]\to M[G][\Gtail]\).
	
	The final lift is easier: since \(\Q_\kappa\) is \(\leq\kappa\)-distributive,
	the filter \(h\) generated by the image \(j[g]\) is generic over \(M[G][\Gtail]\) and
	allows us to lift to \(j\colon V[G]\to M[G][\Gtail][h]\). To see that this embedding
	witnesses the \(\theta\)-strongness of \(\kappa\) in \(V[G]\), we use 
	Lemmas~\ref{lemma:ValphaNamesSmallForcing} and~\ref{lemma:ValphaNamesAddKappa+}. 
	Lemma~\ref{lemma:ValphaNamesSmallForcing} implies that
	every element of \(V[G_\kappa]_\theta\) has a name coded in \(V_\theta\), so, since
	\(V_\theta\in M\) and \(G_\kappa\in M[G]\), we get \(V[G_\kappa]_\theta\in 
	M[G]\subseteq M[G][\Gtail][h]\). 
	Lemma~\ref{lemma:ValphaNamesAddKappa+}, in turn, says that every element of 
	\(V[G]_\theta\)	has a name coded in \(V[G_\kappa]_\theta\), and so, because 
	\(g\in M[G]\), we get \(V[G]_\theta\in M[G]\subseteq M[G][\Gtail][h]\).
	This shows that \(\kappa\) is \(\theta\)-strong in \(V[G]\).
%
%	Let \(i\colon V\to N\) be the ultrapower
%	by the induced normal measure on \(\kappa\) and let \(k\colon N\to M\) be the factor embedding.
%	
%	\[\begin{tikzcd}
%	V \arrow[r, "j"] \arrow[rd, "i", swap] & M\\ & N \arrow[u, "k", swap]
%	\end{tikzcd}\]
%		
%	\noindent We can write \(j(\P)=\P*\Ptail*j(\Q_\kappa)\) and 
%	\(i(\P)=\P*\Ptail'* i(\Q_\kappa)\). We shall first lift the embedding \(i\)
%	and use that to help us lift \(j\).
%	
%	Since \(\P_\kappa\) is \(\kappa\)-cc, the model \(N[G_\kappa]\) remains closed under
%	\(\kappa\)-sequences in \(V[G_\kappa]\), and, since \(\Q_\kappa\) is \(\leq\kappa\)-closed,
%	this remains true for \(N[G]\) in \(V[G]\).
%	The forcing \(\Ptail'\) has size \(i(\kappa)\) and is \(\leq\kappa\)-closed
%	in \(N[G]\). Typically at this stage we would use some kind of GCH hypothesis in \(V\)
%	to bound the number of dense subsets of \(\Ptail'\) in \(N[G]\). We haven't made any such
%	assumption, however, because of the forcing \(\Q_\kappa\), we \emph{do} have \(2^\kappa=\kappa^+\)
%	in \(V[G]\). This is enough to see that, counted in \(V[G]\), there are only \(\kappa^+\)
%	many dense subsets of \(\Ptail'\) in \(N[G]\). Using the closure of \(N[G]\) and \(\Ptail'\),
%	we can thus find a generic \(\Gtail'\) over \(N[G]\), and lift \(i\) to
%	\(i\colon V[G_\kappa]\to N[G][\Gtail']\). Since the final forcing \(\Q_\kappa\) is
%	\(\leq\kappa\)-closed in \(V[G_\kappa]\), the image \(i[g]\) generates a generic filter
%	\(h'\) over \(N[G][\Gtail']\), which allows us to completely lift \(i\) to
%	\(i\colon V[G]\to N[G][\Gtail'][h']\).

Now let us turn our attention to the Laver function. We define a function \(\ell\in V[G]\) on
\(\kappa\) by letting \(\ell(\xi)\) be the set whose Mostowski code appears as the first set
coded in the sequence of bits \(G_{\xi'}\), where \(\xi'\) is the least inaccessible closure
point of \(f\) above \(\xi\). It follows from this definition that, given a \(\theta\)-strongness
embedding \(j\) in \(V[G]\), we evaluate \(j(\ell)(\kappa)\) by consulting the first slice
of the generic \(\Gtail\), as constructed above, and seeing what is coded there. To show
that \(\ell\) really is a Laver function, it only remains for us to see that any target
\(a\in V[G]_\theta\) can be coded into \(\Gtail\) appropriately. But this is straightforward:
if we start with a \(\theta\)-strongness embedding in \(V\) as above and proceed with the
lifting argument, we saw that \(V[G]_\theta\subseteq M[G]\), so the target \(a\) appears in
\(M[G]\). Consequently, the bit sequence of the Mostowski code of \(a\) is a condition
in the first stage of forcing in \(\Ptail\). If we now run our construction of \(\Gtail\)
as described above, with the added requirement that we start with the fixed condition coding \(a\)
before we meet all of the dense sets \(D_F\), we will obtain a generic \(\Gtail\) whose first
slice codes exactly \(a\), and a lifted embedding \(j\) satisfying \(j(\ell)(\kappa)=a\).
Therefore \(\ell\) really is a \(\theta\)-strongness Laver function in \(V[G]\).
\end{proof}

Combining Theorem~\ref{thm:forceThetaStrLd} with Proposition~\ref{prop:ThetaStrJLDiamond},
we immediately obtain the following corollary.

\begin{corollary}
	Let \(\kappa\) be \(\theta\)-strong with \(\kappa+2\leq\theta\). 
	If \(\theta\) is either a successor ordinal or \(\cf(\theta)>2^\kappa\) then
	there is a forcing extension in which \(\jLdthetastr_{\kappa,2^\kappa}\) holds.
\end{corollary}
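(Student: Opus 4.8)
The plan is to derive the conclusion by chaining the two immediately preceding results, as the surrounding text advertises with the phrase ``Combining Theorem~\ref{thm:forceThetaStrLd} with Proposition~\ref{prop:ThetaStrJLDiamond}''. First I would invoke Theorem~\ref{thm:forceThetaStrLd} to pass to a forcing extension \(V[G]\) in which \(\kappa\) carries a \(\theta\)-strongness Laver function. Its hypotheses are met: the corollary already assumes \(\theta\) is a successor or \(\cf(\theta)>2^\kappa\), and in the latter case \(\cf(\theta)>2^\kappa\geq\kappa^+\) certainly yields \(\cf(\theta)\geq\kappa^+\), which is exactly what the theorem requires. The payoff of that theorem is not merely the Laver function but the accompanying cardinal-arithmetic bookkeeping: the forcing is \((2^\kappa)^+\)-cc, it arranges \(2^\kappa=\kappa^+\) in \(V[G]\), it preserves \(\kappa^+\), and in the limit case it keeps \(\cf(\theta)\geq\kappa^+\).

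Second, I would work inside \(V[G]\) and apply Proposition~\ref{prop:ThetaStrJLDiamond} with \(\lambda\) equal to the extension's value of \(2^\kappa\), namely \(\kappa^+\). The proposition needs \(\kappa\) to be \(\theta\)-strong with \(\kappa+2\leq\theta\) (the strongness is witnessed by the very Laver function just produced, and \(\kappa+2\leq\theta\) is a standing hypothesis of the corollary), a \(\theta\)-strongness Laver function (just produced), and \(\lambda\leq 2^\kappa\) (here an equality). Its output is a \(\theta\)-strongness joint Laver sequence of length \(\lambda=2^\kappa\), which is precisely \(\jLdthetastr_{\kappa,2^\kappa}\).

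The one genuine point to verify is the final disjunctive hypothesis of Proposition~\ref{prop:ThetaStrJLDiamond}: either \(\theta\) is a successor or \(\lambda<\cf(\theta)\), now computed in \(V[G]\), where \(2^\kappa\) has shrunk to \(\kappa^+\). The successor case is immediate. In the limit case the subtlety is that I must check \(\kappa^+<\cf(\theta)\) survives into the extension even though the value of \(2^\kappa\) was pushed downward. This is where the chain condition does the work: since \(\cf(\theta)>2^\kappa\) in \(V\) means \(\cf(\theta)\geq(2^\kappa)^+\), and the forcing is \((2^\kappa)^+\)-cc, this cofinality is preserved, so in \(V[G]\) we still have \(\cf(\theta)>(2^\kappa)^V\geq\kappa^+=\lambda\), giving \(\lambda<\cf(\theta)\) there. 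Hence the proposition applies in \(V[G]\) and the corollary follows. I expect this cofinality-preservation check to be the only step requiring any care; everything else is a direct citation of the two prior results.
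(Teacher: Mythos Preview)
Your proposal is correct and follows exactly the route the paper intends: the paper gives no explicit proof, only the sentence ``Combining Theorem~\ref{thm:forceThetaStrLd} with Proposition~\ref{prop:ThetaStrJLDiamond}, we immediately obtain the following corollary,'' together with the remark that \(2^\kappa\) should be read in the extension. Your verification that \(\lambda=\kappa^+<\cf(\theta)\) survives into \(V[G]\) via the \((2^\kappa)^+\)-cc is precisely the one detail needed to make the combination go through, and it is handled correctly.
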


Again, the forcing we do in Theorem~\ref{thm:forceThetaStrLd} will collapse \(2^\kappa\)
to \(\kappa^+\), so the conclusion \(\jLdthetastr_{\kappa,2^\kappa}\) in this corollary 
should be read with the parameters evaluated in the extension.

%
%It is worth noting that any effort of adapting the preceding proof in the style of
%theorem~\ref{thm:ForceLongJLDiamondSC} to give joint Laver sequences seems to require
%exactly the hypotheses of proposition~\ref{prop:ThetaStrJLDiamond}. We have therefore
%decided to give two separate, easier arguments, instead of a single intertwined one.
Moving on to the case of \(\theta\) of low cofinality, 
it is important to note that Theorem~\ref{thm:forceThetaStrLd} has little to say in this
situation. In fact we do not know whether one can force the existence of a
\(\theta\)-strongness Laver function when \(\theta\) is a limit ordinal of low cofinality,
starting from just a \(\theta\)-strong cardinal. Of course, one can do it starting from just a
little bit more, like a \((\theta+1)\)-strong cardinal, but it is unclear what the sharpest
result is. But, since we are interested in jointness phenomena, let us gloss over this
issue and ask whether the hypotheses in Proposition~\ref{prop:ThetaStrJLDiamond} 
are really necessary in the singular case.

\begin{question}
	Suppose there is a \(\theta\)-strongness Laver function for \(\kappa\) (with \(\theta\)
	possibly being a limit of low cofinality).
	Is there a \(\theta\)-strongness joint Laver sequence of length \(\kappa\)? Or even of length 
	\(\omega\)?
\end{question}

We give a partial answer to this question.
In contrast to the supercompact case, some restrictions are in fact necessary to allow for the
existence of joint Laver sequences for \(\theta\)-strong cardinals. 
The existence of even the
shortest of such sequences can surpass the existence of a \(\theta\)-strong cardinal in
consistency strength.

To give a better lower bound on the consistency strength required, we introduce
a notion of Mitchell rank for \(\theta\)-strong cardinals, inspired by 
Carmody~\cite{Carmody2015:Thesis}.

\begin{definition}
	Let \(\kappa\) be a cardinal and \(\theta\) an ordinal. 
	The \emph{\(\theta\)-strongness Mitchell order} \(\triangleleft\) for \(\kappa\) is defined 
	on the set of \((\kappa,V_\theta)\)-extenders, by letting \(E\triangleleft F\) if
	\(E\) is an element of the (transitive collapse of the) ultrapower of \(V\) by \(F\).
\end{definition}

Unsurprisingly, this Mitchell order has properties analogous to those of the usual
Mitchell order on normal measures on \(\kappa\) or the \(\theta\)-supercompactness Mitchell
order on normal fine measures on
\(\power_\kappa(\theta)\), as studied by Carmody. In particular, the \(\theta\)-strongness
Mitchell order is well-founded and gives rise to a notion of a
\emph{\(\theta\)-strongness Mitchell rank}. Having \(\theta\)-strongness Mitchell
rank at least 2 implies that many cardinals below \(\kappa\) have reflected
versions of \(\theta\)-strongness; for example, if \(\kappa\) has 
\((\kappa+\omega)\)-strongness Mitchell rank at least 2, then there are stationarily
many cardinals \(\lambda<\kappa\) which are \((\lambda+\omega)\)-strong (and much more is true).

We should mention a bound on the \(\theta\)-strongness Mitchel rank of a cardinal \(\kappa\).
If \(j\colon V\to M\) is the ultrapower by a \((\kappa,V_\theta)\)-extender then any
\((\kappa,V_\theta)\)-extenders in \(M\) appear in \(V_{j(\kappa)}^M\). It follows
that these extenders can be written in the from \(j(f)(a)\) for some
function \(f\colon V_\kappa\to V_\kappa\) and
a seed \(a\in V_\theta\). In particular, there are at most \(\beth_\theta\)
many such extenders, counted in \(V\). Any given extender therefore has at most
\(\beth_\theta\) many predecessors in the Mitchell order, so the highest possible
\(\theta\)-strongness Mitchell rank of \(\kappa\) is \(\beth_\theta^+\).

\begin{theorem}
	Let \(\kappa\) be a \(\theta\)-strong cardinal, where \(\theta\) is a limit ordinal,
	and \(\cf(\theta)\leq\kappa<\theta\). 
	If there is a \(\theta\)-strongness joint Laver sequence for \(\kappa\) of length 
	\(\cf(\theta)\)
	then \(\kappa\) has maximal \(\theta\)-strongness Mitchell rank. 
	%In particular,
	%there are stationarily many \(\lambda<\kappa\) which are \((\lambda+\omega)\)-strong.
\end{theorem}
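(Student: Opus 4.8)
The plan is to turn the length-\(\cf(\theta)\) jointness into a single ``absorbing'' extender lying above any prescribed family of extenders, and then push the Mitchell rank all the way up by induction. Fix an increasing continuous sequence \(\langle\theta_\alpha;\alpha<\delta\rangle\) cofinal in \(\theta\), where \(\delta=\cf(\theta)\le\kappa\) and each \(\theta_\alpha>\kappa+2\). The organising observation is that a full \((\kappa,V_\theta)\)-extender \(E\) is recoverable from the coherent sequence of its restrictions \(E\restriction V_{\theta_\alpha}\): each such restriction is a \((\kappa,V_{\theta_\alpha})\)-extender, and since \(\theta\) is a limit ordinal it is coded by an element of \(V_{\theta_\alpha+c}\subseteq V_\theta\) for a fixed finite overhead \(c\). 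The heart of the argument is then the guessing lemma: \emph{for any family \(\langle E^i;i\in V_\theta\rangle\) of \((\kappa,V_\theta)\)-extenders of \(V\) there is a single \((\kappa,V_\theta)\)-extender \(F\) with \(E^i\triangleleft F\) for all \(i\).} To prove it I would, at each coordinate \(\alpha<\delta\), bundle the \(\alpha\)th restrictions into one target
\[
a_\alpha=\set{\langle i,E^i\restriction V_{\theta_\alpha}\rangle}{i\in V_{\theta_\alpha}},
\]
which has rank below \(\theta\) (here \(\theta\) being a limit ordinal is used, and Lemma~\ref{lemma:FlatCoding} keeps the code flat), so \(a_\alpha\in V_\theta\). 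Feeding \(\vec a=\langle a_\alpha;\alpha<\delta\rangle\) to the given joint Laver sequence \(\langle\ell_\alpha;\alpha<\delta\rangle\) and applying the factoring remark of this section (which applies in its joint form since all targets lie in \(V_\theta\)), I obtain a \((\kappa,V_\theta)\)-extender \(F\) whose ultrapower \(j_F\colon V\to M_F\) satisfies \(j_F(\ell_\alpha)(\kappa)=a_\alpha\) for every \(\alpha<\delta\).

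The reason the length being exactly \(\cf(\theta)\le\kappa\) matters is that the entire sequence of guessed values can then be reconstructed inside \(M_F\). Every \(\alpha<\delta\le\kappa\) is fixed by \(j_F\), so \(j_F(\vec\ell)(\alpha)=j_F(\ell_\alpha)\), and the map \(\alpha\mapsto j_F(\vec\ell)(\alpha)(\kappa)\) is definable in \(M_F\) from \(j_F(\vec\ell)\), \(\kappa\) and \(\delta\); its value at \(\alpha\) is \(a_\alpha\), whence \(\vec a\in M_F\). As \(V_\theta\subseteq M_F\), for each fixed \(i\in V_\theta\) the model \(M_F\) reads off \(E^i\restriction V_{\theta_\alpha}\) from \(a_\alpha\) for all \(\alpha\) with \(i\in V_{\theta_\alpha}\) — a cofinal set of \(\alpha\) — and unions them to recover \(E^i\), using that \(V_\theta=\bigcup_\alpha V_{\theta_\alpha}\) and the coherence of the restrictions. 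Being a \((\kappa,V_\theta)\)-extender is absolute once \(V_\theta\) is present, so each \(E^i\in M_F\), i.e.\ \(E^i\triangleleft F\), establishing the lemma.

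With the lemma in hand the rank computation is a short induction. I would prove by induction on \(\eta<\beth_\theta^+\) that there is a \((\kappa,V_\theta)\)-extender of \(\theta\)-strongness Mitchell rank at least \(\eta\). Given extenders \(E^\xi\) of rank \(\ge\xi\) for each \(\xi<\eta\), the family \(\langle E^\xi;\xi<\eta\rangle\) has size \(|\eta|\le\beth_\theta=|V_\theta|\), so after re-indexing it by \(V_\theta\) (padding unused indices with a fixed extender) the lemma yields a single \(F\) strictly above all of them; then \(\operatorname{rk}_\triangleleft(F)>\xi\) for every \(\xi<\eta\), so \(\operatorname{rk}_\triangleleft(F)\ge\eta\). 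Hence extenders of every rank below \(\beth_\theta^+\) exist, making the \(\theta\)-strongness Mitchell rank of \(\kappa\) at least \(\beth_\theta^+\); together with the upper bound \(\beth_\theta^+\) noted just before the theorem this gives equality, i.e.\ maximal rank.

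The delicate points are all inside the guessing lemma. Checking that each \(a_\alpha\) truly lands in \(V_\theta\) is routine but relies on \(\theta\) being a limit ordinal. The genuinely essential step — and the one I expect to be the main obstacle — is showing \(\vec a\in M_F\): this is exactly where \(\cf(\theta)\le\kappa\) is indispensable, since it forces every index \(\alpha\) below the critical point and makes the value-sequence internally definable; for \(\cf(\theta)>\kappa\) the sequence of targets need not be captured by \(M_F\) at all, which is consistent with Proposition~\ref{prop:ThetaStrJLDiamond} showing jointness is then automatic. The remaining verification, that the union of the restrictions genuinely reconstructs \(E^i\) in \(M_F\), is straightforward given \(V_\theta\subseteq M_F\).
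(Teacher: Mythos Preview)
Your proof is correct and follows essentially the same approach as the paper's: use the fact that \(\cf(\theta)\le\kappa\) to recover the entire sequence \(\vec a\) of guessed targets inside the ultrapower \(M_F\), thereby dominating any \(\beth_\theta\)-sized family of extenders by a single one, and then climb the Mitchell rank by induction. The only cosmetic difference is that the paper factors through the cleaner intermediate statement ``any \(a\subseteq V_\theta\) lands in some \(M\)'' (by setting \(a_\alpha=a\cap V_{\theta_\alpha}\)) and then codes an extender family as such a subset, whereas you work directly with extender restrictions; the content is the same.
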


\begin{proof}
	We first show that the existence of a short \(\theta\)-strongness joint Laver sequence 
	implies a certain degree
	of hypermeasurability for \(\kappa\). Let \(\vec{\ell}=
	\langle \ell_\alpha;\alpha<\cf(\theta)\rangle\) be the
	joint Laver sequence. If \(\vec{a}=\langle a_\alpha;\alpha<\cf(\theta)\rangle\) 
	is any sequence of
	targets in \(V_\theta\) there is, by definition, a \(\theta\)-strongness embedding
	\(j\colon V\to M\) with critical point \(\kappa\) such that \(j(\ell_\alpha)(\kappa)=a_\alpha\). 
	But we can recover \(\vec{\ell}\) from \(j(\vec{\ell})\) as an initial segment, since
	\(\vec{\ell}\) is so short. Therefore we actually get the whole sequence \(\vec{a}\in M\),
	just by evaluating that initial segment at \(\kappa\).
	Now consider any \(a\subseteq V_\theta\). We can resolve
	\(a\) into a \(\cf(\theta)\)-sequence of elements \(a_\alpha\) of \(V_\theta\),
	by taking a cofinal sequence \(\seq{\theta_\alpha}{\alpha<\cf(\theta)}\) in \(\theta\)
	and letting \(a_\alpha=a\cap V_{\theta_\alpha}\). It follows that \(a=\bigcup_\alpha a_\alpha\).
	We can take the \(a_\alpha\) as our targets for \(\vec{\ell}\). Our argument then implies that
	there is a \(\theta\)-strongness embedding \(j\colon V\to M\) so that \(\seq{a_\alpha}{\alpha<\cf(\theta)}\in M\), and therefore also \(a\in M\).
	
	Now let \(E\) be an arbitrary \((\kappa,V_\theta)\)-extender.
	Since \(E\) can be represented as a family of measures on \(\kappa\) indexed by \(V_\theta\),
	it is coded by a subset of \(V_\theta\) 
	(using the coding scheme from Lemma~\ref{lemma:FlatCoding}, for example). 
	Applying the argument
	above, there is a \(\theta\)-strongness embedding \(j\colon V\to M\) with critical point
	\(\kappa\) such that \(E\in M\). It follows that \(\kappa\) is \(\theta\)-strong in \(M\),
	giving \(\kappa\) nontrivial \(\theta\)-strongness Mitchell rank in \(V\).
	
	The argument in fact yields more: given any collection of at most
	\(\beth_\theta\) many \((\kappa,V_\theta)\)-extenders, we can code the whole family
	by a subset of \(V_\theta\) and, again, obtain an extender whose ultrapower contains
	the entire collection we started with. It follows that, given any family of at most
	\(\beth_\theta\) many extenders, we can find an extender which is above all of them
	in the \(\theta\)-strongness Mitchell order. Applying this fact inductively now
	shows that \(\kappa\) must have maximal \(\theta\)-strongness Mitchell rank.
\end{proof}

Again, we remind the reader that we have not determined the consistency strength of the existence of a
\(\theta\)-strongness Laver function for \(\theta\) of low cofinality. It may well be that
the high Mitchell rank we just derived from \(\jLdthetastr_{\kappa,\cf(\kappa)}\) can already
be obtained from just \(\Ldthetastr_\kappa\).

%We can conclude that if \(\kappa\) is the least of the class of cardinals \(\lambda\) which
%are \((\lambda+\omega)\)-strong (in the terminology of \cite{Kanamori2005:HigherInfinite} the
%least \(\omega\)-strong cardinal) then there is no \((\kappa+\omega)\)-strongness joint Laver sequence for \(\kappa\) of length \(\omega\).
%The proof easily generalizes to the case where \(\cf(\theta)\leq\kappa\), while requiring
%a larger gap between \(\kappa\) and \(\theta\) yields higher reflected degrees
%of strongness for the \(\lambda\).

Just as in the case of \(\theta\)-supercompactness we can also consider 
\(\Ldthetastr_\kappa\)-trees. In view of Propositions~\ref{prop:ThetaStrJLDiamond}
and \ref{prop:FullyTreeableThetaSCJLDExist} it is not surprising that again,
for most \(\theta\), a single \(\theta\)-strongness Laver diamond yields a
\(\Ldthetastr_\kappa\)-tree.

\begin{proposition}
	\label{prop:ThetaStrLaverTree}
	Suppose \(\kappa\) is \(\theta\)-strong where \(\kappa+2\leq\theta\) and
	\(\theta\) is either a successor ordinal or \(\cf(\theta)>2^\kappa\). Then a
	\(\Ldthetastr_\kappa\)-tree exists if and only if a
	\(\theta\)-strongness Laver function for \(\kappa\) does (if and only if\/
	\(\jLdthetasc_{\kappa,2^\kappa}\) holds).
\end{proposition}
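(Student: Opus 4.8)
The plan is to follow the proof of Proposition~\ref{prop:FullyTreeableThetaSCJLDExist}, with the flat coding of Lemma~\ref{lemma:FlatCoding} playing the role that the hypothesis \(\theta\geq 2^\kappa\) played there. The forward implications are routine. Given a \(\Ldthetastr_\kappa\)-tree \(D\), reading off the labels along branches yields functions \(\ell_s(\xi)=D(s\rest\xi)\) for \(s\in\funcs{\kappa}{2}\); since \(j\) has critical point \(\kappa\) we have \(j(s)\rest\kappa=s\), so any embedding \(j\) with \(j(D)(s)=a_s\) for all branches \(s\) also satisfies \(j(\ell_s)(\kappa)=j(D)(j(s)\rest\kappa)=j(D)(s)=a_s\). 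Thus the branches form a \(\jLdthetastr_{\kappa,2^\kappa}\)-sequence, and fixing a single coordinate \(s\) exhibits a single \(\theta\)-strongness Laver function. Together with the fact that a single Laver function yields a joint sequence of length \(2^\kappa\) by Proposition~\ref{prop:ThetaStrJLDiamond} (using exactly that \(\theta\) is a successor or \(\cf(\theta)>2^\kappa\)), this settles the equivalence of the last two conditions, so it remains only to build a tree from a single Laver function.

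For that, fix a \(\theta\)-strongness Laver function \(\ell\colon\kappa\to V_\kappa\) and a flat pairing function \([\cdot,\cdot]\) as in Lemma~\ref{lemma:FlatCoding}. For \(t\in\funcs{<\kappa}{2}\) with \(|t|=\xi\), set \(D(t)=\set{b}{[t,b]\in\ell(\xi)}\), the decoding of \(\ell(\xi)\) at index \(t\) (which is \(\emptyset\) when \(\ell(\xi)\) is not of the appropriate form). I claim \(D\) is a \(\Ldthetastr_\kappa\)-tree. Given a sequence of targets \(\vec{a}=\langle a_s;s\in\funcs{\kappa}{2}\rangle\) in \(V_\theta\), observe that \(\funcs{\kappa}{2}\in V_\theta\) (as \(\kappa+2\leq\theta\)) and that \(|\funcs{\kappa}{2}|=2^\kappa\), so the hypothesis on \(\theta\) lets us apply Lemma~\ref{lemma:FlatCoding} with index set \(I=\funcs{\kappa}{2}\) to code \(\vec{a}\) as a single element \(\widetilde{a}\in V_\theta\). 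Since \(\ell\) is a Laver function, there is a \(\theta\)-strongness embedding \(j\colon V\to M\) with critical point \(\kappa\) such that \(j(\ell)(\kappa)=\widetilde{a}\).

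It remains to verify the guessing condition. Fix a branch \(s\in\funcs{\kappa}{2}\); since \(V_\theta\subseteq M\) we have \(s\in M\), and \(s\) is a node of the tree \(\funcs{<j(\kappa)}{2}\) of \(M\) sitting at level \(\kappa\). By elementarity, together with the \(\Delta_0\)-definability of the decoding (so that it is absolute between \(V\) and \(M\)), we compute \(j(D)(s)=\set{b}{[s,b]\in j(\ell)(\kappa)}=\set{b}{[s,b]\in\widetilde{a}}=a_s\), the last equality being precisely the decoding clause of Lemma~\ref{lemma:FlatCoding}. Hence \(j\) witnesses the tree-guessing property for \(\vec{a}\), and \(D\) is the desired \(\Ldthetastr_\kappa\)-tree. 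The one step that genuinely requires the hypotheses on \(\theta\)—and the only place the argument could fail—is the coding step: when \(\theta\) is a limit of cofinality at most \(2^\kappa\), the supremum of the ranks of the \(a_s\) can reach \(\theta\), so \(\widetilde{a}\) need not lie in \(V_\theta\) and no single Laver function can be expected to guess the whole sequence at once.
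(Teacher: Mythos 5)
Your proof is correct and follows essentially the same route as the paper: the reverse direction defines \(D(t)\) by decoding \(\ell(|t|)\) at index \(t\) via the flat coding of Lemma~\ref{lemma:FlatCoding}, using exactly the hypothesis that \(\theta\) is a successor or \(\cf(\theta)>2^\kappa\) to code the branch-indexed target sequence as a single element of \(V_\theta\). Your explicit verification of the routine forward implications and your closing remark on where the cofinality hypothesis is needed are accurate elaborations of what the paper leaves implicit.
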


\begin{proof}
	We follow the proof of Proposition~\ref{prop:FullyTreeableThetaSCJLDExist}.
	Note that, since \(\theta\geq\kappa+2\), we have \(\funcs{\kappa}{2}\in V_\theta\),
	so \(V_\theta\) is closed under sequences indexed by \(\funcs{\kappa}{2}\) via the coding
	scheme given by Lemma~\ref{lemma:FlatCoding}.
	If \(\ell\) is a \(\theta\)-strongness Laver function for \(\kappa\) we define
	a \(\Ldthetastr_\kappa\)-tree by letting \(D(t)\) be the element with index \(t\) in the sequence
	coded by \(\ell(|t|)\), if this makes sense. It is now easy to check that this truly is
	a \(\Ldthetastr_\kappa\)-tree: given any sequence of targets \(\vec{a}\) we simply use the Laver function
	\(\ell\) to guess it (or, rather, its code), and the embedding \(j\) obtained this way
	will witness the guessing property for \(D\).
\end{proof}

\section{Joint diamonds}
\label{sec:JD}
Motivated by the joint Laver sequences of the previous sections, we now apply
the jointness concept to smaller cardinals. Of course, since we do not have any 
elementary embeddings of the universe with critical point \(\omega_1\), say,
we need a reformulation that will make sense in this setting as well.

Consider a measurable Laver function \(\ell\) and let \(a\subseteq \kappa\). 
By definition there
is an elementary embedding \(j\colon V\to M\) such that \(j(\ell)(\kappa)=a\). Let \(U\)
be the normal measure on \(\kappa\) derived from this embedding. Since \(U\) is normal, \(a\)
is represented in the ultrapower by the function \(f_a(\xi)=a\cap \xi\), and thus, by
Łoś's theorem, we conclude that \(\ell(\xi)=a\cap\xi\) for \(U\)-almost all \(\xi\).
In particular, the set of such \(\xi\) is stationary in \(\kappa\).
Therefore \(\ell\) is (essentially) a \(\diamond_\kappa\)-sequence. Similarly, if we are 
dealing with a joint Laver sequence \(\langle \ell_\alpha;\alpha<\lambda\rangle\) there is
for every sequence \(\langle a_\alpha;\alpha<\lambda\rangle\) of subsets of \(\kappa\)
a normal measure on \(\kappa\) with respect to which the set
\(\set{\xi<\kappa}{\ell_\alpha(\xi)=a_\alpha\cap \xi}\) has measure one for each \(\alpha\).

This understanding of jointness seems amenable to transfer to smaller cardinals.
There are still no normal measures on \(\omega_1\), but perhaps we can weaken
that requirement slightly.

Recall that a filter on a regular cardinal \(\kappa\) is \emph{normal} if it is closed under
diagonal intersections, and \emph{uniform} if it contains all the cobounded sets. 
It is a standard result that the club filter on \(\kappa\) is the least normal
uniform filter on \(\kappa\) (in fact, a normal filter is uniform iff it extends the
club filter). It follows that any subset of \(\kappa\) contained in a (proper) normal uniform 
filter is stationary. Conversely, if \(S\subseteq\kappa\) is stationary, then it is
easy to check that \(S\), together with the club filter, generates a normal uniform
filter on \(\kappa\). Altogether, we see that a set is stationary iff it is contained
in a normal uniform filter. This observation suggests an analogy between Laver functions
and \(\diamond_\kappa\)-sequences: in the same way that Laver functions guess their
targets on large sets with respect to some large cardinal measure, 
\(\diamond_\kappa\)-sequences guess their targets on large sets with respect to some normal uniform
filter. Extending the analogy, in the same way that a joint Laver sequence is
a collection of Laver functions that guess sequences of targets on large sets
with respect to a \emph{common} large cardinal measure (corresponding to the single
embedding \(j\)), a collection of \(\diamond_\kappa\)-sequences will be joint
if they guess sequences of targets on large sets with respect to a common
normal uniform filter.

We will adopt the following terminology: if \(\kappa\) is a cardinal then a
\emph{\(\kappa\)-list} is a function \(d\colon \kappa\to\mathcal{P}(\kappa)\) with
\(d(\alpha)\subseteq\alpha\) (this term seems to have originated in Wei\ss' dissertation~\cite{Weiss2010:Thesis}).

\begin{definition}
Let \(\kappa\) be an uncountable regular cardinal. A \emph{\(\jd_{\kappa,\lambda}\)-sequence} is a
sequence \(\vec{d}=\langle d_\alpha;\alpha<\lambda\rangle\) of \(\kappa\)-lists 
such that for every sequence \(\langle a_\alpha;\alpha<\lambda\rangle\) of subsets of 
\(\kappa\) there is a (proper) normal uniform filter \(\mathcal{F}\) on \(\kappa\) 
such that for every \(\alpha\) the \emph{guessing set}
\(S_\alpha=S(d_\alpha,a_\alpha)=\set{\xi<\kappa}{d_\alpha(\xi)=a_\alpha\cap \xi}\) 
is in \(\mathcal{F}\).
\end{definition}

An alternative, apparently simpler attempt at defining jointness would be to require
that all the \(\kappa\)-lists in the sequence guess their respective targets on
the same stationary set. Let us say that a \(\jd_{\kappa,\lambda}\)-sequence is
\(\emph{consonant}\) if for any sequence of targets \(\seq{a_\alpha}{\alpha<\lambda}\)
there is a stationary set \(S\) so that \(S\subseteq S_\alpha\) for all \(\alpha<\lambda\).

It is not hard to see that we can derive a consonant \(\jd_{\kappa,\lambda}\)-sequence
from a \(\diamond_\kappa\)-sequence, provided that \(\lambda<\kappa\). We omit the proof, since
the following proposition shows that the consonance requirement is, in the end, too strong to yield a
useful notion of jointness for longer sequences.

\begin{proposition}
	Let \(\kappa\) be an uncountable regular cardinal. There are no consonant \(\jd_{\kappa,\kappa}\)-sequences.
\end{proposition}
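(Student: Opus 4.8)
The plan is to exploit the fact that, for a length-\(\kappa\) sequence, consonance is an extremely strong demand: it forces the full intersection of all \(\kappa\) guessing sets to be stationary, and this can be defeated by a one-point diagonalisation. First I would record the reduction. Suppose \(\vec{d}=\langle d_\alpha;\alpha<\kappa\rangle\) were a consonant \(\jd_{\kappa,\kappa}\)-sequence. Then for any sequence of targets \(\langle a_\alpha;\alpha<\kappa\rangle\) there is a stationary \(S\) with \(S\subseteq S_\alpha\) for all \(\alpha\), hence \(S\subseteq\bigcap_{\alpha<\kappa}S_\alpha\); since \(S\) is stationary this makes \(\bigcap_{\alpha<\kappa}S_\alpha\) stationary as well. (Conversely, if that intersection is stationary it witnesses consonance itself.) Thus consonance is equivalent to the assertion that \(\bigcap_{\alpha<\kappa}S_\alpha\) is stationary for \emph{every} choice of targets, and to refute it I only need to exhibit a single sequence of targets for which \(\bigcap_{\alpha<\kappa}S_\alpha\) is nonstationary.

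Next I would diagonalise against the lists. The key observation is that the \(\alpha\)th list \(d_\alpha\) can be defeated at the single point \(\xi=\alpha\) by a suitable choice of the \(\alpha\)th target, and that these choices are completely independent of one another. Concretely, using that \(d_\alpha(\alpha)\subseteq\alpha\), set \(a_\alpha=\alpha\setminus d_\alpha(\alpha)\); any subset of \(\kappa\) with \(a_\alpha\cap\alpha\neq d_\alpha(\alpha)\) would serve equally well. For every \(\alpha\geq 1\) a subset of \(\alpha\) cannot coincide with its own relative complement, so \(\alpha\setminus d_\alpha(\alpha)\neq d_\alpha(\alpha)\); that is, \(a_\alpha\cap\alpha\neq d_\alpha(\alpha)\), which says precisely that \(\alpha\notin S(d_\alpha,a_\alpha)=S_\alpha\).

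Finally I would conclude. Since \(\alpha\notin S_\alpha\) for every \(\alpha\geq 1\), no ordinal \(\alpha\geq 1\) can belong to \(\bigcap_{\beta<\kappa}S_\beta\); hence \(\bigcap_{\beta<\kappa}S_\beta\subseteq\{0\}\), which is bounded and therefore nonstationary in the regular uncountable cardinal \(\kappa\). This contradicts the reduction from the first step, so there can be no consonant \(\jd_{\kappa,\kappa}\)-sequence.

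I do not expect a serious obstacle: the argument is a direct diagonalisation, and the only points to verify are the harmless reduction in the first step, the fact that the diagonal rule produces genuine targets (subsets of \(\kappa\)), and the triviality that the degenerate case \(\alpha=0\) — where \(d_0(0)=\emptyset=a_0\cap 0\) is forced — contributes at most the single point \(0\) to the intersection and so cannot restore stationarity.
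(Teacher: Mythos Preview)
Your argument is correct and follows essentially the same diagonalisation as the paper: choose targets so that \(d_\alpha\) fails to guess at the single point \(\alpha\), which kills any common stationary set. The only cosmetic differences are that the paper uses \(a_\alpha=\kappa\setminus d_\alpha(\alpha)\) rather than \(\alpha\setminus d_\alpha(\alpha)\) (same intersection with \(\alpha\)) and argues directly that no \(\xi\) in the putative stationary set can lie in \(S_\xi\), without first spelling out the reduction to stationarity of \(\bigcap_\alpha S_\alpha\).
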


\begin{proof}
	Suppose that \(\vec{d}\) is a \(\jd_{\kappa,\kappa}\)-sequence and consider
	the sequence of targets \(\seq{\kappa\setminus d_\alpha(\alpha)}{\alpha<\kappa}\). If \(\vec{d}\)
	were consonant, there would definitely need to be some \(\xi<\kappa\)
	so that each particular \(\kappa\)-list \(d_\alpha\) guessed its target at \(\xi\).
	But we picked the targets in such a way that \(d_\xi(\xi)\) is not a good guess for
	the \(\xi\)th target. Therefore \(\vec{d}\) is not consonant.
\end{proof}

Upon reflection we thus abandon the consonance requirement and insist only on the jointness property
as originally stated in the definition. 
To be sure, every \(\jLd_{\kappa,\lambda}\)-sequence is also (essentially) a \(\jd_{\kappa,\lambda}\)-sequence. This means that, at least in the presence of sufficient large
cardinals, \(\jd_{\kappa,\lambda}\)-sequences exist.
But, as we will see later in Theorem~\ref{thm:DiamondEquivalentToJD},
\(\jd_{\kappa,\lambda}\)-sequences can exist quite independently of large cardinals.

We will not use the following proposition going forward, but it serves to give
another parallel between \(\diamond\)-sequences and Laver diamonds. It turns out that
\(\diamond\)-sequences can be seen as Laver functions, except that they work with
generic elementary embeddings.

\begin{proposition}
\label{prop:JDisGenericLaver}
Let \(\kappa\) be an uncountable regular cardinal and \(d\) a \(\kappa\)-list.
Then \(d\) is a \(\diamond_\kappa\)-sequence iff there is, for any \(a\subset\kappa\),
a generic elementary embedding \(j\colon V\to M\) with critical point \(\kappa\)
and \(M\) wellfounded up to \(\kappa+1\) such that \(j(d)(\kappa)=a\).
\end{proposition}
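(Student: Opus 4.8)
The plan is to prove both implications by passing between the stationarity of the guessing sets \(S(d,a)=\set{\xi<\kappa}{d(\xi)=a\cap\xi}\) and the generic ultrapower coming from the nonstationary ideal. The key identity throughout will be that, for a \(V\)-normal \(V\)-ultrafilter \(U\) on \(\kappa\) with associated generic ultrapower \(j\colon V\to M=\mathrm{Ult}(V,U)\), one has \(j(d)(\kappa)=[d]_U\), and moreover \([d]_U=a\) exactly when \(S(d,a)\in U\).

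For the forward direction, I would suppose \(d\) is a \(\diamond_\kappa\)-sequence and fix \(a\subseteq\kappa\). Then \(S(d,a)\) is stationary, so \([S(d,a)]\) is a nonzero condition in the Boolean algebra \(\mathcal{P}(\kappa)/\mathrm{NS}_\kappa\). I would force below this condition and take \(U\) to be the resulting generic \(V\)-ultrafilter; since \(\mathrm{NS}_\kappa\) is normal, \(U\) is \(V\)-normal and extends the club filter. Forming \(M=\mathrm{Ult}(V,U)\) in the extension yields a generic elementary embedding \(j\colon V\to M\) with \(\cp(j)=\kappa\). Two routine verifications remain: first, that \(M\) is wellfounded up to \(\kappa+1\), which follows because normality forces every \([f]<[\mathrm{id}]_U=\kappa\) to be a genuine ordinal (any such \(f\) is regressive on a \(U\)-set, hence constant on one), so the \(M\)-ordinals up to and including \(\kappa\) are genuine and \(\kappa+1\) is their immediate successor; and second, that \(j(d)(\kappa)=a\). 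For the latter I would compute, for each \(\gamma<\kappa\), that \(\gamma\in[d]_U\) iff \(\set{\xi}{\gamma\in d(\xi)}\in U\), and then use that on the tail \(\set{\xi\in S(d,a)}{\xi>\gamma}\in U\) we have \(\gamma\in d(\xi)\) iff \(\gamma\in a\); hence \([d]_U=a\).

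For the reverse direction, I would suppose that for every \(a\) there is such a generic embedding, fix \(a\subseteq\kappa\) and a club \(C\subseteq\kappa\) in \(V\), and aim to show \(S(d,a)\cap C\neq\emptyset\). Let \(j\colon V\to M\), living in some \(V[G]\), have \(\cp(j)=\kappa\), \(M\) wellfounded up to \(\kappa+1\), and \(j(d)(\kappa)=a\). Wellfoundedness up to \(\kappa+1\) guarantees that \(j(d)(\kappa)\) is a genuine subset of \(\kappa\), so the equation with \(a\) is meaningful. The reflection step is the heart of the argument: I would check that \(\kappa\) itself witnesses \(j(S(d,a)\cap C)\neq\emptyset\) inside \(M\). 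Indeed \(\kappa\in j(C)\), since \(j(C)\cap\kappa=C\) is unbounded in \(\kappa\) and \(j(C)\) is closed in \(M\) below \(j(\kappa)\); and \(j(d)(\kappa)=a=j(a)\cap\kappa\), because \(\cp(j)=\kappa\) gives \(j(a)\cap\kappa=a\). Thus \(M\models j(S(d,a)\cap C)\neq\emptyset\), and by elementarity \(S(d,a)\cap C\neq\emptyset\) in \(V\) (an absolute statement about ground-model sets). As \(a\) and \(C\) were arbitrary, every \(S(d,a)\) is stationary and \(d\) is a \(\diamond_\kappa\)-sequence.

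I expect the main obstacle to be bookkeeping rather than mathematics: precisely justifying that the \(\mathrm{NS}_\kappa\)-generic ultrapower is wellfounded exactly up to \(\kappa+1\), and that this level of wellfoundedness is what makes both \(j(d)(\kappa)=a\) and \(\kappa\in j(C)\) literally true statements about genuine sets, rather than statements interpreted inside a possibly ill-founded \(M\). Once the identification of \(M\)'s ordinals below \(\kappa+1\) with true ordinals is in place, both directions reduce to the single computation relating membership in \([d]_U\) (equivalently, in \(j(d)(\kappa)\)) to the guessing condition \(d(\xi)=a\cap\xi\) holding on a large set of \(\xi\).
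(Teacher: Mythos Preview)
Your proof is correct. The forward direction is essentially identical to the paper's: both force a \(V\)-generic ultrafilter on \(\kappa\) containing \(S(d,a)\) (the paper phrases this as extending a normal uniform filter containing \(S(d,a)\); you phrase it as forcing with \(\mathcal{P}(\kappa)/\mathrm{NS}_\kappa\) below \([S(d,a)]\)) and read off \(j(d)(\kappa)=a\) from \L o\'s.

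For the converse you take a genuinely different route. The paper replaces the given embedding \(j\) by the induced normal ultrapower, lets \(U\) be the derived \(V\)-ultrafilter, observes \(S(d,a)\in U\) from \(j(d)(\kappa)=a\), and concludes stationarity since \(U\) extends the club filter. You instead argue directly by reflection: for any club \(C\) you verify \(\kappa\in j(C)\) and \(j(d)(\kappa)=j(a)\cap\kappa\), so \(M\models j(S(d,a)\cap C)\neq\emptyset\), and pull this back by elementarity. Your approach is a bit more self-contained, avoiding the factoring through the derived ultrapower; the paper's approach has the advantage of making the parallel with the Laver-function setting (where one passes to the induced normal measure) more explicit, which is the thematic point of the proposition in context.
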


\begin{proof}
Suppose \(d\) is a \(\diamond_\kappa\)-sequence and fix a target \(a\subseteq\kappa\).
Let \(S(d,a)=\set{\xi<\kappa}{d(\xi)=a\cap\xi}\) be the guessing set. By our discussion above,
there is a normal uniform filter \(\mathcal{F}\) on \(\kappa\) with \(S\in\mathcal{F}\).
Let \(G\) be a generic ultrafilter extending \(\mathcal{F}\) and \(j\colon V\to M\)
the generic ultrapower by \(G\). Then \(M\) is wellfounded up to \(\kappa^+\) and
\(\kappa=[\mathrm{id}]_G\). Since \(S\in G\), Łoś's theorem now implies that
\(j(d)(\kappa)=a\).

Conversely, fix a target \(a\subseteq\kappa\) and suppose that there is a
generic embedding \(j\) with the above properties. We can replace \(j\) with the
induced normal ultrapower embedding and let \(U\) be the derived ultrafilter in
the extension. Since \(j(d)(\kappa)=a\) it follows that \(S(d,a)\in U\). But since
\(U\) extends the club filter, \(S(d,a)\) must be stationary.
\end{proof}

The same proof will also show that a sequence of \(\kappa\)-lists is joint
if they can guess any sequence of targets via a single generic elementary embedding.

The following key lemma gives a ``bottom up'' criterion deciding when a collection of subsets 
of \(\kappa\) (namely, some guessing sets) is contained in a normal uniform filter.
It is completely analogous to the
finite intersection property characterizing containment in a filter. 

\begin{definition}
Let \(\kappa\) be an uncountable regular cardinal. A family \(\mathcal{A}\subseteq
\mathcal{P}(\kappa)\) has the \emph{diagonal intersection property} if
for any function \(f\colon\kappa\to\mathcal{A}\) the diagonal intersection
\(\diag_{\alpha<\kappa}f(\alpha)\) is stationary.
\end{definition}

\begin{lemma}
\label{lemma:DiagonalIntersectionProperty}
Let \(\kappa\) be uncountable and regular and let \(\mathcal{A}\subseteq\mathcal{P}(\kappa)\).
The family \(\mathcal{A}\) is contained in a normal uniform filter on \(\kappa\) iff
\(\mathcal{A}\) satisfies the diagonal intersection property.
\end{lemma}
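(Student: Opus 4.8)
The plan is to prove both directions, with the forward direction essentially immediate from the facts recalled before the lemma and the reverse direction requiring an explicit construction. Throughout I read ``normal uniform filter'' as \emph{proper} normal uniform filter, as the paper does; the case \(\mathcal{A}=\emptyset\) is trivial (the club filter works and the diagonal intersection property holds vacuously), so I assume \(\mathcal{A}\neq\emptyset\).

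For the forward direction, suppose \(\mathcal{A}\subseteq\mathcal{F}\) for a proper normal uniform filter \(\mathcal{F}\). Given any \(f\colon\kappa\to\mathcal{A}\), each value \(f(\alpha)\) lies in \(\mathcal{F}\), so by normality \(\diag_{\alpha<\kappa}f(\alpha)\in\mathcal{F}\); since \(\mathcal{F}\) is a proper normal uniform filter, every member of it is stationary (as recalled in the text). Hence \(\mathcal{A}\) has the diagonal intersection property.

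For the reverse direction I would build the least normal uniform filter containing \(\mathcal{A}\) by hand. Define \(\mathcal{F}\) to consist of all \(X\subseteq\kappa\) for which there are a club \(C\subseteq\kappa\) and a function \(f\colon\kappa\to\mathcal{A}\cup\{\kappa\}\) with \(C\cap\diag_{\alpha<\kappa}f(\alpha)\subseteq X\). Allowing the value \(\kappa\) (whose contribution to a diagonal intersection is trivial) ensures that every club, hence every cobounded set, lies in \(\mathcal{F}\), giving uniformity, while taking \(f\) constant shows \(\mathcal{A}\subseteq\mathcal{F}\); closure under supersets is immediate. One first checks that passing from \(\mathcal{A}\) to \(\mathcal{A}\cup\{\kappa\}\) preserves the diagonal intersection property: given \(f\colon\kappa\to\mathcal{A}\cup\{\kappa\}\), replace each coordinate with \(f(\alpha)=\kappa\) by a fixed \(A_0\in\mathcal{A}\) to obtain \(g\colon\kappa\to\mathcal{A}\) with \(g(\alpha)\subseteq f(\alpha)\) for all \(\alpha\); monotonicity of diagonal intersection gives \(\diag g\subseteq\diag f\), and \(\diag g\) is stationary by hypothesis, so \(\diag f\) is stationary. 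Properness follows at once: if \(\emptyset\in\mathcal{F}\) then some club \(C\) would be disjoint from a stationary set \(\diag_\alpha f(\alpha)\), which is absurd.

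The main obstacle is verifying that \(\mathcal{F}\) is closed under diagonal intersections (closure under finite intersections then comes out as the same argument applied to two interleaved sequences). Suppose \(X_\beta\in\mathcal{F}\) for \(\beta<\kappa\), witnessed by clubs \(C_\beta\) and functions \(f_\beta\). By monotonicity, \(\diag_\beta X_\beta\) contains \(\diag_\beta\bigl(C_\beta\cap\diag_\alpha f_\beta(\alpha)\bigr)=(\diag_\beta C_\beta)\cap\set{\xi}{\forall\beta,\alpha<\xi,\ \xi\in f_\beta(\alpha)}\). The club \(\diag_\beta C_\beta\) is harmless; the difficulty is that the second set is a \emph{two-dimensional} diagonal intersection, not obviously of the required one-dimensional form. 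I would collapse it using the Gödel pairing bijection \(p\colon\kappa\times\kappa\to\kappa\): setting \(g(p(\beta,\alpha))=f_\beta(\alpha)\) defines \(g\colon\kappa\to\mathcal{A}\cup\{\kappa\}\), and on the club \(D\) of ordinals closed under \(p\) one checks \(D\cap\diag_\gamma g(\gamma)\subseteq\set{\xi}{\forall\beta,\alpha<\xi,\ \xi\in f_\beta(\alpha)}\). Intersecting with the clubs \(\diag_\beta C_\beta\) and \(D\) then exhibits a witness of the required form for \(\diag_\beta X_\beta\), so \(\diag_\beta X_\beta\in\mathcal{F}\). This interleaving step is the crux; the remaining filter axioms and the two edge cases are routine.
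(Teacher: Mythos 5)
Your proof is correct and takes essentially the same route as the paper's: both generate the filter from sets of the form (club)\,\(\cap\)\,(diagonal intersection of a function into \(\mathcal{A}\)) and verify normality by collapsing the two-dimensional diagonal intersection via a pairing function, restricted to the club of its closure points. Your only departures --- adjoining \(\kappa\) as an allowed value to handle uniformity and the empty case, and spelling out properness and finite intersections explicitly --- are cosmetic.
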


\begin{proof}
The forward direction is clear, so let us focus on the converse. Consider the family
of sets
\[
E=\Bigl\{C\cap \diag_{\alpha<\kappa}f(\alpha)\st \text{\(C\subseteq\kappa\) is club and \(f\in\funcs{\kappa}{\mathcal{A}}\)}\Bigr\}\,.
\]
We claim that \(E\) is directed under diagonal intersections: any diagonal intersection of
\(\kappa\) many elements of \(E\) contains another element of \(E\). To see this, take
\(C_\alpha\cap \diag_{\beta<\kappa}f_\alpha(\beta)\in E\) for \(\alpha<\kappa\).
Let \(\langle\cdot,\cdot\rangle\) be a pairing function and define 
\(F\colon \kappa\to\lambda\) by \(F(\langle\alpha,\beta\rangle)=f_\alpha(\beta)\).
A calculation then shows that
\[
\diag_{\alpha<\kappa}(C_\alpha\cap\diag_{\beta<\kappa}f_\alpha(\beta))
=\diag_\alpha C_\alpha\cap\diag_\alpha\diag_\beta f_\alpha(\beta)
\supseteq \bigl( \diag_\alpha C_\alpha\cap D \bigr)\cap \diag_\alpha F(\alpha)\,,
\]
where \(D\) is the club of closure points of the pairing function.

It follows that closing \(E\) under supersets yields a normal uniform filter on \(\kappa\).
By considering constant functions \(f\) we also see that every \(a\in \mathcal{A}\) is in 
this filter.
\end{proof}

Lemma~\ref{lemma:DiagonalIntersectionProperty} will be the crucial tool for verifying
\(\jd_{\kappa,\lambda}\). More specifically, we shall often apply the following corollary.

\begin{corollary}
\label{cor:JDIffShortSubsequenceJD}
A sequence \(\vec{d}=\langle d_\alpha;\alpha<\lambda\rangle\)
is a \(\jd_{\kappa,\lambda}\)-sequence iff every subsequence of length \(\kappa\) is a
\(\jd_{\kappa,\kappa}\)-sequence.
\end{corollary}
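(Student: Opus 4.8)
The plan is to reduce both directions to the diagonal intersection property (DIP) of Lemma~\ref{lemma:DiagonalIntersectionProperty}, after which the corollary becomes a purely combinatorial statement about how DIP interacts with subfamilies. Recall that, by that lemma, a family $\mathcal{A}\subseteq\mathcal{P}(\kappa)$ lies in a normal uniform filter iff every diagonal intersection $\diag_{\beta<\kappa}f(\beta)$, for $f\colon\kappa\to\mathcal{A}$, is stationary. Consequently $\vec{d}$ is a $\jd_{\kappa,\lambda}$-sequence precisely when, for each target sequence $\vec{a}$, the family of guessing sets $\{S(d_\alpha,a_\alpha);\alpha<\lambda\}$ has DIP, and the analogous reformulation holds for each subsequence of length $\kappa$. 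I would state this translation first and then verify the two implications.

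For the forward direction I would note that DIP is inherited by subfamilies: if $\mathcal{B}\subseteq\mathcal{A}$ and $\mathcal{A}$ has DIP, then any $f\colon\kappa\to\mathcal{B}$ is in particular a map into $\mathcal{A}$, whence $\diag f$ is stationary. So, given a subsequence $\langle d_\alpha;\alpha\in J\rangle$ with $|J|=\kappa$ and targets indexed by $J$, one extends the targets arbitrarily (say by $\emptyset$) to a full $\lambda$-sequence; jointness of $\vec{d}$ yields DIP for the whole family of guessing sets, and restricting to the indices in $J$ gives DIP for the subfamily $\{S(d_\alpha,a_\alpha);\alpha\in J\}$. By the lemma this witnesses that the subsequence is a $\jd_{\kappa,\kappa}$-sequence.

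The reverse direction is the crux and rests on the observation that DIP is a genuinely $\kappa$-local property, since any single test function $f\colon\kappa\to\mathcal{A}$ references at most $\kappa$ members of $\mathcal{A}$. Concretely, fix a target sequence $\vec{a}=\langle a_\alpha;\alpha<\lambda\rangle$ and an arbitrary $f\colon\kappa\to\{S(d_\alpha,a_\alpha);\alpha<\lambda\}$; writing $f(\beta)=S(d_{g(\beta)},a_{g(\beta)})$ for a suitable $g\colon\kappa\to\lambda$, the range of $g$ has size at most $\kappa$, so I enlarge it to a set $J\subseteq\lambda$ of size exactly $\kappa$. By hypothesis $\langle d_\alpha;\alpha\in J\rangle$ is a $\jd_{\kappa,\kappa}$-sequence, so applied to the targets $\langle a_\alpha;\alpha\in J\rangle$ it produces a normal uniform filter containing every $S(d_\alpha,a_\alpha)$ with $\alpha\in J$; by Lemma~\ref{lemma:DiagonalIntersectionProperty} this $\kappa$-sized subfamily has DIP. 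As $f$ takes all its values inside that subfamily, $\diag_{\beta<\kappa}f(\beta)$ is stationary. Since $f$ was arbitrary, the full family $\{S(d_\alpha,a_\alpha);\alpha<\lambda\}$ has DIP, and a final appeal to the lemma supplies the normal uniform filter witnessing $\jd_{\kappa,\lambda}$ for $\vec{a}$.

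The one point demanding care — and the step I would expect to be the main obstacle if one attempted to argue directly with the filters rather than through the lemma — is exactly this reduction of an arbitrary diagonal intersection to $\kappa$-many coordinates, together with the passage to a subsequence of size \emph{exactly} $\kappa$ when the range of $g$ is smaller. Routing everything through Lemma~\ref{lemma:DiagonalIntersectionProperty} makes the locality transparent and spares us from having to coherently amalgamate the filters arising from different subsequences.
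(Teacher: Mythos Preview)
Your proposal is correct and follows essentially the same route as the paper: both directions are reduced, via Lemma~\ref{lemma:DiagonalIntersectionProperty}, to the observation that the diagonal intersection property is tested by maps \(f\colon\kappa\to\mathcal{A}\) and hence only involves \(\kappa\)-sized subfamilies. The paper's converse is phrased slightly differently (it assumes without loss of generality that the indexing map \(f\colon\kappa\to\lambda\) is injective, rather than enlarging the range to a set \(J\) of size \(\kappa\)), but this is merely a cosmetic variation on the same idea.
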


\begin{proof}
The forward implication is obvious; let us check the converse. Let
\(\vec{a}=\langle a_\alpha;\alpha<\lambda\rangle\) be a sequence of targets and let
\(S_\alpha\) be the corresponding guessing sets. By 
Lemma~\ref{lemma:DiagonalIntersectionProperty} we need to check that the
family \(\mathcal{S}=\set{S_\alpha}{\alpha<\lambda}\)
satisfies the diagonal intersection property. 
%So fix a function 
%\(f\colon \kappa\to\mathcal{S}\) and let \(r=\set{\alpha}{S_\alpha\in f[\kappa]}\).
%By our assumption \(\vec{d}\rest r\) is a \(\jd_{\kappa,|r|}\)-sequence, so
%\(f[\kappa]\) is contained in a normal uniform filter and, in particular,
%\(\diag_\alpha f(\alpha)\) is stationary.
So fix a function \(f\colon\kappa\to\lambda\) and consider the diagonal intersection
\(\diag_\alpha S_{f(\alpha)}\). We may assume without loss of generality that \(f\) is
injective. By our assumption \(\vec{d}\rest f[\kappa]\) is a \(\jd_{\kappa,\kappa}\)-sequence,
so \(\set{S_{f(\alpha)}}{\alpha<\kappa}\) is contained in a normal uniform filter, which
means that \(\diag_\alpha S_{f(\alpha)}\) is stationary.
\end{proof}

This characterization leads to fundamental differences between joint diamonds and
joint Laver diamonds. While the definition of joint diamonds was inspired by large cardinal
phenomena, the absence of a suitable analogue of the diagonal intersection property in the
large cardinal setting provides for some very surprising results. 

\begin{definition}
Let \(\kappa\) be an uncountable regular cardinal. A \(\diamond_\kappa\)-tree is a
function \(D\colon \funcs{<\kappa}{2}\to \mathcal{P}(\kappa)\) such that for any sequence
\(\langle a_s;s\in\funcs{\kappa}{2}\rangle\) of subsets of \(\kappa\)
there is a (proper) normal uniform filter on \(\kappa\) containing all the guessing sets
\(S_s=S(D,a_s)=\set{\xi<\kappa}{D(s\rest\xi)=a_s\cap\xi}\).
\end{definition}

This definition clearly imitates the definition of \(\Ld_\kappa\)-trees. We also have a
correspondence in the style of Proposition~\ref{prop:JDisGenericLaver}: a
\(\diamond_\kappa\)-tree acts like a \(\Ld_\kappa\)-tree using
generic elementary embeddings.

The following theorem, the main result of this section, shows that, in complete contrast
to our experience with joint Laver diamonds, \(\diamond_\kappa\) already implies all of
its joint versions.

\begin{theorem}
\label{thm:DiamondEquivalentToJD}
Let \(\kappa\) be an uncountable regular cardinal.
The following are equivalent:
\begin{enumerate}
\item \(\diamond_\kappa\)
\item \(\jd_{\kappa,\kappa}\)
\item \(\jd_{\kappa,2^\kappa}\)
\item There exists a \(\diamond_\kappa\)-tree.
\end{enumerate}
\end{theorem}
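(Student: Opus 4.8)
The plan is to prove the cyclic chain of implications $(1)\Rightarrow(3)\Rightarrow(2)\Rightarrow(1)$ together with the separate equivalence $(1)\Leftrightarrow(4)$, exploiting Corollary~\ref{cor:JDIffShortSubsequenceJD} to reduce everything to sequences of length $\kappa$. The implications $(3)\Rightarrow(2)$ and (the tree-to-$\diamond$ direction, giving) $(4)\Rightarrow(1)$ are essentially immediate: a $\jd_{\kappa,2^\kappa}$-sequence trivially contains a single $\kappa$-list that is a $\diamond_\kappa$-sequence, and likewise any single branch of a $\diamond_\kappa$-tree is a $\diamond_\kappa$-sequence, so $(2)\Rightarrow(1)$ and $(4)\Rightarrow(1)$ are trivial after noting $(2)\Rightarrow(1)$ holds because a $\jd_{\kappa,\kappa}$-sequence has first coordinate a $\diamond_\kappa$-sequence. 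The substance lies entirely in building, from a single $\diamond_\kappa$-sequence, a long joint sequence and a full tree.

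The heart of the argument is $(1)\Rightarrow(3)$. First I would fix a $\diamond_\kappa$-sequence $\langle d(\xi);\xi<\kappa\rangle$ guessing subsets of $\kappa$, and use it to guess \emph{pairs} (or rather a single subset of $\kappa$ coding a pair $(\beta,x)$ with $\beta<2^\kappa$ and $x\subseteq\kappa$, via some fixed coding of $2^\kappa\times\mathcal{P}(\kappa)$ into $\mathcal{P}(\kappa)$). For each $\alpha<2^\kappa$ define $d_\alpha(\xi)$ to decode the ``$x$-part'' of $d(\xi)$ whenever the ``$\beta$-part'' codes (an ordinal coding) $\alpha$ restricted to $\xi$, and $\emptyset$ otherwise. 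Given a target sequence $\langle a_\alpha;\alpha<2^\kappa\rangle$, I would then want a single normal uniform filter witnessing that each $S(d_\alpha,a_\alpha)$ is large. By Corollary~\ref{cor:JDIffShortSubsequenceJD} it suffices to handle an arbitrary length-$\kappa$ subfamily $\langle a_{\alpha_i};i<\kappa\rangle$ at a time, and for that I would verify the diagonal intersection property of $\{S(d_{\alpha_i},a_{\alpha_i})\st i<\kappa\}$ directly from $\diamond_\kappa$: given $f\colon\kappa\to\kappa$ selecting indices, the diagonal intersection $\diag_\xi S(d_{\alpha_{f(\xi)}},a_{\alpha_{f(\xi)}})$ should be shown stationary by feeding the single $\diamond_\kappa$-sequence a suitably coded target that simultaneously encodes, at each coordinate $\xi$, both the index $\alpha_{f(\xi)}$ and the value $a_{\alpha_{f(\xi)}}\cap\xi$; since $d$ guesses this combined target on a stationary set, that stationary set lands inside the desired diagonal intersection.

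The main obstacle, and the step I expect to require the most care, is arranging the coding so that the diagonal intersection computation actually goes through: at coordinate $\xi$ the list $d_\alpha$ can only ``see'' information below $\xi$, so the index $\alpha$ being guessed must itself be presented as a $\kappa$-list (i.e.\ $\alpha\cap\xi$ under an ordinal coding of $2^\kappa$ into $\mathcal{P}(\kappa)$), and I must ensure that on the stationary guessing set both the recovered index and the recovered value agree with what $f$ and $\vec a$ prescribe. I would set this up once as a reusable coding lemma and then apply it uniformly. For $(1)\Rightarrow(4)$ the construction is entirely parallel: index the targets by $\funcs{\kappa}{2}$ rather than by $2^\kappa$, define $D(t)$ by decoding $d(|t|)$ relative to $t$, and verify the tree-guessing property by the same diagonal-intersection-property argument applied to any $\kappa$-sized subfamily of branches via Corollary~\ref{cor:JDIffShortSubsequenceJD}. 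Since $\lvert\funcs{\kappa}{2}\rvert=2^\kappa$, this tree construction in fact subsumes $(1)\Rightarrow(3)$ — reading the labels along branches yields the joint sequence — so I would likely present $(1)\Rightarrow(4)$ as the one real construction and derive the joint sequence from it, leaving only the trivial reverse implications to record.
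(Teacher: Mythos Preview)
Your proposed construction for $(1)\Rightarrow(3)$, and the parallel one for $(1)\Rightarrow(4)$, does not work. The issue is that your coding places a single (index, value) pair into $d(\xi)$, so at each stage $\xi$ at most one index $\alpha$---the one whose code restricted to $\xi$ agrees with the index-part of $d(\xi)$---receives a nontrivial value $d_\alpha(\xi)$; every other $d_\alpha(\xi)$ is $\emptyset$. But jointness demands the diagonal intersection property, and for $\xi\in\diag_i S(d_{\alpha_i},a_{\alpha_i})$ you need $d_{\alpha_i}(\xi)=a_{\alpha_i}\cap\xi$ simultaneously for \emph{all} $i<\xi$. Concretely, if $\alpha\ne\alpha'$ have distinct codes as subsets of $\kappa$ and $a_\alpha=a_{\alpha'}=\kappa$, then for all sufficiently large $\xi$ at most one of $d_\alpha(\xi),d_{\alpha'}(\xi)$ can be nonempty, so $S(d_\alpha,a_\alpha)\cap S(d_{\alpha'},a_{\alpha'})$ is bounded in $\kappa$: already two of your lists fail to be joint. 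The appeal to Corollary~\ref{cor:JDIffShortSubsequenceJD} does not rescue this, since the lists $d_\alpha$ themselves are defective, not merely the filter verification.

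The paper's proof avoids this by separating the two steps. For $(1)\Rightarrow(2)$ one codes the \emph{entire $\kappa$-sequence} of targets into a single subset of $\kappa$ via a bijection $f\colon\kappa\to\kappa\times\kappa$, defining $d_\alpha(\xi)$ as the $\alpha$-slice of $f[d(\xi)]$; then at each closure point $\xi$ all $\alpha<\xi$ receive meaningful guesses simultaneously, and the single stationary set on which $d$ guesses the combined target generates the required normal filter. The passage to length $2^\kappa$ and to a tree is genuinely different and is not obtained by the same coding: the paper proves $(2)\Rightarrow(4)$ by building the tree level by level, at limit level $\gamma$ using the first $\gamma$ pairs $(d_{2\alpha}(\gamma),d_{2\alpha+1}(\gamma))$ from a $\jd_{\kappa,\kappa}$-sequence to specify $\gamma$ many (node, label) assignments. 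The idea you are missing is that the node to be labelled must be predicted \emph{dynamically} by one of the joint diamond sequences, rather than hard-wired into a global single-pair coding of $d(\xi)$.
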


\begin{proof}
For the implication \((1)\Longrightarrow (2)\), let 
\(d\colon\kappa\to\mathcal{P}(\kappa)\) be a \(\diamond_\kappa\)-sequence and fix
a bijection \(f\colon \kappa\to\kappa\times\kappa\). Define
\[
d_\alpha(\xi)= 
%(f[d(\xi)])_\alpha\cap\alpha =
\set{\eta<\alpha}{ (\alpha,\eta)\in f[d(\xi)]}\,.
\]
We claim that \(\langle d_\alpha;\alpha<\kappa\rangle\)
is a \(\jd_{\kappa,\kappa}\)-sequence.

To see this, take a sequence of targets \(\langle a_\alpha;\alpha<\lambda\rangle\) 
and let \(S_\alpha=\set{\xi<\kappa}{d_\alpha(\xi)=a_\alpha\cap\xi}\) be the guessing sets.
The set
\[
T=\biggl\{
\xi<\kappa \st d(\xi) = f^{-1}\biggl[\bigcup_{\alpha<\kappa}\{\alpha\}\times a_\alpha\biggr]\cap\xi \biggr\}
\]
is stationary in \(\kappa\). Let \(\mathcal{F}\) be the filter generated by the club filter 
on \(\kappa\)
together with \(T\). This is clearly a proper filter. 
To see that it is also normal, consider some typical elements \(C_\alpha\cap T\) of \(\mathcal{F}\),
where \(C_\alpha\subseteq\kappa\) is club for each \(\alpha<\kappa\). Then
\(\diag_{\alpha<\kappa}(C_\alpha\cap T)=(\diag_{\alpha<\kappa} C_\alpha)\cap T\)
is also clearly an element of \(\mathcal{F}\).

We claim that we have \(S_\alpha\in \mathcal{F}\) for all \(\alpha<\kappa\),
so that \(\mathcal{F}\) witnesses the defining property of a 
\(\jd_{\kappa,\kappa}\)-sequence.
Since \(f[\xi]=\xi\times\xi\) for club many 
\(\xi<\kappa\), the set
\[
T'= \biggl\{
\xi<\kappa \st d(\xi)=f^{-1}\biggl[
\bigcup_{\alpha<\kappa}\{\alpha\}\times a_\alpha\biggr]\cap f^{-1}[\xi\times\xi]
\biggr\}
\]
is just the intersection of \(T\) with a club, and is therefore in \(\mathcal{F}\). 
But now observe that
\begin{align*}
T' &= \biggl\{
\xi<\kappa \st d(\xi)=f^{-1}\biggl[
\bigcup_{\alpha<\xi}\{\alpha\}\times(a_\alpha\cap\xi)
\biggr]\biggr\}\\
&= \set{\xi<\kappa}{\forall\alpha<\xi\colon d_\alpha(\xi) = a_\alpha\cap\xi}
= \diag_{\alpha<\kappa}S_\alpha\,.
\end{align*}
We see that \(T'\in \mathcal{F}\) is, modulo a bounded set, contained in each \(S_\alpha\) and can thus
conclude, since \(\mathcal{F}\) is uniform, that \(S_\alpha\in F\) for all \(\alpha<\kappa\).

Instead of proving \((2)\Longrightarrow (3)\), it will be easier to show 
\((2)\Longrightarrow (4)\) directly. Since the implications 
\((4)\Longrightarrow (3)\Longrightarrow (1)\) are obvious, this will finish the proof.

Fix a \(\jd_{\kappa,\kappa}\)-sequence \(\vec{d}\).
We proceed to construct the \(\diamond_\kappa\)-tree \(D\) level by level;
in fact the only meaningful work will take place at limit levels.
At a limit stage \(\gamma<\kappa\) we shall let the first \(\gamma\) many 
\(\diamond_\kappa\)-sequences
anticipate the labels and their positions. 
Concretely, consider the sets \(d_\alpha(\gamma)\) for
\(\alpha<\gamma\). For each \(\alpha\) we interpret \(d_{2\alpha}(\gamma)\) as a node on
the \(\gamma\)-th level of \(\funcs{<\kappa}{2}\) and let \(D(d_{2\alpha}(\gamma))=
d_{2\alpha+1}(\gamma)\), provided that there is no interference between the different
\(\diamond_\kappa\)-sequences. If it should happen that for some \(\alpha\neq \beta\)
we get \(d_{2\alpha}(\gamma)=d_{2\beta}(\gamma)\) but 
\(d_{2\alpha+1}(\gamma)\neq d_{2\beta+1}(\gamma)\)
we scrap the whole level and move on with the construction
higher in the tree.
At the end we extend \(D\) to be defined on the nodes of \(\funcs{<\kappa}{2}\) that
were skipped along the way in any way we like.

We claim that the function \(D\) thus constructed is a \(\diamond_\kappa\)-tree. 
To check this, let us fix a sequence of targets 
\(\vec{a}=\langle a_s;s\in\funcs{\kappa}{2}\rangle\)
and let \(S_s\) be the guessing sets.
By Lemma~\ref{lemma:DiagonalIntersectionProperty} it now suffices to check that
\(\diag_{\alpha<\kappa}S_{s_\alpha}\) is stationary for any sequence of branches 
\(\langle s_\alpha;\alpha<\kappa\rangle\).

For \(\alpha<\kappa\) let 
\(T_{2\alpha}=\set{\xi}{s_\alpha^{-1}[\{1\}]\cap\xi=d_{2\alpha}(\xi)}\)
and \(T_{2\alpha+1}=\set{\xi}{a_{s_\alpha}\cap\xi=d_{2\alpha+1}(\xi)}\).
Since our construction was guided by a \(\jd_{\kappa,\kappa}\)-sequence, 
there is a normal uniform
filter on \(\kappa\) which contains every \(T_\alpha\). In particular,
\(T=\diag_{\alpha<\kappa}T_\alpha\) is stationary. By a simple bootstrapping argument, there
is a club \(C\) of limit ordinals \(\gamma\) such that all \(s_\alpha\rest\gamma\) for 
\(\alpha<\gamma\) are distinct. Let \(\gamma\in C\cap T\). We now have
\(s_\alpha^{-1}[\{1\}]\cap\gamma=d_{2\alpha}(\gamma)\) and
\(a_{s_\alpha}\cap\gamma=d_{2\alpha+1}(\gamma)\) for all \(\alpha<\gamma\).
But this means precisely that the construction of \(D\) goes through at level \(\gamma\)
and that \(\gamma\in\bigcap_{\alpha<\gamma}S_{s_\alpha}\), and it follows that
\(C\cap T\subseteq \diag_{\alpha<\kappa} S_{s_\alpha}\), so
\(\diag_{\alpha<\kappa}S_{s_\alpha}\) is stationary.
\end{proof}

We can again consider the treeability of joint diamond sequences, as we did in
Definition~\ref{def:treeability}. We get the following analogue of 
Corollary~\ref{cor:NontreeableSCJLD}.

\begin{theorem}
\label{thm:NontreeableJDConsistent}
If \(\kappa\) is an uncountable regular cardinal and GCH holds,
then after forcing with \(\Add(\kappa,2^\kappa)\) there is a nontreeable 
\(\jd_{\kappa,2^\kappa}\)-sequence.
\end{theorem}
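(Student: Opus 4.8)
The plan is to produce the required sequence as the generic object itself, in the style of Corollary~\ref{cor:NontreeableSCJLD}. First I would reorganize the forcing: since every \(\kappa\)-list is coded by a subset of \(\kappa\) (via a pairing of \(\kappa\times\kappa\) with \(\kappa\)), the poset \(\Add(\kappa,2^\kappa)\) is isomorphic, up to this coding, to the natural poset adding by initial segments a sequence \(\vec d=\langle d_\alpha;\alpha<2^\kappa\rangle\) of \(2^\kappa\) many generic \(\kappa\)-lists. Working over the ground model where GCH holds, we have \(2^{<\kappa}=\kappa<\kappa^+=2^\kappa\); these values are preserved, so \(2^\kappa\) is still \(\kappa^+\) in the extension and the length of \(\vec d\) matches the extension's \(2^\kappa\). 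I will show that \(\vec d\) is simultaneously a \(\jd_{\kappa,2^\kappa}\)-sequence and not treeable.

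Non-treeability is the easy half. Under the reorganization the generic slices are now the \(\kappa\)-lists \(d_\alpha\), and the cardinal arithmetic hypothesis \(2^{<\kappa}<2^\kappa\) needed by Lemma~\ref{lemma:GenericNotTreeable} holds in the ground model. The proof of that lemma applies essentially unchanged, the only point to note being that each value \(d_\alpha(\xi)\subseteq\xi\) admits more than one possibility and so can always be forced to differ from any prescribed label \(D(\dot e(\alpha)\rest\xi)\). Hence \(\vec d\) is not treeable.

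The substantive part is showing \(\vec d\) is a \(\jd_{\kappa,2^\kappa}\)-sequence. By Corollary~\ref{cor:JDIffShortSubsequenceJD} it suffices to check that every length-\(\kappa\) subsequence is a \(\jd_{\kappa,\kappa}\)-sequence, and by Lemma~\ref{lemma:DiagonalIntersectionProperty} this reduces, for any target sequence, to the diagonal intersection property of the guessing sets. After relabelling I may take the subsequence indexed by \(\alpha<\kappa\); fixing targets \(\langle a_\alpha;\alpha<\kappa\rangle\) in the extension with guessing sets \(S_\alpha=\set{\xi<\kappa}{d_\alpha(\xi)=a_\alpha\cap\xi}\), I must show \(\diag_{\alpha<\kappa}S_\alpha=\set{\xi<\kappa}{\forall\alpha<\xi\ d_\alpha(\xi)=a_\alpha\cap\xi}\) is stationary. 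I would do this by a density argument: given a name \(\dot C\) for a club and any condition \(p\), build a continuous descending chain \(\langle p_i;i<\delta\rangle\) (with \(\delta<\kappa\), say \(\delta=\omega\)) together with strictly increasing ordinals \(\xi_i<\kappa\), so that each \(p_{i+1}\) has domain bounded strictly below \(\xi_{i+1}\), decides \(a_\alpha\cap\xi_i\) for every \(\alpha<\xi_i\), and forces some element of \(\dot C\) above \(\xi_i\). Setting \(\xi=\sup_i\xi_i\), the limit condition \(p_\xi=\bigcup_i p_i\) forces \(\xi\in\dot C\) and, crucially, leaves level \(\xi\) of every coordinate undetermined (each \(p_i\) has domain strictly below \(\xi\)). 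I then extend \(p_\xi\) to set \(d_\alpha(\xi)=a_\alpha\cap\xi\) for all \(\alpha<\xi\), each such value already being decided, obtaining a condition below \(p\) forcing \(\xi\in\dot C\cap\diag_{\alpha<\kappa}S_\alpha\).

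The main obstacle is precisely this coordination inside the density construction: I must decide the bounded pieces \(a_\alpha\cap\xi_i\) of the targets without ever committing the generic at the eventual limit level \(\xi\), so that the diagonal values stay free to be set correctly at the end. Keeping each stage's domain strictly below the final supremum \(\xi\) (which is automatic once every \(\xi_{i+1}\) is chosen above the previous domain) secures this, while the regularity of \(\kappa\) and the \(<\kappa\)-closure of \(\Add(\kappa,2^\kappa)\) guarantee that \(p_\xi\) is a genuine condition. Granting the stationarity of all these diagonal intersections, Lemma~\ref{lemma:DiagonalIntersectionProperty} and Corollary~\ref{cor:JDIffShortSubsequenceJD} assemble them into the full \(\jd_{\kappa,2^\kappa}\) property, completing the argument.
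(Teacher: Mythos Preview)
Your approach is essentially the paper's: show the generic itself is a \(\jd_{\kappa,2^\kappa}\)-sequence by a density argument, and invoke Lemma~\ref{lemma:GenericNotTreeable} for nontreeability. However, the ``relabelling'' step hides a genuine gap. The length-\(\kappa\) subsequence you must handle is given by a function \(f\colon\kappa\to 2^\kappa\) living in the \emph{extension}, not the ground model, so you cannot reduce to \(f=\mathrm{id}_\kappa\) by an automorphism of the forcing. Concretely, at the final step of your density argument you want to set \(d_\alpha(\xi)=a_\alpha\cap\xi\) for \(\alpha<\xi\); but after your relabelling this really means setting \(d_{f(\alpha)}(\xi)\), and you have not decided which column \(f(\alpha)\) is.

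The fix is exactly what the paper does: work with a name \(\dot f\) for the selection function and, at each stage \(i\) of the descending chain, also decide \(\dot f\rest\xi_i\) (alongside the targets and a club point). At the limit \(\xi=\sup_i\xi_i\) you then know \(f\rest\xi\), so you know which \(<\kappa\) many columns to modify at level \(\xi\). With this addition your density argument goes through verbatim. (A minor point: to conclude \(\xi\in\dot C\) you should choose \(\xi_{i+1}\) above the decided club element, not merely above the domain of \(p_{i+1}\); this is routine.)
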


\begin{proof}
Let \(\P=\Add(\kappa,2^\kappa)\) and \(G\subseteq\P\) generic; 
we refer to the \(\alpha\)-th subset added by \(G\) as \(G_\alpha\). 
We will show that the generic \(G\), seen as a sequence of
\(2^\kappa\) many \(\diamond_\kappa\)-sequences in the usual way, is a nontreeable 
\(\jd_{\kappa,2^\kappa}\)-sequence.

Showing that \(G\) is a \(\jd_{\kappa,2^\kappa}\)-sequence requires only minor modifications to
the usual proof that a Cohen subset of \(\kappa\) codes a \(\diamond_\kappa\)-sequence.
Thus, we view each \(G_\alpha\) as a \(\kappa\)-list. Fix a sequence \(\langle \dot{a}_\alpha;\alpha<2^\kappa
\rangle\) of names for subsets of \(\kappa\), a name \(\dot{f}\) for a function
from \(\kappa\) to \(2^\kappa\) and a name \(\dot{C}\) for a club in \(\kappa\)
as well as a condition \(p\in\P\). We will find a condition \(q\leq p\)
forcing that \(\dot{C}\cap\diag_{\alpha<\kappa}S_{\dot{f}(\alpha)}\) is nonempty, where
\(S_{\dot{f}(\alpha)}\) names the set \(\set{\xi<\kappa}{\dot{a}_{\dot{f}(\alpha)}\cap\xi=
G_\alpha(\xi)}\);
this will show that \(G\) codes a \(\jd_{\kappa,2^\kappa}\)-sequence by 
Lemma~\ref{lemma:DiagonalIntersectionProperty}.

We build the condition \(q\) in \(\omega\) many steps. To start with, let \(p_0=p\) and
let \(\gamma_0\) be an ordinal such that \(\dom(p_0)\subseteq 2^\kappa\times\gamma_0\).
By deciding more and more of the function \(\dot{f}\), the targets \(\dot{a}_{\dot{f}(\alpha)}\),
and the club \(\dot{C}\), we now inductively find ordinals \(\gamma_n<\delta_n<\gamma_{n+1}\), 
sets \(B^\alpha_n\subseteq\gamma_n\),
functions \(f_n\) and a descending sequence of
conditions \(p_n\) satisfying \(\dom(p_n)\subseteq 2^\kappa\times\gamma_n\) and
\(p_{n+1}\forces \delta_n\in\dot{C}\) as well as 
\(p_{n+1}\forces \dot{f}\rest\delta_n=f_n\) and \(p_{n+1}\forces \dot{a}_{f_n(\alpha)}=
B^\alpha_n\) for \(\alpha<\delta_n\).
Let \(\gamma_\omega=\sup_n \gamma_n=\sup_n \delta_n\) and
\(p_\omega=\bigcup_n p_n\) and \(f_\omega=\bigcup_n f_n\) and \(B^\alpha_\omega=
\bigcup_n B_n^\alpha\). 
The construction of these ensures that \(\dom(p_\omega)
\subseteq 2^\kappa\times\gamma_\omega\) and \(p_\omega\) forces that
\(\dot{f}\rest\gamma_\omega=f_\omega\) and \(\dot{a}_{\dot{f}(\alpha)}\cap\gamma_\omega=B^\alpha_\omega\)
for \(\alpha<\gamma_\omega\) as well as \(\gamma_\omega\in\dot{C}\). 
To obtain the final condition \(q\) we now simply extend \(p_\omega\)
by placing the code of \(B^\alpha_\omega\) on top of the \(f(\alpha)\)-th column
for all \(\alpha<\gamma_\omega\). It now follows immediately that \(q\forces\gamma_\omega
\in\dot{C}\cap\diag_{\alpha<\kappa}S_{\dot{f}(\alpha)}\).

It remains to show that the generic \(\jd_{\kappa,2^\kappa}\)-sequence is not treeable,
which follows immediately from Lemma~\ref{lemma:GenericNotTreeable}.
\end{proof}

In the case of Laver diamonds we were able to produce models with quite long
joint Laver sequences but no \(\Ld_\kappa\)-trees simply on consistency strength grounds (see
Theorem~\ref{thm:SeparateLongAndTreeableSCJLD}). In other words, we have models where
there are long joint Laver sequences, but none of them are treeable. The situation seems
different for ordinary diamonds, as Theorem~\ref{thm:DiamondEquivalentToJD} tells us
that treeable joint diamond sequences exist as soon as a single diamond sequence
exists. While Theorem~\ref{thm:NontreeableJDConsistent} shows that it is at least
consistent that there are nontreeable such sequences, we should ask whether this is 
simply always the case.

\begin{question}
Is it consistent for a fixed \(\kappa\), for example \(\kappa=\omega_1\), that every \(\jd_{\kappa,2^\kappa}\)-sequence is
treeable? Is it consistent that all \(\jd_{\kappa,2^\kappa}\)-sequences are treeable for
all \(\kappa\)?
\end{question}

\bibliographystyle{amsplain}
\bibliography{bibbase}

\providecommand{\bysame}{\leavevmode\hbox to3em{\hrulefill}\thinspace}
\providecommand{\MR}{\relax\ifhmode\unskip\space\fi MR }
% \MRhref is called by the amsart/book/proc definition of \MR.
\providecommand{\MRhref}[2]{%
  \href{http://www.ams.org/mathscinet-getitem?mr=#1}{#2}
}
\providecommand{\href}[2]{#2}
\begin{thebibliography}{10}

\bibitem{ApterCummingsHamkins2007:LargeCardinalsFewMeasures}
Arthur~W. Apter, James Cummings, and Joel~David Hamkins, \emph{Large cardinals
  with few measures}, Proceedings of the American Mathematical Society
  \textbf{135} (2007), no.~7, 2291--2300. \MR{2299507}

\bibitem{ApterShelah1997:MenasResultBestPossible}
Arthur~W. Apter and Saharon Shelah, \emph{Menas' result is best possible},
  Transactions of the American Mathematical Society \textbf{349} (1997), no.~5,
  2007--2034. \MR{1370634}

\bibitem{BenNeriaGitik:UniqueMeasureWithoutGCH}
Omer {Ben-Neria} and Moti Gitik, \emph{A model with a unique normal measure on
  \(\kappa\) and \(2^\kappa=\kappa^{++}\) from optimal assumptions}, preprint.

\bibitem{Carmody2015:Thesis}
Erin Carmody, \emph{Force to change large cardinal strength}, Ph.D. thesis, The
  Graduate Center, City University of New York, 2015, ProQuest/UMI Publication
  No. 3704311.

\bibitem{Corazza1989:LaverSequencesExtendible}
Paul Corazza, \emph{Laver sequences for extendible and super-almost-huge
  cardinals}, The Journal of Symbolic Logic \textbf{64} (1999), no.~3,
  963--983. \MR{1779746}

\bibitem{Cummings:Handbook}
James Cummings, \emph{Iterated forcing and elementary embeddings}, Handbook of
  set theory. {V}ols. 1, 2, 3, Springer, Dordrecht, 2010, pp.~775--883.
  \MR{2768691}

\bibitem{Dow1985:GoodOKUltrafilters}
Alan Dow, \emph{Good and {OK} ultrafilters}, Transactions of the American
  Mathematical Society \textbf{290} (1985), no.~1, 145--160. \MR{787959}

\bibitem{FriedmanMagidor2009:NumberNormalMeasures}
Sy-David Friedman and Menachem Magidor, \emph{The number of normal measures},
  The Journal of Symbolic Logic \textbf{74} (2009), no.~3, 1069--1080.
  \MR{2548481}

\bibitem{GitikShelah1989:IndestructibilityOfStrong}
Moti Gitik and Saharon Shelah, \emph{On certain indestructibility of strong
  cardinals and a question of {H}ajnal}, Archive for Mathematical Logic
  \textbf{28} (1989), no.~1, 35--42. \MR{987765}

\bibitem{Hamkins2000:LotteryPreparation}
Joel~David Hamkins, \emph{The lottery preparation}, Annals of Pure and Applied
  Logic \textbf{101} (2000), no.~2-3, 103--146. \MR{1736060}

\bibitem{Hamkins2002:StrongDiamondPrinciples}
\bysame, \emph{A class of strong diamond principles}, preprint,
  \href{https://arxiv.org/abs/math/0211419}{arXiv:math/0211419 [math.LO]},
  2002.

\bibitem{Hamkins2003:ApproximationAndCoveringNoNewLargeCardinals}
\bysame, \emph{Extensions with the approximation and cover properties have no
  new large cardinals}, Fundamenta Mathematicae \textbf{180} (2003), no.~3,
  257--277. \MR{2063629}

\bibitem{Jech:Handbook}
Thomas Jech, \emph{Stationary sets}, Handbook of set theory. {V}ols. 1, 2, 3,
  Springer, Dordrecht, 2010, pp.~93--128. \MR{2768680}

\bibitem{Kanamori1980:PerfectSetForcingUncountable}
Akihiro Kanamori, \emph{Perfect-set forcing for uncountable cardinals}, Annals
  of Mathematical Logic \textbf{19} (1980), no.~1-2, 97--114. \MR{593029}

\bibitem{Kanamori2005:HigherInfinite}
\bysame, \emph{The higher infinite}, second ed., Springer Monographs in
  Mathematics, Springer-Verlag, Berlin, 2008. \MR{2731169}

\bibitem{Laver1978:MakingSupercompactnessIndestructible}
Richard Laver, \emph{Making the supercompactness of {$\kappa $} indestructible
  under {$\kappa $}-directed closed forcing}, Israel Journal of Mathematics
  \textbf{29} (1978), no.~4, 385--388. \MR{0472529}

\bibitem{Magidor1976:IdentityCrises}
Menachem Magidor, \emph{How large is the first strongly compact cardinal? or
  {A} study on identity crises}, Annals of Mathematical Logic \textbf{10}
  (1976), no.~1, 33--57. \MR{0429566}

\bibitem{Magidor1979:NonregularUltrafiltersCardinalityOfUltrapowers}
\bysame, \emph{On the existence of nonregular ultrafilters and the cardinality
  of ultrapowers}, Transactions of the American Mathematical Society
  \textbf{249} (1979), no.~1, 97--111. \MR{526312}

\bibitem{Menas1974:OnStrongCompactnessAndSupercompactness}
Telis~K. Menas, \emph{On strong compactness and supercompactness}, Annals of
  Mathematical Logic \textbf{7} (1974/75), 327--359. \MR{0357121}

\bibitem{Shelah2010:Diamonds}
Saharon Shelah, \emph{Diamonds}, Proceedings of the American Mathematical
  Society \textbf{138} (2010), no.~6, 2151--2161. \MR{2596054}

\bibitem{Weiss2010:Thesis}
Christoph Wei{\ss}, \emph{Subtle and ineffable tree properties}, Ph.D. thesis,
  Ludwig-Maximilians-Universit{\"a}t M{\"u}nchen, 2010.

\bibitem{Zeman2002:InnerModels}
Martin Zeman, \emph{Inner models and large cardinals}, De Gruyter Series in
  Logic and its Applications, vol.~5, Walter de Gruyter \& Co., Berlin, 2002.
  \MR{1876087}

\end{thebibliography}
\end{document}